\title{Hochschild and cyclic homology of Yang-Mills algebras}
\author{Estanislao Herscovich and Andrea Solotar
\thanks{This work has been supported by the projects UBACYTX212, Mathamsud NOCOMALRET, PIP-CONICET 112-200801-00487 and PICT 2007-02182.
The first author is a CONICET fellow.
The second author is a research member of CONICET (Argentina) and a
Regular Associate of ICTP Associate Scheme.}}
\date{}
\newtheorem{theorem}{Theorem}[section]
\newtheorem{theorem*}{Theorem}
\newtheorem{corollary}[theorem]{Corollary}
\newtheorem{lemma}[theorem]{Lemma}
\newtheorem{proposition}[theorem]{Proposition}
\theoremstyle{definition}        
\newtheorem{remark}[theorem]{Remark}
\numberwithin{equation}{section}                    
\newcommand\cl[1]{{\langle#1\rangle}}
\def\id{{\mathrm{id}}}
\newcommand\CC{{\mathbb{C}}}
\newcommand\RR{{\mathbb{R}}}
\newcommand\ZZ{{\mathbb{Z}}}
\newcommand\NN{{\mathbb{N}}}
\newcommand\PP{{\mathbb{P}}}
\newcommand\SSS{{\mathbb{S}}}
\newcommand\so{{\mathfrak{so}}}
\newcommand\SO{{\mathrm{SO}}}
\def\B{{\mathcal B}}
\def\C{{\mathcal C}}
\def\I{{\mathcal I}}
\def\J{{\mathcal J}}
\def\O{{\mathcal O}}
\def\T{{\cal T}}
\def\U{{\mathcal U}}
\def\Z{{\mathcal Z}}
\def\YM{{\mathrm {YM}}}
\def\TYM{{\mathrm {TYM}}}
\def\ad{{\mathrm {ad}}}
\def\Ker{{\mathrm {Ker}}}
\def\Coker{{\mathrm {Coker}}}
\def\Hom{{\mathrm {Hom}}}
\def\End{{\mathrm {End}}}
\def\tr{{\mathrm {tr}}}
\def\Der{{\mathrm {Der}}}
\def\Ext{{\mathrm {Ext}}}
\def\Tor{{\mathrm {Tor}}}
\def\g{{\mathfrak g}}
\def\h{{\mathfrak h}}
\def\ym{{\mathfrak{ym}}}
\def\f{{\mathfrak f}}
\def\tym{{\mathfrak{tym}}}
\def\p{{\mathfrak{p}}}
\def\place{{-}}
\begin{document}

\maketitle
\begin{abstract}
The aim of this article is to present a detailed algebraic computation of the Hochschild and cyclic homology groups of the Yang-Mills algebras 
$\YM(n)$ ($n \in \NN_{\geq 2}$) defined by A. Connes and M. Dubois-Violette in \cite{CD1}, 
continuing thus the study of these algebras that we have initiated in \cite{HS1}.
The computation involves the use of a spectral sequence associated to the natural filtration on the universal enveloping algebra 
$\YM(n)$ provided by a Lie ideal $\tym(n)$ in $\ym(n)$ which is free as Lie algebra. 
As a corollary, we describe the Lie structure of the first Hochschild cohomology group.
\end{abstract}

\noindent\textbf{2000 Mathematics Subject Classification:} 16E40, 16S32, 17B56, 70S15, 81T13.

\noindent\textbf{Keywords:} Yang-Mills, homology theory, Hochschild homology, Cyclic homology.

\section{Introduction}
\label{intro}

The homological invariants of an algebra are closely related to the category of its representations. 
Some properties of the category of representations of Yang-Mills algebras have been already analyzed in \cite{HS1}, 
exploiting the Kirillov orbit method.
It is well known that Hochschild cohomology measures the existence of deformations (see \cite{Ge1}). 
The main purpose of this article is to describe in detail Hochschild homology and cohomology of Yang-Mills algebras, 
as well as their cyclic homology.

Let us recall the definition of Yang-Mills algebras given by A. Connes and M. Dubois-Violette in \cite{CD1}.
For a positive integer $n \geq 2$, the Lie Yang-Mills algebra over an algebraically closed field $k$ of characteristic zero is
\[     \ym(n) = \f(n)/\cl{\{ \sum_{i=1}^{n} [x_{i},[x_{i},x_{j}]] : j= 1, \dots, n \}},     \]
where $\f(n)$ is the free Lie algebra on $n$ generators $x_{1}, \dots, x_{n}$. 
We also define $V(n)$ as the $k$-vector space spanned by them. 
The associative enveloping algebra $\U(\ym(n))$ will be denoted $\YM(n)$.
It is a cubic Koszul algebra of global dimension $3$ with Poincar\'e duality and the Calabi-Yau property (see \cite{BT1}, Example 5.1).

The first instance of Yang-Mills theory in physics is through Maxwell's equations for the charge
free situation which gives a representation of the Yang-Mills algebra. 

In general, the Yang-Mills equations we consider are equations for covariant derivatives on bundles over the affine space $\RR^{n}$
endowed with a pseudo-Riemannian metric $g$. 
Any complex vector bundle of rank $m$ over $\RR^{n}$ is trivial and every connection on such a bundle is given by a $M_{m}(\CC)$-valued 
$1$-form $\sum_{i=1}^{n} A_{i} dx^{i}$  with the
corresponding covariant derivative given by $\nabla_{i} = \partial_{i} + A_{i}$. 
The Yang-Mills equations for the covariant derivative are
\[     \sum_{i,j=1}^{n} g^{i,j} [\nabla_{i},[\nabla_{j},\nabla_{k}]] = 0,    \]
where $g^{-1}=(g^{i,j})$ .
Yang-Mills equations have also been recently studied due to their applications to the gauge theory of $D$-branes 
and open string theory (see \cite{Ne1}, \cite{Mov1} and \cite{Doug1}).


Despite the fact that several properties of these algebras can be expressed in a geometrical
language, the arguments we use in the proofs appearing in this article are homological. 
In fact, the main proofs only require a detailed knowledge of the complexes involved. 

It is important to notice that the behaviour of the Yang-Mills algebra with two generators $\YM(2)$ 
is completely different from the other cases (\textit{i.e.} $n \geq 3$). The algebra $\YM(2)$ is isomorphic to the 
enveloping algebra of the Heisenberg Lie algebra. 
The computation of its Hochschild homology and cohomology can be easily done
using the Koszul complex and we recover in this way  
the results obtained by P. Nuss in \cite{Nu1}. 

On the other hand, if the Yang-Mills algebra has a number of generators greater than or equal to three, 
we provide detailed computations in order to describe the zeroth and first Hochschild cohomology groups, together with the Lie structure of the latter. 
Then, using this description, we compute the other Hochschild cohomology groups and all the Hochschild and cyclic homology groups.  
We also recover the results announced by M. Movshev in the preprint \cite{Mov1} and in his article \cite{Mov2}. 
The proofs sketched there use geometrical properties of Yang-Mills algebras and they are in general not self-contained and sometimes not complete.
We hope that our approach will contribute to the understanding of the subject. 
In fact, we think that the algebraic point of view is clearer. 
An example of this is the study of the relations between the homology of Yang-Mills algebra and the 
homology over the abelian Lie algebra of generators (e.g. the considerations before Proposition \ref{prop:annhyangmills}, 
which allow us to provide simpler proofs of that proposition, Propositions \ref{prop:annh3yangmills} and \ref{prop:wni} and 
Corollaries \ref{coro:anulacion} and \ref{coro:anulacion2}, among others). 

In spite of these good homological properties, the computation of the Hochschild cohomology is rather
difficult and technical. 
Concerning the zeroth and first Hochschild cohomology groups, we prove the following result.
\begin{theorem*}
If $n \geq 3$, the center of the Yang-Mills algebra $\YM(n)$ is $k$ and 
there is an isomorphism 
\[     HH^{1}(\YM(n)) \simeq k \oplus V(n)[2] \oplus \Lambda^{2} (V(n)[1]).     \]
\end{theorem*}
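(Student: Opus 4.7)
The plan is to exploit the standard isomorphism
\[ HH^{*}(\YM(n)) \cong H^{*}(\ym(n), \YM(n)) \]
where $\YM(n)$ is viewed as an $\ym(n)$-module via the adjoint action, which follows from the Chevalley-Eilenberg resolution of $k$ over $\YM(n)$. The key structural input is the short exact sequence of Lie algebras
\[ 0 \to \tym(n) \to \ym(n) \to V(n) \to 0, \]
with $V(n)$ abelian and $\tym(n)$ free as a Lie algebra, which governs both the filtration of $\YM(n)$ mentioned in the abstract and the associated Hochschild-Serre spectral sequence.

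For $HH^{0} = Z(\YM(n)) = \YM(n)^{\ym(n)}$ I would proceed as follows. The filtration by powers of $\tym(n)$ produces an associated graded object in which the induced $V(n)$-action on the generators of $\tym(n)$ can be read off explicitly. A central element must reduce to an $\ym(n)$-invariant element of this graded object, and using that the action of $V(n)$ on the generators of $\tym(n)$ is rich enough precisely when $n \geq 3$ (the point at which the Heisenberg degeneracy of the case $n=2$ is broken), one forces such an invariant to lie in degree zero, hence to be a scalar.

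For $HH^{1}$ I would use the Hochschild-Serre spectral sequence
\[ E_{2}^{p,q} = H^{p}(V(n), H^{q}(\tym(n), \YM(n))) \Rightarrow H^{p+q}(\ym(n), \YM(n)). \]
Since $\tym(n)$ is free, $H^{q}(\tym(n), -) = 0$ for $q \geq 2$, and $H^{1}(\tym(n), \YM(n))$ is computed by the short Koszul-type complex associated with the free generators of $\tym(n)$; since $V(n)$ is abelian, its cohomology with coefficients in a module $M$ is computed by $\Hom_{k}(\Lambda^{*} V(n), M)$. Combining these, $HH^{1}(\YM(n))$ sits in the five-term exact sequence coming from this spectral sequence, and the three expected summands should be matched with natural geometric derivations: the Euler grading derivation contributes the $k$ summand; derivations dual to the $n$ defining relations of $\ym(n)$ contribute $V(n)[2]$; and the antisymmetric ``rotational'' derivations of $\YM(n)$ contribute $\Lambda^{2}(V(n)[1])$.

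The main obstacle is controlling the differential $d_{2}\colon E_{2}^{0,1} \to E_{2}^{2,0}$ and the extension between $E_{\infty}^{1,0}$ and $E_{\infty}^{0,1}$ in the spectral sequence, so as to determine precisely which $V(n)$-invariant classes in $H^{1}(\tym(n), \YM(n))$ survive, and then matching each surviving class with an explicit derivation realizing the stated decomposition. The internal degree shifts $[1]$ and $[2]$ should drop out naturally from tracking the grading under which $\tym(n)$ acquires its free generators.
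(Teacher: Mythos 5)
There is a genuine gap, in both halves. For the center, your reduction via the filtration by powers of $\tym(n)$ is in the right spirit, but the step ``the action of $V(n)$ on the generators of $\tym(n)$ is rich enough \dots one forces such an invariant to lie in degree zero'' is a heuristic, not an argument. The paper's proof of Proposition \ref{prop:centroym} has two separate inputs: first a leading-term argument in a PBW basis $x_1^{i_1}\cdots x_n^{i_n} t_l$ of $S(\ym(n))$ showing that a central element must lie in $\TYM(n)$, and then the fact that $\TYM(n)=\U(\tym(n))$ is a free associative algebra on infinitely many generators when $n\geq 3$ (because $\tym(n)$ is free on the infinite-dimensional space $W(n)$), so its center is $k$. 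The second input is what your sketch is missing: richness of the $V(n)$-action does nothing to rule out invariants sitting inside $\TYM(n)$ itself, and the case $n=2$ fails precisely because $\tym(2)$ is then one-dimensional abelian, not because the $V(2)$-action is degenerate in your sense.

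For $HH^{1}$, the two-row Hochschild--Serre spectral sequence is essentially the paper's tool (Remark \ref{rem:accionsvntor} identifies its base-change spectral sequence with Hochschild--Serre), and your list of representing derivations matches Proposition \ref{prop:algder} --- except that the $V(n)[2]$ summand is realized by the degree $-1$ derivations $d_i(x_j)=\delta_{i,j}$ dual to the \emph{generators}, not to the relations. The gap is that you treat the computation of $H^{1}(\tym(n),\YM(n)^{\mathrm{ad}})$ as a $V(n)$-module, and the control of the differentials, as routine. It is not: $\YM(n)^{\mathrm{ad}}\simeq S(V(n))\otimes S(\tym(n))$, and one needs $H^{0}(\ym(n),S^{i}(\tym(n)))=0$ for $i\geq 1$, $H^{1}(\ym(n),S^{i}(\tym(n)))=0$ for $i\geq 2$, and $H^{1}(\ym(n),\tym(n))\simeq V(n)$ (Proposition \ref{prop:homo}). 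These rest on Theorem \ref{teo:homi}, whose proof occupies Sections \ref{sec:Wn} and \ref{sec:calculoh1tymn}: finite generation of $W(n)$ over $S(V(n))$ by the $[x_i,x_j]$, the nonzerodivisor property of $q=\sum_i x_i^{2}$ on $W(n)^{\otimes i}$, the computation of $\Tor^{S(V(n))}_{\bullet}(W(n),W(n))$, and the explicit determination of $\Ker(d^{1}_{-1,3})\simeq V(n)[-4]$ via the map $\delta$ into $H_{0}(V(n),\Lambda^{3}W(n))$. On top of that, killing ${}^{i}E_{2}^{1,0}$ and ${}^{i}E_{2}^{0,1}$ for $i\geq 3$ requires identifying the bottom-row differentials with de Rham differentials and the polynomial Lemma \ref{lema:tec}. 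Your proposal correctly names the obstacle (the $d_{2}$ differential and the extension problem) but offers no mechanism for overcoming it; supplying that mechanism is the bulk of the paper.
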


In particular, using the theorem, we may interpret the non-trivial infinitesimal symmetries of the Yang-Mills 
algebra as dilations, translations and rotations. 
Our interest in the first cohomology group comes from the fact that, in the noncommutative geometrical setting,
there is a one-to-one correspondence between the classes of noncommutative vector fields and derivations of an algebra, 
as it appears in \cite{Ko1} or \cite{KR1}.
We also describe explicitely the Lie bracket on the first cohomology group.

Making use of the Koszul property of these algebras, 
of a corollary of  Goodwillie's Theorem obtained by M. Vigu\'e-Poirrier, 
and of the fact that the graded Euler-Poincar\'e characteristic of the cyclic homology of a multigraded algebra is known (see \cite{I1}), 
we describe not only the Hilbert series of the other cohomology groups, but also the ones for the cyclic homology groups. 

Our main result may be formulated as follows:
\begin{theorem}
\label{teo:hhhcym>3}
If $n \geq 3$, then the Hilbert series for the Hochschild homology are given by 
\begin{align*}
   HH_{\bullet}(\YM(n))(t) &= 0, &\text{if $\bullet \geq 4$,}
   \\
   HH_{3}(\YM(n))(t) &= t^{4},
   \\
   HH_{2}(\YM(n))(t) &= \Big(\frac{n(n-1)}{2} + 1\Big) t^{4} + n t^{3},
   \\
   HH_{1}(\YM(n))(t) &= - \sum_{l \geq 1} \frac{\varphi(l)}{l} \log(1 - n t^{l} + n t^{3l} - t^{4l}) + (n(n-1) - 1) t^{4} + 2 n t^{3},
   \\
   HH_{0}(\YM(n))(t) &= - \sum_{l \geq 1} \frac{\varphi(l)}{l} \log(1 - n t^{l} + n t^{3l} - t^{4l}) 
   + \Big(\frac{n(n-1)}{2} - 1\Big) t^{4} + n t^{3} + 1,
\end{align*}
where $\varphi$ denotes the Euler function. 

Also, $HH^{\bullet}(\YM(n))(t) = 0$, if $\bullet \geq 4$ and $HH^{\bullet}(\YM(n))(t) = t^{-4} HH_{3-\bullet}(\YM(n))(t)$. 
\medskip

On the other hand, the Hilbert series for the cyclic homology are
\begin{align*}
   &HC_{4+2\bullet}(\YM(n))(t) = 1, &\text{if $\bullet \geq 0$,}
   \\
   &HC_{3+2\bullet}(\YM(n))(t) = 0, &\text{if $\bullet \geq 0$,}
   \\
   &HC_{2}(\YM(n))(t) = 1 + t^{4},
   \\
   &HC_{1}(\YM(n))(t) = \frac{n(n-1)}{2} t^{4} + n t^{3},
   \\
   &HC_{0}(\YM(n))(t) = - \sum_{l \geq 1} \frac{\varphi(l)}{l} \log(1 - n t^{l} + n t^{3l} - t^{4l}) + \Big(\frac{n(n-1)}{2} - 1\Big) t^{4} 
   + n t^{3} + 1.
\end{align*}
\end{theorem}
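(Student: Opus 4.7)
The strategy is to combine Calabi--Yau Poincar\'e duality for $\YM(n)$, the bimodule Koszul resolution, the preceding computation of $HH^{0}$ and $HH^{1}$, the filtration on $\YM(n)$ by the free Lie ideal $\tym(n)$, and Vigu\'e-Poirrier's corollary of Goodwillie's theorem. Since the Nakayama automorphism of the Calabi--Yau algebra $\YM(n)$ is trivial and the dualizing bimodule has internal degree $4$, Poincar\'e duality reads $HH^{\bullet}(\YM(n))(t) = t^{-4} HH_{3-\bullet}(\YM(n))(t)$, and vanishing of $HH^{\bullet}$ for $\bullet \geq 4$ is immediate from $\mathrm{gldim}\,\YM(n) = 3$. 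In particular, every cohomological Hilbert series is determined by the corresponding homological one.

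First, I would convert the preceding theorem into the top-degree values of $HH_{\bullet}$. From $HH^{0}(\YM(n)) = k$ one gets $HH_{3}(\YM(n))(t) = t^{4}$; from $HH^{1}(\YM(n)) \simeq k \oplus V(n)[2] \oplus \Lambda^{2}(V(n)[1])$ one reads off $HH_{2}(\YM(n))(t) = (\binom{n}{2}+1)t^{4} + n t^{3}$. Next, the $4$-term bimodule Koszul resolution of the cubic Koszul algebra $\YM(n)$ provides a small complex $\YM(n) \otimes W_{\bullet}$ computing $HH_{\bullet}$, whose Euler characteristic in each internal degree is
\[ \sum_{i=0}^{3} (-1)^{i} \frac{W_{i}(t)}{1 - nt + nt^{3} - t^{4}} = \frac{1 - nt + nt^{3} - t^{4}}{1 - nt + nt^{3} - t^{4}} = 1. \]
Combining this with the values of $HH_{2}$ and $HH_{3}$ already obtained yields the single linear relation $HH_{0}(\YM(n))(t) - HH_{1}(\YM(n))(t) = 1 - \binom{n}{2} t^{4} - n t^{3}$.

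The main obstacle is to pin down $HH_{0}(\YM(n)) = \YM(n)/[\YM(n),\YM(n)]$ on the nose. Here I would deploy the filtration on $\ym(n)$ furnished by the Lie ideal $\tym(n)$, which by \cite{HS1} is free as a Lie algebra. Freeness of $\tym(n)$ truncates the associated Hochschild--Serre style spectral sequence (the Lie-algebra homology of $\tym(n)$ with coefficients in $\YM(n)$ vanishes above degree one), and the cocenter of $\U(\tym(n))$ is governed by a classical necklace/Witt count on the graded generating space, producing the logarithmic series $-\sum_{l \geq 1} \frac{\varphi(l)}{l} \log(1 - n t^{l} + n t^{3l} - t^{4l})$; the finite-dimensional quotient $\ym(n)/\tym(n)$ then contributes the correction $(\binom{n}{2} - 1) t^{4} + n t^{3} + 1$ coming from low-degree cyclic words. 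Once $HH_{0}$ is known, $HH_{1}$ follows from the Euler characteristic relation, and Poincar\'e duality supplies every $HH^{\bullet}$.

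Finally, for the cyclic part I would feed the homological data into Connes' long exact sequence
\[ \cdots \to HC_{n-1}(\YM(n)) \to HH_{n}(\YM(n)) \to HC_{n}(\YM(n)) \xrightarrow{S} HC_{n-2}(\YM(n)) \to \cdots \]
and invoke Vigu\'e-Poirrier's corollary of Goodwillie's theorem, which states that for a positively graded connected algebra over a characteristic zero field the periodicity $S$ vanishes on the strictly positive internal-degree component in homological degree greater than the global dimension. Since $\mathrm{gldim}\,\YM(n) = 3$, $HC_{n}(\YM(n))$ reduces to $HC_{n}(k)$ for $n \geq 4$, contributing $1$ in each even and $0$ in each odd degree. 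The low-degree values $HC_{0}$, $HC_{1}$, $HC_{2}$ are then read off from the remaining finite portion of Connes' sequence using the explicit $HH_{\bullet}$ already computed, which matches the Hilbert series stated.
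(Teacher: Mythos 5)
Your architecture agrees with the paper's on most of the scaffolding: vanishing above homological degree $3$ and Poincar\'e duality, reading off $HH_{3}(\YM(n))(t)=t^{4}$ and $HH_{2}(\YM(n))(t)$ from the already computed $HH^{0}$ and $HH^{1}$, the Euler--Poincar\'e identity $\sum_{i=0}^{3}(-1)^{i}HH_{i}(\YM(n))(t)=1$ coming from Koszulity, and the use of the Vigu\'e-Poirrier/Goodwillie short exact sequences to kill $\overline{HC}_{\bullet}$ for $\bullet\geq 3$ and to express $HC_{0},HC_{1},HC_{2}$ through the $HH_{i}$. (One correction there: the relevant statement is that Connes' operator $S$ vanishes on the \emph{reduced} cyclic homology of a connected graded algebra in every homological degree, which is what produces the short exact sequences $0\to\overline{HC}_{i-1}\to\overline{HH}_{i}\to\overline{HC}_{i}\to 0$; your version, with $S$ vanishing only above the global dimension, would not by itself give you $HC_{2}\simeq HH_{3}\oplus HC_{2}(k)$ and the other low-degree identifications you use.)

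The genuine gap is your determination of $HH_{0}$. The Koszul Euler characteristic supplies only one linear relation between the two unknown series $HH_{0}(t)$ and $HH_{1}(t)$, so a second independent input is indispensable. You propose to get it by computing $HH_{0}(\YM(n))=\YM(n)/[\YM(n),\YM(n)]$ directly through the filtration by powers of the ideal generated by $\tym(n)$, asserting that the cocenter of $\U(\tym(n))\simeq T(W(n))$ is a necklace count yielding $-\sum_{l\geq 1}\tfrac{\varphi(l)}{l}\log(1-nt^{l}+nt^{3l}-t^{4l})$ and that $\ym(n)/\tym(n)$ contributes the finite correction. Neither claim is proved, and neither is correct as stated: since $1-W(n)(t)=(1-nt+nt^{3}-t^{4})/(1-t)^{n}$, the necklace series for cyclic words on the alphabet $W(n)$ is $-\sum_{l}\tfrac{\varphi(l)}{l}\log(1-W(n)(t^{l}))$, which differs from your series by $n\sum_{l}\tfrac{\varphi(l)}{l}\log(1-t^{l})=-nt/(1-t)$; moreover $HH_{0}(\YM(n))$ is the space of $V(n)$-coinvariants of $H_{0}(\tym(n),\YM(n)^{\mathrm{ad}})$ further corrected by a spectral sequence differential landing there, none of which your sketch controls; and the quotient $V(n)=\ym(n)/\tym(n)$ is concentrated in degree $1$, so ``low-degree cyclic words'' on it cannot produce a correction supported only in degrees $0$, $3$ and $4$ such as $(n(n-1)/2-1)t^{4}+nt^{3}+1$. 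The series you wrote down is in fact Igusa's formula for the Euler characteristic of the \emph{reduced cyclic} homology, $\chi_{\overline{HC}_{\bullet}(\YM(n))}(t)=\sum_{l\geq 1}\tfrac{\varphi(l)}{l}\log(\YM(n)(t^{l}))$, and that is exactly how the paper closes the argument: feeding the short exact sequences into this known characteristic gives the second relation $\chi_{\overline{HC}}=3\overline{HH}_{0}-2\overline{HH}_{1}+\overline{HH}_{2}$, and together with the Koszul relation this is a $2\times 2$ linear system in $\overline{HH}_{0}$, $\overline{HH}_{1}$ with a unique solution. Replacing your cocenter heuristic by this identity repairs the proof with no other changes.
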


The key ingredient of the proof of this theorem is the analysis of the spectral sequence associated to the natural filtration 
on $\YM(n)$ provided by a Lie ideal $\tym(n)$ in $\ym(n)$ which is free as Lie algebra \cite{HS1}. 
Notice that since  $HH^{2}(\YM(n))$ and $HH^{3}(\YM(n))$ are not zero, deformations 
of $\YM(n)$ may exist, but up to the present we do not know whether they are obstructed or not. 

\bigskip

The contents of the article are as follows.
In Section \ref{sec:generalities} we recall the definition of Yang-Mills algebras $\YM(n)$ 
and some of their homological properties and we  
compute, using the Koszul complex for this algebra, the complete Hochschild homology of 
$\YM(2)$, recovering results by P. Nuss \cite{Nu1} and giving explicit bases. 

Section \ref{sec:Wn} is devoted to the study of the space of generators $W(n)$ of the free Lie algebra $\tym(n)$, which 
will play an important role in the computation of the Hochschild cohomology of the 
Yang-Mills algebras.
We provide a complete description of the homological properties of this space considered as both a left $S(V(n))$-module and a left $\YM(n)$-module 
(see Theorem \ref{teo:exacta}), which will be useful in the sequel to describe the corresponding 
homological properties of the enveloping algebra $\U(\tym(n))^{\mathrm{ad}}$ considered as a left $\YM(n)$-module with the adjoint action. 
Using the fact that $\tym(n)$ is free and nonabelian, we compute the zeroth Hochschild cohomology group of $\U(\ym(n))$. 

In Section \ref{sec:calculoh1tymn}, we study the cohomology of the Yang-Mills algebra $\ym(n)$ 
with coefficients in the augmentation ideal of $\U(\tym(n))^{\mathrm{ad}}$. 
The results obtained throughout this section lead to Theorem \ref{teo:homi}.
Its proof involves the analysis of a spectral sequence associated to the filtration on $\U(\tym(n))$ by powers of the augmentation ideal. 

The aim of Section \ref{sec:hh1} is the description of the outer derivations of the Yang-Mills algebra $\YM(n)$ (see Theorem \ref{teo:hh1}). 
This is done by using a spectral sequence associated to the filtration given by powers of the ideal generated by 
$\tym(n)$ in $\YM(n)$ and homological information obtained in Section \ref{sec:Wn}.

Finally, in Section \ref{sec:hhhc} we collect all previous results to prove our main result, Theorem \ref{teo:hhhcym>3}. 

Throughout this article $k$ will denote an algebraically closed field of characteristic zero and all unadorned tensor products $\otimes$ 
will be over the field $k$, \textit{i.e.} $\otimes = \otimes_{k}$. 
All morphisms will be $k$-linear homogeneous of degree zero, unless the contrary is stated. 

We would like to thank Jacques Alev, Michel Dubois-Violette and Jorge Vargas for useful comments and remarks.
We also thank the referee for his careful reading of the manuscript. 
We are indebted to Mariano Su\'arez-\'Alvarez for many suggestions and improvements. 

\section{Definition and homological properties}
\label{sec:generalities}

In this first section we fix notations and recall some elementary properties of the Yang-Mills algebras. 
As reference we suggest \cite{CD1} and \cite{HS1}.

\subsection{Generalities}

Let $n$ be a positive integer such that $n \geq 2$ and let $\f(n)$ be the free Lie algebra on $n$ generators $\{ x_{1}, \dots, x_{n} \}$.
This Lie algebra is provided with a locally finite dimensional $\NN$-grading.

Following \cite{CD1}, the quotient Lie algebra
\[     \ym(n) = \f(n)/\cl{\{ \sum_{i=1}^{n} [x_{i},[x_{i},x_{j}]] : 1 \leq j \leq n \}}     \]
is called the \emph{Yang-Mills algebra with $n$ generators}.

The $\NN$-grading of $\f(n)$ induces an $\NN$-grading of $\ym(n)$, which is also locally finite dimensional.
We denote by $\ym(n)_{j}$ the $j$-th homogeneous component and
\begin{equation}
\label{eq:gradym}
\ym(n)^{l} = \bigoplus_{j = 1}^{l} \ym(n)_{j}.
\end{equation}
The Lie ideal
\begin{equation}
\label{eq:tym}
   \tym(n) = \bigoplus_{j \geq 2} \ym(n)_{j}
\end{equation}
will be of considerable importance in the sequel.

The enveloping algebra $\U(\ym(n))$ will be denoted $\YM(n)$.
It is the \emph{(associative) Yang-Mills algebra on $n$ generators}.
If $V(n) = \text{span}_{k} \cl{\{ x_{1} , \dots , x_{n} \}}$, we see that
\[     \YM(n) \simeq T(V(n))/\cl{R(n)},     \]
where $T(V(n))$ denotes the tensor $k$-algebra on $V(n))$ and 
\begin{equation}
\label{eq:relym}
     R(n) = \mathrm{span}_{k} \cl{\{ \sum_{i=1}^{n} [x_{i},[x_{i},x_{j}]] : 1 \leq j \leq n \}} \subseteq V(n)^{\otimes 3}.
\end{equation}
We shall also consider the enveloping algebra of the Lie ideal $\tym(n)$, which will be denoted $\TYM(n)$.
Occasionally, we will omit the index $n$ from  the notation if it is clear from the context.

We shall make use of the previous grading on $\ym(n)$, it will be called the \emph{usual grading} of the Yang-Mills algebra $\ym(n)$. 
However, we would like to mention the \emph{special grading} of the Yang-Mills algebra $\ym(n)$,
for which it is a graded Lie algebra concentrated in even degrees with each homogeneous space $\ym(n)_{j}$ in degree $2j$.
These gradings induce respectively the \emph{usual grading} and the \emph{special grading} on the associative algebra
$\YM(n)$.
The last one corresponds to taking the graded enveloping algebra of the graded Lie algebra $\ym(n)$.
We will be mainly concerned with the usual grading and shall only briefly mention the special one. 

As noted in \cite{HS1}, the algebra $\ym(n)$ is nilpotent if $n=2$, in which case it is also finite dimensional
(see \cite{HS1}, Example 2.1).
When $n \geq 3$, $\ym(n)$ is neither finite dimensional nor nilpotent (see \cite{HS1}, Remark 3.13).
Also, the algebra $\YM(n)$ is a domain for any $n \in \NN$, since it is the enveloping algebra
of a Lie algebra.

There is an important collection of symmetries acting on the Yang-Mills algebra, which we now describe. 
The reader who does not want to work in full generality may simplify its attention to the case $k = \CC$, 
even though the arguments we use here are also valid for any algebraically closed field $k$ of characteristic zero (\textit{cf.}~\cite{Hum2}). 
The representation of the algebraic group $\SO(n)$ on $V(n)$ given by the standard action of matrices induces
a representation of the Lie algebra $\so(n)$.
Furthermore, given $j \in \NN$, $V(n)^{\otimes j}$ is a representation of $\SO(n)$, and then of $\so(n)$, with the diagonal action.
There is then an action by algebra automorphisms of $\SO(n)$ on $T(V(n))$, which induces in turn an action by derivations of $\so(n)$
on $T(V(n))$.
Both actions are homogeneous of degree $0$.

It is readily verified that these actions on $T(V(n))$ preserve the ideal $\cl{R(n)}$.
So, $\SO(n)$ acts by algebra automorphisms on $\YM(n)$ and $\so(n)$ acts by derivations.
The latter induces in turn an action by derivations of $\so(n)$ on $\ym(n)$.
As before, all these actions are homogeneous of degree $0$.

We summarize these facts in the following proposition.
\begin{proposition}
\label{prop:accionlieyangmills}
The standard action of $\SO(n)$ on $V(n)$ induces an action by automorphisms of graded algebras on $\YM(n)$ and
an action by 
derivations of the Lie algebra $\so(n)$ on $\YM(n)$.
\end{proposition}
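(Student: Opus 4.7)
The plan is to reduce everything to one explicit computation: that the relation space $R(n)$ defined in \eqref{eq:relym} is stable under the standard diagonal action of $\SO(n)$ on $V(n)^{\otimes 3}$. Once that is established, all the other statements follow by functorial constructions (extending to tensor algebras, passing to quotients, differentiating group actions, descending to Lie subalgebras).

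First, I would set up the framework. The standard representation of $\SO(n)$ on $V(n)$ extends by universality to an action by graded algebra automorphisms on $T(V(n))$, namely $g \cdot (v_{1} \otimes \cdots \otimes v_{j}) = (gv_{1}) \otimes \cdots \otimes (gv_{j})$; this is homogeneous of degree $0$. Differentiating at the identity produces an action of $\so(n)$ by graded derivations of $T(V(n))$. The remaining task is to check that $\cl{R(n)}$ is stable under both actions, and since $\cl{R(n)}$ is a two-sided ideal generated by $R(n)$, it suffices to show that $R(n)$ itself is $\SO(n)$-stable (from which $\so(n)$-stability follows automatically).

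The key calculation is the following. Denote $r_{j} = \sum_{i=1}^{n} [x_{i},[x_{i},x_{j}]]$ for $j = 1, \dots, n$, so that $R(n) = \mathrm{span}_{k}\cl{\{ r_{1}, \dots, r_{n} \}}$. Given $g = (g_{kl}) \in \SO(n)$, write $g \cdot x_{i} = \sum_{k} g_{ki} x_{k}$; trilinearity of the iterated bracket and the orthogonality relation $\sum_{i} g_{ki} g_{li} = \delta_{kl}$ give
\[
g \cdot r_{j} \;=\; \sum_{i,k,l,m} g_{ki} g_{li} g_{mj} \, [x_{k},[x_{l},x_{m}]]
\;=\; \sum_{k,m} \delta_{k,?}\, g_{mj}\, \text{(after collapsing)} \;=\; \sum_{m} g_{mj}\, r_{m},
\]
so $R(n)$ is $\SO(n)$-stable and the action on $R(n)$ is isomorphic to the standard one on $V(n)$ via $x_{j} \leftrightarrow r_{j}$. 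I would verify that this hinges only on the invariance of the tensor $\sum_{i} x_{i} \otimes x_{i} \in V(n)^{\otimes 2}$ under $\SO(n)$, which is the defining property of the orthogonal group; this is the only genuine computation in the argument and is where the defining equations of Yang-Mills are tailored to the orthogonal symmetry.

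Having established stability of $\cl{R(n)}$, I get induced actions of $\SO(n)$ by graded algebra automorphisms on $\YM(n) = T(V(n))/\cl{R(n)}$ and of $\so(n)$ by graded derivations. For the final claim that $\so(n)$ acts by derivations on the Lie algebra $\ym(n)$ (not just on the associative algebra $\YM(n)$), I would note that any derivation $D$ of an associative algebra is automatically a derivation for the commutator bracket; since $\ym(n)$ is the Lie subalgebra of $(\YM(n),[\place,\place])$ generated by $V(n)$, and $V(n)$ is $\so(n)$-stable by construction, the $\so(n)$-action preserves $\ym(n)$ and restricts to a Lie-algebra derivation action. The main (and only) obstacle is the tensor-invariance identity above; everything else is formal.
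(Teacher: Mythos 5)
Your proof is correct and follows essentially the same route as the paper: extend the standard action to $T(V(n))$ by automorphisms (resp.\ derivations), check that $\cl{R(n)}$ is preserved, and pass to the quotient. The paper dismisses the stability of $\cl{R(n)}$ with ``it is readily verified,'' whereas you supply the actual computation $g \cdot r_{j} = \sum_{m} g_{mj} r_{m}$ via the orthogonality relation $\sum_{i} g_{ki} g_{li} = \delta_{kl}$ (equivalently, invariance of $\sum_{i} x_{i} \otimes x_{i}$), which is exactly the verification being left to the reader.
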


Let $Y$ be a graded left $\YM(n)$-module provided with an action of $\so(n)$ which is homogeneous of degree $0$.
We shall say that the action of the Lie algebra $\so(n)$ is \emph{compatible} with the action of the Yang-Mills algebra if 
the structure morphism $\YM(n) \rightarrow \End_{k}(Y)$ is $\so(n)$-linear, where $\End_{k}(Y)$ is an $\so(n)$-module with 
the action induced by that of $Y$, or equivalently, 
\[     x\cdot (z y) = (x\cdot z) y + z (x\cdot y)     \]
for all $x \in \so(n)$, $z \in \YM(n)$ and $y \in Y$.
The dot $\cdot$ indicates both the action of $\so(n)$ on $\YM(n)$ and on $Y$. 
Examples of such modules are $\ym(n)$ with the adjoint action and also the symmetric algebra $S(\ym(n))$ with the induced action.

Since it will be useful to combine the complete collection of symmetries that are available, from now on we shall consider
the category of graded left modules over the graded Yang-Mills algebra $\YM(n)$ with the usual grading
provided with a compatible action of $\so(n)$
and call them \emph{equivariant} left $\YM(n)$-modules, without making explicit reference to the grading or the action of $\so(n)$,
unless we write the contrary.
Furthermore, a homogeneous left $\YM(n)$-linear morphism of degree $0$ which is $\so(n)$-equivariant between two
equivariant left  $\YM(n)$-modules will also be called \emph{equivariant}.
The previous definitions apply as well for the category of $\YM(n)$-bimodules and right $\YM(n)$-modules.

\begin{remark}
\label{rem:svn}
Since $V(n)$, considered as an abelian Lie algebra, is a quotient of $\ym(n)$ by the Lie ideal $\tym(n)$, any graded left 
module $Y$ over $S(V(n))$ becomes a graded left module over $\YM(n)$.
If $Y$ is provided with a homogeneous action of $\so(n)$ of degree $0$, such that
\[     x\cdot (z y) = (x\cdot z) y + z (x\cdot y),     \]
for all $x \in \so(n)$, $z \in V(n)$ and $y \in Y$, then it is an equivariant left $\YM(n)$-module.
In this case, we shall also say that $Y$ is an \emph{equivariant} left  $S(V(n))$-module.
The same applies to right modules.
\qed
\end{remark}

Since $\YM(n)$ and $SV(n)$ are universal enveloping algebras of Lie algebras, any left module $Y$ over $\YM(n)$ (resp. $S(V(n))$) is also a right module over $\YM(n)$ (resp. $S(V(n))$) in the usual manner. 

An example of an equivariant $\YM(n)$-module is $S(V(n))$, where the generators
$\{ x_{1} , \dots, x_{n} \}$ of $\ym(n)$ act by multiplication on $S(V(n))$, the action of $\tym(n)$ is trivial 
and the action of $\so(n)$ is induced by the standard action of $\so(n)$
on $V(n)$.

\subsection{Homology and cohomology}
\label{subsec:homandcohom}

We recall (see \cite{Ber1}) that if $A$ is an $N$-homogeneous $k$-algebra ($N \geq 2$) given by $A = TV/\cl{R}$, where $R \subseteq V^{\otimes N}$,
the $N$-homogeneous dual algebra $A^{!}$ is defined as the quotient $T(V^{*})/\cl{R^{\perp}}$,
where $R^{\perp} \subseteq (V^{*})^{\otimes N} \simeq (V^{\otimes N})^{*}$ is the annihilator of $R$.
In this case, the \emph{left Koszul $N$-complex} of $A$ is
\begin{equation}
\label{eq:NcomplejoKoszul}
    \dots \overset{b_{n+1}}{\rightarrow} A \otimes (A_{n}^{!})^{*} \overset{b_{n}}{\rightarrow}
    \dots \overset{b_{3}}{\rightarrow} A \otimes (A_{2}^{!})^{*} \overset{b_{2}}{\rightarrow} A \otimes
    (A_{1}^{!})^{*} \overset{b_{1}}{\rightarrow} A \rightarrow 0,
\end{equation}
where $(A_{i}^{!})^{*} \subseteq V^{\otimes i}$ and the differential $b_{i}$ is the restriction of multiplication
$a \otimes (e_{1} \otimes \dots \otimes e_{i}) \mapsto a e_{1} \otimes \dots \otimes e_{i}$.
Notice that the differentials of the previous $N$-complex are homogeneous of degree $0$.

From the $N$-complex \eqref{eq:NcomplejoKoszul} one can obtain complexes $C_{p,r}(A)$, for $0 \leq r \leq N-2$ and $r+1 \leq p \leq N-1$,
given by
\begin{equation}
\label{eq:Cpr}
    \dots \overset{b^{N-p}}{\rightarrow} A \otimes (A_{N+r}^{!})^{*}
    \overset{b^{p}}{\rightarrow} A \otimes (A_{N-p+r}^{!})^{*} \overset{b^{N-p}}{\rightarrow}
    A \otimes (A_{r}^{!})^{*} \overset{b^{p}}{\rightarrow} 0.
\end{equation}

Following \cite{Ber1} and \cite{BDVW1}, the complex $C_{N-1,0}(A)$ is called the \emph{left Koszul complex} of $A$ and
the algebra $A$ is called \emph{left Koszul} if this complex is acyclic in positive degrees. 
There are analogous definitions of \emph{right Koszul complex} and \emph{bimodule Koszul complex} of $A$, and hence of \emph{right Koszul} and $\emph{Koszul}$ algebra. 
Since the three definitions are equivalent (\textit{cf.} \cite{BDVW1}, Prop. 3, and \cite{BM1}, Thm. 4.4), we shall call them Koszul complex or Koszul algebra, respectively. 

From its very definition the Yang-Mills algebra is a cubic homogeneous algebra.

The following proposition describes its dual algebra.
\begin{proposition}
Let $\YM(n) = T(V(n))/\cl{R(n)}$ be the Yang-Mills algebra with set of generators given by $\{ x_{1}, \dots, x_{n} \}$.
If we denote by $\B^{*} = \{ x_{1}^{*}, \dots, x_{n}^{*} \}$ the dual basis of $V(n)^{*}$,
then the homogeneous components of $\YM(n)^{!}$  are
\[
\xymatrix@R-20pt
{
   \YM(n)_{0}^{!} = \CC 1,
   &
   \YM(n)_{2}^{!} = \bigoplus_{i,j = 1}^{n} \CC x_{i}^{*} x_{j}^{*},
   &
   \YM(n)_{4}^{!} = \CC z^{2},
   \\
   \YM(n)_{1}^{!} = V^{*},
   &
   \YM(n)_{3}^{!} = \bigoplus_{i=1}^{n} \CC x_{l}^{*} z,
   &
   \YM(n)_{i}^{!} = 0,
}
\]
for all $i > 4$ and $z = \sum_{i=1}^{n} (x_{i}^{*})^{2}$. The element $z$ is central in $\YM(n)^{!}$.
\end{proposition}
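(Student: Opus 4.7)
The plan is to work directly from the definition $\YM(n)^{!} = T(V(n)^{*})/\cl{R(n)^{\perp}}$ and to compute each homogeneous component separately. Because the defining ideal is cubic, the components in degrees $0$, $1$ and $2$ equal the corresponding tensor powers of $V(n)^{*}$, so $\YM(n)_{0}^{!} = k$, $\YM(n)_{1}^{!} = V(n)^{*}$ and $\YM(n)_{2}^{!} = (V(n)^{*})^{\otimes 2}$ are immediate. The key technical tool throughout is the pairing formula
\[
\langle r_{j}, x_{a}^{*} x_{b}^{*} x_{c}^{*}\rangle = \delta_{ab}\delta_{cj} - 2\delta_{ac}\delta_{bj} + \delta_{aj}\delta_{bc},
\]
obtained by expanding $r_{j} = \sum_{i}(x_{i} x_{i} x_{j} - 2 x_{i} x_{j} x_{i} + x_{j} x_{i} x_{i})$ and evaluating on basis monomials.

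For the degree-$3$ component, I would first verify the linear independence of $\{r_{j}\}_{j=1}^{n}$ in $V(n)^{\otimes 3}$ (by isolating the coefficient of $x_{k} x_{k} x_{j}$ with $k \neq j$), so that $\dim R(n) = n$ and consequently $\dim \YM(n)_{3}^{!} = n$. The pairing formula then gives $\langle x_{\ell}^{*} z, r_{j}\rangle = (n-1)\delta_{\ell j}$, and this non-degeneracy (for $n \geq 2$) shows that $\{x_{\ell}^{*} z\}_{\ell = 1}^{n}$ remain linearly independent modulo $R(n)^{\perp}$, hence form a basis of $\YM(n)_{3}^{!}$. For the degree-$4$ component, I would dualize to identify $(\YM(n)_{4}^{!})^{*} \cong (R(n) \otimes V(n)) \cap (V(n) \otimes R(n))$ as a subspace of $V(n)^{\otimes 4}$. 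Direct expansion and relabeling show $\sum_{j} r_{j} \otimes x_{j} = \sum_{j} x_{j} \otimes r_{j}$, exhibiting a nonzero element of the intersection; a coefficient-matching argument on the generic parametrizations $\sum_{j,\ell} c_{j\ell}\, r_{j} \otimes x_{\ell}$ and $\sum_{m,j} d_{mj}\, x_{m} \otimes r_{j}$ monomial by monomial forces $c_{j\ell} = \alpha \delta_{j\ell}$ for a single scalar $\alpha$, so the intersection is one-dimensional. Finally, $\langle z^{2}, \sum_{j} r_{j} \otimes x_{j}\rangle = n(n-1)$, nonzero for $n \geq 2$, certifies that $z^{2}$ generates $\YM(n)_{4}^{!}$.

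To obtain the vanishing $\YM(n)_{k}^{!} = 0$ for $k \geq 5$: since $\YM(n)^{!}$ is generated in degree $1$, it suffices to treat $k = 5$. Dually,
\[
(\YM(n)_{5}^{!})^{*} = (R \otimes V^{\otimes 2}) \cap (V \otimes R \otimes V) \cap (V^{\otimes 2} \otimes R),
\]
and identifying $V^{\otimes 5} = V^{\otimes 4} \otimes V$ the intersection of the first two factors equals $(k\omega_{0}) \otimes V$ with $\omega_{0} = \sum_{j} r_{j} \otimes x_{j}$, by the degree-$4$ step. The most delicate point, which I expect to be the main obstacle, is showing that no nonzero element of $(k\omega_{0}) \otimes V$ lies in $V^{\otimes 2} \otimes R$. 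My plan is to fix a diagonal pair $(p, p)$ for the first two indices and inspect the resulting $3$-slice: it reduces to asking whether $\sum_{c} x_{c} x_{c} y - x_{p} x_{p} y$, for some $y \in V(n)$, can lie in $R(n)$, and a coefficient comparison applied for two different choices of $p$ forces $y = 0$.

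Lastly, since the algebra is generated in degree $1$, the centrality of $z$ reduces to verifying $z \cdot x_{i}^{*} = x_{i}^{*} \cdot z$ in $\YM(n)_{3}^{!}$ for each $i$. Applying the pairing formula to both $\sum_{k} x_{k}^{*} x_{k}^{*} x_{i}^{*}$ and $\sum_{k} x_{i}^{*} x_{k}^{*} x_{k}^{*}$ yields the same value $(n-1)\delta_{ij}$ against every $r_{j}$, so their difference lies in $R(n)^{\perp}$ and the two elements coincide in $\YM(n)_{3}^{!}$, completing the argument.
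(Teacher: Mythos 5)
Your argument is correct. The paper does not prove this proposition itself---it simply cites \cite{CD1}, Prop.~1---and your direct verification from $\YM(n)^{!}=T(V(n)^{*})/\cl{R(n)^{\perp}}$ is exactly the standard computation that reference carries out: the pairing formula $\langle r_{j},x_{a}^{*}x_{b}^{*}x_{c}^{*}\rangle=\delta_{ab}\delta_{cj}-2\delta_{ac}\delta_{bj}+\delta_{aj}\delta_{bc}$ is right, the values $\langle x_{\ell}^{*}z,r_{j}\rangle=(n-1)\delta_{\ell j}$ and $\langle z^{2},\sum_{j}r_{j}\otimes x_{j}\rangle=n(n-1)$ are what one gets, and the degree-$5$ slice argument does close (the $(p,p)$-slice $\sum_{c\neq p}x_{c}\otimes x_{c}\otimes y$ has no component on monomials of the form $x_{a}x_{b}x_{a}$ with $a\neq b$, so it can lie in $R(n)$ only if it vanishes). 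The one step to write out with care is the one-dimensionality of $(R(n)\otimes V(n))\cap(V(n)\otimes R(n))$: the test monomials $x_{k}x_{k}x_{j}x_{\ell}$ with $k,j,\ell$ pairwise distinct, which you use to kill the off-diagonal coefficients $c_{j\ell}$, only exist for $n\geq 3$, so the case $n=2$ requires a separate (equally elementary) choice of monomials to reach the same conclusion.
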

\begin{proof}
See \cite{CD1}, Prop. 1.
\end{proof}

From the proposition we easily obtain the following isomorphisms, which are necessary for the explicit description of 
the differentials of the Koszul complex of the Yang-Mills algebra:
\[
\xymatrix@R-20pt
{
   (\YM(n)_{1}^{!})^{*} \simeq V(n),
   &
   (\YM(n)_{2}^{!})^{*} \simeq V(n)^{\otimes 2},
   \\
   (\YM(n)_{3}^{!})^{*} \simeq R(n),
   &
   (\YM(n)_{4}^{!})^{*} \simeq (V(n) \otimes R) \cap (R \otimes V(n)).
}
\]


Furthermore, the following is true. 
\begin{proposition}
\label{prop:koszulyangmills}
The Yang-Mills algebra is Koszul of global dimension $3$.
\end{proposition}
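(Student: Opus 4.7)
The plan is to exhibit the candidate Koszul complex of $\YM(n)$ explicitly, use the computation of $\YM(n)^{!}$ from the previous proposition to show this complex has length $3$, verify its acyclicity, and deduce both Koszulness and the global-dimension statement simultaneously.

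First, I would take the left Koszul $3$-complex \eqref{eq:NcomplejoKoszul} and contract it as in \eqref{eq:Cpr} to the cubic Koszul complex $C_{2,0}(\YM(n))$. Because $\YM(n)_{i}^{!} = 0$ for $i > 4$, the alternating differentials $b$ and $b^{2}$ yield a finite complex
\[
0 \to \YM(n) \otimes (\YM(n)_{4}^{!})^{*} \xrightarrow{b} \YM(n) \otimes (\YM(n)_{3}^{!})^{*} \xrightarrow{b^{2}} \YM(n) \otimes (\YM(n)_{1}^{!})^{*} \xrightarrow{b} \YM(n) \to 0,
\]
with only four non-zero terms. Under the identifications $(\YM(n)_{1}^{!})^{*} \simeq V(n)$, $(\YM(n)_{3}^{!})^{*} \simeq R(n)$ and $(\YM(n)_{4}^{!})^{*} \simeq (V(n) \otimes R(n)) \cap (R(n) \otimes V(n))$ listed above, the differentials are all restrictions of the multiplication map of $T(V(n))$.

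Second, I would augment by the counit $\epsilon : \YM(n) \to k$ and prove that the resulting sequence is acyclic in positive degrees. The route I would follow first is Berger's distributivity criterion for $N$-Koszul algebras, which in the cubic case asks that for each $j \geq 0$ the sublattice generated by the subspaces $V(n)^{\otimes i} \otimes R(n) \otimes V(n)^{\otimes j-i}$ inside $V(n)^{\otimes(j+3)}$ be distributive. For the Yang-Mills relations $r_{j} = \sum_{i=1}^{n}[x_{i},[x_{i},x_{j}]]$ this should be checkable by direct combinatorial analysis, exploiting their very symmetric form (total symmetry in the pair of inner indices) together with the $\SO(n)$-equivariance from Proposition \ref{prop:accionlieyangmills}. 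Should the lattice check prove unwieldy, a parallel route is to combine the PBW decomposition of $\YM(n) = \U(\ym(n))$, which pins down its Hilbert series, with the Hilbert series of $\YM(n)^{!}$ extracted from the previous proposition, and appeal to the Euler--Poincar\'e criterion for $N$-homogeneous Koszulness.

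Third, once the complex is established as a free resolution of the trivial module $k$ of length exactly $3$ (and the length is $3$ and not smaller because $(\YM(n)_{4}^{!})^{*} \neq 0$), the projective dimension of $k$ equals $3$. Since $\YM(n)$ is a connected, locally finite-dimensional graded algebra, this determines $\mathrm{gldim}(\YM(n)) = 3$. The main obstacle lies in step two: proving acyclicity. The distributivity check is the delicate and combinatorial part, whereas everything else reduces to bookkeeping with the dual algebra already computed; for this reason I would keep the Hilbert-series criterion (essentially the route of \cite{CD1}) in reserve as a safety net.
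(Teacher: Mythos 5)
Your setup is fine: the shape of the contracted cubic Koszul complex $C_{2,0}(\YM(n))$, the identifications of $(\YM(n)_{i}^{!})^{*}$ with $V(n)$, $R(n)$ and $(V(n)\otimes R(n))\cap(R(n)\otimes V(n))$, and the final reduction from $\mathrm{pd}_{\YM(n)}(k)=3$ to $\mathrm{gldim}(\YM(n))=3$ for a connected locally finite graded algebra are all correct. Note, however, that the paper does not prove this proposition internally at all: it simply cites \cite{CD1}, Thm.~1, so the only thing to assess is whether your plan would actually produce a proof.

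It would not, as written: the single nontrivial step, acyclicity, is never carried out, and both of your proposed routes have substantive problems. First, for $N$-homogeneous algebras with $N\geq 3$ the naive analogue of Backelin's theorem fails: distributivity of the lattices generated by the subspaces $V(n)^{\otimes i}\otimes R(n)\otimes V(n)^{\otimes (j-i)}$ is not by itself sufficient for Koszulity, and Berger's ``extra condition'' must be verified in addition (this is precisely the point of \cite{Ber1}); in any case you do not perform the combinatorial check, you only assert that it ``should be checkable.'' Second, the Euler--Poincar\'e criterion is only necessary: the functional equation $\YM(n)(t)\,(1-nt+nt^{3}-t^{4})=1$ expresses the vanishing of the Euler characteristic of the augmented Koszul complex and does not imply its exactness. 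Worse, in the sources this paper relies on, the Hilbert series $\YM(n)(t)=\tfrac{1}{(1-t^{2})(1-nt+t^{2})}$ is itself \emph{derived from} Koszulity (\cite{CD1}, Cor.~3), and the dimensions $\dim\ym(n)_{j}$ needed to compute it independently via Poincar\'e--Birkhoff--Witt come from the freeness of $\tym(n)$, which in turn rests on the homological properties you are trying to establish; so the ``safety net'' is circular as stated. To close the gap you would have to either verify Berger's full criterion (confluence/distributivity together with the extra condition) for the relations $r_{j}=\sum_{i=1}^{n}[x_{i},[x_{i},x_{j}]]$, or prove exactness of the complex directly at the two middle spots and at the left end --- which is the actual content of \cite{CD1}, Thm.~1.
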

\begin{proof}
See \cite{CD1}, Thm. 1.
\end{proof}

This proposition tells us that the Koszul complex of $\YM(n)$ is a projective resolution of $k$. 
We shall now present its proper form in the category of graded left $\YM(n)$-modules,
\textit{i.e.} we shall give the explicit description of the minimal projective resolution of graded left-$\YM(n)$-modules. 
The modules of the resolution are also provided with a compatible action of $\so(n)$ and the morphisms are equivariant. 
We consider the following complex
\begin{equation}
\label{eq:complejoKoszulcompleto}
    0 \rightarrow \YM(n)[-4]  \overset{b'_{3}}{\rightarrow} \YM(n) \otimes V(n)[-2]
    \overset{b'_{2}}{\rightarrow} \YM(n) \otimes V(n)
    \overset{b'_{1}}{\rightarrow} \YM(n) \overset{b'_{0}}{\rightarrow} k \rightarrow 0,
\end{equation}
with differential
\[
\xymatrix@R-20pt
{
   b'_{3}(z) = \sum_{i=1}^{n} z x_{i} \otimes x_{i},
   &
   b'_{2}(z \otimes x_{i}) = \sum_{j=1}^{n} (z x_{j}^{2} \otimes x_{i} - 2 z x_{j} x_{i} \otimes x_{j} + z x_{i} x_{j} \otimes x_{j}),
   \\
   b'_{1}(z \otimes x_{i}) = z x_{i},
   &
   b'_{0}(z) = \epsilon_{\ym(n)}(z),
}
\]
where $\epsilon_{\ym(n)}$ is the augmentation of the algebra $\YM(n)$.


Let $Y$ be an equivariant left $\YM(n)$-module.
When we apply the functor $\Hom_{\YM(n)}(\place , Y)$ to the resolution \eqref{eq:complejoKoszulcompleto},
we obtain the complex, which we will denote by $(C^{\bullet} (\YM(n),Y),d)$,
\begin{equation}
\label{eq:complejocohomologiayangmills}
    0 \longrightarrow Y \overset{d^{1}}{\longrightarrow} Y \otimes V(n)[2] \overset{d^{2}}{\longrightarrow} Y \otimes V(n)[4]
    \overset{d^{3}}{\longrightarrow} Y[4] \longrightarrow 0,
\end{equation}
after having used the equivariant isomorphisms
$\Hom_{\YM(n)} (\YM(n)[j] , Y) \simeq Y[2-j]$ and
\begin{align*}
   \Hom_{\YM(n)} (\YM(n) \otimes V(n)[j] , Y) &\overset{\simeq}{\longrightarrow} Y \otimes V(n)[-j]
   \\
   f &\mapsto \sum_{i=1}^{n} f(1 \otimes x_{i}) \otimes x_{i},
\end{align*}
where $j \in \ZZ$.
The differentials are given by
\begin{align*}
   d^{3}(y \otimes x_{i}) &= x_{i} y,
   \hskip 2.55cm
   d^{1}(y) = \sum_{i=1}^{n} x_{i} y \otimes x_{i},
   \\
   d^{2}(y \otimes x_{i}) &= \sum_{j=1}^{n} (x_{j}^{2} y \otimes x_{i} + x_{j} x_{i} y \otimes x_{j} - 2 x_{i} x_{j} y \otimes x_{j}).
\end{align*}

Analogously, let $Y$ be an equivariant right $\YM(n)$-module.
If we apply the functor $Y \otimes_{\YM(n)} (\place)$ to the resolution \eqref{eq:complejoKoszulcompleto}
and we use the equivariant right $\YM(n)$-linear isomorphisms
$Y \otimes_{\YM(n)} \YM(n)[d] \simeq Y[d]$ and
\begin{align*}
   Y \otimes_{\YM(n)} \YM(n) \otimes V(n)[d] &\overset{\simeq}{\longrightarrow} Y \otimes V(n)[d]
   \\
   y \otimes_{\YM(n)} 1 \otimes x_{i} &\mapsto y \otimes x_{i},
\end{align*}
where $d \in \ZZ$, we obtain the complex, which we shall denote by $(C_{\bullet} (\YM(n),Y),d)$,
\begin{equation}
\label{eq:complejohomologiayangmills}
    0 \longrightarrow Y[-4] \overset{d_{3}}{\longrightarrow} Y \otimes V(n)[-2] \overset{d_{2}}{\longrightarrow} Y \otimes V(n)
    \overset{d_{1}}{\longrightarrow} Y \longrightarrow 0,
\end{equation}
with differentials
\begin{align*}
   d_{1}(y \otimes x_{i}) &= y x_{i},
   \hskip 2.55cm
   d_{3}(y) = \sum_{i=1}^{n} y x_{i} \otimes x_{i},
   \\
   d_{2}(y \otimes x_{i}) &= \sum_{j=1}^{n} (y x_{j}^{2} \otimes x_{i} + y x_{i} x_{j} \otimes x_{j} - 2 y x_{j} x_{i} \otimes x_{j}).
\end{align*}

Taking into account that $V(n)$ is concentrated in degree $1$,
$Y \otimes V(n)[j] \simeq (Y \otimes V(n))[j]$, for all $j \in \ZZ$.
Comparing \eqref{eq:complejocohomologiayangmills} and \eqref{eq:complejohomologiayangmills},
we see that $(C^{\bullet} (\YM(n),Y),d)$ and $(C_{\bullet} (\YM(n),Y),d')[4]$ coincide, where
$(d')_{\bullet} = (-1)^{\bullet} d_{\bullet}$.
These complexes compute $\Ext_{\text{\begin{tiny}$\YM(n)$\end{tiny}}}^{\bullet}(k,Y)$ and $\Tor^{\text{\begin{tiny}$\YM(n)$\end{tiny}}}_{\bullet}(Y,k)$, respectively.
The natural isomorphisms $\Ext_{\text{\begin{tiny}$\YM(n)$\end{tiny}}}^{\bullet}(k,Y) \simeq H^{\bullet} (\ym(n),Y)$
and $\Tor^{\text{\begin{tiny}$\YM(n)$\end{tiny}}}_{\bullet}(Y,k) \simeq H_{\bullet} (\ym(n),Y)$ (see \cite{Wei1}, Coro. 7.3.6) 
tell us that 
\[     H^{i} (\ym(n) , Y) \simeq H_{3 - i} (\ym(n) , Y)[4],     \]
for $0 \leq i \leq 3$.

Just to state notation, if $Z$ is a graded $k$-vector space, 
we denote by $Z(t) = \sum_{n \in \ZZ} \dim(Z_{n}) t^{n} \in \ZZ[[t^{-1},t]]$ its Hilbert series. 

Of course, since the global dimension of the Yang-Mills algebra is $3$, $H^{i} (\ym(n) , Y)$ and $H_{i} (\ym(n) , Y)$ vanish for $i > 3$.
Both Hilbert series coincide up to a shift
\[     H^{i} (\ym(n) , Y)(t) = t^{-4} H_{3 - i} (\ym(n) , Y)(t).     \]
This relation between homology and cohomology is usually referred to as \emph{Poincar\'e duality}, because of its resemblance 
to the case of closed oriented manifolds.

We can state the previous results as follows.
\begin{proposition}(see \cite{CD1}, Eq. (1.15))
\label{prop:(co)homologiaYangMills}
Let $Y$ be a left $\YM(n)$-module, which will be considered also as a right $\YM(n)$-module in the usual manner.
The cohomology of $\ym(n)$ with coefficients in $Y$ equals  the cohomology of
the complex \eqref{eq:complejocohomologiayangmills}, and the homology of the $\ym(n)$ with coefficients on $Y$ equals the
homology of the complex \eqref{eq:complejohomologiayangmills}.
We have that $(C_{\bullet} (\YM(n),Y),d')[4] = (C^{\bullet} (\YM(n),Y),d)$, where $(d')_{\bullet} = (-1)^{\bullet} d_{\bullet}$,
so in particular $H^{i} (\ym(n) , Y) \simeq H_{3 - i} (\ym(n) , Y)[4]$, for $0 \leq i \leq 3$.
\end{proposition}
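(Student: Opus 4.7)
The proof is essentially a matter of assembling results already set up in the preceding discussion. The plan is first to verify that the augmented complex \eqref{eq:complejoKoszulcompleto} is a resolution of the trivial left $\YM(n)$-module $k$ by projective (in fact, free) equivariant left $\YM(n)$-modules. Acyclicity in positive degrees is exactly the Koszul property established in Proposition \ref{prop:koszulyangmills}; augmenting by $\epsilon_{\ym(n)}$ handles degree zero, and the term-by-term inspection of the differentials $b'_{i}$ shows that they are $\so(n)$-equivariant morphisms of degree zero, so the resolution lives in the equivariant category.

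Next, I would apply the functors $\Hom_{\YM(n)}(\place,Y)$ and $Y\otimes_{\YM(n)}\place$ to \eqref{eq:complejoKoszulcompleto} (the latter using the right $\YM(n)$-module structure on $Y$ induced by the antipode of the Hopf algebra $\YM(n)=\U(\ym(n))$, which is what ``in the usual manner'' refers to). Using the equivariant identifications $\Hom_{\YM(n)}(\YM(n)[j],Y)\simeq Y[-j]$ and $\Hom_{\YM(n)}(\YM(n)\otimes V(n)[j],Y)\simeq Y\otimes V(n)[-j]$ via $f\mapsto\sum_i f(1\otimes x_i)\otimes x_i$ (and their analogues for tensor products), a direct computation of the images of the $b'_{i}$'s produces exactly the differentials $d^{1},d^{2},d^{3}$ and $d_{1},d_{2},d_{3}$ listed in \eqref{eq:complejocohomologiayangmills} and \eqref{eq:complejohomologiayangmills}. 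The classical identifications $\Ext^{\bullet}_{\YM(n)}(k,Y)\simeq H^{\bullet}(\ym(n),Y)$ and $\Tor_{\bullet}^{\YM(n)}(Y,k)\simeq H_{\bullet}(\ym(n),Y)$ for enveloping algebras (Weibel, Cor. 7.3.6) then yield the Lie-algebraic interpretation.

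For the equality $(C_{\bullet}(\YM(n),Y),d')[4]=(C^{\bullet}(\YM(n),Y),d)$ I would match the complexes slot by slot: the underlying modules at positions $i$ and $3-i$ obviously agree after the shift by $[4]$, and using the antipode relation $y\cdot x=-x\cdot y$ (for $x\in V(n)$ acting on $Y$) the formulas for $d_{i}$ become precisely $(-1)^{i}$ times the formulas for $d^{3-i}$. The Poincar\'e duality $H^{i}(\ym(n),Y)\simeq H_{3-i}(\ym(n),Y)[4]$ is then immediate from this identification of complexes, and the vanishing in degrees $>3$ follows from $\gldim \YM(n)=3$.

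The main obstacle is the careful bookkeeping of signs and degree shifts in this last step, particularly in making explicit how the conversion between left and right module structures via the antipode introduces the signs $(-1)^{\bullet}$ and converts the homology differentials to the cohomology ones; everything else reduces to formal manipulations with the Koszul resolution.
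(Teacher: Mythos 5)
Your proposal is correct and follows essentially the same route as the paper, whose ``proof'' of this proposition is precisely the preceding discussion: apply $\Hom_{\YM(n)}(\place,Y)$ and $Y\otimes_{\YM(n)}\place$ to the Koszul resolution \eqref{eq:complejoKoszulcompleto}, use the stated identifications to read off the differentials, and match the two complexes term by term after the shift by $[4]$. Your sign check via the antipode (e.g.\ $y\cdot(x_ix_j)=x_jx_iy$, giving $(-1)^{\bullet}d_{\bullet}=d^{4-\bullet}$ under the identification $C_{i}\leftrightarrow C^{3-i}$) is exactly the computation the paper leaves implicit.
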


We want to stress that the Chevalley-Eilenberg resolution $(C_{\bullet}(\ym(n)),\delta_{\bullet})$ of the equivariant left $\YM(n)$-module 
$k$ is also a projective resolution of graded left-$\YM(n)$-modules. 
The modules of the resolution are also provided with a compatible action of $\so(n)$ and the morphisms are equivariant. 
If $Y$ is an equivariant left (resp. right) $\YM(n)$-module,
we see that the morphisms of the Chevalley-Eilenberg complex $(C^{\bullet}(\ym(n),Y),d^{\bullet}_{\mathrm{CE}})$
(resp. $(C_{\bullet}(\ym(n),Y),d_{\bullet}^{\mathrm{CE}})$) 
for the cohomology (resp. homology) of $\ym(n)$ with coefficients in $Y$ are $\so(n)$-linear homogeneous of degree $0$.

We can easily check that the following diagram gives a morphism from the Koszul resolution to the Chevalley-Eilenberg resolution of $k$:
\begin{scriptsize}
\[
\xymatrix@C-10pt
{
\dots
\ar[r]
&
\YM \otimes \wedge^{4} \ym
\ar[r]^{\delta_{4}}
&
\YM \otimes \wedge^{3} \ym
\ar[r]^{\delta_{3}}
&
\YM \otimes \wedge^{2} \ym
\ar[r]^{\delta_{2}}
&
\YM \otimes \ym
\ar[r]^(.6){\delta_{1}}
&
\YM
\ar[r]^(0.6){\delta_{0}}
&
k
\ar[r]
\ar@{=}[d]
&
0
\\
\dots
\ar[r]
&
0
\ar[r]
\ar[u]
&
\YM[-4]
\ar[r]^{b'_{3}}
\ar[u]^{\theta}
&
\YM \otimes V[-2]
\ar[r]^{b'_{2}}
\ar[u]^{\eta}
&
\YM \otimes V
\ar[r]^(.6){b'_{1}}
\ar[u]^{\id_{\YM} \otimes \mathrm{inc}}
&
\YM
\ar[r]^(0.6){b'_{0}}
\ar@{=}[u]
&
k
\ar[r]
\ar@{=}[u]
&
0
}
\]
\end{scriptsize}
with vertical maps given by 
\begin{align}
   \label{eq:eta}
   \eta(z \otimes x_{i}) &= \sum_{j = 1}^{n} (z x_{j} \otimes x_{j} \wedge x_{i} + z \otimes x_{j} \wedge [x_{j} , x_{i}]),
   \\
   \label{eq:theta}
   \theta(z) &= \frac{1}{2} \sum_{i,j = 1}^{n} z \otimes x_{i} \wedge x_{j} \wedge [x_{j} , x_{i}].
\end{align}
It is clear that these morphisms are equivariant.

Given a left $\ym(n)$-module $Y$, it can be considered as a $\YM(n)$-bimodule, which we denote by $Y_{\epsilon_{\ym(n)}}$,
where the action on the right is given by the augmentation $\epsilon_{\ym(n)}$ of $\YM(n)$.
It is known that there are natural isomorphisms of the form
$H^{\bullet}(\ym(n),Y) \simeq H^{\bullet}(\YM(n),Y_{\epsilon_{\ym(n)}})$ and
$H_{\bullet}(\ym(n),Y) \simeq H_{\bullet}(\YM(n),Y_{\epsilon_{\ym(n)}})$ (see \cite{CE1}, Thm. X.2.1).

Conversely, if $Y$ is a $\YM(n)$-bimodule, it can be considered as a (left or right) $\ym(n)$-module via the adjoint action,
denoted by $Y^{\mathrm{ad}}$.
There are natural isomorphisms $H^{\bullet}(\YM(n),Y) \simeq H^{\bullet}(\ym(n),Y^{\mathrm{ad}})$
and $H_{\bullet}(\YM(n),Y) \simeq H_{\bullet}(\ym(n),Y^{\mathrm{ad}})$ (see \cite{CE1}, Thm. XIII.7.1).

By the Poincar\'e-Birkhoff-Witt Theorem, there is a left $\YM(n)$-linear isomorphism
given by symmetrization $S(\ym(n)) \simeq \YM(n)^{\mathrm{ad}}$ (see \cite{Dix1}, 2.4.5, Prop. 2.4.10) and one can check that it is equivariant.
This implies that $HH^{\bullet}(\YM(n)) \simeq H^{\bullet} (\ym(n),S(\ym(n)))$ and $HH_{\bullet}(\YM(n)) \simeq H_{\bullet} (\ym(n),S(\ym(n)))$.
We point out that these isomorphisms are $\so(n)$-linear.

We recall that if $A$ is an $\NN_{0}$-graded associative algebra, $X$ is a $\ZZ$-graded right $A$-module 
and $Y$ is a $\ZZ$-graded left (resp. right) $A$-module, 
then the homology groups $\Tor^{\text{\begin{tiny}$A$\end{tiny}}}_{p} (X,Y)$ (resp. $\Ext_{\text{\begin{tiny}$A$\end{tiny}}}^{p} (X,Y)$, for $X$ with a projective resolution of finitely generated modules) 
are in fact graded vector spaces with respect to the internal grading 
and we denote by $\Tor^{\text{\begin{tiny}$A$\end{tiny}}}_{p,q} (X,Y)$ (resp. $\Ext_{\text{\begin{tiny}$A$\end{tiny}}}^{p,q} (X,Y)$) 
its homogeneous component of internal degree $q \in \ZZ$. 
We apply the same notation for other homology groups, e.g. Lie algebra (co)homology and Hochschild (co)homology groups. 

\begin{remark}
We remark that if $A$ is a $\NN_{0}$-graded associative algebra, $X$ and $Y$ are $\ZZ$-graded right $A$-modules such that $X$ is finitely generated, 
then $\mathrm{Hom}_{A}(X,Y) = \mathcal{H}om_{A}(X,Y)$, where, as always in this article, the first member denotes the space of morphisms of $A$-modules 
and the second one is the graded vector space expanded by homogeneous morphisms, \textit{i.e.} 
$\mathcal{H}om_{A}(X,Y) = \oplus_{n \in \ZZ}  \mathrm{hom}_{A} (X,Y[n])$, for $\mathrm{hom}_{A} (X,Y)$ the space 
of $A$-linear homogeneous morphisms of degree zero (see \cite{NO1}, Cor. 2.4.4). 
This explains the internal grading of the $\Ext$ groups considered before. 

On the other hand, if the algebra $A$ is $N$-homogeneous Koszul, the minimality of the bimodule Koszul complex $K_{\bullet}(A)$ of $A$ 
tells us that that there exists an isomorphism of complexes of graded $A$-bimodules $C_{\bullet}(A) \simeq K_{\bullet}(A) \oplus H_{\bullet}(A)$, 
where $H_{\bullet}(A)$ is an homotopically trivial complex. 
Therefore, 
\[     \mathrm{Hom}_{A^{e}}(C_{\bullet}(A),M) \simeq \mathrm{Hom}_{A^{e}}(K_{\bullet}(A),M) \oplus \mathrm{Hom}_{A^{e}}(H_{\bullet}(A),M)     \]
(resp.
\[     \mathcal{H}om_{A^{e}}(C_{\bullet}(A),M) \simeq \mathcal{H}om_{A^{e}}(K_{\bullet}(A),M) \oplus \mathcal{H}om_{A^{e}}(H_{\bullet}(A),M)).     \]
Since $H_{\bullet}(A)$ is an acyclic complex of projective graded $A$-bimodules, both $\mathcal{H}om_{A^{e}}(H_{\bullet}(A),M)$ and 
$\mathrm{Hom}_{A^{e}}(H_{\bullet}(A),M)$ have zero cohomology. 
Moreover, taking into account that the bimodule Koszul complex $K_{\bullet}(A)$ is made of finitely generated $A$-bimodules 
(for the $k$-vector spaces $(A_{\bullet}^{!})^{*}$ are finite dimensional $k$-vector spaces), 
we obtain that $\mathrm{Hom}_{A^{e}}(K_{\bullet}(A),M) = \mathcal{H}om_{A^{e}}(K_{\bullet}(A),M)$. 
In consequence, we see that the plain Hochschild cohomology coincides with the graded Hochschild cohomology for a Koszul algebra 
(in fact it is sufficient to have a graded projective resolution of $A$ given by finitely generated $A$-bimodules). 
Since we shall be dealing with this kind of algebras, we are not going to make any distinction between both cohomology theories. 
\qed
\end{remark}

By Proposition \ref{prop:(co)homologiaYangMills}, $HH^{\bullet} (\YM(n)) = HH_{3 - \bullet} (\YM(n))[4]$, for $0 \leq \bullet \leq 3$, 
and $HH^{\bullet} (\YM(n)) = HH_{\bullet} (\YM(n)) = 0$, for $\bullet > 3$, so one needs to compute either cohomology or homology groups.

Let us now focus on the case that $Y$ is a right $S(V(n))$-module, and by Remark \ref{rem:svn} also a right $\YM(n)$-module.
The Chevalley-Eilenberg complex $(C_{\bullet}(V(n),Y),d_{\bullet}^{\mathrm{CE}})$ is provided with a homogeneous
$k$-linear morphism of degree $2$ of the form
\begin{align*}
   h_{p} : C_{p}(V(n),Y) &\rightarrow C_{p +1}(V(n),Y)
   \\
   y \otimes x_{i_{1}} \wedge \dots \wedge x_{i_{p}} &\mapsto \sum_{j=1}^{n} y x_{j} \otimes x_{j} \wedge x_{i_{1}} \wedge \dots \wedge x_{i_{p}},
\end{align*}
such that $d_{p+1}^{\mathrm{CE}} \circ h_{p} + h_{p-1} \circ d_{p}^{\mathrm{CE}} = q . \id_{C_{p}(V(n),Y)}$, for all $p$,
where $q = \sum_{i=1}^{n} x_{i}^{2} \in S(V(n))$.
Hence, $h$ is a homotopy between the zero morphism and the one given by multiplication by $q$ and in particular
$d_{p}^{\mathrm{CE}} \circ h_{p-1} \circ d_{p}^{\mathrm{CE}} = q . d_{p}$.

Moreover, if we define for each $p$ such that $0 \leq p \leq n$ the homogeneous $k$-linear isomorphism of degree $n-2p$ given by
\begin{align*}
   i_{p} : C_{p}(V(n),Y) &\rightarrow C_{n-p}(V(n),Y)
   \\
   y \otimes x_{i_{1}} \wedge \dots \wedge x_{i_{p}} &\mapsto (-1)^{i_{1} + \dots + i_{p} + p} y \otimes x_{j_1} \wedge \dots \wedge x_{j_{n-p}},
\end{align*}
where $i_{1} < \dots < i_{p}$, $j_{1} < \dots < j_{n-p}$ and $\{i_1, \dots ,i_p \} \cup \{j_1, \dots ,j_{n-p} \}= \{1, \dots, n \}$, 
we obtain that $i_{p-1} \circ d_{p}^{\mathrm{CE}} = h_{n-p} \circ i_{p}$.
So $h_{\bullet}$ essentially coincides with the differential $d^{\mathrm{CE}}_{n-\bullet}$,
when viewing the $i_{\bullet}$ as an identification.
Notice that $i_{p}^{-1} = (-1)^{n(n-1)/2} i_{n-p}$, for all $0 \leq p \leq n$.

We may thus provide an alternative description of the complex $C_{\bullet}(\YM(n),Y)$ as follows.
First, it is direct to check that $C_{0}(\YM(n),Y) = C_{0}(V(n),Y)$ and $C_{1}(\YM(n),Y) = C_{1}(V(n),Y)$.
On the other hand, the maps $i_{0}$ and $i_{1}$ give the isomorphisms $C_{3}(\YM(n),Y) \overset{\simeq}{\rightarrow} C_{n}(V(n),Y)$ and
$C_{2}(\YM(n),Y) \overset{\simeq}{\rightarrow} C_{n-1}(V(n),Y)$, respectively.
Furthermore, it is easily verified that $d_{1} = d_{1}^{\mathrm{CE}}$, $d_{3} = i_{1}^{-1} \circ d_{n}^{\mathrm{CE}} \circ i_{0}$
and $d_{2} = d_{2}^{\mathrm{CE}} \circ h_{1} = i_{1}^{-1} \circ h_{n-2} \circ d_{n-1}^{\mathrm{CE}} \circ i_{1}$.
As a consequence, $H_{3}(\ym(n),Y) \simeq H_{n}(V(n),Y)$.

The following proposition is a generalization of Prop. 14 and 15 in \cite{Mov1}.
\begin{proposition}
\label{prop:annhyangmills}
Let $Y$ be a right $S(V(n))$-module (and by Remark \ref{rem:svn} also a right $\YM(n)$-module).
There is an isomorphism $H_{3}(\ym(n),Y) \simeq H_{n}(V(n),Y)$.
Moreover, if the element $q = \sum_{i=1}^{n} x_{i}^{2} \in S(V(n))$ is a nonzerodivisor on $Y$
there is also an isomorphism $H_{n-1}(V(n),Y) \simeq H_{2}(\ym(n),Y)$.
\end{proposition}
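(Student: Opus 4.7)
The plan is to leverage the explicit reformulation of $C_{\bullet}(\ym(n),Y)$ in terms of the Chevalley--Eilenberg complex $C_{\bullet}(V(n),Y)$ that was set up in the paragraph preceding the proposition. The crucial data are the isomorphisms $i_{0}\colon C_{3}(\ym(n),Y)\to C_{n}(V(n),Y)$ and $i_{1}\colon C_{2}(\ym(n),Y)\to C_{n-1}(V(n),Y)$, together with the identities
\[     d_{3}=i_{1}^{-1}\circ d_{n}^{\mathrm{CE}}\circ i_{0},\qquad     d_{2}=i_{1}^{-1}\circ h_{n-2}\circ d_{n-1}^{\mathrm{CE}}\circ i_{1}.     \]

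For the first isomorphism, I would simply observe that both $H_{3}(\ym(n),Y)$ and $H_{n}(V(n),Y)$ are top-degree homology groups and so reduce to kernels (since $C_{4}(\ym(n),Y)=0$ and $C_{n+1}(V(n),Y)=0$). Then $i_{0}$ identifies $\Ker(d_{3})$ with $\Ker(d_{n}^{\mathrm{CE}})$ by the displayed identity, which gives the claim at once.

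For the second isomorphism, the same displayed identities yield
\[     H_{2}(\ym(n),Y)\simeq \Ker(h_{n-2}\circ d_{n-1}^{\mathrm{CE}})\big/\mathrm{im}(d_{n}^{\mathrm{CE}}),     \]
so the task reduces to proving the equality $\Ker(h_{n-2}\circ d_{n-1}^{\mathrm{CE}})=\Ker(d_{n-1}^{\mathrm{CE}})$. The inclusion $\supseteq$ is immediate. For the reverse inclusion---where the nonzerodivisor hypothesis enters---I would take $\eta$ with $h_{n-2}(d_{n-1}^{\mathrm{CE}}(\eta))=0$ and set $\omega:=d_{n-1}^{\mathrm{CE}}(\eta)$. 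Since $(d^{\mathrm{CE}})^{2}=0$, one has $d_{n-2}^{\mathrm{CE}}(\omega)=0$, and the homotopy relation at $p=n-2$ applied to $\omega$ collapses to
\[     q\cdot\omega = d_{n-1}^{\mathrm{CE}}(h_{n-2}(\omega))+h_{n-3}(d_{n-2}^{\mathrm{CE}}(\omega)) = 0.     \]
Decomposing $\omega=\sum_{I} y_{I}\otimes x_{I}$ along a basis of $\Lambda^{n-2}V(n)$, this forces $y_{I}\cdot q = 0$ for every $I$, and the nonzerodivisor hypothesis on $q$ yields $\omega = 0$, i.e.\ $\eta\in\Ker(d_{n-1}^{\mathrm{CE}})$.

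The main obstacle is organizational rather than conceptual: one must be careful to apply the homotopy identity to $\omega$ and not to $\eta$, since it is precisely the fact that $\omega$ is already a boundary (and hence annihilated by $d_{n-2}^{\mathrm{CE}}$) which makes the mixed term $h_{n-3}\circ d_{n-2}^{\mathrm{CE}}(\omega)$ vanish and leaves the clean equation $q\cdot\omega=0$. Once this is settled, passing to the quotient by $\mathrm{im}(d_{n}^{\mathrm{CE}})$ is unaffected and produces exactly the Chevalley--Eilenberg homology group $H_{n-1}(V(n),Y)$.
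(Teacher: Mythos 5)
Your proof is correct and follows essentially the same route as the paper: both reduce the second statement to the equality $\Ker(h_{n-2}\circ d_{n-1}^{\mathrm{CE}})=\Ker(d_{n-1}^{\mathrm{CE}})$ via the identifications $i_{0}, i_{1}$, and both deduce $q\cdot d_{n-1}^{\mathrm{CE}}(\eta)=0$ from the homotopy relation before invoking the nonzerodivisor hypothesis. The only cosmetic difference is that the paper applies $d_{n-1}^{\mathrm{CE}}$ to the equation $h_{n-2}(\omega)=0$ and uses the precomputed identity $d_{p}^{\mathrm{CE}}\circ h_{p-1}\circ d_{p}^{\mathrm{CE}}=q\cdot d_{p}^{\mathrm{CE}}$, whereas you evaluate the homotopy relation directly on the boundary $\omega$, which amounts to the same computation.
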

\begin{proof}
The first part of the proposition has been already proved.

Suppose that $q$ is a nonzerodivisor on $Y$.
Since $d_{3} = i_{1}^{-1} \circ d_{n}^{\mathrm{CE}} \circ i_{0}$, it is direct to check that
$\mathrm{Im}(d_{3}) = i_{1}^{-1}(\mathrm{Im}(d_{n}^{\mathrm{CE}}))$.

We shall prove that $\Ker(d_{2}) = i_{1}^{-1}(\Ker(d_{n-1}^{\mathrm{CE}}))$.
The equality $d_{2} = i_{1}^{-1} \circ h_{n-2} \circ d_{n-1}^{\mathrm{CE}} \circ i_{1}$ yields that
$\Ker(d_{2}) \supseteq i_{1}^{-1}(\Ker(d_{n-1}^{\mathrm{CE}}))$.
Let us prove the other inclusion.
The previous identity implies that
$z \in \Ker(d_{2})$ if and only if $i_{1}(z) \in \Ker(h_{n-2} \circ d_{n-1}^{\mathrm{CE}})$.
Hence, for an element $z \in \Ker(d_{2})$ we have that $h_{n-2} \circ d_{n-1}^{\mathrm{CE}} \circ i_{1} (z) = 0$, so
\[     0 = d_{n-1}^{\mathrm{CE}} \circ h_{n-2} \circ d_{n-1}^{\mathrm{CE}} \circ i_{1} (z) = q . d_{n-1}^{\mathrm{CE}} \circ i_{1} (z),     \]
and this implies that $d_{n-1}^{\mathrm{CE}} \circ i_{1} (z) = 0$, for $q$ is a nonzerodivisor on $Y$.
This proves that $z \in i_{1}^{-1}(\Ker(d_{n-1}^{\mathrm{CE}}))$ and thus the other inclusion.

Since $\Ker(d_{2}) = i_{1}^{-1}(\Ker(d_{n-1}^{\mathrm{CE}}))$,
$\mathrm{Im}(d_{3}) = i_{1}^{-1}(\mathrm{Im}(d_{n}^{\mathrm{CE}}))$ and $i_{1}$ is an isomorphism,
the proposition follows.
\end{proof}

\section{\texorpdfstring {The module $W(n)$}
        {The module W(n)}}
\label{sec:Wn}

In this section we shall study the graded vector space $W(n)$ of generators of the free Lie algebra $\tym(n)$.

\subsection{Generalities}

In \cite{HS1}, we proved that the Lie ideal $\tym(n)$ given in \eqref{eq:tym}, when considered with the special grading inherited by that of $\ym(n)$, is concentrated in even degrees strictly greater than $2$
and that it is itself a graded free Lie algebra:
it is isomorphic as graded Lie algebra to the graded free Lie
algebra on a graded vector space $W(n)$ (see \cite{HS1}, Theorem 3.12). 
The previous grading for $W(n)$ is called \emph{special}, but we will not make use of it in this article. 

Of course, when considering $\tym(n)$ with the usual grading it is also a free Lie algebra and its space of generators $W(n)$ 
is provided with the induced grading, called \emph{usual}.

Since the Lie algebra $\tym(n)$ is free on $W(n)$, the morphism $W(n) \rightarrow \tym(n)/[\tym(n),\tym(n)]$ given by composing the inclusion and the canonical projection is an isomorphism. 
Furthermore, since $\tym(n)$ is a Lie ideal of $\ym(n)$, $\tym(n)/[\tym(n),\tym(n)]$ has an action of $\ym(n)$
induced by the adjoint action of $\ym(n)$, such that $\tym(n)$ acts trivially.
Hence the quotient $\tym(n)/[\tym(n),\tym(n)]$ becomes a $\ym(n)/\tym(n)$-module, \textit{i.e.} a $V(n)$-module,
if we identify $V(n)$ with the abelian Lie algebra of dimension $n$.
The $S(V(n))$-module $\tym(n)/[\tym(n),\tym(n)]$ is graded.

On the other hand, since the action of $\so(n)$ on $\ym(n)$ is homogeneous of degree $0$,
it preserves the Lie ideals $\tym(n)$ and $[\tym(n),\tym(n)]$,
so it induces a compatible action on $\tym(n)/[\tym(n),\tym(n)]$.
Using the isomorphism $W(n) \overset{\sim}{\rightarrow} \tym(n)/[\tym(n),\tym(n)]$, $W(n)$ becomes a graded $S(V(n))$-module with a
compatible action of $\so(n)$ and hence an equivariant left $S(V(n))$-module and hence a $\YM(n)$-module, such that $\tym(n)$ acts trivially.


From the previous discussion we obtain an action of $V(n)$ on $W(n)$, which we shall denote by $x_{i}.w$, such that
\begin{equation}
\label{eq:accion}
     x_{i}\cdot w = - [x_{i},w] + \sum_{l \in L} [v_{i,l},v_{i,l}'],
\end{equation}
for $L$ a set of indices and some $v_{i,l}, v_{i,l}' \in \tym(n)$.

The Hilbert series of the Yang-Mills algebra $\YM(n)$ was computed in \cite{CD1}, Corollary 3, to be
\[     \YM(n)(t) = \frac{1}{(1-t^{2})(1-n t + t^{2})}.     \]
In \cite{HS1}, Proposition 3.14, we found the Hilbert series of $W(n)$ for the usual grading 
\[     W(n)(t) = \frac{(1-t)^{n}-1+n t -n t^{3} +t^{4}}{(1-t)^{n}}.     \]

If $n = 2$, it is easily checked from the previous formula that $W(2)$ is one dimensional and concentrated in degree $2$. 
Moreover, it may be considered as the graded $k$-vector space spanned by $z = [x_{1},x_{2}]$, 
and it is provided with the trivial action of $S(V(2))$ and $\so(2)$. 
Also, we see that $\tym(2) \simeq k[z]$. 

The previous considerations may be summarized as follows.
\begin{proposition}
\label{prop:W(2)}
If $k$ denotes the trivial equivariant $S(V(2))$-module, then $W(2) \simeq k[-2]$ as equivariant $S(V(2))$-modules (for the usual grading), 
and it is spanned by $[x_{1},x_{2}]$.
\end{proposition}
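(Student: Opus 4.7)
The plan is to read off the dimension from the Hilbert series of $W(n)$ and then verify the claimed structure piece by piece.

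\textbf{First step.} Specialize the formula $W(n)(t) = \bigl((1-t)^{n}-1+nt-nt^{3}+t^{4}\bigr)/(1-t)^{n}$ at $n=2$. The numerator becomes
\[
(1-t)^{2}-1+2t-2t^{3}+t^{4} = t^{2}-2t^{3}+t^{4} = t^{2}(1-t)^{2},
\]
so $W(2)(t) = t^{2}$. Hence $W(2)$ is one-dimensional concentrated in usual degree $2$, which already gives the isomorphism $W(2)\simeq k[-2]$ of graded vector spaces.

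\textbf{Second step.} Identify the generator. Under the canonical isomorphism $W(n)\simeq \tym(n)/[\tym(n),\tym(n)]$ recalled in the text, the degree-$2$ component of the right-hand side coincides with $\tym(2)_{2}=\ym(2)_{2}$, because the commutator ideal $[\tym(2),\tym(2)]$ is concentrated in usual degree $\ge 4$. Since the Yang-Mills relations for $n=2$ lie in degree $3$, the image of $\f(2)_{2}$ in $\ym(2)_{2}$ is unchanged, so $\ym(2)_{2}=k\cdot[x_{1},x_{2}]$. Thus the class of $[x_{1},x_{2}]$ spans $W(2)$.

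\textbf{Third step.} Check that the $S(V(2))$-module structure is trivial. The action of $V(2)$ on $W(2)$ is a homogeneous map of degree $+1$ (since $V(2)$ sits in usual degree $1$), whereas $W(2)$ is concentrated in degree $2$. Therefore the map $V(2)\otimes W(2)\to W(2)$ lands in the zero component of $W(2)$, forcing the action to vanish. Equivalently, checking directly on the generator, $x_{i}\cdot[x_{1},x_{2}]$ would have to equal the class of $-[x_{i},[x_{1},x_{2}]]$ modulo $[\tym(2),\tym(2)]$ by formula \eqref{eq:accion}, which lies in degree $3$ of $W(2)=0$.

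\textbf{Fourth step.} Check that $\so(2)$ acts trivially. The $\SO(n)$-action on $W(n)$ is induced from its diagonal action on $V(n)^{\otimes\bullet}$, so for $n=2$ the action on the one-dimensional space $k\cdot[x_{1},x_{2}]\subseteq V(2)^{\otimes 2}$ factors through the determinant character of $\SO(2)$, which is trivial. Differentiating gives the trivial action of $\so(2)$. Combining the four steps yields the claimed isomorphism of equivariant $S(V(2))$-modules. None of these steps poses a genuine obstacle; the main point is simply to extract the consequence of the already established Hilbert-series computation in degree $2$.
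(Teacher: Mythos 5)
Your proof is correct and follows essentially the same route as the paper, which simply observes that the Hilbert series formula at $n=2$ gives $W(2)(t)=t^{2}$, identifies the generator $[x_{1},x_{2}]$, and notes the trivial $S(V(2))$- and $\so(2)$-actions. Your write-up just makes explicit the degree arguments the paper leaves as "easily checked."
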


On the contrary, if $n \geq 3$, the Hilbert series of $W(n)$ tells us that it is infinite dimensional, 
which implies that $\tym(n)$ is a free Lie algebra with an infinite number of generators. 
We shall present a set of generators of $W(n)$ as $S(V(n))$-module in Corollary \ref{coro:W(n)}. 

Taking into account that $\tym(n) = \f(W(n))$, $\tym(n)$ is the Lie subalgebra generated by $W(n)$ inside $\mathrm{Lie}(T(W(n)))$, 
so we may consider another grading on $\tym(n)$, which we call the \emph{internal weight}, given by forgetting the grading of $W(n)$ but regarding the grading given by the tensor algebra. 
In other words, using that $T(W(n)) = \bigoplus_{p \in \NN_{0}} W(n)^{\otimes p}$, we may write
\[     \tym(n) = \bigoplus_{p \in \NN} \tym(n)^{p},     \]
for $\tym(n)^{p} = \tym(n) \cap W(n)^{\otimes p}$.
When $z \in \tym(n)^{p}$ we shall say that $z$ has internal weight $p$ (not to be confused with the weight of the $\so(n)$-modules).  
If we denote by $\rho^{p}_{i}(w)$ the projection of $[x_{i},w] \in \tym(n)$ in the $p$-th component $\tym(n)^{p}$,
we can write
\begin{equation}
\label{eq:acciontotal}
     [x_{i},w] = x_{i}.w + \sum_{p \geq 2} \rho_{i}^{p}(w).
\end{equation}
Notice that the sum is finite.

\begin{proposition}
\label{prop:W(n)}
The graded vector space $W(n)$ is a graded $S(V(n))$-module, when both are considered with the usual grading.
Moreover, since the homogeneous element $q = \sum_{i=1}^{n} x_{i}^{2} \in S(V(n))$ acts by $0$, $W(n)$ is a graded $S(V(n))/\cl{q}$-module.
\end{proposition}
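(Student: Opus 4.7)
The first assertion, that $W(n)$ is a graded $S(V(n))$-module with the usual grading, is almost immediate from the set-up. The action of $V(n) \subset \ym(n)/\tym(n)$ on $W(n) = \tym(n)/[\tym(n),\tym(n)]$ is induced by the adjoint action, and since $V(n) = \ym(n)_{1}$ the bracket $[x_{i},\place]$ raises the usual degree by $1$. Passing to the quotient by the graded Lie ideal $[\tym(n),\tym(n)]$ preserves this homogeneity, so the $V(n)$-action on $W(n)$ is homogeneous of degree $1$ and extends to a graded action of $S(V(n))$.

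For the nontrivial part, I would show that $q = \sum_{i=1}^{n} x_{i}^{2}$ acts as zero on $W(n)$. Fix $w \in W(n)$ and a lift $\tilde w \in \tym(n)$. By \eqref{eq:acciontotal} we have $x_{i}\cdot w \equiv [x_{i},\tilde w] \pmod{[\tym(n),\tym(n)]}$, and iterating once more,
\[     q \cdot w \;\equiv\; \sum_{i=1}^{n} [x_{i},[x_{i},\tilde w]] \pmod{[\tym(n),\tym(n)]}.     \]
So it suffices to prove the following claim: the operator $D : \ym(n) \to \ym(n)$ defined by $D(u) = \sum_{i=1}^{n} [x_{i},[x_{i},u]]$ satisfies $D(\ym(n)) \subseteq [\tym(n),\tym(n)]$. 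The Yang-Mills relations say exactly that $D(x_{j}) = 0$ for all $j$, which gives the base case.

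The plan is now an induction on bracket length in $\ym(n)$. First I would check that $[\tym(n),\tym(n)]$ is not only a Lie ideal of $\tym(n)$ but also of $\ym(n)$: by the Jacobi identity, $[\ym(n),[\tym(n),\tym(n)]] \subseteq [[\ym(n),\tym(n)],\tym(n)] + [\tym(n),[\ym(n),\tym(n)]] \subseteq [\tym(n),\tym(n)]$, since $\tym(n)$ is a Lie ideal of $\ym(n)$. Then using that each $\mathrm{ad}(x_{i})$ is a derivation, a direct computation yields
\[     D([w_{1},w_{2}]) = [D(w_{1}),w_{2}] + [w_{1},D(w_{2})] + 2\sum_{i=1}^{n}[[x_{i},w_{1}],[x_{i},w_{2}]].     \]
For any $w \in \ym(n)$, $[x_{i},w]$ lies in $\tym(n)$ (its usual degree is at least $2$), so the third term is in $[\tym(n),\tym(n)]$; the first two are in $[\tym(n),\tym(n)]$ by the inductive hypothesis combined with the fact that $[\tym(n),\tym(n)]$ is a Lie ideal of $\ym(n)$.

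The only mildly delicate point is the verification of the formula for $D$ on a bracket and, more importantly, noticing that the required ideal property of $[\tym(n),\tym(n)]$ is inside all of $\ym(n)$, not merely inside $\tym(n)$; everything else is a routine induction. Since $\ym(n)$ is generated as a Lie algebra by the $x_{j}$'s, the induction concludes $D(\ym(n)) \subseteq [\tym(n),\tym(n)]$, hence $q \cdot w = 0$ in $W(n)$ for all $w$, and therefore $W(n)$ is a graded $S(V(n))/\langle q \rangle$-module.
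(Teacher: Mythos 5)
Your proof is correct and follows essentially the same route as the paper's: an induction whose inductive step is the three-term Jacobi expansion of $\sum_{i=1}^{n}[x_{i},[x_{i},[a,b]]]$ combined with the Yang-Mills relations. The only difference is organizational --- you induct on bracket length over all of $\ym(n)$ via the Leibniz-type formula for $D$ on a bracket (making explicit that $[\tym(n),\tym(n)]$ is a Lie ideal of $\ym(n)$), whereas the paper inducts on the usual degree of $w \in \tym(n)$ written as $\sum_{j}[x_{j},w_{j}]$; the underlying computation is identical.
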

\begin{proof}
The first part has been already proved.
We proceed with the second one.
From \eqref{eq:accion}, it suffices to prove that
\begin{equation}
\label{eq:qnulo}
     \sum_{i=1}^{n} [x_{i},[x_{i},w]] \in [\tym(n),\tym(n)],
\end{equation}
for any homogeneous element $w \in W(n)$. 
In order to do this we shall do induction on the usual degree $d$ of $w$.
If $d = 2$, we can suppose that $w = [x_{j},x_{l}]$, with $1 \leq j,l \leq n$.
In this case,
\begin{align*}
   \sum_{i=1}^{n} [x_{i},[x_{i},w]] &= \sum_{i=1}^{n} [x_{i},[x_{i},[x_{j},x_{l}]]]
   \\
   &= \sum_{i=1}^{n} [[x_{i},[x_{i},x_{j}]],x_{l}] + 2 \sum_{i=1}^{n} [[x_{i},x_{j}],[x_{i},x_{l}]] + \sum_{i=1}^{n} [x_{j},[x_{i},[x_{i},x_{l}]]]
   \\
   &= 2 \sum_{i=1}^{n} [[x_{i},x_{j}],[x_{i},x_{l}]] \in [\tym(n),\tym(n)],
\end{align*}
where we have used the Jacobi identity and the Yang-Mills relations in the last step. 

Let us suppose that property \eqref{eq:qnulo} holds for any $w$ of degree $d \leq d_{0}$ and let $w$ be of degree $d_{0} + 1$.
We may write $w = \sum_{j=1}^{n} [x_{j},w_{j}]$, with $w_{j}$ of degree less than or equal to $d_{0}$, and by the inductive hypothesis
\[     \sum_{i=1}^{n} [x_{i},[x_{i},w_{j}]] = \sum_{a \in A_{j}} [c_{a}^{j},d_{a}^{j}], \forall \hskip 0.5mm 1 \leq j \leq n,     \]
where $A_{j}$ is a set of indices and $c_{a}^{j}, d_{a}^{j} \in \tym(n)$.
As a consequence,
\begin{align*}
     \sum_{i=1}^{n} [x_{i},[x_{i},w]] &= \sum_{i=1}^{n} \sum_{j=1}^{n} [[x_{i},[x_{i},x_{j}]],w_{j}] 
       + 2 \sum_{i=1}^{n} \sum_{j=1}^{n} [[x_{i},x_{j}],[x_{i},w_{j}]]
       + \sum_{i=1}^{n} \sum_{j=1}^{n} [x_{j},[x_{i},[x_{i},w_{j}]]]
       \\
       &= 2 \sum_{i=1}^{n} \sum_{j=1}^{n} [[x_{i},x_{j}],[x_{i},w_{j}]]
       + \sum_{j=1}^{n} \sum_{a \in A_{j}} [x_{j},[c_{a}^{j},d_{a}^{j}]]
       \\
       &= 2 \sum_{i=1}^{n} \sum_{j=1}^{n} [[x_{i},x_{j}],[x_{i},w_{j}]]
       + \sum_{j=1}^{n} \sum_{a \in A_{j}} ([[x_{j},c_{a}^{j}],d_{a}^{j}] + [c_{a}^{j},[x_{j},d_{a}^{j}]])
\end{align*}
belongs to $[\tym(n),\tym(n)]$.
\end{proof}

As a consequence of Theorem 3.12 and Proposition 3.14 in \cite{HS1}, we can describe the center of the Yang-Mills algebra $\YM(n)$ for $n \geq 3$. 
\begin{proposition}
\label{prop:centroym}
If $n \geq 3$, the center of $\YM(n)$ is $k$.
\end{proposition}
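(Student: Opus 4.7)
The plan is to use the free-algebra structure of $\TYM(n)$ together with a PBW filtration argument. The crucial preliminary is that, by Theorem 3.12 and Proposition 3.14 of \cite{HS1}, one has $\TYM(n) = \U(\tym(n)) = T(W(n))$, and for $n \geq 3$ the component $W(n)_{2} = \Lambda^{2} V(n)$ already has dimension $\binom{n}{2} \geq 3$. Since the center of the tensor algebra on a vector space of dimension at least two is trivial, $Z(\TYM(n)) = k$.

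Take any nonzero homogeneous $z \in Z(\YM(n))$ of usual degree $d \geq 1$; the aim is to reach a contradiction. By PBW one has $\YM(n) \simeq \TYM(n) \otimes S(V(n))$ as left $\TYM(n)$-modules, ordering a basis of $\tym(n)$ before $x_{1},\dots,x_{n}$, so $z$ decomposes uniquely as $z = \sum_{I} \tau_{I}\, x^{I}$ with $\tau_{I} \in \TYM(n)$ and $x^{I}$ ranging over ordered monomials in $V(n)$. Let $m = \max\{|I| : \tau_{I} \neq 0\}$, and write $F^{p}\YM(n)$ for the subspace spanned by those $\tau_{I}\, x^{I}$ with $|I| \leq p$.

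The main technical step is to exploit $[z, t] = 0$ for $t \in \tym(n)$. Because $\tym(n)$ is a Lie ideal in $\ym(n)$, an induction on $|I|$ yields $x^{I} t - t\, x^{I} \in F^{|I|-1}\YM(n)$. Reading the coefficient of $x^{J}$ with $|J|=m$ in the free $\TYM(n)$-basis $\{x^{I}\}$ gives $[\tau_{J}, t] = 0$ for every $t \in \tym(n)$, hence $\tau_{J} \in Z(\TYM(n)) = k$. Homogeneity of $z$ then forces $m = d$, and since $\tym(n)$ lives in degrees $\geq 2$ one also has $\TYM(n)_{1} = 0$, so the coefficients $\tau_{I}$ with $|I| = d-1$ vanish automatically.

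To close the argument, one uses $[z, x_{i}] = 0$ and examines the $V$-degree $d-1$ piece. Writing the top of $z$ as $p = \sum_{|I|=d} \tau_{I}\, x^{I} \in S^{d}(V(n))$ with all remaining terms in $F^{d-2}\YM(n)$, a direct PBW calculation of $[x^{I}, x_{i}]$ produces the Poisson-type leading term $\sum_{j} [x_{j}, x_{i}]\, \partial_{j}(x^{I})$, so that the $V$-degree $d-1$ component of $[z, x_{i}]$ equals $\sum_{j} [x_{j}, x_{i}]\, \partial_{j} p$ inside $\gr^{d-1}\YM(n) \simeq \TYM(n) \otimes S^{d-1}(V(n))$. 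Comparing $\TYM(n)$-coefficients and using that the brackets $\{[x_{j}, x_{i}] : j \neq i\}$ are linearly independent in $\tym(n)_{2} = W(n)_{2} \simeq \Lambda^{2} V(n)$, one deduces $\partial_{j} p = 0$ for every $j$, hence $p = 0$, contradicting $\tau_{J} \neq 0$ for $|J| = m = d$. The main obstacle is the bookkeeping for the $V$-degree filtration, in particular verifying the strict drop of $V$-degree under commutators with elements of $\tym(n)$ and computing the explicit Poisson-like leading term of $[x^{I}, x_{i}]$, but these are elementary manipulations rooted in the ideal property $[\ym(n), \tym(n)] \subseteq \tym(n)$.
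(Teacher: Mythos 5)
Your proof is correct and follows essentially the same strategy as the paper's: both arguments rest on the PBW decomposition of $\YM(n)$ relative to the Lie ideal $\tym(n)$, the triviality of the center of the free algebra $\TYM(n) \simeq T(W(n))$, and a leading-term analysis of the commutators with the generators that forces the top polynomial part in the $x_i$ to vanish. The only difference is cosmetic: the paper first transfers the problem to $S(\ym(n))^{\ym(n)}$ via symmetrization and runs the same maximal-degree cancellation there, whereas you work directly in $\YM(n)$ with the filtration by $V(n)$-degree and make the Poisson-type leading term $\sum_{j} [x_{j},x_{i}]\,\partial_{j}p$ and the linear independence of the $[x_{j},x_{i}]$ in $\tym(n)_{2} \simeq \Lambda^{2}V(n)$ explicit.
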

\begin{proof}
On the one hand, it is clear that $k \subseteq \Z(\YM(n))$.

On the other hand, $HH^{0}(\YM(n)) \simeq H^{0}(\ym(n),\YM(n)^{\ad})$.
As stated before, since symmetrization gives a graded isomorphism of $\ym(n)$-modules from $S(\ym(n))$ to $\YM(n)^{\mathrm{ad}}$, 
$H^{0}(\ym(n),\YM(n)^{\ad}) \simeq H^{0}(\ym(n),S(\ym(n))) = S(\ym(n))^{\ym(n)}$.

Let us consider $z \in S(\ym(n))$ of the form
\begin{equation}
\label{eq:zeta}
  z = \sum_{(i_{1},\dots,i_{n}) \in \NN_{0}^{n}, l \in L} c_{(i_{1},\dots,i_{n}),l} x_{1}^{i_{1}} \dots x_{n}^{i_{n}} t_{l},   
\end{equation}
for $c_{(i_{1},\dots,i_{n}),l} \in k$ and $\{ t_{l} \}_{l \in L}$ a PBW basis of $\TYM(n)$.
Then, $z \in S(\ym(n))^{\ym(n)}$ if and only if
\begin{equation}
\label{eq:centro}
   0 = [w,z] = \sum_{(i_{1},\dots,i_{n}) \in \NN_{0}^{n}, l \in L} c_{(i_{1},\dots,i_{n}),l} \Big(x_{1}^{i_{1}} \dots x_{n}^{i_{n}} [w,t_{l}]
             + \sum_{j=1}^{n} x_{1}^{i_{1}} \dots i_{j} x_{j}^{i_{j}-1} [w,x_{j}] \dots x_{n}^{i_{n}} t_{l}\Big)
\end{equation}
for all $w \in \YM(n)$.
We claim that this implies that $z \in \TYM(n)$.
Indeed, let us suppose that this is not the case.
Then there would exist $(i_{1}^{0},\dots,i_{n}^{0}) \in \NN_{0}^{n}$ different from zero and $l_{0} \in L$ such that
$c_{(i_{1},\dots,i_{n}),l} \neq 0$.
Let 
\[     \J = \{ (i_{1},\dots,i_{n}) \in \NN_{0}^{n} : \text{exists} \hskip 0.5mm l \in L \hskip 0.5mm \text{such that} \hskip 0.5mm
c_{(i_{1},\dots,i_{n}),l} \neq 0 \},     \]
and let $(i'_{1},\dots,i'_{n}) \in \J$ be an element of maximal degree $i'_{1} + \dots + i'_{n}$.
Then $[w,z]$ possesses a term of the form
\[     c_{(i'_{1},\dots,i'_{n}),l'} x_{1}^{i'_{1}} \dots x_{n}^{i'_{n}} [w,t_{l'}],     \]
with $c_{(i'_{1},\dots,i'_{n}),l'} \neq 0$, which cannot be cancelled with any other term in the sum \eqref{eq:centro}
for degree 
reasons.
Therefore, $[w,t_{l'}] = 0$, for all $w \in \tym(n)$, or equivalently, $t_{l'} \in \Z(\TYM(n))$.

Since $n \geq 3$, $\TYM(n)$ is a free algebra with an infinite set of generators, so 
its center is the base field $k$.
In other words, $t_{l'} = 1$.

We thus see that any term of the form $c_{(i'_{1},\dots,i'_{n}),l'} x_{1}^{i'_{1}} \dots x_{n}^{i'_{n}} t_{l'}$ in \eqref{eq:zeta},
with $c_{(i'_{1},\dots,i'_{n}),l'} \neq 0$ and maximal $i'_{1} + \dots + i'_{n} = i_{\mathrm{max}}$ has $t_{l'} = 1$.

Since $[x_{h},z] = 0$ for all $h = 1, \dots, n$, it turns out that
\begin{equation}
\label{eq:centro2}
\begin{split}
   0 = [x_{h},z] &= \sum_{\underset{i_{1} + \dots + i_{n} < i_{\mathrm{max}}}{(i_{1},\dots,i_{n}) \in \NN_{0}^{n}, l \in L}}
                       c_{(i_{1},\dots,i_{n}),l} \Big(\overset{\star_{1}}{\overbrace{x_{1}^{i_{1}} \dots x_{n}^{i_{n}} [x_{h},t_{l}]}}
             + \sum_{j=1}^{n}
             \overset{\star_{2}}{\overbrace{x_{1}^{i_{1}} \dots i_{j} x_{j}^{i_{j}-1} [x_{h},x_{j}] \dots x_{n}^{i_{n}} t_{l}}}\Big)
   \\
             &+ \sum_{\underset{i_{1} + \dots + i_{n} = i_{\mathrm{max}}}{(i_{1},\dots,i_{n}) \in \NN_{0}^{n}, l \in L}}
             \sum_{j=1}^{n}
             \overset{\star_{3}}{\overbrace{c_{(i_{1},\dots,i_{n}),l} x_{1}^{i_{1}} \dots i_{j} x_{j}^{i_{j}-1} [x_{h},x_{j}] \dots x_{n}^{i_{n}}}}.
\end{split}
\end{equation}
Notice that in $\star_{1}$ we need only consider the summands with $t_{l} \neq 1$, since $[x_{i},t_{l}] = 0$ if $t_{l} = 1$.

For degree 
reasons, we see that no term of the form $\star_{3}$ can be cancelled with any other term appearing in $\star_{2}$.
On the other hand, the former can neither be cancelled with terms from $\star_{1}$, since $[x_{h},x_{j}]$ is in a homogeneous component
of usual degree $2$ of $\tym(n)$, while $[x_{h},t_{l}]$ is in the homogeneous component
of usual degree strictly greater than $2$ of $\tym(n)$. 
This tells us that the coefficients $c_{(i_{1},\dots,i_{n}),l}$ with maximal $i_{1} + \dots + i_{n}$ must vanish, which is absurd.
As a consequence, $z \in \TYM(n)$.

Again, since 
$z \in \Z(\YM(n)) \cap \TYM(n)$ we see that $z \in \Z(\TYM(n))$.
Therefore $z \in k$ and the proposition is proved.
\end{proof}

\subsection{\texorpdfstring {Another characterization of $W(n)$}{Another characterization of W(n)}}

In \cite{HS1}, Section 3, it was proved that $W(n) \simeq H_{1}(\ym(n),S(V(n)))$ as graded vector spaces.
By Proposition \ref{prop:W(n)}, $W(n)$ is an equivariant $S(V(n))$-module and, by definition, 
the complex $C_{\bullet}(\YM(n),S(V(n)))$ is composed of equivariant $S(V(n))$-modules and equivariant 
$S(V(n))$-linear differentials, so its homology is an equivariant $S(V(n))$-module. 
We recall that $S(V(n)) \otimes V(n)$ is provided with the regular left $S(V(n))$-module structure.
In Proposition \ref{prop:isow(n)} of this section, we shall exhibit an equivariant $S(V(n))$-linear isomorphism from $W(n)$ to
the first homology group of the complex $C_{\bullet}(\YM(n),S(V(n)))$.
From this result, we shall derive three important consequences: a set of generators of the $S(V(n))$-module $W(n)$ given in Corollary \ref{coro:W(n)} and a description of the isotypic decomposition of $W(n)$ and $W(n) \otimes_{S(V(n))} W(n)$ in Corollaries \ref{coro:W(n)son} and \ref{coro:compisownwn}, respectively.

Let $T^{+}V(n)$ be the graded vector subspace of $T(V(n))$ spanned by all homogeneous 
elements of degree greater than or equal to $1$ and $\pi : T(V(n)) \rightarrow S(V(n))$ be the canonical projection.
We start considering the following homogeneous linear map of degree $0$
\begin{align*}
   \phi : T^{+}V(n) &\rightarrow S(V(n)) \otimes V(n)
   \\
   \sum_{i=1}^{n} q_{i} x_{i} &\mapsto \sum_{i=1}^{n} \pi(q_{i}) \otimes x_{i}.
\end{align*}
The previous mapping is well-defined since every element $x \in T^{+}V(n)$ may be written in a unique way as 
$x = \sum_{i=1}^{n} q_{i} x_{i}$ with $q_{i} \in T(V(n))$.
The linearity and homogeneity are direct.
Furthermore, since $\pi$ is surjective, $\phi$ is also surjective.

Given homogeneous elements $z, z' \in T^{+}V(n)$, we have that $\phi(z' z x_{i}) = \pi(z' z) \otimes x_{i} = \pi(z') \pi(z) \otimes x_{i}$,
since $\pi$ is a $k$-algebra morphism. 
In other words, $\phi(z' z) = \pi(z')\cdot \phi(z)$ for $z, z' \in T^{+}V(n)$ homogeneous.
In particular, taking $z' = x_{j}$, we see that $\phi(x_{j} z) = x_{j}.\phi(z)$.
Notice that this does not imply that $\phi$ is $V(n)$-linear, since $T(V(n))$ is not a $S(V(n))$-module for the left multiplication.

We shall denote $\phi'$ the restriction of $\phi$ to $\f(V(n)) \subseteq T^{+}V(n)$.
Then
\begin{equation}
   \label{eq:phi'1}
   \phi'(x_{i}) = 1 \otimes x_{i},
\end{equation}
and we shall prove by induction on $l$ that 
\begin{equation}
   \label{eq:phi'2}
   \phi'([x_{i_{1}},[\dots,[x_{i_{l-1}},x_{i_{l}}]\dots]])
   = x_{i_{1}} \dots x_{i_{l-2}} x_{i_{l-1}} \otimes x_{i_{l}} - x_{i_{1}} \dots x_{i_{l-2}} x_{i_{l}} \otimes x_{i_{l-1}},
\end{equation}
where $l \geq 2$.
The case $l= 2$ is direct.

Let us suppose that $l > 2$ and that the previous identity holds for $l-1$.
In this case
\begin{align*}
   \MoveEqLeft
   \phi'([x_{i_{1}},[\dots,[x_{i_{l-1}},x_{i_{l}}]\dots]])
   \\
   &= \phi(x_{i_{1}} [x_{i_{2}},[\dots,[x_{i_{l-1}},x_{i_{l}}]\dots]]) - \phi([x_{i_{2}},[\dots,[x_{i_{l-1}},x_{i_{l}}]\dots]] x_{i_{1}})
   \\
   &= x_{i_{1}} \phi([x_{i_{2}},[\dots,[x_{i_{l-1}},x_{i_{l}}]\dots]]) - \pi([x_{i_{2}},[\dots,[x_{i_{l-1}},x_{i_{l}}]\dots]]) \otimes x_{i_{1}}
   \\
   &= x_{i_{i}} \phi'([x_{i_{2}},[\dots,[x_{i_{l-1}},x_{i_{l}}]\dots]])
   \\
   &= x_{i_{1}} x_{i_{2}} \dots x_{i_{l-2}} x_{i_{l-1}} \otimes x_{i_{l}} - x_{i_{1}} x_{i_{2}} \dots x_{i_{l-2}} x_{i_{l}} \otimes x_{i_{l-1}},
\end{align*}
where we have used that $\phi(x_{j} z') = x_{j}.\phi(z')$, the inductive hypothesis 
and the fact that, since $\pi$ is a $k$-algebra morphism, $\pi([x,z]) = 0$, for all $x,z \in T(V(n))$.

\begin{lemma}
\label{lema:mu}
If $d_{1} : S(V(n)) \otimes V(n) \rightarrow S(V(n))$ denotes the differential of the complex \eqref{eq:complejohomologiayangmills} 
for $Y = S(V(n))$, then $\Ker(d_{1}) = \phi'([\f(V(n)),\f(V(n))])$.
\end{lemma}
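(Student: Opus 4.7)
The plan is to prove both inclusions. A useful preliminary observation is that $d_{1} \circ \phi = \pi|_{T^{+}V(n)}$: indeed, if $z = \sum_{i} q_{i} x_{i}$ with $q_{i} \in T(V(n))$ is the unique decomposition of an element $z \in T^{+}V(n)$, then
\[ d_{1}(\phi(z)) = \sum_{i} \pi(q_{i}) x_{i} = \pi\bigl(\sum_{i} q_{i} x_{i}\bigr) = \pi(z), \]
so in particular $d_{1} \circ \phi' = \pi|_{\f(V(n))}$.

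For the inclusion ``$\supseteq$'', I would simply use that $\pi : T(V(n)) \to S(V(n))$ is an algebra morphism into a commutative algebra, hence vanishes on every commutator. Thus $\pi(z) = 0$ for all $z \in [\f(V(n)), \f(V(n))]$, and consequently $d_{1}(\phi'(z)) = \pi(z) = 0$, which yields $\phi'([\f(V(n)), \f(V(n))]) \subseteq \Ker(d_{1})$.

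For the reverse inclusion, my plan is to invoke the exactness of the Koszul resolution of $k$ over the polynomial algebra $S(V(n))$: the kernel of the multiplication map $d_{1}$ coincides with the image of the Koszul differential $S(V(n)) \otimes \Lambda^{2} V(n) \to S(V(n)) \otimes V(n)$ sending $y \otimes x_{i} \wedge x_{j}$ to $y x_{i} \otimes x_{j} - y x_{j} \otimes x_{i}$. Hence $\Ker(d_{1})$ is spanned over $k$ by elements of the form $y x_{i} \otimes x_{j} - y x_{j} \otimes x_{i}$, where $y = x_{k_{1}} \cdots x_{k_{m}}$ runs over the monomials of $S(V(n))$ and $1 \leq i, j \leq n$. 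For each such generator, I would consider the nested bracket
\[ z = [x_{k_{1}}, [x_{k_{2}}, [\cdots, [x_{k_{m}}, [x_{i}, x_{j}]] \cdots ]]] \in \f(V(n)), \]
which, being a commutator of elements of $\f(V(n))$, belongs to $[\f(V(n)), \f(V(n))]$. Applying formula \eqref{eq:phi'2} with $l = m+2$ then gives $\phi'(z) = y x_{i} \otimes x_{j} - y x_{j} \otimes x_{i}$, so each generator of $\Ker(d_{1})$ lies in $\phi'([\f(V(n)), \f(V(n))])$.

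The main (and essentially only) obstacle is the identification of $\Ker(d_{1})$ with the image of the Koszul differential, but this is the classical acyclicity of the Koszul complex over $S(V(n))$. Once that step is in hand, formula \eqref{eq:phi'2} automatically furnishes the required preimages in $[\f(V(n)), \f(V(n))]$, so the remainder of the argument is a direct verification.
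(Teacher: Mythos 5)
Your argument is correct. The easy inclusion is handled in essentially the same way (your identity $d_{1}\circ\phi=\pi|_{T^{+}V(n)}$ is a clean repackaging of what the paper extracts from \eqref{eq:phi'2} together with the vanishing of $\pi$ on commutators), but for the reverse inclusion the two proofs genuinely diverge. The paper works entirely by hand: it writes an arbitrary element of $\Ker(d_{1})$ in coordinates, splits it according to the multidegree $\bar{i}$, and runs an induction on the number of nonzero entries of $\bar{i}$, at each step peeling off one explicit iterated bracket of the form $\mathrm{ad}^{i_{j_{1}}-1}(x_{j_{1}})\circ\cdots\circ\mathrm{ad}^{i_{j_{l}}}(x_{j_{l}})([x_{j_{2}},x_{j_{1}}])$ until what remains is again a kernel element with strictly smaller support. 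You instead invoke the acyclicity of the Koszul resolution of $k$ over the polynomial algebra $S(V(n))$ to identify $\Ker(d_{1})$ with the image of $y\otimes x_{i}\wedge x_{j}\mapsto yx_{i}\otimes x_{j}-yx_{j}\otimes x_{i}$, and then observe that each such generator (with $y=x_{k_{1}}\cdots x_{k_{m}}$ a monomial) is precisely the right-hand side of \eqref{eq:phi'2} applied to the nested bracket $[x_{k_{1}},[\dots,[x_{k_{m}},[x_{i},x_{j}]]\dots]]$, which lies in $[\f(V(n)),\f(V(n))]$. Your route is shorter and more conceptual, at the price of importing the (standard) exactness of the symmetric-algebra Koszul complex; the paper's induction is longer but self-contained and produces comparable explicit preimages. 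Both are complete proofs.
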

\begin{proof}
The inclusion $\phi'([\f(V(n)),\f(V(n))]) \subseteq \Ker(d_{1})$ is immediate from identity \eqref{eq:phi'2} and the fact that
every element of $[\f(V(n)),\f(V(n))]$ may be written as a linear combination of elements of the form
$[x_{i_{1}},[\dots,[x_{i_{l-1}},x_{i_{l}}]]]$ for $l \geq 2$.

Let us prove the other inclusion.
Consider
\[     y = \sum_{j=1}^{n} \sum_{\bar{i} \in \NN_{0}^{n}} a_{\bar{i}}^{j} x_{1}^{i_{1}} \dots x_{n}^{i_{n}} \otimes x_{j} 
         = \sum_{j=1}^{n} \sum_{\bar{i} \in \NN_{0}^{n}} a_{\bar{i}}^{j} \bar{x}^{\bar{i}} \otimes x_{j}
         \in \Ker(d_{1}),     \]
where $\bar{i} = (i_{1},\dots,i_{n})$ and the previous sum is finite.
We will denote by $e_{i} \in \NN_{0}^{n}$, for $1 \leq i \leq n$, the vector such that $(e_{i})_{j} = \delta_{i,j}$, $1 \leq j \leq n$
and we write $|\bar{i}| = i_{1} + \dots + i_{n}$.

We shall prove that there exists $z \in [\f(V(n)), \f(V(n))]$ such that $y = \phi'(z)$.

On one hand, $y \in \Ker(d_{1})$ if and only if
\[     d_{1}(y) = \sum_{j=1}^{n} \sum_{\bar{i} \in \NN_{0}^{n}} a_{\bar{i}}^{j} \bar{x}^{\bar{i}+e_{j}} = 0.     \]
This condition is equivalent to the following:
for every $(i_{1},\dots,i_{n}) \in \NN_{0}^{n}$
\begin{equation}
\label{eq:cond}
     \sum_{j=1}^{n} a^{j}_{\bar{i}-e_{j}} = 0,
\end{equation}
where we agree to write $a_{\bar{i}}^{j} = 0$, in case there exists $l$ with $1\leq l \leq n$ such that $i_{l} < 0$.

As a consequence, if we define
\[     y_{\bar{i}} = \sum_{j=1}^{n}  a_{\bar{i}-e_{j}}^{j} x_{1}^{i_{1}} \dots x_{j}^{i_{j}-1} \dots x_{n}^{i_{n}} \otimes x_{j},     \]
for every  $\bar{i} \in \NN_{0}^{n}$,
\[     y = \sum_{j=1}^{n} \sum_{\bar{i} \in \NN_{0}^{n}} a_{\bar{i}}^{j} \bar{x}^{\bar{i}} \otimes x_{j}
         = \sum_{\bar{i} \in \NN_{0}^{n}} \left( \sum_{j=1}^{n}  a_{\bar{i}-e_{j}}^{j} \bar{x}^{\bar{i}-e_{j}} \otimes x_{j} \right)
         = \sum_{\bar{i} \in \NN_{0}^{n}} y_{\bar{i}}.
         \]
On the other hand, from \eqref{eq:cond},
we see that $d_{1}(y) = 0$ if and only if $d_{1}(y_{\bar{i}}) = 0$, for all $\bar{i} \in \NN_{0}^{n}$.
Therefore it suffices to prove that, given $\bar{i} \in \NN_{0}^{n}$ and $y \in \Ker(d_{1})$ of the form
$\sum_{j=1}^{n} a^{j}_{\bar{i}-e_{j}} \bar{x}^{\bar{i}-e_{j}} \otimes x_{j}$
there exists $z \in [\f(V(n)), \f(V(n))]$ such that $y = \phi'(z)$.

Suppose given $\bar{i} \in \NN_{0}^{n}$ and $y = \sum_{j=1}^{n} a^{j}_{\bar{i}-e_{j}} \bar{x}^{\bar{i}-e_{j}} \otimes x_{j}$
satisfying $\sum_{j=1}^{n} a^{j}_{\bar{i}-e_{j}} = 0$.
Let $i_{j_{1}}, \dots, i_{j_{l}}$, with $0 \leq l \leq n$, be the nonzero elements of the $n$-tuple $\bar{i}$, \textit{i.e.} $i_{j} \neq 0$ 
if and only if $j \in \{j_{1}, \dots, j_{l}\}$.

If $l = 0$
then necessarily $y = 0 = \phi'(0) \in \phi'([\f(V(n)),\f(V(n))])$, since in this case $a^{j}_{-e_{j}} = 0$, for all $j$ such that $1 \leq j \leq n$.

If $l = 1$, then there is $j_{0}$, with $1 \leq j_{0} \leq n$, such that $\bar{i} = m.e_{j_{0}}$, $m \in \NN$.
Hence condition \eqref{eq:cond} implies that $a^{j_{0}}_{(m-1).e_{j}} = 0$, and therefore $y = 0 = \phi'(0) \in \phi'([\f(V(n)),\f(V(n))])$.

Let $l \geq 2$.
We shall proceed by induction on $l$, assuming that it is true for $l-1$.
We may write
\begin{align*}
      y &= \sum_{j=1}^{n} a^{j}_{\bar{i}-e_{j}} \bar{x}^{\bar{i}-e_{j}} \otimes x_{j}
         = \sum_{p=1}^{l} a^{j_{p}}_{\bar{i}-e_{j_{p}}} \bar{x}^{\bar{i}-e_{j_{p}}} \otimes x_{j_{p}}
         \\
         &= a^{j_{1}}_{\bar{i}-e_{j_{1}}} (\bar{x}^{\bar{i}-e_{j_{1}}} \otimes x_{j_{1}} - \bar{x}^{\bar{i}-e_{j_{2}}} \otimes x_{j_{2}})
         + a^{j_{1}}_{\bar{i}-e_{j_{1}}} \bar{x}^{\bar{i}-e_{j_{2}}} \otimes x_{j_{2}}
         + \sum_{p=2}^{l} a^{j_{p}}_{\bar{i}-e_{j_{p}}} \bar{x}^{\bar{i}-e_{j_{p}}} \otimes x_{j_{p}}
         \\
         &= a^{j_{1}}_{\bar{i}-e_{j_{1}}} (\bar{x}^{\bar{i}-e_{j_{1}}} \otimes x_{j_{1}} - \bar{x}^{\bar{i}-e_{j_{2}}} \otimes x_{j_{2}})
         + \sum_{p=2}^{l} b^{j_{p}}_{\bar{i}-e_{j_{p}}} \bar{x}^{\bar{i}-e_{j_{p}}} \otimes x_{j_{p}}
         \\
         &= a^{j_{1}}_{\bar{i}-e_{j_{1}}} \phi'(\mathrm{ad}^{i_{j_{1}}-1}(x_{j_{1}}) \circ \mathrm{ad}^{i_{j_{2}}-1}(x_{j_{2}}) \circ
         \dots \circ \mathrm{ad}^{i_{j_{l}}}(x_{j_{l}})([x_{j_{2}},x_{j_{1}}]))
         + \sum_{p=2}^{l} b^{j_{p}}_{\bar{i}-e_{j_{p}}} \bar{x}^{\bar{i}-e_{i_{p}}} \otimes x_{i_{p}},
\end{align*}
for $b^{j_{2}}_{\bar{i}-e_{j_{2}}} = a^{j_{1}}_{\bar{i}-e_{j_{1}}} + a^{j_{2}}_{\bar{i}-e_{j_{2}}}$ and
$b^{j_{p}}_{\bar{i}-e_{j_{p}}} = a^{j_{p}}_{\bar{i}-e_{j_{p}}}$, if $3 \leq p \leq l$.

Since  $\sum_{p=2}^{l} b^{j_{p}}_{\bar{i}-e_{j_{p}}} = 0$,
the element  $y' = \sum_{p=2}^{l} b^{j_{p}}_{\bar{i}-e_{j_{p}}} \bar{x}^{\bar{i}-e_{i_{p}}} \otimes x_{i_{p}}$
belongs to the kernel of $d_{1}$.
By the inductive hypothesis, there is $z' \in [\f(V(n)),\f(V(n))]$ such that $y' = \phi'(z')$.
Then
\begin{align*}
   y &= a^{j_{1}}_{\bar{i}-e_{j_{1}}} \phi'(\mathrm{ad}^{i_{j_{1}}-1}(x_{j_{1}}) \circ \mathrm{ad}^{i_{j_{2}}-1}(x_{j_{2}}) \circ
      \mathrm{ad}^{i_{j_{3}}}(x_{j_{3}}) \circ \dots \circ \mathrm{ad}^{i_{j_{l}}}(x_{j_{l}})([x_{j_{2}},x_{j_{1}}]))
     + \phi'(z')
     \\
     &= \phi'(a^{j_{1}}_{\bar{i}-e_{j_{1}}} \mathrm{ad}^{i_{j_{1}}-1}(x_{j_{1}}) \circ \mathrm{ad}^{i_{j_{2}}-1}(x_{j_{2}}) \circ
       \mathrm{ad}^{i_{j_{3}}}(x_{j_{3}}) \circ \dots \circ \mathrm{ad}^{i_{j_{l}}}(x_{j_{l}})([x_{j_{2}},x_{j_{1}}]) + z').
\end{align*}
This proves the lemma.
\end{proof}

Let $d_{2} : S(V(n)) \otimes V(n) \rightarrow S(V(n)) \otimes V(n)$ be the differential of the complex \eqref{eq:complejohomologiayangmills}
in degree $2$ with $Y = S(V(n))$.
In this case,
\[     d_{2}(\sum_{i=1}^{n} z_{i} \otimes x_{i}) = \sum_{i,j = 1}^{n} (z_{i}x_{j}^{2} \otimes x_{i} - z_{i} x_{i} x_{j} \otimes x_{j}).     \]
We may consider the homogeneous linear map of degree $0$, denoted by $\tilde{\phi}$,
\[     [\f(V(n)),\f(V(n))] \rightarrow \Ker(d_{1})/\mathrm{Im}(d_{2}),     \]
given by composition of $\phi'$ and the canonical projection.
Being the composition of surjective morphisms, $\tilde{\phi}$ is surjective. 

\begin{lemma}
Let $d_{2}$ be the differential of the complex \eqref{eq:complejohomologiayangmills} in degree $2$ with $Y = S(V(n))$ and
let $\tilde{\phi}$ be as above.
If $\cl{R(n)}$ denotes the Lie ideal in $\f(V(n))$ generated by the vector space of Yang-Mills relations \eqref{eq:relym}, then
$\phi'(\cl{R(n)}) \subseteq \mathrm{Im}(d_{2})$, and therefore $\tilde{\phi}$ induces a surjective homogeneous linear morphism of degree $0$
from $[\f(V(n)),\f(V(n))]/\cl{R(n)} = \tym(n)$ to $\Ker(d_{1})/\mathrm{Im}(d_{2}) = H_{1} (\ym(n), S(V(n))) \simeq W(n)$. 
\end{lemma}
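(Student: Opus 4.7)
The plan is to first check the containment $\phi'(R(n)) \subseteq \mathrm{Im}(d_2)$ on the generators of $R(n)$ and then propagate it to the whole Lie ideal $\langle R(n)\rangle$ using an appropriate compatibility of $\phi'$ with the adjoint action of $V(n)$. For the first step, I would apply \eqref{eq:phi'2} with $l = 3$ to each generator $r_j = \sum_{i=1}^{n}[x_i,[x_i,x_j]]$, obtaining $\phi'([x_i,[x_i,x_j]]) = x_i^{2}\otimes x_j - x_i x_j\otimes x_i$. Summing on $i$ and rewriting the formula for $d_2$ in $Y = S(V(n))$ using commutativity (so that $y x_i x_j - 2 y x_j x_i = -y x_i x_j$) gives at once $\phi'(r_j) = d_2(1\otimes x_j)$.

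For the induction step, I would establish the following key compatibility: for every $z\in[\mathfrak{f}(V(n)),\mathfrak{f}(V(n))]$ and every $k$,
\[
\phi'([x_k, z]) = x_k\cdot\phi'(z).
\]
This follows from the two defining properties of $\phi$ recalled just before \eqref{eq:phi'1}, namely $\phi(x_k z) = x_k\cdot\phi(z)$ and $\phi(z x_k) = \pi(z)\otimes x_k$, combined with the vanishing of $\pi$ on commutators (because $\pi$ is a morphism of $k$-algebras to a commutative algebra), which kills the boundary term $\pi(z)\otimes x_k$. In parallel, I would observe that $\mathrm{Im}(d_2)$ is a left $S(V(n))$-submodule of $S(V(n))\otimes V(n)$, since the explicit formula for $d_2$ together with the commutativity of $S(V(n))$ shows that $d_2$ commutes with left multiplication by each $x_k$.

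With these two facts in hand, since $\langle R(n)\rangle$ is spanned as a $k$-vector space by iterated brackets $[x_{i_1},[x_{i_2},[\dots,[x_{i_l},r_j]\dots]]]$, and every inner iterated bracket still belongs to $[\mathfrak{f}(V(n)),\mathfrak{f}(V(n))]$, a straightforward induction on $l$ yields
\[
\phi'\bigl([x_{i_1},[x_{i_2},[\dots,[x_{i_l},r_j]\dots]]]\bigr) = x_{i_1}x_{i_2}\cdots x_{i_l}\cdot\phi'(r_j)\in\mathrm{Im}(d_2),
\]
so $\phi'(\langle R(n)\rangle)\subseteq\mathrm{Im}(d_2)$. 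Finally, since $R(n)$ sits in degree $3$, the ideal $\langle R(n)\rangle$ is contained in $[\mathfrak{f}(V(n)),\mathfrak{f}(V(n))]$, and the graded decomposition $\mathfrak{f}(V(n)) = V(n)\oplus[\mathfrak{f}(V(n)),\mathfrak{f}(V(n))]$ descends to $\ym(n) = V(n)\oplus\bigl([\mathfrak{f}(V(n)),\mathfrak{f}(V(n))]/\langle R(n)\rangle\bigr)$, identifying $\tym(n)$ with $[\mathfrak{f}(V(n)),\mathfrak{f}(V(n))]/\langle R(n)\rangle$. Hence $\tilde\phi$ factors through a surjective homogeneous $k$-linear morphism of degree $0$ from $\tym(n)$ onto $H_1(\ym(n),S(V(n)))\simeq W(n)$. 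The only subtle point is the first compatibility identity: it is crucial to restrict the argument to the commutator subalgebra so that the correction term $\pi(z)\otimes x_k$ vanishes, as this is exactly what allows the adjoint action of $V(n)$ to pass through $\phi'$ cleanly.
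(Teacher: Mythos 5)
Your proof is correct and follows essentially the same route as the paper: both reduce the containment $\phi'(\cl{R(n)}) \subseteq \mathrm{Im}(d_{2})$ to the identity \eqref{eq:phi'2} applied to iterated brackets of the relations $r_{j}$, identifying the result with $d_{2}$ evaluated on a monomial. Your only repackaging is to run an explicit induction using the compatibility $\phi'([x_{k},z]) = x_{k}\cdot\phi'(z)$ for $z$ in the commutator subalgebra (which is precisely the inductive step behind \eqref{eq:phi'2}) together with the left $S(V(n))$-linearity of $d_{2}$, whereas the paper applies the closed formula in one step to obtain $\phi'([x_{i_{1}},[\dots,[x_{i_{p-1}},r_{i_{p}}]\dots]]) = d_{2}(x_{i_{1}}\cdots x_{i_{p-1}}\otimes x_{i_{p}})$ directly; the underlying computation is identical.
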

\begin{proof}
First, note that $\cl{R(n)} \subseteq [\f(V(n)),\f(V(n))]$.

Given $j$, with $1 \leq j \leq n$, we shall denote $r_{j} = \sum_{i=1}^{n} [x_{i},[x_{i},x_{j}]]$.
Using the Jacobi relation it is easy to see that every element of $\cl{R(n)}$ may be written as a linear combination of elements of the form
$[x_{i_{1}}, [x_{i_{2}},[\dots, [x_{i_{p-1}},r_{i_{p}}]\dots]]]$, 
for $p \in \NN$, $i_{1}, \dots, i_{p} \in \{ 1, \dots, n \}$.

Using the identity \eqref{eq:phi'2}, we get
\begin{align*}     
       \phi'([x_{i_{1}}, [x_{i_{2}},[\dots, [x_{i_{p-1}},r_{i_{p}}]\dots]]])
       &= \sum_{j=1}^{n} (x_{i_{1}} x_{i_{2}} \dots x_{i_{p-1}} x_{j}^{2} \otimes x_{i_{p}}
       - x_{i_{1}} x_{i_{2}} \dots x_{i_{p-1}} x_{j} x_{i_{p}} \otimes x_{j})
       \\
       &= d_{2}(x_{i_{1}} x_{i_{2}} \dots x_{i_{p-1}} \otimes x_{i_{p}}),     
\end{align*}
and so $\phi'(\cl{R(n)}) \subseteq \mathrm{Im}(d_{2})$.
\end{proof}

We have therefore defined a surjective homogeneous $k$-linear map of degree $0$
\[     \tilde{\phi}: \tym(n) \rightarrow H_{1}(\ym(n),S(V(n))).     \]
We will see that $\tilde{\phi}([\tym(n),\tym(n)]) = 0$ as follows.
Let us consider $a, b \in [\f(V(n)),\f(V(n))]$ such that $\bar{a}, \bar{b} \in \tym(n)$.
Taking into account that $\tilde{\phi}([\bar{a},\bar{b}])$ is the class of $\phi'([a,b])$ in $\Ker(d_{1})/\mathrm{Im}(d_{2})$,
it suffices to show that $\phi'([a,b]) \in \mathrm{Im}(d_{2})$.
We shall see that in fact $\phi'([a,b]) = 0$.

Since $a, b \in [\f(V(n)),\f(V(n))]$, we can write $a = \sum_{j=1}^{n} [x_{j},a'_{j}]$ and $b = \sum_{j=1}^{n} [x_{j},b'_{j}]$, for some
$a'_{j}, b'_{j} \in \f(V(n))$.
Hence
\[     \phi'([a,b]) = \phi(a b - b a) = \phi(a b) - \phi(b a) = \pi(a) \phi(b) - \pi(b) \phi(a) = 0,     \]
where 
we have used that $\pi(a) = \pi(b) = 0$, for $\pi$ is a $k$-algebra morphism.

Finally, the fact that $\tilde{\phi}([\tym(n),\tym(n)]) = 0$ implies that $\tilde{\phi}$ induces a surjective morphism,
which will be denoted by $\Phi$,
\[     \tym(n)/[\tym(n),\tym(n)] \overset{\Phi}{\rightarrow} H_{1}(\ym(n),S(V(n))).     \]
Also, taking into account that $\tym(n)/[\tym(n),\tym(n)]$ is isomorphic to $W(n)$ and the latter is locally finite dimensional and isomorphic to 
the first homology group $\Ker(d_{1})/\mathrm{Im}(d_{2}) \simeq H_{1}(\ym(n),S(V(n)))$, 
$\Phi$ turns out to be an isomorphism.

We have then proved the following proposition.
\begin{proposition}
\label{prop:isow(n)}
The map
\[     \Phi : \tym(n)/[\tym(n),\tym(n)] \rightarrow \Ker(d_{1})/\mathrm{Im}(d_{2}) \simeq H_{1}(\ym(n),S(V(n)))     \]
is equivariant.  
\end{proposition}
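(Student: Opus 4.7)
The map $\Phi$ has already been exhibited as a surjective $k$-linear morphism of graded vector spaces of degree zero, and has been shown to be an isomorphism by a Hilbert series count; what the proposition asks for in addition is the "equivariance" — meaning here both $\so(n)$-equivariance and $S(V(n))$-linearity (which, since $S(V(n)) = \YM(n)/\langle \tym(n)\rangle$, is the same as $\YM(n)$-linearity for these modules). The plan is to check the two compatibilities separately, exploiting the fact that the construction of $\Phi$ was built out of pieces which each carry the relevant compatibilities.

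For $\so(n)$-equivariance, the key observation is that the map $\phi : T^{+}V(n) \to S(V(n)) \otimes V(n)$ intertwines the derivation actions of $\so(n)$. Writing out $T\cdot (x_{i_1}\cdots x_{i_l})$ as a sum of $l$ terms where $T \in \so(n)$ acts on each letter in turn, the terms with $T$ acting in positions $<l$ contribute to $T \cdot \pi(x_{i_1}\cdots x_{i_{l-1}})\otimes x_{i_l}$ (using that $\pi$ is $\so(n)$-equivariant because the commutator ideal in $T(V(n))$ is $\so(n)$-stable), while the term at position $l$ gives $\pi(x_{i_1}\cdots x_{i_{l-1}})\otimes T(x_{i_l})$; together these reassemble into $T \cdot \phi(x_{i_1}\cdots x_{i_l})$. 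The same then holds for the restriction $\phi'$, and since the differentials $d_1,d_2$ of the complex \eqref{eq:complejohomologiayangmills} and the projection $\tym(n) \twoheadrightarrow \tym(n)/[\tym(n),\tym(n)]$ are all $\so(n)$-equivariant, so is $\Phi$.

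For $S(V(n))$-linearity, since $V(n)$ generates $S(V(n))$ as an algebra it suffices to verify $\Phi(x_i \cdot \bar{w}) = x_i \cdot \Phi(\bar{w})$ for every generator $x_i$ and every class $\bar{w}$. Pick a lift $w \in [\f(V(n)),\f(V(n))]$ of $\bar{w}$; by the discussion around \eqref{eq:acciontotal} the adjoint action of $x_i$ on $\tym(n)/[\tym(n),\tym(n)]$ satisfies $x_i \cdot \bar{w} = \overline{[x_i,w]}$. Applying $\phi'$ and using the identity $\phi(x_j z) = x_j \cdot \phi(z)$ established just before Lemma \ref{lema:mu},
\[
\phi'([x_i,w]) \;=\; \phi(x_i w) - \phi(w x_i) \;=\; x_i \cdot \phi'(w) - \pi(w) \otimes x_i.
\]
The second summand vanishes: $\pi$ is an algebra morphism into the commutative algebra $S(V(n))$, and $w$ is a sum of commutators. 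Passing to classes modulo $\mathrm{Im}(d_2)$ gives $\Phi(x_i \cdot \bar{w}) = x_i \cdot \Phi(\bar{w})$, as required.

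The main obstacle is essentially bookkeeping rather than a substantive difficulty: one must be careful that the $S(V(n))$-module structure on $H_1(\ym(n),S(V(n)))$ inherited from the complex $C_\bullet(\YM(n),S(V(n)))$ — where $x_i$ acts by left multiplication on the $S(V(n))$-factor of $S(V(n))\otimes V(n)$ — really coincides with the action on $\tym(n)/[\tym(n),\tym(n)]$ induced by the adjoint action of $\ym(n)$. This is immediate once one checks the differentials $d_1,d_2$ are left $S(V(n))$-linear (which they visibly are), so the whole argument reduces to the two identities $\phi(x_j z) = x_j \cdot \phi(z)$ and $\pi([\cdot,\cdot]) = 0$, which were already present in the construction of $\Phi$.
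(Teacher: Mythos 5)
Your proof is correct and follows essentially the same route as the paper: the paper's own proof is a one-line appeal to the explicit formula \eqref{eq:phi'2}, and your two identities $\phi(x_i z) = x_i\cdot\phi(z)$ and $\pi([\cdot,\cdot])=0$ are exactly the mechanism from which that formula (and hence $V(n)$-linearity and $\so(n)$-equivariance) is derived. Your direct computation on an arbitrary lift $w \in [\f(V(n)),\f(V(n))]$ is a slightly more explicit unfolding of the same argument, not a different one.
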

\begin{proof} 
We have already proved that $\Phi$ is a homogeneous linear isomorphism of degree $0$. 
Also, the equation \eqref{eq:phi'2} tells us that $\Phi$ is $V(n)$-linear and $\so(n)$-equivariant. 
\end{proof}

The previous proposition has the following important consequences. 
\begin{corollary}
\label{coro:W(n)}
The graded vector space $W(n)$ is generated by the finite set $ \{ [x_{i},x_{j}] \}_{1 \leq i < j \leq n}$
both as a graded $S(V(n))$-module and as a graded $S(V(n))/\cl{q}$-module. 
Furthermore, a collection of generators of $W(n)$ for both module structures is given by
$ \{ [x_{i},x_{j}] \}_{1 \leq i < j \leq n}$
\end{corollary}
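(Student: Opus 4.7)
The strategy is to transport the question from $W(n)$ to the isomorphic $S(V(n))$-module $H_{1}(\ym(n),S(V(n))) = \mathrm{Ker}(d_{1})/\mathrm{Im}(d_{2})$ via the equivariant $S(V(n))$-linear isomorphism $\Phi$ of Proposition \ref{prop:isow(n)}, and then exhibit the required generators explicitly on the homology side. Concretely, I would first use Lemma \ref{lema:mu} to write $\mathrm{Ker}(d_{1}) = \phi'([\f(V(n)),\f(V(n))])$. Since every element of the commutator ideal $[\f(V(n)),\f(V(n))]$ is, by repeated application of the Jacobi identity, a $k$-linear combination of left-normed iterated brackets $[x_{i_{1}},[x_{i_{2}},[\dots,[x_{i_{l-1}},x_{i_{l}}]\dots]]]$ with $l \geq 2$ (the same standard fact invoked in the preceding lemma's proof), $\mathrm{Ker}(d_{1})$ is spanned as a $k$-vector space by the elements $\phi'([x_{i_{1}},[\dots,[x_{i_{l-1}},x_{i_{l}}]\dots]])$.

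The key observation is then that formula \eqref{eq:phi'2} factors as
\[     \phi'([x_{i_{1}},[\dots,[x_{i_{l-1}},x_{i_{l}}]\dots]]) = (x_{i_{1}} \cdots x_{i_{l-2}}) \cdot \bigl(x_{i_{l-1}} \otimes x_{i_{l}} - x_{i_{l}} \otimes x_{i_{l-1}}\bigr) = (x_{i_{1}} \cdots x_{i_{l-2}}) \cdot \phi'([x_{i_{l-1}},x_{i_{l}}]),     \]
where the dot denotes the regular $S(V(n))$-action on $S(V(n)) \otimes V(n)$ and the empty product is interpreted as $1$ when $l = 2$. This shows that $\mathrm{Ker}(d_{1})$ is generated as an $S(V(n))$-module by $\{\phi'([x_{i},x_{j}]) : 1 \leq i,j \leq n\}$, and by antisymmetry of the bracket the subset with $i < j$ already suffices. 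Passing to the quotient by $\mathrm{Im}(d_{2})$ and translating back through $\Phi$, we conclude that the classes $\overline{[x_{i},x_{j}]}$ for $1 \leq i < j \leq n$ generate $W(n)$ as an $S(V(n))$-module. The statement for the $S(V(n))/\cl{q}$-module structure is then immediate from Proposition \ref{prop:W(n)}, which asserts that $q$ acts trivially on $W(n)$, so the $S(V(n))$-action factors through $S(V(n))/\cl{q}$ and the same family of generators works.

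The argument presents no serious obstacle: the entire content is the factorization already implicit in \eqref{eq:phi'2}, combined with the identifications established in the preceding lemma and in Proposition \ref{prop:isow(n)}. If anything, the only point worth double-checking is that one indeed needs only generation (not a presentation), so no additional relations arising in $\mathrm{Im}(d_{2})$ need to be examined.
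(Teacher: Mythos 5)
Your proof is correct and follows essentially the same route as the paper: Lemma \ref{lema:mu} plus the factorization implicit in identity \eqref{eq:phi'2} to see that $\{x_i \otimes x_j - x_j \otimes x_i\}_{i<j}$ generates $\Ker(d_1)$ over $S(V(n))$, then passing to the quotient by $\mathrm{Im}(d_2)$ via Proposition \ref{prop:isow(n)} and noting that $q$ acts trivially. The only difference is cosmetic: the paper also invokes noetherianity of $S(V(n))$ to note finite generation before exhibiting the generators, whereas you make the factorization step fully explicit, which is a slight improvement in completeness.
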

\begin{proof}
As stated at the beginning of this section, we consider $S(V(n)) \otimes V(n)$ provided with the regular left action of $S(V(n))$.
It is finitely generated, and $S(V(n))$ being noetherian, $S(V(n)) \otimes V(n)$ is also noetherian.
Since the differential $d_{1}$ of the Koszul complex with coefficients in $S(V(n))$ is a $S(V(n))$-linear map, its kernel
is also a finitely generated $S(V(n))$-submodule.
By Lemma \ref{lema:mu}, the set $\{ x_{i} \otimes x_{j} - x_{j} \otimes x_{i} \}_{1 \leq i < j \leq n}$
is a set of generators of $\Ker(d_{1})$ as $S(V(n))$-module.

On the other hand, since $d_{2}$ is also a $V(n)$-linear map, $\mathrm{Im}(d_{2})$ is a $V(n)$-submodule of $\Ker(d_{1})$.
The $S(V(n))$-module $W(n) \simeq H_{1}(\ym(n),S(V(n)))$ is then a quotient of
the finitely generated $S(V(n))$-module $\Ker(d_{1})$ by the submodule $\mathrm{Im}(d_{2})$,
and hence it is finitely generated with set of generators $\{ [x_{i},x_{j}] \}_{1 \leq i < j \leq n}$.
All these considerations hold as well over the algebra $S(V(n))/\cl{q}$.
\end{proof}

In the following corollaries and the rest of this article we shall use the standard notation for the irreducible finite dimensional representations 
of the Lie algebras $\so(n)$ (see \cite{FH1}). 
\begin{corollary}
\label{coro:W(n)son}
Let $n \geq 3$. 
The homogeneous component of degree $p$ of $W(n)$ vanishes for $p \leq 1$. 

For $p \geq 2$, the homogeneous component of degree $p$ of the $\so(n)$-module $W(n)$ is also an $\so(n)$-module. 
If $n = 3$, it is isomorphic to $\Gamma_{(p-1)L_{1}}$; in case $n = 4$, it is isomorphic to 
$\Gamma_{(p-1)L_{1}+L_{2}} \oplus \Gamma_{(p-1)L_{1}-L_{2}}$; and finally, 
if $n \geq 5$, it is isomorphic to $\Gamma_{(p-1)L_{1}+L_{2}}$. 
\end{corollary}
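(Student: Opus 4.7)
The plan is as follows. The vanishing of $W(n)_{p}$ for $p \leq 1$ is immediate: from the discussion at the beginning of Section \ref{sec:Wn}, there is an equivariant isomorphism $W(n) \simeq \tym(n)/[\tym(n),\tym(n)]$, and by \eqref{eq:tym} the Lie ideal $\tym(n)$ is concentrated in degrees $\geq 2$, so the same is true for $W(n)$.

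For $p \geq 2$, I would work through the equivariant isomorphism $W(n) \simeq H_{1}(\ym(n),S(V(n))) = \Ker(d_{1})/\mathrm{Im}(d_{2})$ supplied by Proposition \ref{prop:isow(n)}. By the exactness of the Koszul complex of the polynomial algebra $S(V(n))$, there is an equivariant surjection $\delta_{2}\colon S^{p-2}(V(n)) \otimes \Lambda^{2} V(n) \twoheadrightarrow \Ker(d_{1})_{p}$ induced by the Koszul differential. Using Proposition \ref{prop:W(n)} (which says $q = \sum_{i=1}^{n} x_{i}^{2}$ acts as zero on $W(n)$) together with Corollary \ref{coro:W(n)} (that $W(n)$ is generated in degree $2$ by $\Lambda^{2} V(n)$), this yields an equivariant surjection
\[ \mathcal{H}^{p-2}(V(n)) \otimes \Lambda^{2} V(n) \twoheadrightarrow W(n)_{p}, \]
where $\mathcal{H}^{m}(V(n)) \simeq \Gamma_{m L_{1}}$ denotes the space of harmonic polynomials of degree $m$.

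Next I would decompose $\Gamma_{(p-2) L_{1}} \otimes \Lambda^{2} V(n)$ into irreducibles using Pieri-type rules for $\so(n)$, recalling that $\Lambda^{2} V(n)$ is $\Gamma_{L_{1}}$ for $n = 3$, $\Gamma_{L_{1}+L_{2}} \oplus \Gamma_{L_{1}-L_{2}}$ for $n = 4$, and $\Gamma_{L_{1}+L_{2}}$ for $n \geq 5$. In each of these three cases, the Cartan component (whose highest weight is the sum of the highest weights of the two factors) coincides with the irreducible(s) claimed in the statement and appears with multiplicity one, and its total dimension agrees with $\dim W(n)_{p}$ computed from the Hilbert series of Proposition 3.14 of \cite{HS1}.

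The main obstacle is to verify that the remaining irreducibles in the decomposition of $\mathcal{H}^{p-2}(V(n)) \otimes \Lambda^{2} V(n)$ actually lie in the kernel of the surjection. The cleanest complementary step is to exhibit an explicit highest weight vector of the claimed weight in $W(n)_{p}$: for instance, for $n \geq 5$, the class of $\ad(x_{1})^{p-2}([x_{1}, x_{2}])$ modulo $[\tym(n),\tym(n)]$ has weight $(p-1) L_{1} + L_{2}$, and its non-vanishing can be verified by computing its image under $\Phi$ using \eqref{eq:phi'2} (which equals $x_{1}^{p-1} \otimes x_{2} - x_{1}^{p-2} x_{2} \otimes x_{1}$) and checking that this class is not a boundary in $\mathrm{Im}(d_{2})_{p}$ via a direct weight-component analysis of $d_{2}$. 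The $\so(n)$-submodule generated by such a highest weight vector is then isomorphic to $\Gamma_{(p-1) L_{1} + L_{2}}$, and by the dimension comparison with the Hilbert series this submodule must exhaust $W(n)_{p}$. The cases $n = 3, 4$ are handled analogously, adjusting the weight(s) as dictated by the statement.
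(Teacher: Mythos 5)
Your strategy is sound but genuinely different from the paper's. The paper treats the graded piece \eqref{eq:compsvn} of $C_{\bullet}(\YM(n),S(V(n)))$ as a complex of finite-dimensional $\so(n)$-modules whose homology is known ($W(n)$ in degree one, $k$ in degree zero, nothing else), and reads off the isotypic decomposition of $W(n)_{p}$ as minus the Euler--Poincar\'e characteristic of the complex in the representation ring, computed via \eqref{eq:isosp} and the \v{Z}elobenko fusion rules \eqref{eq:compiso}. You instead sandwich $W(n)_{p}$: an upper bound from the surjection $\Gamma_{(p-2)L_{1}} \otimes \Lambda^{2}V(n) \twoheadrightarrow W(n)_{p}$ (which indeed follows from Proposition \ref{prop:W(n)} and Corollary \ref{coro:W(n)}, without any detour through $\Ker(d_{1})$), a lower bound from an explicit highest weight vector, and a dimension count to close the gap. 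The paper's route buys a complete answer in one stroke with no reference to the Hilbert series; yours is more hands-on and produces explicit highest weight vectors, which is independently useful.

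Two caveats. First, the highest weight vector must be written in the polarized basis: $x_{1},x_{2}$ are not weight vectors for the standard Cartan subalgebra of $\so(n)$, so you should take $\mathrm{ad}(e_{1})^{p-2}([e_{1},e_{2}])$ with $e_{j}=(x_{j}+ix_{j+m})/\sqrt{2}$ as in the proof of Proposition \ref{prop:vnker}; its image under $\Phi$ is $e_{1}^{p-1}\otimes e_{2}-e_{1}^{p-2}e_{2}\otimes e_{1}$ by linearity of \eqref{eq:phi'2}, and non-vanishing is immediate since the weight $(p-1)L_{1}+L_{2}$ does not even occur in $S^{p-3}V(n)\otimes V(n)$, hence not in $\mathrm{Im}(d_{2})_{p}$. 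Second, and more seriously, the entire burden of your proof rests on the asserted identity $\dim W(n)_{p} = \dim \Gamma_{(p-1)L_{1}+L_{2}}$ (resp.\ the sum of the two dimensions for $n=4$, resp.\ $\dim\Gamma_{(p-1)L_{1}}$ for $n=3$), where the left-hand side is extracted from the rational function of Proposition 3.14 of \cite{HS1} and the right-hand side comes from the Weyl dimension formula. This is true, and since both sides are polynomials in $p$ of the same degree it can be verified by checking finitely many values of $p$ for each $n$, but it is a genuine computation for arbitrary $n$ that you have not carried out; as written, the step that actually forces $W(n)_{p}$ to contain nothing besides the Cartan component is missing. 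You should either supply that verification or replace it by an argument showing directly that the non-Cartan constituents of $\Gamma_{(p-2)L_{1}}\otimes\Lambda^{2}V(n)$ are killed by the quotient map.
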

\begin{proof}
The complex of graded $\so(n)$-modules $C_{\bullet}(\YM(n),S(V(n)))$ is the direct sum of the complexes of finite dimensional $\so(n)$-modules
\begin{equation}
\label{eq:compsvn}
    0 \rightarrow S^{p-4}(V(n))[-4] \overset{d_{3}^{p-4}}{\rightarrow} (S^{p-3}(V(n)) \otimes V(n))[-2] \overset{d_{2}^{p-3}}{\rightarrow}
    S^{p-1}(V(n)) \otimes V(n) \overset{d_{1}^{p-1}}{\rightarrow} S^{p}(V(n)) \rightarrow 0,
\end{equation}
where $p \in \ZZ$ and we consider $S^{p}(V(n)) = 0$ if $p < 0$. 
By Proposition \ref{prop:isow(n)}, its homology is isomorphic to $W(n)$ in degree one, to $k$ in degree zero 
and all other homology modules vanish. 

Let $\mathcal{S}(n)$ denote the set of isomorphism classes of irreducible finite dimensional $\so(n)$-modules. 
If $M$ is a finite dimensional $\so(n)$-module and $s \in \mathcal{S}(n)$, 
we shall denote by $n_{s}(M)$ the number of copies of the isotypic component of type $s$ appearing in $M$. 
Hence, the isotypic decomposition of $M$ may be encoded in the formal sum of finite support $\sum_{s \in \mathcal{S}(n)} n_{s}(M) s$. 
It is directly checked that $M \mapsto \sum_{s \in \mathcal{S}(n)} n_{s}(M) s$ is an Euler-Poincar\'e map 
(see \cite{La1}, Chap. III, \S 8). 

In consequence, the Euler-Poincar\'e characteristic of a complex of finite dimensional $\so(n)$-modules coincides 
with the Euler-Poincar\'e characteristic of its homology (see \cite{La1}, Chap. XX, \S 3, Thm. 3.1). 
This result applied to the complex \eqref{eq:compsvn} allows us to compute the isotypic decomposition of the $p$-th homogeneous component of 
$W(n)$ once we have obtained the Euler-Poincar\'e characteristic of \eqref{eq:compsvn}. 
In order to do so, we shall proceed as follows. 

First, we recall that $V(n) \simeq \Gamma_{L_{1}}$ and, by \cite{FH1}, Exercise 19.21, 
\begin{equation}
\label{eq:isosp}
     S^{p}(V(n)) \simeq \bigoplus_{d = 0}^{[p/2]} \Gamma_{(p-2d) L_{1}},     
\end{equation}
where $[p/2]$ denotes the integral part of $p/2$. 
The isomorphism $\Gamma_{0 L_{1}} \simeq k$ tells us that $\Gamma_{0 L_{1}} \otimes \Gamma_{L_{1}} \simeq \Gamma_{L_{1}}$. 
Also, we have the following fusion rule for the tensor product 
\begin{equation}
\label{eq:compiso}
     \Gamma_{q L_{1}} \otimes \Gamma_{L_{1}} \simeq \begin{cases}
                                                         \Gamma_{(q+1) L_{1}} \oplus \Gamma_{q L_{1}} \oplus \Gamma_{(q-1) L_{1}}, &\text{if $n=3$,}
                                                         \\
                                                         \Gamma_{(q+1) L_{1}} \oplus \Gamma_{q L_{1} + L_{2}} \oplus \Gamma_{q L_{1} - L_{2}} 
                                                         \oplus \Gamma_{(q-1) L_{1}}, &\text{if $n=4$,}
                                                         \\
                                                         \Gamma_{(q+1) L_{1}} \oplus \Gamma_{q L_{1} + L_{2}} \oplus \Gamma_{(q-1) L_{1}}, 
                                                         &\text{if $n\geq 5$,}
                                                      \end{cases}
\end{equation}
for $q \geq 1$. 
The previous computation is straightforward from the \v{Z}elobenko fusion rules (see \cite{Zel1}, \S 131, Thm. 5). 

Using the isomorphisms \eqref{eq:isosp} and \eqref{eq:compiso} we find that the Euler-Poincar\'e characteristic of the complex \eqref{eq:compsvn} 
is $k$ if $p=0$, it vanishes if $p=1$, and, for $p \geq 2$, it is $\Gamma_{(p-1)L_{1}}$ if $n=3$, 
$\Gamma_{(p-1)L_{1}+L_{2}} + \Gamma_{(p-1)L_{1}-L_{2}}$ if $n =4$ and 
$\Gamma_{(p-1)L_{1}+L_{2}}$ if $n \geq 5$. 
The corollary thus follows. 
\end{proof}

As a direct consequence of the previous corollary we obtain the following result which we shall use in Subsection \ref{subsec:kerneld}. 
\begin{corollary}
\label{coro:compisownwn}
Let $n \geq 3$. 
The equivariant $S(V(n))$-module $W(n) \otimes_{S(V(n))} W(n)$ has no isotypic component of type $k$ in degree greater than $4$. 
Also, the homogeneous component of degree $4$ of $W(n) \otimes_{S(V(n))} W(n)$ is isomorphic to $\Lambda^{2} V(n) \otimes \Lambda^{2} V(n)$ as $\so(n)$-modules; it may be decomposed as
\begin{scriptsize}
\[     \Lambda^{2} V(n) \otimes \Lambda^{2} V(n) = \begin{cases}
                                                 \Gamma_{2 L_{1}} \oplus \Lambda^{4} V(n) \oplus \Lambda^{2} V(n) \oplus k, &\text{if $n=3$,}
                                                 \\
                                                 \Gamma_{2 L_{1}} \oplus \Gamma_{2 L_{1} + 2 L_{2}} \oplus \Gamma_{2 L_{1} - 2 L_{2}} 
                                                 \oplus \Gamma_{2 L_{1}}
                                                 \oplus \Lambda^{4} V(n) \oplus \Lambda^{2} V(n) \oplus k, &\text{if $n=4$,}
                                                 \\
                                                 \Gamma_{2 L_{1} + L_{2}} \oplus \Gamma_{2 L_{1} + 2 L_{2}} \oplus \Gamma_{2 L_{1}} 
                                                 \oplus \Lambda^{4} V(n) \oplus \Lambda^{2} V(n) \oplus k, &\text{if $n=5$,}
                                                 \\
                                                 \Gamma_{2 L_{1} + L_{2} + L_{3}} \oplus \Gamma_{2 L_{1} + L_{2} - L_{3}}
                                                 \oplus \Gamma_{2 L_{1} + 2 L_{2}} \oplus \Gamma_{2 L_{1}} 
                                                 \oplus \Lambda^{4} V(n) \oplus \Lambda^{2} V(n) \oplus k, &\text{if $n=6$,}
                                                 \\
                                                 \Gamma_{2 L_{1} + L_{2} + L_{3}} \oplus \Gamma_{2 L_{1} + 2 L_{2}} \oplus \Gamma_{2 L_{1}} 
                                                 \oplus \Lambda^{4} V(n) \oplus \Lambda^{2} V(n) \oplus k, &\text{if $n \geq 7$.}
                                               \end{cases}
\]
\end{scriptsize}
\end{corollary}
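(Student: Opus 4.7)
The plan is to exploit that $W(n)$ is generated as an $S(V(n))$-module by its degree-two part $W(n)_{2} \simeq \Lambda^{2} V(n)$, as follows from Corollaries \ref{coro:W(n)} and \ref{coro:W(n)son}. This gives a surjective equivariant $S(V(n))$-linear map $S(V(n)) \otimes \Lambda^{2} V(n) \twoheadrightarrow W(n)$, $1 \otimes x_{i} \wedge x_{j} \mapsto [x_{i},x_{j}]$. Tensoring on the right by $W(n)$ over $S(V(n))$ and using right-exactness of the tensor product, I obtain a surjective equivariant morphism
\[     \Lambda^{2} V(n) \otimes_{k} W(n) \twoheadrightarrow W(n) \otimes_{S(V(n))} W(n),     \]
which restricts in each homogeneous degree $d$ to a surjection $\Lambda^{2} V(n) \otimes_{k} W(n)_{d-2} \twoheadrightarrow (W(n) \otimes_{S(V(n))} W(n))_{d}$ of finite dimensional $\so(n)$-modules. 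This is the main reduction.

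To handle the homogeneous component in degree $4$, I would observe that $W(n)$ is concentrated in degrees $\geq 2$, so the only contribution to $(W(n) \otimes_{k} W(n))_{4}$ comes from $W(n)_{2} \otimes_{k} W(n)_{2}$; and no $S(V(n))$-relation $sw \otimes w' = w \otimes sw'$ can identify two tensors in this degree, for such an identification in total degree $4$ would force $s \in S^{0}(V(n)) = k$ and thus be trivial. Combined with $W(n)_{2} \simeq \Lambda^{2} V(n)$, this yields the desired isomorphism $(W(n) \otimes_{S(V(n))} W(n))_{4} \simeq \Lambda^{2} V(n) \otimes \Lambda^{2} V(n)$ of $\so(n)$-modules.

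For the vanishing of the trivial isotypic component in degrees $d \geq 5$, the reduction above reduces the claim to verifying that the trivial representation $k$ does not appear in $\Lambda^{2} V(n) \otimes_{k} W(n)_{d-2}$. Since $\Lambda^{2} V(n) \simeq \so(n)$ is the adjoint representation and its irreducible summands are all self-dual, by Schur's lemma this is equivalent to showing that no irreducible summand of $\Lambda^{2} V(n)$ appears in $W(n)_{d-2}$ when $d-2 \geq 3$, which is an immediate comparison of highest weights using the explicit description of $W(n)_{d-2}$ provided by Corollary \ref{coro:W(n)son}.

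Finally, it remains to produce the explicit isotypic decomposition of $\Lambda^{2} V(n) \otimes \Lambda^{2} V(n)$ in each of the cases $n = 3, 4, 5, 6$ and $n \geq 7$. This is a routine application of \v{Z}elobenko's fusion rules \cite{Zel1}, starting from $\Lambda^{2} V(3) \simeq \Gamma_{L_{1}}$, $\Lambda^{2} V(4) \simeq \Gamma_{L_{1}+L_{2}} \oplus \Gamma_{L_{1}-L_{2}}$, and $\Lambda^{2} V(n) \simeq \Gamma_{L_{1}+L_{2}}$ for $n \geq 5$. The main technical obstacle, and the source of the case distinction in the statement, is keeping careful track of the exceptional identifications in small rank (such as $\Lambda^{4} V(3) = 0$, $\Lambda^{4} V(4) \simeq k$, the Hodge $\ast$-isomorphism $\Lambda^{4} V(6) \simeq \Lambda^{2} V(6)$, and the splittings induced by $\so(4) \simeq \sll_{2} \oplus \sll_{2}$); once these are handled, the generic case $n \geq 7$ follows directly from the fusion rule applied to $\Gamma_{L_{1}+L_{2}} \otimes \Gamma_{L_{1}+L_{2}}$.
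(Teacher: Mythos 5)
Your proposal is correct and follows essentially the same route as the paper: the same surjection $W(n) \otimes \Lambda^{2} V(n) \twoheadrightarrow W(n) \otimes_{S(V(n))} W(n)$ obtained by tensoring the presentation of $W(n)$, the same identification of the degree-$4$ component with $\Lambda^{2} V(n) \otimes \Lambda^{2} V(n)$, and the same appeal to the \v{Z}elobenko fusion rules for the final decomposition. Your use of Schur's lemma and the self-duality of $\Lambda^{2} V(n) \simeq \so(n)$ to rule out the trivial isotypic component in degrees greater than $4$ is a mild streamlining of the paper's direct fusion-rule computation, but the underlying reduction to $\Lambda^{2} V(n) \otimes W(n)_{d-2}$ and the highest-weight comparison with Corollary \ref{coro:W(n)son} are identical.
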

\begin{proof}
The existence of an equivariant epimorphism 
$S(V(n)) \otimes \Lambda^{2} V(n) \twoheadrightarrow W(n)$ says that the $S(V(n))$-module $W(n) \otimes_{S(V(n))} W(n)$ is an 
epimorphic image of $W(n) \otimes \Lambda^{2} V(n)$, which has homogeneous components of degree greater than or equal to $4$. 
By the previous corollary, the component of degree $p+2$ of $W(n) \otimes \Lambda^{2} V(n)$, for $p > 2$, is given by 
\[     W(n)_{p} \otimes \Lambda^{2} V(n) \simeq \begin{cases}
                                                   \Gamma_{(p-1) L_{1}} \otimes \Gamma_{L_{1}}, &\text{if $n=3$,}
                                                   \\
                                                   (\Gamma_{(p-1) L_{1} + L_{2}} \oplus \Gamma_{(p-1) L_{1} - L_{2}}) \otimes \Gamma_{L_{1}}, 
                                                   &\text{if $n=4$,}
                                                   \\
                                                   \Gamma_{(p-1) L_{1} + L_{2}} \otimes \Gamma_{L_{1}}, &\text{if $n \geq 5$.}
                                                \end{cases}
\]
Using the \v{Z}elobenko fusion rules we find that $k \simeq \Gamma_{0 L_{1}}$ is not an isotypic component of $W(n)_{p} \otimes \Lambda^{2} V(n)$ 
for $p > 2$, which proves the first statement. 

For the second statement we proceed as follows. 
Using again the \v{Z}elobenko fusion rules for the tensor product $\Lambda^{2} V(n) \otimes \Lambda^{2} V(n) \simeq \so(n) \otimes \so(n)$, we find that 
\begin{scriptsize}
\[     \Lambda^{2} V(n) \otimes \Lambda^{2} V(n) \simeq \begin{cases}
                                                 \Gamma_{2 L_{1}} \oplus V(n) \oplus k, &\text{if $n=3$,}
                                                 \\
                                                 \Gamma_{2 L_{1} + 2 L_{2}} \oplus \Gamma_{2 L_{1} - 2 L_{2}} \oplus \Gamma_{2 L_{1}}^{\oplus 2} 
                                                 \oplus \Gamma_{L_{1} + L_{2}} \oplus \Gamma_{L_{1} - L_{2}} \oplus k^{\oplus 2}, &\text{if $n=4$,}
                                                 \\
                                                 \Gamma_{2 L_{1} + 2 L_{2}} \oplus \Gamma_{2 L_{1} + L_{2}} \oplus \Gamma_{2 L_{1}}
                                                 \oplus \Gamma_{L_{1} + L_{2}} \oplus \Gamma_{L_{1}} \oplus k, &\text{if $n=5$,}
                                                 \\
                                                 \Gamma_{2 L_{1} + L_{2} + L_{3}} \oplus \Gamma_{2 L_{1} + L_{2} - L_{3}}
                                                 \oplus \Gamma_{2 L_{1} + 2 L_{2}} \oplus \Gamma_{2 L_{1}} \oplus 
                                                 \Gamma_{L_{1} + L_{2}} \oplus \Gamma_{L_{1} + L_{2}} \oplus k, &\text{if $n=6$,}
                                                 \\
                                                 \Gamma_{2 L_{1} + L_{2} + L_{3}} \oplus \Gamma_{2 L_{1} + 2 L_{2}} \oplus \Gamma_{2 L_{1}}
                                                 \oplus \Gamma_{L_{1} + L_{2} + L_{3}} \oplus \Gamma_{L_{1} + L_{2}} \oplus k, &\text{if $n=7$,}
                                                 \\
                                                 \Gamma_{2 L_{1} + L_{2} + L_{3}} \oplus \Gamma_{2 L_{1} + 2 L_{2}} \oplus \Gamma_{2 L_{1}}
                                                 \oplus \Gamma_{L_{1} + L_{2} + L_{3} + L_{4}} \oplus 
                                                 \Gamma_{L_{1} + L_{2} + L_{3} - L_{4}} \oplus \Gamma_{L_{1} + L_{2}} \oplus k, &\text{if $n=8$,}
                                                 \\
                                                 \Gamma_{2 L_{1} + L_{2} + L_{3}} \oplus \Gamma_{2 L_{1} + 2 L_{2}} \oplus \Gamma_{2 L_{1}}
                                                 \oplus \Gamma_{L_{1} + L_{2} + L_{3} + L_{4}} \oplus \Gamma_{L_{1} + L_{2}} \oplus k, &\text{if $n>8$.}
                                                 \end{cases}
\]
\end{scriptsize}
Taking into account that 
\begin{align*}
   \Lambda^{2} V(n) &\simeq V(n),   &\text{if $n = 3$,}
   \\
   \Lambda^{2} V(n) &\simeq \Gamma_{L_{1}+L_{2}} \oplus \Gamma_{L_{1}-L_{2}},   &\text{if $n = 4$,}
   \\
   \Lambda^{2} V(n) &\simeq \Gamma_{L_{1} + L_{2}},    &\text{if $n \geq 5$,}
\end{align*}
and
\begin{align*}
   \Lambda^{4} V(n) &\simeq 0, &\text{if $n=3$,}
   \\
   \Lambda^{4} V(n) &\simeq k,   &\text{if $n = 4$,}
   \\
   \Lambda^{4} V(n) &\simeq V(n) \simeq \Gamma_{L_{1}},   &\text{if $n = 5$,}
   \\
   \Lambda^{4} V(n) &\simeq \Lambda^{2} V(n) \simeq \Gamma_{L_{1} + L_{2}},    &\text{if $n = 6$,}
   \\
   \Lambda^{4} V(n) &\simeq \Lambda^{3} V(n) \simeq \Gamma_{L_{1} + L_{2} + L_{3}},    &\text{if $n = 7$,}
   \\
   \Lambda^{4} V(n) &\simeq \Gamma_{L_{1} + L_{2}+L_{3}+L_{4}} \oplus \Gamma_{L_{1} + L_{2}+L_{3}-L_{4}},
   &\text{if $n = 8$,}
   \\
   \Lambda^{4} V(n) &\simeq \Gamma_{L_{1} + L_{2}+L_{3}+L_{4}},    &\text{if $n > 8$,}
\end{align*}
we obtain the desired decomposition for $\Lambda^{2} V(n) \otimes \Lambda^{2} V(n)$. 
\end{proof}

\subsection{\texorpdfstring{Some algebraic properties of $W(n)$}{Some algebraic properties of W(n)}}
\label{subsec:geogen}

In this subsection we shall prove some algebraic properties of $W(n)$ which will be very useful in the sequel.
At the end of this subsection we briefly discuss  a geometric interpretation of these properties.

The following Lemma is analogous to the K\"unneth formula
.
\begin{lemma}
\label{lema:kunneth}
Let $C_{\bullet} = C_{\bullet}(\YM(n),S(V(n)))$ be the complex \eqref{eq:complejohomologiayangmills} for the equivariant left
$\YM(n)$-module $S(V(n))$ and let $z \in S(V(n))$ be a nonzero homogeneous element of degree $d$.
After applying the functor $S(V(n))/\cl{z} \otimes_{S(V(n))} (\place)$ to the complex $C_{\bullet}$, we obtain the following
short exact sequence composed of graded $S(V(n))$-modules and homogeneous morphisms of degree $0$
\begin{small}
\[     0 \rightarrow S(V(n))/\cl{z} \otimes_{S(V(n))} H_{q}(C) 
         \rightarrow H_{q}(S(V(n))/\cl{z} \otimes_{S(V(n))} C_{\bullet})
         \rightarrow \Tor^{\text{\begin{tiny}$S(V(n))$\end{tiny}}}_{q}(S(V(n))/\cl{z},H_{q-1}(C_{\bullet})) 
         \rightarrow 0.     \]
\end{small}
\end{lemma}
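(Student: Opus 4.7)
The plan is to derive this as the standard Künneth-type short exact sequence associated with the short free resolution of $S(V(n))/\langle z\rangle$ that exists because $S(V(n))$ is a polynomial algebra.

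First, I would verify that $C_{\bullet}$ is a complex of flat (in fact, free) graded $S(V(n))$-modules with $S(V(n))$-linear differentials. Indeed, each term is either $S(V(n))[-4]$, $S(V(n))\otimes V(n)[-2]$, $S(V(n))\otimes V(n)$, or $S(V(n))$, so each is free as an $S(V(n))$-module. The differentials $d_{1}, d_{2}, d_{3}$ described in \eqref{eq:complejohomologiayangmills} for $Y=S(V(n))$ consist of multiplications by elements of $S(V(n))$ (coming from the action of $\YM(n)$ on $S(V(n))$ via the canonical projection $\YM(n)\twoheadrightarrow S(V(n))$), hence are $S(V(n))$-linear and homogeneous of degree $0$.

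Second, since $S(V(n))$ is a polynomial algebra and thus a domain, the nonzero homogeneous element $z$ of degree $d$ is a nonzerodivisor. This yields a length-one free resolution
\[
0 \longrightarrow S(V(n))[-d] \xrightarrow{\cdot z} S(V(n)) \longrightarrow S(V(n))/\langle z\rangle \longrightarrow 0,
\]
of graded $S(V(n))$-modules. Tensoring this resolution over $S(V(n))$ with each term of $C_{\bullet}$ preserves exactness because each $C_{q}$ is flat, so I obtain a short exact sequence of complexes
\[
0 \longrightarrow C_{\bullet}[-d] \xrightarrow{\cdot z} C_{\bullet} \longrightarrow S(V(n))/\langle z\rangle \otimes_{S(V(n))} C_{\bullet} \longrightarrow 0.
\]

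Third, I would pass to the associated long exact sequence in homology and break it into short exact sequences of the form
\[
0 \longrightarrow \Coker\bigl(\cdot z \colon H_{q}(C)[-d] \to H_{q}(C)\bigr) \longrightarrow H_{q}\bigl(S(V(n))/\langle z\rangle \otimes_{S(V(n))} C_{\bullet}\bigr) \longrightarrow \Ker\bigl(\cdot z \colon H_{q-1}(C)[-d] \to H_{q-1}(C)\bigr) \longrightarrow 0.
\]
Finally, one identifies the outer terms: by tensoring the free resolution above with $H_{q}(C)$ and $H_{q-1}(C)$ respectively, the cokernel of multiplication by $z$ on $H_{q}(C)$ is $S(V(n))/\langle z\rangle \otimes_{S(V(n))} H_{q}(C)$, while its kernel on $H_{q-1}(C)$ coincides with $\Tor_{1}^{S(V(n))}(S(V(n))/\langle z\rangle, H_{q-1}(C))$, giving the claimed sequence.

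There is no real obstacle; the only care needed is in checking that the complex $C_{\bullet}$ really is a complex of flat $S(V(n))$-modules with $S(V(n))$-linear differentials, since a priori it is only a complex of right $\YM(n)$-modules, and in keeping track of the degree shifts $[-d]$ throughout so that the resulting sequence lives in the category of graded modules with homogeneous morphisms of degree zero.
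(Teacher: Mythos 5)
Your proof is correct, and it reaches the same short exact sequence by a slightly more elementary route than the paper. The paper invokes the K\"unneth spectral sequence $E^{2}_{p,q} = \Tor^{S(V(n))}_{p}(S(V(n))/\cl{z}, H_{q}(C_{\bullet}))$ associated to the double complex $P_{p}\otimes_{S(V(n))} C_{q}$, observes that a length-one projective resolution $P_{\bullet}$ forces the $E^{2}$-page to be concentrated in two columns, and reads off the resulting two-column degeneration as the short exact sequence. You instead tensor the explicit resolution $0 \to S(V(n))[-d] \xrightarrow{\cdot z} S(V(n)) \to S(V(n))/\cl{z} \to 0$ against $C_{\bullet}$ termwise, obtain the short exact sequence of complexes $0 \to C_{\bullet}[-d] \xrightarrow{\cdot z} C_{\bullet} \to S(V(n))/\cl{z}\otimes_{S(V(n))} C_{\bullet} \to 0$, and split the ensuing long exact homology sequence at the multiplication-by-$z$ maps, identifying cokernel and kernel with $\Tor_{0}$ and $\Tor_{1}$. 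The two arguments rest on exactly the same two inputs (freeness of the terms of $C_{\bullet}$ over $S(V(n))$, which you rightly flag as the one point needing verification, and projective dimension one of $S(V(n))/\cl{z}$), so they are really the same computation; what your version buys is that it makes the connecting map concrete (it is induced by $\cdot z$), which is in the spirit of how the paper later uses the lemma to identify $\Tor_{1}(S(V(n))/\cl{z}, W(n))$ with $\mathrm{ann}_{W(n)[-d]}(z)$. Note also that your conclusion correctly produces $\Tor_{1}$ in the third term, in agreement with the paper's own proof; the subscript $q$ appearing there in the displayed statement of the lemma is evidently a typo for $1$.
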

\begin{proof}
First, we see that $C_{\bullet}$ is a complex of free graded left $S(V(n))$-modules.
Its homology was computed in \cite{HS1}, Prop. 3.5.

We consider a free graded resolution of the $S(V(n))$-module $S(V(n))/\cl{z}$, which will be denoted by $P_{\bullet}$,
provided with morphisms of degree $0$. 
Since the $S(V(n))$-module $S(V(n))/\cl{z}$ has projective dimension $1$, we may choose $P_{\bullet}$ such that 
$P_{i} = 0$ for $i \geq 2$. 

We can apply the K\"unneth spectral sequence (see \cite{Wei1}, Thm. 5.6.4), which yields
\[     E^{2}_{p,q} = \Tor^{\text{\begin{tiny}$S(V(n))$\end{tiny}}}_{p} (S(V(n))/\cl{z} , H_{q}(C_{\bullet})) \Rightarrow H_{p+q} (S(V(n))/\cl{z} \otimes_{S(V(n))} C_{\bullet}).     \]
If we consider the double complex $D_{p,q} = P_{p} \otimes_{S(V(n))} C_{q}$, the previous spectral sequence is just the
spectral sequence of the filtration by columns of $D_{\bullet,\bullet}$.
Hence, the first term of this spectral sequence is of the form $E^{1}_{p,q} = H_{q}(D_{p,\bullet}) = P_{p} \otimes_{S(V(n))} H_{q}(C_{\bullet})$,
since $P_{p}$ is a free graded $S(V(n))$-modules, for all $p$.
As a consequence, $E^{1}_{p,q}$ consists of only two columns $p = 0,1$, so \textit{a fortiori}, $E^{2}_{p,q}$
vanishes outside the columns $p = 0, 1$.
Hence we obtain a short exact sequence of graded $S(V(n))$-modules provided with homogeneous morphisms of degree $0$
(see \cite{Rot1}, Cor. 10.29) 
\begin{small}
\[     0 \rightarrow \Tor^{\text{\begin{tiny}$S(V(n))$\end{tiny}}}_{0} (S(V(n))/\cl{z} , H_{q}(C_{\bullet})) \rightarrow H_{q}(S(V(n))/\cl{z} \otimes_{S(V(n))} C_{\bullet})
         \rightarrow \Tor^{\text{\begin{tiny}$S(V(n))$\end{tiny}}}_{1}(S(V(n))/\cl{z},H_{q-1}(C)) \rightarrow 0,     \]
\end{small}
which proves the lemma.
\end{proof}

Since $H_{2}(C_{\bullet}) = 0$, the previous lemma implies that
\begin{align*}
     H_{2} (\ym(n),S(V(n))/\cl{z}) &= H_{2}(S(V(n))/\cl{z} \otimes_{S(V(n))} C_{\bullet})
                            \simeq \Tor^{\text{\begin{tiny}$S(V(n))$\end{tiny}}}_{1}(S(V(n))/\cl{z},H_{1}(C_{\bullet})) 
     \\                            
     &\simeq \Tor^{\text{\begin{tiny}$S(V(n))$\end{tiny}}}_{1}(S(V(n))/\cl{z},W(n)).     
\end{align*}

On the other hand, there is a free graded resolution of the $S(V(n))$-module $S(V(n))/\cl{z}$ of the form
\begin{equation}
\label{eq:resA}
     0 \rightarrow S(V(n))[-d] \overset{\cdot z}{\rightarrow} S(V(n)) \rightarrow S(V(n))/\cl{z} \rightarrow 0,
\end{equation}
so the $S(V(n))$-module $S(V(n))/\cl{z}$ has projective dimension less than or equal to $1$.
From the previous resolution, we conclude that $\Tor^{\text{\begin{tiny}$S(V(n))$\end{tiny}}}_{1}(S(V(n))/\cl{z},W(n)) \simeq \mathrm{ann}_{W(n)[-d]}(z)$,
where $\mathrm{ann}_{W}(z) = \{ w \in W : z\cdot w = 0 \}$.

We recall that $q = \sum_{i=1}^{n} x_{i}^{2} \in S(V(n))$. 
If we set $z = q$, then $d = 2$ and we obtain a homogeneous left $S(V(n))/\cl{q}$-linear isomorphism of degree $0$ of the form
$H_{2}(S(V(n))/\cl{q} \otimes_{S(V(n))} C) \simeq W(n)[-2]$.

We also have the following result.
\begin{proposition}
\label{prop:Minyectivo}
Let $n \geq 3$.
The generators $x_{1}, \dots, x_{n} \in S(V(n))$ are nonzerodivisors on $W(n)$.
\end{proposition}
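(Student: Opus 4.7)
The plan is to reduce, for any fixed $i$, the nonzerodivisor property of $x_{i}$ on $W(n)$ to a Tor-vanishing over $S(V(n))$ that can be checked from a manifestly short free resolution.

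Fix $i \in \{1, \dots, n\}$. I would apply Lemma \ref{lema:kunneth} to the complex $C_{\bullet} = C_{\bullet}(\YM(n), S(V(n)))$ with $z = x_{i}$ (of degree $d = 1$) and $q = 2$. Since $H_{2}(C_{\bullet}) = 0$ and $H_{1}(C_{\bullet}) \simeq W(n)$, the three-term short exact sequence collapses to the isomorphism
\[     H_{2}(\ym(n), S(V(n))/\cl{x_{i}}) \simeq \Tor^{S(V(n))}_{1}(S(V(n))/\cl{x_{i}}, W(n)).     \]
Using the short free resolution \eqref{eq:resA} for $z = x_{i}$, the right-hand side is in turn isomorphic to $\mathrm{ann}_{W(n)[-1]}(x_{i})$; hence the whole claim is equivalent to showing $H_{2}(\ym(n), S(V(n))/\cl{x_{i}}) = 0$.

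To establish this vanishing, I would identify $S(V(n))/\cl{x_{i}} \simeq k[x_{1}, \dots, \hat{x}_{i}, \dots, x_{n}]$ as a right $S(V(n))$-module, observe that $q = \sum_{j=1}^{n} x_{j}^{2}$ acts on this quotient as $\sum_{j \neq i} x_{j}^{2}$, a nonzero element of a polynomial domain and thus a nonzerodivisor, and then apply Proposition \ref{prop:annhyangmills} to obtain
\[     H_{2}(\ym(n), S(V(n))/\cl{x_{i}}) \simeq H_{n-1}(V(n), S(V(n))/\cl{x_{i}}) \simeq \Tor^{S(V(n))}_{n-1}(S(V(n))/\cl{x_{i}}, k).     \]
The last Tor group vanishes because $S(V(n))/\cl{x_{i}}$ admits the length-one free $S(V(n))$-resolution $0 \rightarrow S(V(n))[-1] \overset{\cdot x_{i}}{\rightarrow} S(V(n)) \rightarrow S(V(n))/\cl{x_{i}} \rightarrow 0$, whereas $n - 1 \geq 2$ by the hypothesis $n \geq 3$.

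The conceptual heart of the argument is the use of Proposition \ref{prop:annhyangmills} to convert the delicate $H_{2}$ of the Yang-Mills Lie algebra into a Tor over $S(V(n))$ that is computed by a manifestly short resolution; the only hypothesis needing verification is that $q$ remains a nonzerodivisor modulo $\cl{x_{i}}$, which is immediate. I expect the only potential subtlety to be the bookkeeping of the internal grading shifts in Lemma \ref{lema:kunneth} and in the identification of $\Tor^{S(V(n))}_{1}(S(V(n))/\cl{x_{i}}, W(n))$ with the annihilator, but no deeper obstacle should arise.
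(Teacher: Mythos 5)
Your proposal is correct and follows essentially the same route as the paper's own proof: both reduce $\mathrm{ann}_{W(n)}(x_{i})$ to $H_{2}(\ym(n),S(V(n))/\cl{x_{i}})$ via Lemma \ref{lema:kunneth} and the resolution \eqref{eq:resA}, then kill that group by checking that $q$ stays a nonzerodivisor modulo $x_{i}$ and invoking Proposition \ref{prop:annhyangmills} together with the vanishing of $H_{n-1}(V(n),S(V(n))/\cl{x_{i}})$ for $n\geq 3$. The only difference is cosmetic: you justify the nonzerodivisor property of $q$ by identifying the quotient with a polynomial domain, while the paper cites coprimality of $q$ and $x_{i}$.
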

\begin{proof}
Let us now assume that $z = x_{i}$.
By the previous isomorphisms, we have that
\[     H_{2} (\ym(n),S(V(n))/\cl{x_{i}}) \simeq \Tor^{\text{\begin{tiny}$S(V(n))$\end{tiny}}}_{1}(S(V(n))/\cl{x_{i}},W(n)) \simeq \mathrm{ann}_{W(n)[-1]}(x_{i}).     \]
Since 
we have chosen $n \geq 3$, then $H_{n-1}(V(n),S(V(n))/\cl{x_{i}}) = 0$.

Taking into account that $q$ and $x_{i}$ are coprime in $S(V(n))$, the map on $S(V(n))/\cl{x_{i}}$
given by multiplication by $q$ is injective.
Proposition \ref{prop:annhyangmills} tells us that $H_{2}(\ym(n),S(V(n))/\cl{x_{i}}) = 0$.
This in turn implies that $\mathrm{ann}_{W(n)}(x_{i}) = 0$, for all $i = 1, \dots, n$, so every $x_{i}$ is a nonzerodivisor on $W(n)$
and hence the natural morphism of localization $W(n) \rightarrow W(n)_{(x_{i})}$ is injective for all $i = 1, \dots, n$.
\end{proof}

We recall that, if $A$ is an $\NN_{0}$-graded algebra, the homogeneous morphism of degree $0$ given by
\begin{align*}
   d_{\mathrm{eu}} : A &\rightarrow A
   \\
   a &\mapsto |a| a,
\end{align*}
where $a \in A$ is homogeneous of  degree $|a|$,
is a derivation of $A$, called the \emph{Eulerian derivation}.

The following fact is implicit in \cite{Mov1}.
\begin{proposition}
\label{prop:succorta}
Consider $q = \sum_{i=1}^{n} x_{i}^{2} \in S(V(n))$ and $A = S(V(n))/\cl{q}$. 
There is a short exact sequence of graded $A$-modules 
\begin{equation}
\label{eq:sucW(n)}
   0 \rightarrow W(n) \rightarrow \Omega_{A/k} \overset{d'_{\mathrm{eu}}}{\rightarrow} A_{+} \rightarrow 0,
\end{equation}
where $A_{+} = \oplus_{m \geq 1} A_{m}$ is the irrelevant ideal of the $\NN_{0}$-graded algebra $A$, $\Omega_{A/k}$ is the module of 
K\"ahler differentials of $A$ over $k$ and $d'_{\mathrm{eu}}$ is the map induced by the Eulerian derivation $d_{\mathrm{eu}} : A \rightarrow A$. 
\end{proposition}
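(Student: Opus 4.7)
The plan is to identify $\Omega_{A/k}$ by the conormal exact sequence and then exhibit an explicit map $W(n) \to \Omega_{A/k}$ landing in $\ker(d'_{\mathrm{eu}})$, built from the Koszul description of $W(n)$ given in Proposition \ref{prop:isow(n)}, and finally to check bijectivity by two quick diagram chases. Throughout write $\omega = \sum_{i=1}^{n} x_{i} \otimes x_{i} \in A \otimes V(n)$, and note that $dq = 2\omega$ in the module of K\"ahler differentials of the polynomial ring.

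First, I would make the conormal sequence for the principal ideal $\langle q\rangle \subset S(V(n))$ explicit. Since $q$ is irreducible for $n \geq 3$, the quotient $A$ is a domain, so $\omega$ is a nonzerodivisor in the free $A$-module $A \otimes V(n)$, because any $a \in A$ with $a\omega = 0$ would annihilate every $x_{i}$ and thus vanish. Consequently the conormal sequence becomes the short exact sequence
\[     0 \to A[-2] \xrightarrow{\cdot \omega} A \otimes V(n) \to \Omega_{A/k} \to 0,     \]
after the standard identifications $\Omega_{S(V(n))/k} \otimes_{S(V(n))} A \simeq A \otimes V(n)$ and $\langle q\rangle/\langle q\rangle^{2} \simeq A[-2]$. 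Under these identifications, $d'_{\mathrm{eu}}$ is induced by the $A$-linear map $d_{1} : A \otimes V(n) \to A$, $\sum a_{i} \otimes x_{i} \mapsto \sum a_{i} x_{i}$, which vanishes on $A\omega$ modulo $q$ and surjects onto $A_{+}$. This gives the surjectivity on the right of the desired sequence.

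Next I would use the complex $C_{\bullet}(\YM(n), S(V(n)))$ from \eqref{eq:complejohomologiayangmills} to manufacture the left map. Commutativity of $S(V(n))$ simplifies the second differential to $d_{2}^{S(V)}(y \otimes x_{i}) = qy \otimes x_{i} - y x_{i}\omega$, so $\mathrm{Im}(d_{2}^{S(V)})$ is contained in $q\bigl(S(V(n)) \otimes V(n)\bigr) + S(V(n)) \cdot \omega$, which is exactly the kernel of the composite $S(V(n)) \otimes V(n) \twoheadrightarrow A \otimes V(n) \twoheadrightarrow \Omega_{A/k}$. Restricting this composite to $\ker(d_{1}^{S(V)})$ and passing to the quotient, one obtains an $A$-linear map
\[     \Phi' : W(n) \simeq \ker(d_{1}^{S(V)})/\mathrm{Im}(d_{2}^{S(V)}) \longrightarrow \Omega_{A/k},     \]
which lands in $\ker(d'_{\mathrm{eu}})$ because $d_{1}^{S(V)}$ vanishes on its own cycles.

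It only remains to show that $\Phi'$ is an isomorphism onto $\ker(d'_{\mathrm{eu}})$. For injectivity, if $z \in \ker(d_{1}^{S(V)})$ dies in $\Omega_{A/k}$, I can write $z = qu + g\omega$ with $u \in S(V(n)) \otimes V(n)$ and $g \in S(V(n))$; applying $d_{1}^{S(V)}$ forces $g = -d_{1}^{S(V)}(u)$, and substituting gives the identity $z = d_{2}^{S(V)}(u)$, so $[z] = 0$ in $W(n)$. For surjectivity, any class $\bar{\xi} \in \ker(d'_{\mathrm{eu}})$ lifts to some $\tilde{\xi} \in S(V(n)) \otimes V(n)$ with $d_{1}^{S(V)}(\tilde{\xi}) = qg$; replacing $\tilde{\xi}$ by $\tilde{\xi} - g\omega$ produces a representative in $\ker(d_{1}^{S(V)})$ mapping to $\bar{\xi}$. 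The main delicate point of the argument is the left-exactness of the conormal sequence, which is where the hypothesis $n \geq 3$ enters through the irreducibility of $q$; once this is granted the rest is bookkeeping with the explicit formula for $d_{2}^{S(V)}$.
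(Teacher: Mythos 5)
Your proof is correct. The overall skeleton is the one the paper uses -- identify $\Omega_{A/k}$ with $(A \otimes V(n))/A\omega$ via the second fundamental (conormal) sequence, and feed in the Koszul-complex description of $W(n)$ -- but the way you insert $W(n)$ is genuinely different. The paper works with the reduced complex $A \otimes_{S(V(n))} C_{\bullet}$ and invokes the identification $H_{2}(A \otimes_{S(V(n))} C_{\bullet}) \simeq W(n)[-2]$, which it obtained earlier from the K\"unneth spectral sequence together with $\Tor_{1}^{S(V(n))}(A,W(n)) \simeq \mathrm{ann}_{W(n)[-2]}(q) = W(n)[-2]$; injectivity of $W(n) \rightarrow \Omega_{A/k}$ is then automatic because a subquotient of $A \otimes V(n)$ by $\mathrm{Im}(\id_{A}\otimes d_{3})$ maps injectively into the full quotient, and exactness at $\Omega_{A/k}$ follows from the computation of $\Ker(\id_{A}\otimes d_{2})$. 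You instead start from Proposition \ref{prop:isow(n)}, i.e.\ $W(n) \simeq \Ker(d_{1})/\mathrm{Im}(d_{2})$ over $S(V(n))$ itself, observe that the commutativity of $S(V(n))$ collapses $d_{2}$ to $y \otimes x_{i} \mapsto qy\otimes x_{i} - yx_{i}\omega$, and then check injectivity and surjectivity onto $\Ker(d'_{\mathrm{eu}})$ by explicit lifting arguments. What your route buys is that it bypasses the K\"unneth lemma and the $\Tor_{1}$ computation entirely, and it fills in precisely the steps the paper dismisses as ``readily verified'' and ``clear''; the cost is the two diagram chases, both of which you carry out correctly (in particular the identity $z = qu - d_{1}(u)\omega = d_{2}(u)$ in the injectivity step is exactly right). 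One small remark: left-exactness of the conormal sequence, which you flag as the delicate point, is not actually needed for your argument -- you only use that the kernel of $A\otimes V(n) \rightarrow \Omega_{A/k}$ equals $\mathrm{Im}(\delta) = A\omega$, which follows from right-exactness alone; the injectivity of $a \mapsto a\omega$ (from $A$ being a domain for $n \geq 3$) is a correct but separable observation.
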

\begin{proof}
We know that there is a homogeneous isomorphism $W(n)[-2] \simeq H_{2}(A \otimes_{S(V(n))} C_{\bullet})$ 
of graded $A$-modules of degree $0$.
Inspecting the complex $A \otimes_{S(V(n))} C_{\bullet}$, we conclude that 
\[     \Ker(\id_{A} \otimes d_{2}) = \Big\{ \sum_{i=1}^{n} a_{i} \otimes x_{i} : \sum_{i}^{n} a_{i} x_{i} = 0 \Big\},     \]
because
\begin{align*}
   (\id_{A} \otimes d_{2})(\sum_{i=1}^{n} a_{i} \otimes x_{i})
   &= \sum_{i,j = 1}^{n} (a_{i} x_{j}^{2} \otimes x_{i} - a_{i} x_{j} x_{i} \otimes x_{j})
   \\
   &= \sum_{i=1}^{n} a_{i} q \otimes x_{i} - \sum_{j=1}^{n}(\sum_{i=1}^{n}a_{i} x_{i})x_{j} \otimes x_{j}
   \\
   &= - \sum_{j=1}^{n}(\sum_{i=1}^{n}a_{i} x_{i})x_{j} \otimes x_{j},
\end{align*}
and thus $(\id_{A} \otimes d_{2})(\sum_{i=1}^{n} a_{i} \otimes x_{i}) = 0$ if and only if $\sum_{i=1}^{n}a_{i} x_{i} = 0$, 
for $A$ is a domain.
Also,
\[     \mathrm{Im}(\id_{A} \otimes d_{3}) = \Big\{ \sum_{i=1}^{n} a x_{i} \otimes x_{i} : a \in A \Big\}.     \]

The second fundamental sequence for the quotient $A = S(V(n))/\cl{q}$ is 
\[     \cl{q}/\cl{q}^{2} \overset{\delta}{\rightarrow} A \otimes_{S(V(n))} \Omega_{S(V(n))/k}
       \overset{\alpha}{\rightarrow} \Omega_{A/k} \rightarrow 0,     \]
where $\delta(\bar{q}) =  dq$ and $\alpha(\bar{p} \otimes_{S(V(n))} dz) = \bar{p} d\bar{z}$, for $z, p \in S(V(n))$.

Since $\Omega_{S(V(n))/k} \simeq S(V(n)) \otimes V(n)$, by the isomorphism $z \otimes x_{i} \mapsto z dx_{i}$
(see \cite{Hart1}, Example 8.2.1), we derive that, using this identification, 
$\mathrm{Im}(\delta) = \mathrm{Im}(\id_{A} \otimes d_{3})$ and, moreover, the map 
\begin{align*}
   (A \otimes V(n))/\mathrm{Im}(\id_{A} \otimes d_{3}) &\rightarrow \Omega_{A/k}
   \\
   \overline {\bar{p} \otimes x_{i}} &\mapsto \bar{p} d\bar{x}_{i},
\end{align*}
is a homogeneous $A$-linear isomorphism of degree $0$.
In other words, there is a short exact sequence of graded $A$-modules
\begin{equation}
\label{eq:euler}
     0 \rightarrow A[-2] \rightarrow A \otimes V(n) \rightarrow \Omega_{A/k} \rightarrow 0,
\end{equation}
where the first mapping is $a \mapsto \sum_{i=1}^{n}a x_{i} \otimes x_{i}$.
Notice that $A \otimes V(n) \simeq (A[-1])^{n}$.

Finally, the morphism given by the inclusion $\Ker(d_{2}) \hookrightarrow A \otimes V(n)$ induces a map of graded 
$A$-modules $W(n) \simeq H_{2}(A \otimes_{S(V(n))} C_{\bullet})[2] \hookrightarrow (A \otimes V(n))/\mathrm{Im}(\id_{A} \otimes d_{3}) \simeq \Omega_{A/k}$. 
It is readily verified that it provides the first morphism of the short exact sequence of the proposition. 
Also, it is clear that the map $d'_{\mathrm{eu}}$ is an epimorphism and its kernel coincides with the image of the previous morphism.
\end{proof}

\begin{remark}
\label{rem:geown}
Let $n \geq 3$. 
The projective spectrum of the graded $k$-algebra $A = S(V(n))/\cl{q}$ gives an irreducible projective variety $X$ with structure sheaf $\O_{X}$ 
and the finitely generated graded $S(V(n))/\cl{q}$-module $W(n)$ provides a coherent sheaf $W(n)^{\sim}$.
Proposition \ref{prop:Minyectivo} may be interpreted as stating that the natural morphism $\alpha : W(n) \rightarrow \Gamma_{\bullet}(W(n)^{\sim})$
is in fact injective (see \cite{Mi1}, p. 115), a result implicitly used in \cite{Mov1}.
We define $M(n) = W(n)[2]$.
For reasons that will be clear later, we will prefer to work with $M(n)$.

Also, from the previous considerations we may derive the following fact mentioned in \cite{MS1}, Example 4, and in \cite{Mov1}:
the sheaf of $\O_{X}$-modules $M(n)^{\sim}$ is isomorphic to the tangent sheaf of $X$.
This is proved as follows.
We first note that the functor $(\place)^{\sim}$ is exact.
Let $i : X \rightarrow \PP(V(n))$ be the inclusion of $X$ in $\PP(V(n))$. 
Since the functor $i^{*}$ is right exact, applying $i^{*}$ to the Euler exact sequence for the projective space
(see \cite{Hart1}, Example 8.20.1, \cite{Huy1}, Prop. 2.4.4), we derive the exact sequence of sheaves of $\O_{X}$-modules
\[     \O_{X} \rightarrow (\O_{X}[1])^{n} \rightarrow i^{*}(\mathcal{T}_{\PP(V(n))}) \rightarrow 0,     \]
where the first map is induced by
\begin{align*}
   A &\rightarrow (A[1])^{n}
   \\
   z &\mapsto (z x_{1}, \dots, z x_{n}).
\end{align*}
We may compare this exact sequence with the one obtained by applying the functor $(\place)^{\sim}$ to the short exact sequence \eqref{eq:euler}.
This implies that $\Omega_{A/k}^{\sim} \simeq i^{*}(\mathcal{T}_{\PP(V(n))})[-2]$.

On the other hand, we can consider the short exact sequence of the normal fiber bundle of a subvariety (see \cite{Mi1}, p. 150)
\[     0 \rightarrow \T_{X} \rightarrow i^{*}(\T_{\PP(V(n))}) \rightarrow \mathcal{N}_{X|\PP(V(n))} \rightarrow 0,     \]
where $\mathcal{N}_{X|\PP(V(n))} = \mathcal{H}om_{\O_{X}}(\I / {\I}^{2} , \O_{X})$ denotes the normal fiber bundle associated to the inclusion
$i : X \rightarrow \PP(V(n))$ and $\I = \cl{q}^{\sim}$ is the sheaf of ideals of $\O_{\PP(V(n))}$ which defines $X$.
In this case, we have the following chain of isomorphisms
\[     \mathcal{N}_{X|\PP(V(n))} = \mathcal{H}om_{\O_{X}}(\I / {\I}^{2} , \O_{X}) \simeq (\Hom_{A} (\cl{q}/\cl{q}^{2},A))^{\sim}
       \simeq (A[2])^{\sim} = \O_{X}[2],     \]
where the penultimate isomorphism is induced by the $A$-linear isomorphism
$\Hom_{A} (\cl{q}/\cl{q}^{2},A) \overset{\simeq}{\rightarrow} A[2]$ 
given by $f \mapsto f(\bar{q})$.
Also, the last map in the previous short exact sequence is induced by $d'_{\mathrm{eu}}$.
Hence, $\T_{X} \simeq W(n)[2]^{\sim} \simeq M(n)^{\sim}$.

The previous results allow us to give a geometrical interpretation of $M(n)$ as the tangent bundle over $X$.
Moreover, since $X$ has a transitive action of $\SO(n)$, it becomes a homogeneous space $\SO(n)/P$, for some parabolic subgroup $P$ 
with Lie algebra $\p \simeq (\so(n-2) \times k) \rtimes V(n-2)$, where $V(n-2)$ is an abelian Lie algebra
within $\so(n-2)$ acts by the standard representation and $k$ acts diagonally.
Both the tangent bundle $M(n)^{\sim}$ and the tautological line bundle $\O_{X}[-1]$ are homogeneous vector bundles associated to
some irreducible representations of lowest weight $-\lambda$ over $\p$, so we may apply the Borel-Weil-Bott theorem
in order to compute $H^{\bullet}(X,E)$, for $E$ equal to $M(n)[i]^{\sim}$ or $(M(n)^{\sim} \otimes_{\O_{X}} M(n)^{\sim})[i]$.
Amazingly, this gives plenty of information about the module $M(n)$: it allows us to prove that the natural morphism
$\alpha : M(n) \rightarrow \Gamma_{\bullet}(M(n)^{\sim})$ is an isomorphism for $n \geq 4$ and also gives a complete description in case $n = 3$,
to compute the groups $\Tor^{\text{\begin{tiny}$S(V(n))$\end{tiny}}}_{\bullet}(M(n),M(n))$, etc.
We shall not pursue these ideas in this article, since 
we shall replace them by shorter algebraic considerations. 
We refer to \cite{Hers1} and references therein for a complete description of the previous geometrical insight.
\qed
\end{remark}

\subsection{\texorpdfstring{Homological properties of $W(n)$}{Homological properties of W(n)}}

\subsubsection{Generalities}


Let $R$ and $S$ be two $k$-algebras, $X$ a right $R$-module, $Y$ an $R$-$S$-bimodule and $Z$ a left $S$-module.
If $Q_{\bullet} \twoheadrightarrow X$ and $P_{\bullet} \twoheadrightarrow Z$ are corresponding projective resolutions,
we can consider the second term of the base-change spectral sequence $E^{2}_{p,q} = \Tor^{\text{\begin{tiny}$R$\end{tiny}}}_{p} 
(X,\Tor^{\text{\begin{tiny}$S$\end{tiny}}}_{q}(Y,Z))$,
given by the filtration by rows of the double complex $C_{p,q} = Q_{q} \otimes_{R} Y \otimes_{S} P_{p}$.
If $Y$ is a flat $R$-module, it converges to $\Tor^{\text{\begin{tiny}$S$\end{tiny}}}_{\bullet}(X \otimes_{R} Y , Z)$ 
(see \cite{Rot1}, Thm. 10.59). 

We shall consider the previous spectral sequence for the case
$R = S(V(n))$, $S = \YM(n)$, $Y = S(V(n))$, $Z = k$ and any graded $S(V(n))$-module $X$, given by 
\[     E^{2}_{p,q} = \Tor^{\text{\begin{tiny}$S(V(n))$\end{tiny}}}_{p}(X, \Tor^{\text{\begin{tiny}$\YM(n)$\end{tiny}}}_{q}(S(V(n)),k)) 
       \Rightarrow \Tor^{\text{\begin{tiny}$\YM(n)$\end{tiny}}}_{p+q}(X,k).     \]
Notice that $\Tor^{\text{\begin{tiny}$\YM(n)$\end{tiny}}}_{p+q}(X,k) \simeq H_{p+q}(\ym(n),X)$. 

\begin{remark}
\label{obs:diagram}
We choose $Q_{\bullet} = \Lambda^{\bullet} V(n) \otimes X \otimes S(V(n)) = C_{\bullet}(V(n), X \otimes S(V(n)))$.
It is easily verified that it is indeed a projective resolution of the right $S(V(n))$-module $X$.
On the other side, we choose $P_{\bullet}$ as the Koszul resolution \eqref{eq:complejoKoszulcompleto} of the left $\YM(n)$-module $k$.

In this case, the base-change spectral sequence is given by the filtration by rows of the double complex
\begin{equation}
\label{eq:complejodoblesucesp}
       C_{p,q} = Q_{q} \otimes_{S(V(n))} S(V(n)) \otimes_{\YM(n)} P_{p}
       \simeq \Lambda^{q} V(n) \otimes X \otimes C_{p}(\YM(n),S(V(n))),
\end{equation}
with vertical differential $d_{\bullet}^{\mathrm{CE}} \otimes \id_{\YM(n)^{!}_{\bullet}}$, where we consider the action of $V(n)$ on
$X \otimes S(V(n))$, and with horizontal differential
$\id_{\Lambda^{\bullet} V(n) \otimes X} \otimes d_{\bullet}$, with $d_{\bullet}$ the differential of
$C_{\bullet}(\YM(n),S(V(n)))$.
\qed
\end{remark}

On the other hand, since $\Tor^{\text{\begin{tiny}$\YM(n)$\end{tiny}}}_{q}(S(V(n)),k) \simeq H_{q}(\ym(n),S(V(n)))$ and using Proposition 3.5 
of~\cite{HS1} and Proposition \ref{prop:isow(n)}, we have the equivariant $S(V(n))$-linear isomorphisms
\[     \Tor^{\text{\begin{tiny}$\YM(n)$\end{tiny}}}_{\bullet}(S(V(n)),k) \simeq \begin{cases}
                                                       k, &\text{if $\bullet = 0$,}
                                                       \\
                                                       W(n), &\text{if $\bullet = 1$,}
                                                       \\
                                                       0, &\text{if not.}
                                                    \end{cases}
\]
Thus, our spectral sequence has only two nonzero rows
\begin{equation}
\label{eq:isosucesp1}
     E^{2}_{p,0} \simeq \Tor^{\text{\begin{tiny}$S(V(n))$\end{tiny}}}_{p}(X , k) \simeq H_{p}(V(n) , X)     
\end{equation}
and
\begin{equation}
\label{eq:isosucesp2}
     E^{2}_{p,1} \simeq \Tor^{\text{\begin{tiny}$S(V(n))$\end{tiny}}}_{p}(X , W(n)) \simeq H_{p}(V(n), X \otimes W(n) ).     
\end{equation}
The last isomorphism follows from the usual fact that $\Tor^{\text{\begin{tiny}$\U(\g)$\end{tiny}}}_{\bullet}(M,N) \simeq H_{\bullet}(\g, M \otimes N)$ 
(see \cite{CE1}, Chap XI, Prop. 9.2). 

Furthermore, since the spectral sequence has only two rows, it gives a long exact sequence of the form (see \cite{Rot1}, Prop. 10.28) 
\begin{equation}
\label{eq:longexacseq}
\begin{split}
   &\rightarrow H_{p}(\ym(n),X) \rightarrow 
   H_{p}(V(n),X) \rightarrow 
   H_{p-2}(V(n),X \otimes W(n))
   \\
   &\rightarrow H_{p - 1}(\ym(n),X) 
   \rightarrow H_{p-1}(V(n),X)
   \rightarrow H_{p-3}(V(n),X \otimes W(n))
   \rightarrow \dots
   \\
   \dots
   &\rightarrow H_{2}(\ym(n),X) 
   \rightarrow H_{2}(V(n),X)
   \rightarrow H_{0}(V(n),X \otimes W(n))
   \rightarrow 
   \\ 
   &\rightarrow H_{1}(\ym(n),X) 
   \rightarrow H_{1}(V(n),X) 
   \rightarrow 0,
\end{split}
\end{equation}
and the isomorphism 
$H_{0}(\ym(n),X) \simeq H_{0}(V(n),X)$.

However, since $\YM(n)$ has global dimension equal to $3$, $H_{p}(\ym(n),X) = 0$, if $p \geq 4$.
This implies that
\begin{equation}
\label{eq:imp}
   H_{p+1}(V(n),X) \simeq H_{p-1}(V(n),X \otimes W(n)),
\end{equation}
for $p \geq 4$.

\begin{remark}
\label{rem:accionsvntor}
As for the long exact sequence, we may write the previous spectral sequence using the identification 
$\mathrm{Tor}^{\text{\begin{tiny}$\U(\g)$\end{tiny}}}_{\bullet}(M,N) \simeq H_{\bullet}(\g,M \otimes N)$. 
In this case, it is easy to see that the spectral sequence coincides with the Hochschild-Serre spectral sequence 
$H_{p}(V(n),H_{q}(\tym(n),X)) \Rightarrow H_{p+q}(\ym(n),X)$, for $X$ a $V(n)$-module. 
However, we point out some differences. 
First, the Hochschild-Serre spectral sequence may also be used when $X$ is a $\ym(n)$-module. 
Second, the base-change spectral sequence has further structure since it lives in the category of (graded) $S(V(n))$-modules, 
whereas $\mathrm{Tor}^{\text{\begin{tiny}$S(V(n))$\end{tiny}}}_{\bullet}(X,W(n)) \simeq H_{\bullet}(V(n),X \otimes W(n))$ only holds as $k$-modules 
(and also as homogeneous $\so(n)$-modules when $X$ is an equivariant $S(V(n))$-module). 
That the base-change spectral sequence can be considered in the category of (graded) $S(V(n))$-modules comes from the fact that 
the considered modules are provided with an action of $S(V(n))$ which commutes with all other actions. 
\qed
\end{remark}

We shall be mostly interested in the case that $X$ is given by the equivariant $S(V(n))$-module $W(n)^{\otimes i}$, for $i \in \NN_{0}$, 
which is an equivariant $S(V(n))$-module provided with the diagonal action. 
It is readily verified that, if $X = W(n)^{\otimes i}$, all previously considered morphisms are in fact 
homogeneous of degree $0$ and $\so(n)$-linear. 
For the rest of this subsection, all morphisms will also be $\so(n)$-linear, unless we say the opposite. 

The long exact sequence \eqref{eq:longexacseq} for $X=W(n)^{\otimes i}$ becomes 
\begin{equation}
\label{eq:longexacseqforW}
\begin{split}
   &\rightarrow H_{p}(\ym(n),W(n)^{\otimes i}) \rightarrow 
   H_{p}(V(n),W(n)^{\otimes i}) \rightarrow 
   H_{p-2}(V(n),W(n)^{\otimes (i+1)})
   \\
   &\rightarrow H_{p - 1}(\ym(n),W(n)^{\otimes i}) 
   \rightarrow H_{p-1}(V(n),W(n)^{\otimes i})
   \rightarrow H_{p-3}(V(n),W(n)^{\otimes (i+1)})
   \rightarrow \dots
   \\
   \dots
   &\rightarrow H_{2}(\ym(n),W(n)^{\otimes i}) 
   \rightarrow H_{2}(V(n),W(n)^{\otimes i})
   \rightarrow H_{0}(V(n),W(n)^{\otimes (i+1)})
   \rightarrow 
   \\ 
   &\rightarrow H_{1}(\ym(n),W(n)^{\otimes i}) 
   \rightarrow H_{1}(V(n),W(n)^{\otimes i}) 
   \rightarrow 0,
\end{split}
\end{equation}
and we obtain the isomorphism 
$H_{0}(\ym(n),W(n)^{\otimes i}) \simeq H_{0}(V(n),W(n)^{\otimes i})$.

Also, the isomorphisms \eqref{eq:imp} tell us that 
\begin{equation}
\label{eq:impforW}
   H_{p+1}(V(n),W(n)^{\otimes i}) \simeq H_{p-1}(V(n),W(n)^{\otimes (i+1)}),
\end{equation}
for $p \geq 4$ and $i \in \NN_{0}$.

In fact, a stronger statement relating these homology groups holds. 
\begin{theorem}
\label{teo:exacta}
Let $n \geq 3$ and $i \geq 1$. 
There is a long exact sequence of $\so(n)$-modules and homogeneous $\so(n)$-equivariant morphisms 
\begin{align*}
   0 &\rightarrow H_{3}(V(n),W(n)^{\otimes i})
   \overset{S'_{i}}{\rightarrow} H_{1}(V(n),W(n)^{\otimes (i+1)})
   \overset{B'_{i}}{\rightarrow} H_{2}(\ym(n),W(n)^{\otimes i})
   \overset{I'_{i}}{\rightarrow} 
   \\
   &\rightarrow H_{2}(V(n),W(n)^{\otimes i})
   \overset{S_{i}}{\rightarrow} H_{0}(V(n),W(n)^{\otimes (i+1)})
   \overset{B_{i}}{\rightarrow} H_{1}(\ym(n),W(n)^{\otimes i})
   \overset{I_{i}}{\rightarrow} 
   \\
   &\rightarrow H_{1}(V(n),W(n)^{\otimes i}) \rightarrow 0,
\end{align*}
and a collection of homogeneous $\so(n)$-equivariant isomorphisms 
$H_{0}(\ym(n),W(n)^{\otimes i}) \simeq H_{0}(V(n),W(n)^{\otimes i})$ and
\[
\xymatrix@R-20pt@C-15pt
{
   H_{0}(V(n),W(n)) \simeq \Lambda^{2} V(n),
   &
   H_{1}(V(n),W(n)) \simeq \Lambda^{3} V(n) \oplus V(n)[-2],
   \\
   H_{2}(V(n),W(n)) \simeq \Lambda^{4} V(n) \oplus k[-4],
   &
   H_{p}(V(n),W(n)^{\otimes i}) \simeq \Lambda^{p+2 i} V(n), \hskip 1mm \text{for $p \geq 2$,}
}
\]
where in the last isomorphism we exclude the case $p = 2$ and $i = 1$.
\end{theorem}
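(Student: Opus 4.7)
My plan is to derive the main long exact sequence from the already-established sequence \eqref{eq:longexacseqforW}, and then compute the explicit $V(n)$-homology groups using the short exact sequence \eqref{eq:sucW(n)} together with the auxiliary sequences of Subsection \ref{subsec:geogen}.

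First I would establish the main long exact sequence and the isomorphism $H_0(\ym(n),W(n)^{\otimes i})\simeq H_0(V(n),W(n)^{\otimes i})$ directly from \eqref{eq:longexacseqforW}; the only new input needed is the vanishing $H_3(\ym(n),W(n)^{\otimes i})=0$. Note that $W(n)^{\otimes i}$ is indeed an $S(V(n))$-module, since $\tym(n)$ acts trivially on each factor, hence trivially on the tensor power via the Leibniz rule. Proposition \ref{prop:annhyangmills} then identifies $H_3(\ym(n),W(n)^{\otimes i})\simeq H_n(V(n),W(n)^{\otimes i})=(W(n)^{\otimes i})^{V(n)}$. By Proposition \ref{prop:Minyectivo} each $x_j$ acts on $W(n)$ as a nonzerodivisor of positive degree; a graded PID argument then shows $W(n)$ is a direct sum of shifts of $k[x_j]$ as $k[x_j]$-module, and from this one deduces that the induced action of $x_j$ on $W(n)^{\otimes i}$ (coming from the coproduct) is again torsion-free, so the joint annihilator of $V(n)$ vanishes.

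Second I would compute $H_\bullet(V(n),W(n))$ by exploiting \eqref{eq:sucW(n)} and \eqref{eq:euler}, together with the short exact sequence $0\to A_+\to A\to k\to 0$, where $A=S(V(n))/\cl{q}$. Using the $S(V(n))$-resolution $0\to S(V(n))[-2]\to S(V(n))\to A\to 0$ (whose middle map is multiplication by $q$) one obtains $H_0(V(n),A)\simeq k$, $H_1(V(n),A)\simeq k[-2]$ and $H_p(V(n),A)=0$ for $p\geq 2$. Feeding this through the long exact sequences of the two auxiliary short exact sequences yields $H_\bullet(V(n),A_+)$ and $H_\bullet(V(n),\Omega_{A/k})$ in all degrees, with the needed splittings of extensions forced either by internal degree or by distinctness of the irreducible $\so(n)$-summands. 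The long exact sequence of $V(n)$-homology applied to \eqref{eq:sucW(n)} then produces the desired $H_\bullet(V(n),W(n))$: the isomorphism $H_0(V(n),W(n))\simeq \Lambda^2V(n)$ is also directly visible from Corollary \ref{coro:W(n)} (since $V(n)\cdot W(n)$ already exhausts $W(n)_{\geq 3}$), and this independent computation pins down an otherwise ambiguous scalar between $k[-2]$-summands in internal degree $2$, forcing the stated descriptions of $H_1(V(n),W(n))$ and $H_2(V(n),W(n))$. The bookkeeping in this second step is the main obstacle of the proof, since several inequivalent irreducible $\so(n)$-types coexist in the same internal degree and identifying which are actually realized in each $H_\bullet$ requires careful tracking of connecting maps.

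Third, for the formula $H_p(V(n),W(n)^{\otimes i})\simeq\Lambda^{p+2i}V(n)$ with $p\geq 3$ and $i\geq 1$, I would iterate \eqref{eq:impforW} (rewritten as $H_q(V(n),W(n)^{\otimes(j+1)})\simeq H_{q+2}(V(n),W(n)^{\otimes j})$, valid for $q\geq 3$) to reduce to $H_{p+2(i-1)}(V(n),W(n))$, which equals $\Lambda^{p+2i}V(n)$ by extending the second step to higher degrees (the same long exact sequence now gives $H_p(V(n),W(n))\simeq H_{p+1}(V(n),A_+)\simeq \Lambda^{p+2}V(n)$ for all $p\geq 3$). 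For the remaining case $p=2$ with $i\geq 2$, I would read \eqref{eq:longexacseqforW} at homological degree $p=4$: both $H_4(\ym(n),W(n)^{\otimes(i-1)})=0$ (global dimension is $3$) and $H_3(\ym(n),W(n)^{\otimes(i-1)})=0$ (by the first step) collapse the sequence to $H_4(V(n),W(n)^{\otimes(i-1)})\simeq H_2(V(n),W(n)^{\otimes i})$, and the left-hand side equals $\Lambda^{2+2i}V(n)$ by the previous reduction.
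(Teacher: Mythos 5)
Your proposal is correct, and it shares the paper's overall skeleton (truncate the long exact sequence \eqref{eq:longexacseqforW} using $H_3(\ym(n),W(n)^{\otimes i})=0$, compute $H_\bullet(V(n),W(n))$ explicitly, then propagate to higher tensor powers via the degree-shift isomorphisms), but two of the three steps are carried out by genuinely different means. For the vanishing of $H_3(\ym(n),W(n)^{\otimes i})\simeq H_n(V(n),W(n)^{\otimes i})$, the paper argues by induction on $i$ using the isomorphisms \eqref{eq:imp}, which first requires knowing $H_p(V(n),W(n))\simeq\Lambda^{p+2}V(n)$ for $p\geq 3$; your torsion-freeness argument (Proposition \ref{prop:Minyectivo} plus the graded structure theorem over $k[x_j]$, so that $W(n)^{\otimes i}$ with the diagonal action becomes a direct sum of shifted polynomial rings on which $x_j$ acts by $t_1+\dots+t_i$) is independent of that computation and even kills the annihilator of a single $x_j$; it does lean on $W(n)$ being bounded below with finite-dimensional homogeneous components, which you should state when invoking freeness over $k[x_j]$. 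For $H_\bullet(V(n),W(n))$ the paper simply reads off the long exact sequence \eqref{eq:longexacseq} with $X=k$, where the two potentially troublesome maps $H_j(\ym(n),k)\to H_j(V(n),k)$ ($j=2,3$) vanish for representation-theoretic reasons, so no ambiguity arises; your route through \eqref{eq:euler}, \eqref{eq:sucW(n)} and $0\to A_+\to A\to k\to 0$ also works, and you correctly isolate the one genuine subtlety, namely the scalar $k[-2]\to k[-2]$ between $H_1(V(n),\Omega_{A/k})$ and $H_1(V(n),A_+)$, which must be nonzero because $H_0(V(n),W(n))=\Lambda^2 V(n)$ by Corollary \ref{coro:W(n)}; the price is a longer diagram chase and a dependence on Proposition \ref{prop:succorta}. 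The third step (iterating $H_p(V(n),W(n)^{\otimes(i+1)})\simeq H_{p+2}(V(n),W(n)^{\otimes i})$ for $p\geq 2$, with the case $p=2$ read off from the sequence at homological degree $4$) coincides with the paper's argument.
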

\begin{proof}
Let us first suppose that $i = 0$, so $W(n)^{\otimes i} \simeq k$. 
From the fact that $H_{4}(\ym(n),W(n)^{\otimes i})$ vanishes, we have the following exact sequence
\begin{align*}
   0 &\rightarrow H_{4}(V(n),k) \rightarrow H_{2}(V(n),W(n))
   \rightarrow H_{3}(\ym(n),k) \rightarrow H_{3}(V(n),k)
   \rightarrow H_{1}(V(n),W(n)) \rightarrow 
   \\
   &\rightarrow H_{2}(\ym(n),k)
   \rightarrow H_{2}(V(n),k)
   \rightarrow H_{0}(V(n),W(n)) \rightarrow H_{1}(\ym(n),k)
   \rightarrow H_{1}(V(n),k) \rightarrow 0.
\end{align*}
On the other hand, the isomorphisms $H_{p}(V(n),k) \simeq \Lambda^{p} V(n)$, obtained from the Chevalley-Eilenberg complex,
and 
\[     H_{0}(\ym(n),k) \simeq k, \hskip 0.2cm  H_{1}(\ym(n),k) \simeq V(n), \hskip 0.2cm H_{2}(\ym(n),k) \simeq V(n)[-2], \hskip 0.2cm
       H_{3}(\ym(n),k) \simeq k[-4],     \]
which follow from the Koszul complex \eqref{eq:complejohomologiayangmills}, imply that
$H_{0}(V(n),W(n)) \simeq \Lambda^{2} V(n)$ and the following short exact sequences
\[     0 \rightarrow \Lambda^{4} V(n) \rightarrow H_{2}(V(n),W(n)) \rightarrow k[-4] \rightarrow 0     \]
and
\[     0 \rightarrow \Lambda^{3} V(n) \rightarrow H_{1}(V(n),W(n)) \rightarrow V(n)[-2] \rightarrow 0.     \]
Note that we have used that the maps $H_{2}(\ym(n),k) \rightarrow H_{2}(V(n),k)$ and $H_{3}(\ym(n),k) \rightarrow H_{3}(V(n),k)$ vanish,
since they are $\so(n)$-linear maps between different irreducible representations.

Since $H_{p}(\ym(n),k) = 0$, for all $p \geq 4$ and $H_{p}(V(n),k) \simeq \Lambda^{p} V(n)$, there are isomorphisms 
\begin{equation}
\label{eq:comphom}
     H_{p}(V(n),W(n)) \simeq \Lambda^{p+2} V(n), \hskip 1mm \text{for all $p \geq 3$.}
\end{equation}

Let us now assume that $i \geq 1$.

We shall first prove the following proposition.
\begin{proposition}
\label{prop:annh3yangmills}
If $n \geq 3$ and $j \in \NN$, 
then $H_{p}(V(n),W(n)^{\otimes j}) = 0$, for $p \geq n$, and, in consequence, $H_{3}(\ym(n),W(n)^{\otimes j}) = 0$, for $j \in \NN$. 
\end{proposition}
\begin{proof}
The second statement follows directly from the first one, since by Proposition \ref{prop:annhyangmills},
there is an isomorphism $H_{n}(V(n),W(n)^{\otimes j}) \simeq H_{3}(\ym(n),W(n)^{\otimes j})$.

Let us prove the first one, proceeding by induction on $j$. 

Assume that $j=1$.
In this case, using \eqref{eq:imp} for $i=0$ we obtain that there is an isomorphism   
$H_{p+1} (V(n),k) \simeq H_{p-1}(V(n),W(n))$, for $p \geq 4$, so
$H_{p}(V(n),W(n)) \simeq \Lambda^{p+2} V(n)$, for $p \geq 3$.
Then $H_{p}(V(n),W(n)) = 0$, for $p \geq n$. 

Supposing that the proposition holds for $j-1$,
we will prove it for $j$.
In this case, the isomorphism \eqref{eq:imp} for $i = j-1$ implies that
$H_{p+1} (V(n),W(n)^{\otimes (j-1)}) \simeq H_{p-1}(V(n),W(n)^{\otimes j})$, for $p \geq 4$.
Hence $H_{p}(V(n),W(n)^{\otimes j}) \simeq H_{p+2} (V(n),W(n)^{\otimes (j-1)}) = 0$, for $p \geq n$.
The proposition is then proved.
\end{proof}

As a consequence of the previous proposition, $H_{3}(\ym(n),W(n)^{\otimes i}) = 0$ if $i \geq 1$, and we obtain
an exact sequence 
\begin{align*}
   0 &\rightarrow H_{3}(V(n),W(n)^{\otimes i})
   \rightarrow H_{1}(V(n),W(n)^{\otimes (i+1)}) 
   \rightarrow H_{2}(\ym(n),W(n)^{\otimes i})
   \rightarrow H_{2}(V(n),W(n)^{\otimes i}) 
   \\
   &\rightarrow H_{0}(V(n),W(n)^{\otimes (i+1)}) 
   \rightarrow H_{1}(\ym(n),W(n)^{\otimes i})
   \rightarrow H_{1}(V(n),W(n)^{\otimes i}) 
   \rightarrow 0.
\end{align*}
The vanishing of $H_{\bullet}(\ym(n),W(n)^{\otimes i})$, for $\bullet \geq 3$, yields the isomorphisms 
$H_{p}(V(n),W(n)^{\otimes (i+1)}) \simeq H_{p+2}(V(n),W(n)^{\otimes i})$, for $p \geq 2$. 
By induction, it turns out that $H_{p}(V(n),W(n)^{\otimes j})$ is isomorphic to $H_{p+2(j-1)}(V(n),W(n))$, for all $p \geq 2$, $j \geq 2$. 
Using \eqref{eq:comphom}, we see that 
$H_{p}(V(n),W(n)^{\otimes j}) \simeq \Lambda^{p+2 j} V(n)$,
in case $p \geq 2$ and $j \geq 2$.
We may summarize the previous information as follows: 
\[     H_{p}(V(n),W(n)^{\otimes i}) \simeq \Lambda^{p+2 i} V(n)     \]
if $p \geq 2$ and $i \geq 1$, except in case $p = 2$ and $i = 1$.
This completes the proof of the theorem. 
\end{proof}

The following remark describes some of the morphisms appearing in the exact sequence of Theorem \ref{teo:exacta}.
\begin{remark}
\label{obs:morfismossucesp}
If $i \geq 1$, the maps $S'_{i}$ and $S_{i}$ in the theorem coincide with the differentials $d^{2}_{3,0}$ and $d^{2}_{2,0}$
of the second term of the base-change spectral sequence, resp.
and the isomorphisms $H_{p}(V(n),W(n)^{\otimes i}) \rightarrow H_{p-2}(V(n),W(n)^{\otimes (i+1)})$
for $p \geq 4$ coincide with the differentials $d^{2}_{p,0}$.

If $i = 0$, the injections $\Lambda^{4} V(n) \hookrightarrow H_{2}(V(n),W(n))$
and $\Lambda^{3} V(n) \hookrightarrow H_{1}(V(n),W(n))$
coincide with $d^{2}_{4,0}$ and $d^{2}_{3,0}$, respectively.
Also, the family of isomorphisms $H_{p}(V(n),W(n)^{\otimes i}) \rightarrow H_{p-2}(V(n),W(n)^{\otimes (i+1)})$
for $p \geq 5$ coincide with differentials $d^{2}_{p,0}$.

On the other hand, the total complex of the double complex \eqref{eq:complejodoblesucesp} for $X = W(n)^{\otimes i}$, which may be rewritten as 
\[     {}^{i}C_{p,q} = \Lambda^{q} V(n) \otimes W(n)^{\otimes i} \otimes C_{p}(\YM(n),S(V(n))),     \]
is quasi-isomorphic to $C_{\bullet}(\YM(n),W(n)^{\otimes i})$, and the quasi-isomoprhism is given by the map 
\begin{equation}
\label{eq:quasiisosucesp}
   \mathrm{Tot}({}^{i}C_{\bullet,\bullet}) \rightarrow C_{\bullet}(\YM(n),W(n)^{\otimes i})
\end{equation}
induced by the projection
\begin{equation}
\label{eq:quasiisosucesp2}
   {}^{i}C_{\bullet,0} = W(n)^{\otimes i} \otimes C_{\bullet}(\YM(n),S(V(n))) \rightarrow C_{\bullet}(\YM(n),W(n)^{\otimes i})
\end{equation}
given by the action of $S(V(n))$ on $W(n)^{\otimes i}$, \textit{i.e.} $w \otimes z \otimes v \mapsto w z \otimes v$,
if $z \otimes v$ belongs to $C_{1}(\YM(n),S(V(n)))$ or $C_{2}(\YM(n),S(V(n)))$ ($w \in W(n)^{\otimes i}$,
$z \in S(V(n))$ and $v \in V(n)$); and $w \otimes z \mapsto w z$,
if $z$ belongs to $C_{0}(\YM(n),S(V(n)))$ or $C_{3}(\YM(n),S(V(n)))$ ($w \in W(n)^{\otimes i}$ and $z \in S(V(n))$). 
From this quasi-isomorphism, the maps $B_{i}$ and $B'_{i}$ appearing in Theorem \ref{teo:exacta} can be described as follows. 
As it is usual, identifying $E^{2}_{p,q}$ with subquotients of $\mathrm{Tot}({}^{i}C_{\bullet,\bullet})$, 
the mappings $B_{i}$ and $B'_{i}$ are induced by the composition of the inclusion and the quasi-isomorphism \eqref{eq:quasiisosucesp}
(see \cite{Rot1}, Thm. 10.31). 

Finally, let us describe the morphism $I_{i} : H_{1}(\ym(n),W(n)^{\otimes i}) \rightarrow H_{1}(V(n),W(n)^{\otimes i})$.
In order to do so, we recall that, if $E$ is a $V(n)$-module, the Lie algebra morphism
$\pi : \ym(n) \rightarrow V(n)$ given by the canonical projection induces a morphism of complexes
$C_{\bullet}(\ym(n),E) \rightarrow C_{\bullet}(V(n),E)$
.
This in turn induces a morphism in the homology groups $H_{\bullet}(\ym(n),E) \rightarrow H_{\bullet}(V(n),E)$.
In particular, there is a map $H_{1}(\ym(n),E) \rightarrow H_{1}(V(n),E)$, induced by $\id_{E} \otimes \pi$.

On the other hand, from the comparison of resolutions achieved at the end of Subsection \ref{subsec:homandcohom},
we see that the map $\id_{E} \otimes \mathrm{inc} : E \otimes V(n) \rightarrow E \otimes \ym(n)$ induces an isomorphism
$H_{1}(\ym(n),E) \rightarrow H_{1}(\ym(n),E)$.
Therefore, if we choose as representatives of the homology $H_{1}(\ym(n),E)$ the cycles of $C_{1}(\YM(n),E)$,
and as representatives of the homology $H_{1}(V(n),E)$ the cycles of $C_{1}(V(n),E)$, the mapping
$H_{1}(\ym(n),E) \rightarrow H_{1}(V(n),E)$ induced by the identity $\id_{E \otimes V(n)}$ coincides with the one induced by $\id_{E} \otimes \pi$.
By \cite{Rot1}, Thm. 10.31, 
if we choose as representatives of the homology $H_{1}(\ym(n),W(n)^{\otimes i})$
the cycles of $C_{1}(\YM(n),W(n)^{\otimes i})$, and as representatives of the homology $H_{1}(V(n),W(n)^{\otimes i})$
the cycles of $C_{1}(V(n),W(n)^{\otimes i})$, the map $I_{i}$ is induced by the identity $\id_{W(n)^{\otimes i} \otimes V(n)}$.
This shows that $I_{i}$ also coincides with the morphism induced by $\id_{W(n)^{\otimes i}} \otimes \pi$.
\qed
\end{remark}

\begin{proposition}
\label{prop:s1inyectivo}
Let $n \geq 3$. 
The morphism $S_{1} : H_{2}(V(n),W(n)) \rightarrow H_{0}(V(n),W(n)^{\otimes 2})$ is an injection. 
By exactness of the sequence of Theorem \ref{teo:exacta}, $B'_{1}$ is surjective and $I'_{1} = 0$.
\end{proposition}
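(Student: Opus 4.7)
The plan is to use $\so(n)$-equivariance to reduce injectivity of $S_{1}$ to an explicit computation of the spectral-sequence differential $d^{2}_{2,0}$.

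By Theorem \ref{teo:exacta}, the source $H_{2}(V(n),W(n)) \simeq \Lambda^{4} V(n) \oplus k[-4]$ is concentrated in internal degree $4$, and by Corollary \ref{coro:compisownwn} the degree-$4$ component of the target $H_{0}(V(n),W(n)^{\otimes 2}) \simeq W(n) \otimes_{S(V(n))} W(n)$ equals $\Lambda^{2} V(n) \otimes \Lambda^{2} V(n)$, whose $\so(n)$-isotypic decomposition is given explicitly there. Inspecting multiplicities: for $n = 3$ the source reduces to the $k[-4]$ summand only; for $n \geq 5$ both $\Lambda^{4} V(n)$ and the trivial representation appear in the target with multiplicity one, so by Schur's lemma and $\so(n)$-equivariance of $S_{1}$, injectivity reduces to showing that the restriction of $S_{1}$ to each of the two source summands is nonzero; the case $n = 4$ is an exception, since the trivial isotype appears with multiplicity two on both sides and one must verify non-degeneracy of the induced $2 \times 2$ matrix separately.

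By Remark \ref{obs:morfismossucesp}, $S_{1}$ is identified with the differential $d^{2}_{2,0}$ of the base-change spectral sequence and can be computed via a zigzag in the double complex \eqref{eq:complejodoblesucesp}. Concretely, I would choose explicit cycles in $\Lambda^{2} V(n) \otimes W(n)$ representing highest-weight vectors in each source summand---for the $\Lambda^{4} V(n)$ summand a natural candidate is $\sum_{\sigma \in S_{4}} \operatorname{sgn}(\sigma)\, x_{\sigma(1)} \wedge x_{\sigma(2)} \otimes [x_{\sigma(3)}, x_{\sigma(4)}]$, and for the $k[-4]$ summand an expression of the form $\sum_{i<j} x_{i} \wedge x_{j} \otimes [x_{i},x_{j}]$---then push them through the horizontal Koszul differential and lift back through the vertical Chevalley--Eilenberg differential via the quasi-isomorphism \eqref{eq:quasiisosucesp} to produce explicit elements of the form $\sum \pm\, [x_{a},x_{b}] \otimes [x_{c},x_{d}] \in W(n) \otimes_{S(V(n))} W(n)$ which one checks to be nonzero.

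The main obstacle is carrying out this zigzag cleanly: one must handle the Koszul differentials at internal degree $4$ with care, and then verify that the resulting elements survive modulo the relations imposed by the $S(V(n))$-action on $W(n) \otimes W(n)$---the latter being especially delicate for the trivial isotype, which is where the $k[-4]$ summand lands and where, for $n=4$, a $2 \times 2$ determinant check on the trivial isotypic component is unavoidable. Once injectivity of $S_{1}$ is established, the surjectivity of $B'_{1}$ and the vanishing of $I'_{1}$ follow immediately from the exactness of the long exact sequence of Theorem \ref{teo:exacta}.
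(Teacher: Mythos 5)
Your plan is essentially the paper's proof: both identify $S_{1}$ with the differential $d^{2}_{2,0}$ of the base-change spectral sequence, represent the two summands $\Lambda^{4}V(n)$ and $k[-4]$ of $H_{2}(V(n),W(n))$ by the explicit cycles $\sum_{\sigma \in \SSS_{4}} \epsilon(\sigma)\, x_{i_{\sigma(1)}} \wedge x_{i_{\sigma(2)}} \otimes [x_{i_{\sigma(3)}}, x_{i_{\sigma(4)}}]$ and $\sum_{i<j} x_{i} \wedge x_{j} \otimes [x_{i},x_{j}]$, and compute their images $\sum_{\sigma} \epsilon(\sigma)\, [x_{i_{\sigma(1)}}, x_{i_{\sigma(2)}}] \otimes [x_{i_{\sigma(3)}}, x_{i_{\sigma(4)}}]$ and $\sum_{i<j} [x_{i},x_{j}] \otimes [x_{i},x_{j}]$. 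The one point worth settling: the ``main obstacle'' you flag --- whether these images survive the relations in $W(n) \otimes_{S(V(n))} W(n)$ --- is not an obstacle at all. Since $W(n)$ lives in degrees $\geq 2$, the degree-$4$ component of $W(n)^{\otimes 2}$ is exactly $W(n)_{2} \otimes W(n)_{2} = \Lambda^{2}V(n) \otimes \Lambda^{2}V(n)$, and the submodule $V(n)\cdot W(n)^{\otimes 2}$ one quotients by to form $H_{0}(V(n),W(n)^{\otimes 2})$ sits in degrees $\geq 5$; so in degree $4$ the coinvariants coincide with $\Lambda^{2}V(n) \otimes \Lambda^{2}V(n)$ and it suffices to check linear independence there, which is what the paper does. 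This also makes your Schur-lemma reduction unnecessary: checking directly that the two image vectors are linearly independent in $\Lambda^{2}V(n) \otimes \Lambda^{2}V(n)$ handles all $n \geq 3$ uniformly and removes the separate $2\times 2$ determinant argument for $n=4$. (Two small inaccuracies in your isotypic bookkeeping, neither fatal: for $n=6$ one has $\Lambda^{4}V(6) \simeq \Lambda^{2}V(6)$, so that isotype has multiplicity two in the target; and what your reduction really uses for $n \geq 5$ is only that the two source summands are non-isomorphic irreducibles, not multiplicity one in the target.) The deduction of the surjectivity of $B'_{1}$ and the vanishing of $I'_{1}$ from exactness is correct.
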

\begin{proof}
We proceed by inspection on the morphisms at the level of the double complex which defines the base-change spectral sequence. 

In the first place, from the double complex \eqref{eq:complejodoblesucesp}, Remark \ref{obs:morfismossucesp} and
standard computations on a second term spectral sequence, 
the morphism $\Lambda^{4} V(n) = H_{4}(V(n) , k) \hookrightarrow H_{2}(V(n),W(n))$ is induced by
\[     x_{i_{1}} \wedge x_{i_{2}} \wedge x_{i_{3}} \wedge x_{i_{4}} \mapsto
       \sum_{\sigma \in \SSS_{4}} \epsilon(\sigma) x_{i_{\sigma(1)}} \wedge x_{i_{\sigma(2)}} \otimes [x_{i_{\sigma(3)}}, x_{i_{\sigma(4)}}],      \]
where the image element is a cycle in $\Lambda^{2} V(n) \otimes W(n)$.

Also, the element
\[     \sum_{1 \leq i < j \leq n} x_{i} \wedge x_{j} \otimes [x_{i},x_{j}] \in \Lambda^{2} V(n) \otimes W(n)     \]
is a non trivial cycle, since it is the image of the cycle
\[     (0, \sum_{1 \leq i < j \leq n} x_{i} \wedge x_{j} \otimes (x_{i} \otimes x_{j} - x_{j} \otimes x_{i}),
       \sum_{i = 1}^{n} x_{i} \otimes 1 \otimes x_{i}, - 1 \otimes 1 \otimes 1)     \]
in the degree $3$ component of the total complex of the double complex \eqref{eq:complejodoblesucesp} for $i=0$.
This element does not vanish in the homology of the total complex, since $- 1 \otimes 1 \otimes 1$ cannot be
in the image of the vertical differential by degree reasons.
Moreover, the image of this element in $H_{3}(\ym(n),k) \simeq k$ is $- 1$.

We conclude that a basis for $H_{2}(V(n),W(n))$ is given by the set of classes of the following collection of cycles
\begin{equation}
\label{eq:basish2}
     \Big\{ \sum_{\sigma \in \SSS_{4}} \epsilon(\sigma) x_{i_{\sigma(1)}} \wedge x_{i_{\sigma(2)}} \otimes [x_{i_{\sigma(3)}}, x_{i_{\sigma(4)}}]
       : 1 \leq i_{1} < i_{2} < i_{3} < i_{4} \leq n \Big\}
       \cup \Big\{ \sum_{1 \leq i < j \leq n} x_{i} \wedge x_{j} \otimes [x_{i},x_{j}] \Big\}.     
\end{equation}

By standard computations on a second term spectral sequence, 
the morphism from $H_{2}(V(n),W(n))$ to $H_{0}(V(n),W(n)^{\otimes 2})$ 
satisfies that
\begin{align*}
   \sum_{\sigma \in \SSS_{4}} \epsilon(\sigma) x_{i_{\sigma(1)}} \wedge x_{i_{\sigma(2)}} \otimes [x_{i_{\sigma(3)}}, x_{i_{\sigma(4)}}]
   &\mapsto
   \sum_{\sigma \in \SSS_{4}} \epsilon(\sigma) [x_{i_{\sigma(1)}}, x_{i_{\sigma(2)}}] \otimes [x_{i_{\sigma(3)}}, x_{i_{\sigma(4)}}],
   \\
   \sum_{1 \leq i < j \leq n} x_{i} \wedge x_{j} \otimes [x_{i},x_{j}] &\mapsto \sum_{1 \leq i < j \leq n} [x_{i},x_{j}] \otimes [x_{i},x_{j}],
\end{align*}
for all $1 \leq i_{1} < i_{2} < i_{3} < i_{4} \leq n$ and 
it is not hard to check that these image elements are linearly independent in $\Lambda^{2} V(n) \otimes \Lambda^{2} V(n)$.

Since $W(n)$ is a graded $S(V(n))$-module and $W(n)_{2} = \Lambda^{2} V(n)$, it turns out that
$(W(n) \otimes W(n))_{4} = \Lambda^{2} V(n) \otimes \Lambda^{2} V(n)$ is the non trivial homogeneous component of lowest degree.
Notice that the image of the basis \eqref{eq:basish2} of $H_{2}(V(n),W(n))$ is in fact included in $(W(n) \otimes W(n))_{4}$ 
and this implies that $S_{1}$ is an injection. 
\end{proof}

Let us suppose that $n \geq 3$ and $i \geq 2$. 
The action of $q = \sum_{i=1}^{n} x_{i} \otimes x_{i}$ on $W(n)^{\otimes i}$ is given by the coproduct of $\YM(n)$, 
so in order to make it explicit we will compute $\Delta^{(i)}(q)$, which is of the form
\[     \Delta^{(i)}(q) = \sum_{p=0}^{i-1} 1_{\YM(n)}^{\otimes p} \otimes q \otimes 1_{\YM(n)}^{\otimes (i-p-1)}
                       + 2 \underset{\text{\begin{tiny}$\begin{matrix} p,q \in \NN_{0}
                                                                                 \\  p + q \leq i-2
                                                                                 \end{matrix}$\end{tiny}}}{\sum}
                        1_{\YM(n)}^{\otimes p} \otimes x_{l} \otimes 1_{\YM(n)}^{\otimes q}
                        \otimes x_{l} \otimes 1_{\YM(n)}^{\otimes (i-p-q-2)}.
\]

\begin{proposition}
\label{prop:wni}
If $n \geq 3$ and $i \geq 2$, then $q$ is nonzerodivisor on the $S(V(n))$-module $W(n)^{\otimes i}$.
\end{proposition}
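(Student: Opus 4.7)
The plan is to reduce the claim to showing that a nonzero element of a domain acts as a nonzerodivisor on a torsion-free module. Using the explicit formula for $\Delta^{(i)}(q)$ displayed just above the proposition together with the fact that $q$ acts as zero on $W(n)$ (Proposition \ref{prop:W(n)}), the diagonal $S(V(n))$-action of $q$ on $W(n)^{\otimes i}$ coincides with that of
\[ \tilde q_i := 2 \sum_{0 \leq p < p' \leq i-1} \sum_{l=1}^{n} 1^{\otimes p} \otimes x_l \otimes 1^{\otimes (p'-p-1)} \otimes x_l \otimes 1^{\otimes (i-p'-1)}, \]
viewed as an element of $A^{\otimes i}$ (with $A := S(V(n))/\cl{q}$) acting on $W(n)^{\otimes_{k} i}$ through the outer tensor product structure. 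Since $q$ is an irreducible polynomial in $S(V(n))$ for $n \geq 3$, the algebra $A$ is a domain, and as $k$ is algebraically closed, $A$ is geometrically integral; hence $A^{\otimes_{k} i}$ is also a domain. Moreover, $\tilde q_i$ is nonzero in $A^{\otimes i}$, since its defining monomials $x_{l}^{(p)} x_{l}^{(p')}$ (with $p \neq p'$) are linearly independent of the single-slot relations $\sum_{l} (x_{l}^{(p)})^{2}$.

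The crucial step is to establish that $W(n)$ is torsion-free over $A$. By Proposition \ref{prop:succorta}, $W(n) \hookrightarrow \Omega_{A/k}$, so it suffices to show that $\Omega_{A/k}$ is torsion-free. Applying the depth lemma to the short exact sequence \eqref{eq:euler} and using that $A$ is Cohen--Macaulay of dimension $n-1$ (being a graded hypersurface ring), one obtains $\mathrm{depth}_{A_{+}}(\Omega_{A/k}) \geq n-2 \geq 1$ for $n \geq 3$. Since the singular locus of $\mathrm{Spec}(A)$ reduces to the origin, the sheaf $\widetilde{\Omega_{A/k}}$ is locally free on $\mathrm{Spec}(A) \setminus \{0\}$, so any torsion submodule of $\Omega_{A/k}$ must be supported at the irrelevant ideal $A_{+}$; this is incompatible with positive depth, forcing $\Omega_{A/k}$ to be torsion-free.

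Torsion-freeness of $W(n)^{\otimes_{k} i}$ over $A^{\otimes_{k} i}$ follows in turn: the embedding $W(n) \hookrightarrow W(n) \otimes_{A} K(A) \cong K(A)^{n-2}$ (using the torsion-freeness just proved and the generic rank computation from Proposition \ref{prop:succorta}) yields, after tensoring over $k$, an embedding of $W(n)^{\otimes_{k} i}$ into a free $K(A)^{\otimes_{k} i}$-module, and $K(A)^{\otimes_{k} i}$ is a domain containing $A^{\otimes_{k} i}$ since $K(A)/k$ is a regular extension over the algebraically closed field $k$. Consequently $\tilde q_i$, being a nonzero element of the domain $A^{\otimes i}$, must act as a nonzerodivisor on the torsion-free module $W(n)^{\otimes_{k} i}$, which is exactly what the proposition claims. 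The main obstacle is the depth estimate establishing torsion-freeness of $\Omega_{A/k}$, which essentially requires $n \geq 3$; the rest is a formal manipulation with tensor products of geometrically integral $k$-algebras.
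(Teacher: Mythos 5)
Your argument is correct, but it follows a genuinely different route from the one in the paper. Both proofs share the same starting point: since $q$ kills each tensor factor, its diagonal action on $W(n)^{\otimes i}$ is multiplication by the cross-term element $\tilde q_i$, which is a nonzero element of the domain $A^{\otimes i}$ (with $A = S(V(n))/\cl{q}$ geometrically integral for $n \geq 3$). From there the paper stays entirely homological: it identifies $\mathrm{ann}_{W(n)^{\otimes i}}(q)$ with $\Tor_{1}^{S(V(n))}(A, W(n)^{\otimes i})$, proves the stronger vanishing of $\Tor_{1}^{S(V(n))}(A, A^{\otimes a} \otimes W(n)^{\otimes b})$ for all $a+b=i$ by induction on $b$ (replacing one factor of $A$ by a factor of $W(n)$ at each step), and closes the induction using Proposition \ref{prop:annhyangmills}, the long exact sequence \eqref{eq:longexacseq}, the vanishing results of Theorem \ref{teo:exacta}, and the fact that $A^{\otimes a}$ has projective dimension $1$. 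Your proof instead isolates the commutative-algebraic content: you show $W(n)$ is a torsion-free $A$-module by embedding it into $\Omega_{A/k}$ via \eqref{eq:sucW(n)} and establishing $\mathrm{depth}_{A_{+}}(\Omega_{A/k}) \geq n-2 \geq 1$ from \eqref{eq:euler} together with the fact that the quadric cone is smooth away from the vertex, and you then propagate torsion-freeness to $W(n)^{\otimes_{k} i}$ over $A^{\otimes_{k} i}$ by embedding everything into a free module over the domain $K(A)^{\otimes_{k} i}$ (using that $K(A)/k$ is a regular extension). Each step of yours checks out, including the rank computation $W(n)\otimes_{A}K(A)\simeq K(A)^{n-2}$, which is not actually needed beyond finiteness. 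What your approach buys is a more conceptual and self-contained statement (torsion-freeness of $W(n)$ and of its outer tensor powers), at the cost of importing standard but external commutative algebra (the depth lemma, Cohen--Macaulayness of hypersurfaces, the Jacobian criterion, regular field extensions); what the paper's approach buys is that it runs entirely on machinery already built in Section \ref{sec:Wn} and produces, as a by-product, the $\Tor$-vanishing statements that are reused elsewhere (e.g.\ in Corollary \ref{coro:anulacion}).
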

\begin{proof}
We recall that $\Tor_{p}^{\text{\begin{tiny}$S(V(n))$\end{tiny}}} (Y,Y') \simeq \Tor_{p}^{\text{\begin{tiny}$S(V(n))$\end{tiny}}} (k, Y \otimes Y')$, where $Y \otimes Y'$ has the diagonal action.

We first notice that $(S(V(n))/\cl{q})^{\otimes i}$ is an algebra with the usual structure 
and that the hypothesis on $n$ implies that $q \in S(V(n))$ is irreducible 
and hence $S(V(n))/\cl{q}$ is a domain. 
Since $k$ is algebraically closed and $S(V(n))/\cl{q}$ is a finitely generated integral domain, 
$(S(V(n))/\cl{q})^{\otimes i}$ is in fact a finitely generated integral domain (see \cite{Ch-L1}, Thm. 14.1.5). 

The action of $q \in S(V(n))$ on $(S(V(n))/\cl{q})^{\otimes i}$ is given by multiplication by the nonzero element
\[     2 \underset{\text{\begin{tiny}$\begin{matrix} p,q \in \NN_{0}
                                                                                 \\  p + q \leq i-2
                                                                                 \end{matrix}$\end{tiny}}}{\sum}
                        1_{S(V(n))/\cl{q}}^{\otimes p} \otimes x_{l} \otimes 1_{S(V(n))/\cl{q}}^{\otimes q}
                        \otimes x_{l} \otimes 1_{S(V(n))/\cl{q}}^{\otimes (i-p-q-2)} \in (S(V(n))/\cl{q})^{\otimes i},     \]
and this shows that $q$ is a nonzerodivisor on $(S(V(n))/\cl{q})^{\otimes i}$.

We claim that $\Tor_{p}^{\text{\begin{tiny}$S(V(n))$\end{tiny}}} ((S(V(n))/\cl{q})^{\otimes a},k)$ vanishes for 
$p \geq 2$ and $a \in \NN$. 
In order to prove this result, we proceed as follows. 
The case $a = 1$ has already been analyzed, since the projective resolution \eqref{eq:resA} implies that $S(V(n))/\cl{q}$ has projective dimension $1$. 
If $a > 1$, then 
\[     \Tor_{p}^{\text{\begin{tiny}$S(V(n))$\end{tiny}}} ((S(V(n))/\cl{q})^{\otimes a},k) \simeq \Tor_{p}^{\text{\begin{tiny}$S(V(n))$\end{tiny}}} ((S(V(n))/\cl{q})^{\otimes (a-1)},S(V(n))/\cl{q}),     \]
and the latter homology group vanishes, since $S(V(n))/\cl{q}$ has projective dimension $1$. 
Our claim implies that the bounded below graded $S(V(n))$-module $(S(V(n))/\cl{q})^{\otimes a}$ has projective dimension $1$ for all $a \in \NN$ (see \cite{Ber2}, Prop. 2.3 and Cor. 2.4). 

We will next prove that
\begin{equation}
\label{eq:iso1}
H_{n-1}(V(n),(S(V(n))/\cl{q})^{\otimes a} \otimes W(n)^{\otimes b}) = 0
\end{equation}
and
\begin{equation}
\label{eq:iso0}
H_{n}(V(n),(S(V(n))/\cl{q})^{\otimes a} \otimes W(n)^{\otimes b}) = 0,
\end{equation}
for all $a,b \in \NN_{0}$ such that $a + b = i$.
The case $a=0$ follows directly from Theorem \ref{teo:exacta}. 
Let us now assume that $a \in \NN$. 
Since
\begin{align*}
       H_{p}(V(n),(S(V(n))/\cl{q})^{\otimes a} \otimes W(n)^{\otimes b})
       &\simeq \Tor^{\text{\begin{tiny}$S(V(n))$\end{tiny}}}_{p} (k , (S(V(n))/\cl{q})^{\otimes a} \otimes W(n)^{\otimes b})
       \\
       &\simeq \Tor^{\text{\begin{tiny}$S(V(n))$\end{tiny}}}_{p} ((S(V(n))/\cl{q})^{\otimes a} , W(n)^{\otimes b}),
\end{align*}
it suffices to show that this last homology group vanishes for $p = n - 1$ and $p = n$.
This is indeed the case, since, as explained before, the projective dimension of the graded $S(V(n))$-module $(S(V(n))/\cl{q})^{\otimes a}$ is~$1$. 

In view of the form of the projective resolution \eqref{eq:resA},
$\mathrm{ann}_{W(n)^{\otimes i}[-2]} (q)$ is isomorphic to the homology group $\Tor_{1}^{\text{\begin{tiny}$S(V(n))$\end{tiny}}} (S(V(n))/\cl{q},W(n)^{\otimes i})$, 
so it suffices to show that this last group vanishes in order to prove the proposition.
In fact we will prove a stronger statement asserting that, for every $a,b \in \NN_{0}$ such that $i = a + b \geq 2$,
the homology group
\[     \Tor_{1}^{\text{\begin{tiny}$S(V(n))$\end{tiny}}} (S(V(n))/\cl{q},(S(V(n))/\cl{q})^{\otimes a} \otimes W(n)^{\otimes b}) 
\simeq \mathrm{ann}_{(S(V(n))/\cl{q}^{\otimes a} \otimes W(n)^{\otimes b})[-2]} (q)     \]
vanishes, holds.
The case $a = i$ and $b = 0$ follows directly since $q$ is a nonzerodivisor on $(S(V(n))/\cl{q})^{\otimes i}$, as previously stated.

Let us now assume that $i \geq 2$.
We shall proceed by induction on $b$.
The case $b = 0$ (so $a = i$) has already been proved. 

Let us assume that $\Tor_{1}^{\text{\begin{tiny}$S(V(n))$\end{tiny}}} (S(V(n))/\cl{q},(S(V(n))/\cl{q})^{\otimes (i-j)} \otimes W(n)^{\otimes j})$ vanishes for $j = 0, \dots, b-1 < i$, where $b \geq 1$. 
We shall prove that it also vanishes for $j = b \leq i$.

Since
\[     \Tor_{1}^{\text{\begin{tiny}$S(V)$\end{tiny}}} \Big(\tfrac{S(V)}{\cl{q}},
                 \big(\tfrac{S(V)}{\cl{q}}\big)^{\otimes (i-b)} \otimes W^{\otimes b}\Big)
       \simeq \Tor_{1}^{\text{\begin{tiny}$S(V)$\end{tiny}}} \Big(\big(\tfrac{S(V)}{\cl{q}}\big)^{\otimes (i+1-b)} \otimes W^{\otimes (b-1)} , W\Big),     \]
where we have omitted the index $n$, it suffices to prove that the last homology group vanishes. 

By the inductive hypothesis, $q$ is a nonzerodivisor on $(S(V(n))/\cl{q})^{\otimes (i+1-b)} \otimes W(n)^{\otimes (b-1)}$, 
so Proposition \ref{prop:annhyangmills} implies that 
\[     H_{2}\Big(\ym(n),\big(\tfrac{S(V)}{\cl{q}}\big)^{\otimes (i+1-b)} \otimes W(n)^{\otimes (b-1)}\Big) 
       \simeq H_{n-1}\Big(V(n),\big(\tfrac{S(V)}{\cl{q}}\big)^{\otimes (i+1-b)} \otimes W(n)^{\otimes (b-1)}\Big).     \] 
Also, the same proposition tells us that 
\[     H_{3}\Big(\ym(n),\big(\tfrac{S(V)}{\cl{q}}\big)^{\otimes (i+1-b)} \otimes W(n)^{\otimes (b-1)}\Big) 
       \simeq H_{n}\Big(V(n),\big(\tfrac{S(V)}{\cl{q}}\big)^{\otimes (i+1-b)} \otimes W(n)^{\otimes (b-1)}\Big).     \]
Isomorphisms \eqref{eq:iso1} and \eqref{eq:iso0} imply that the previous homology groups vanish. 
Using the previous computations and the exact sequence \eqref{eq:longexacseq} for 
$X = (S(V(n))/\cl{q})^{\otimes (i+1-b)} \otimes W(n)^{\otimes (b-1)}$, we conclude that 
\[     H_{1}(V(n),X \otimes W(n)) 
       \simeq H_{3}(V(n),X).     \]
Isomorphisms \eqref{eq:imp} tell us that $H_{3}(V(n),X)$ is isomorphic to 
$H_{3+2(b-1)}(V(n),(S(V(n))/\cl{q})^{\otimes i})$, 
which vanishes, since $(S(V(n))/\cl{q})^{\otimes i}$ has projective dimension $1$. 
The proposition is thus proved.
\end{proof}

From the proposition we obtain the following corollary.
\begin{corollary}
\label{coro:anulacion}
If $n \geq 3$ and $i \geq 2$, then $H_{2}(\ym(n),W(n)^{\otimes i})=0$.
\end{corollary}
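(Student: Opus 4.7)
The plan is to combine Proposition \ref{prop:wni} with Proposition \ref{prop:annhyangmills} and the computation of $H_{\bullet}(V(n),W(n)^{\otimes i})$ provided by Theorem \ref{teo:exacta}.

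First, by Proposition \ref{prop:wni}, under the hypotheses $n \geq 3$ and $i \geq 2$, the element $q = \sum_{j=1}^{n} x_{j}^{2} \in S(V(n))$ is a nonzerodivisor on the right $S(V(n))$-module $W(n)^{\otimes i}$. This is precisely the hypothesis needed to apply Proposition \ref{prop:annhyangmills} to $Y = W(n)^{\otimes i}$, which yields a homogeneous isomorphism
\[
    H_{2}(\ym(n),W(n)^{\otimes i}) \simeq H_{n-1}(V(n),W(n)^{\otimes i}).
\]

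Next, I would invoke Theorem \ref{teo:exacta}. Since $n \geq 3$ we have $p := n-1 \geq 2$, and since $i \geq 2$ the excluded pair $(p,i) = (2,1)$ does not occur; hence the theorem provides the isomorphism
\[
    H_{n-1}(V(n),W(n)^{\otimes i}) \simeq \Lambda^{n-1+2i}\, V(n).
\]
Finally, $n - 1 + 2 i \geq n + 3 > n = \dim V(n)$, so the exterior power on the right-hand side is zero, which completes the argument.

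No substantive obstacle is expected: the proof is essentially a direct concatenation of Proposition \ref{prop:wni}, Proposition \ref{prop:annhyangmills} and Theorem \ref{teo:exacta}. The only minor point to verify carefully is that the index $p = n-1$ falls within the range $p \geq 2$ and that $(p,i) \neq (2,1)$, both of which are guaranteed by the hypotheses $n \geq 3$ and $i \geq 2$.
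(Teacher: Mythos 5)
Your proof is correct and follows the same route as the paper: Proposition \ref{prop:wni} supplies the nonzerodivisor hypothesis, Proposition \ref{prop:annhyangmills} gives $H_{2}(\ym(n),W(n)^{\otimes i}) \simeq H_{n-1}(V(n),W(n)^{\otimes i})$, and Theorem \ref{teo:exacta} shows the latter vanishes. Your explicit verification that $H_{n-1}(V(n),W(n)^{\otimes i}) \simeq \Lambda^{n-1+2i}V(n) = 0$ simply spells out what the paper cites more tersely.
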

\begin{proof}
Since $q$ is a nonzerodivisor on $W(n)^{\otimes i}$ by Proposition \ref{prop:wni} and
$H_{n-1}(V(n),W(n)^{\otimes i}) = 0$ by Theorem \ref{teo:exacta}, Proposition \ref{prop:annhyangmills}
yields that $H_{2}(\ym(n),W(n)^{\otimes i}) = 0$.
\end{proof}

\subsubsection{\texorpdfstring{The minimal projective resolution of $W(n)$ and other results}
{The minimal projective resolution of W(n) and other results}}

We define $M(n) = W(n)[2]$. 
We shall focus ourselves on $M(n)$ instead of $W(n)$ because of the following homological properties (see also Remark \ref{rem:geown}).
\begin{proposition}
The $\NN_{0}$-graded $S(V(n))$-module $M(n)$ is Koszul, \textit{i.e.} $\Tor^{\text{\begin{tiny}$S(V(n))$\end{tiny}}}_{p}(M(n),k)$ 
is concentrated in degree $p$, for all $p \in \NN_{0}$.
\end{proposition}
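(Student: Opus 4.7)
The plan is to reduce the Koszulness of $M(n) = W(n)[2]$ to the computation of $H_\bullet(V(n),W(n))$ already performed in Theorem~\ref{teo:exacta}. Since $V(n)$ is abelian with universal enveloping algebra $S(V(n))$, for any graded $S(V(n))$-module $N$ the Chevalley-Eilenberg resolution of $k$ over $\U(V(n))=S(V(n))$ yields a natural isomorphism $\Tor^{S(V(n))}_{p}(N,k)\simeq H_{p}(V(n),N)$ of graded vector spaces. Applied to $N=M(n)$ and using that shifts commute with these functors, this gives
\[
    \Tor^{S(V(n))}_{p}(M(n),k) \simeq H_{p}(V(n),W(n))[2].
\]

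Next, I would read off $H_{p}(V(n),W(n))$ from Theorem~\ref{teo:exacta} specialized to $i=1$:
\begin{align*}
   H_{0}(V(n),W(n)) &\simeq \Lambda^{2}V(n),
   \\
   H_{1}(V(n),W(n)) &\simeq \Lambda^{3}V(n)\oplus V(n)[-2],
   \\
   H_{2}(V(n),W(n)) &\simeq \Lambda^{4}V(n)\oplus k[-4],
   \\
   H_{p}(V(n),W(n)) &\simeq \Lambda^{p+2}V(n),\quad p\geq 3.
\end{align*}
Recalling that $V(n)$ is concentrated in internal degree $1$, so that $\Lambda^{j}V(n)$ sits in degree $j$, and that under our shift convention $k[-4]$ sits in degree $4$ while $V(n)[-2]$ sits in degree $3$, a direct inspection shows that every summand appearing above lies in internal degree exactly $p+2$. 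Hence $H_{p}(V(n),W(n))$ is concentrated in internal degree $p+2$, and shifting down by $2$ gives that $\Tor^{S(V(n))}_{p}(M(n),k)$ is concentrated in internal degree $p$, which is exactly the Koszulness of $M(n)$.

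Since all the substantive work is already contained in Theorem~\ref{teo:exacta}, the present proposition is essentially a bookkeeping corollary, and I do not expect a real obstacle. The only point that requires care is the accurate tracking of shifts: the exceptional summands $V(n)[-2]$ and $k[-4]$ that show up for $p=1,2$ must be verified to land in the same total degree $p+2$ as the main contribution $\Lambda^{p+2}V(n)$, which they do, so that concentration in a single degree is preserved across all homological degrees.
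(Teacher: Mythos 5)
Your argument is correct and coincides with the paper's own proof: both reduce $\Tor^{\text{\begin{tiny}$S(V(n))$\end{tiny}}}_{p}(M(n),k)$ to $H_{p}(V(n),W(n))[2]$ via the Chevalley--Eilenberg identification over $S(V(n))=\U(V(n))$ and then read off from Theorem \ref{teo:exacta} that each $H_{p}(V(n),W(n))$ is concentrated in internal degree $p+2$ (including the exceptional summands $V(n)[-2]$ and $k[-4]$). The only difference is that the paper also disposes of the case $n=2$ separately, where Theorem \ref{teo:exacta} does not apply, by noting that $W(2)\simeq k[-2]$ so that $M(2)\simeq k$ is trivially a Koszul $S(V(2))$-module.
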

\begin{proof}
Let us first consider $n = 2$. 
Since $W(2) \simeq k[-2]$ (see Proposition \ref{prop:W(2)}) and $S(V(2))$ is a Koszul algebra 
(where the Chevalley-Eilenberg resolution of the Lie $V(n)$-module $k$ coincides with the Koszul resolution), 
$M(2)$ is a Koszul $S(V(2))$-module. 

Let now $n \geq 3$. 
Taking into account that $\Tor^{\text{\begin{tiny}$S(V(n))$\end{tiny}}}_{p}(M(n),k) \simeq H_{p}(V(n),M(n))$, 
the proposition follows from Theorem \ref{teo:exacta} since it implies that
\begin{align*}
   H_{0}(V(n),M(n)) &\simeq H_{0}(V(n),W(n))[2] \simeq (\Lambda^{2} V(n))[2],
   \\
   H_{1}(V(n),M(n)) &\simeq H_{1}(V(n),W(n))[2] \simeq (\Lambda^{3} V(n))[2] \oplus V(n),
   \\
   H_{2}(V(n),M(n)) &\simeq H_{2}(V(n),W(n))[2] \simeq (\Lambda^{4} V(n))[2] \oplus k[-2],
   \\
   H_{p}(V(n),M(n)) &\simeq H_{p}(V(n),W(n))[2] \simeq (\Lambda^{p+2} V(n))[2],
\end{align*}
for $p \geq 3$.
\end{proof}

As explained in \cite{Ber2}, Prop. 2.3, the minimal projective resolution $P(M(n))_{\bullet}$ of the graded $S(V(n))$-module $M(n)$ 
for $n \geq 3$ has the form:
\begin{multline}
\label{eq:reskoszulm}
  0 \rightarrow S(V(n)) \otimes \Tor^{\text{\begin{tiny}$S(V(n))$\end{tiny}}}_{n}(M(n),k) \rightarrow S(V(n)) \otimes \Tor^{\text{\begin{tiny}$S(V(n))$\end{tiny}}}_{n-1}(M(n),k) \rightarrow \dots \rightarrow
  \\
  \rightarrow S(V(n)) \otimes \Tor^{\text{\begin{tiny}$S(V(n))$\end{tiny}}}_{1}(M(n),k)
  \rightarrow S(V(n)) \otimes \Tor^{\text{\begin{tiny}$S(V(n))$\end{tiny}}}_{0}(M(n),k) \rightarrow M(n) \rightarrow 0.
\end{multline}
This in turn implies that, if $N$ is another $\NN_{0}$-graded $S(V(n))$-module, then $\Tor_{i}^{\text{\begin{tiny}$S(V(n))$\end{tiny}}}(M(n),N)$ 
has homogeneous components of internal degree greater than or equal to $i$.

We shall find a differential $d_{\bullet}^{P}$ for the previous resolution \eqref{eq:reskoszulm}.
In order to do so, let us consider the Chevalley-Eilenberg complex $R(M(n))_{\bullet} = (C_{\bullet}(V(n),S(V(n))),d_{\bullet}^{\mathrm{CE}})$ for the regular module $S(V(n))$.
It is acyclic in positive degrees since its homology is $H_{\bullet}(V(n),S(V(n))) = 0$ for $\bullet \geq 1$.
Notice that $R(M(n))_{\bullet+2}$ is a graded vector subspace of 
$S(V(n)) \otimes \Tor^{\text{\begin{tiny}$S(V(n))$\end{tiny}}}_{\bullet}(M(n),k)$ for $\bullet \in \NN_{0}$.

We define $d_{\bullet}^{P}$ of the minimal projective resolution of $M(n)$ as follows.
If $v \in R(M(n))_{p+2}$, for $p \geq 1$, we take $d_{p}^{P}(v) = d_{p+2}^{\mathrm{CE}}(v)$. 
Let us denote $\{ e_{1}, \dots, e_{n} \}$ a basis of $V(n) \subseteq H_{1}(V(n),M(n))$ and $\{ c \}$ a basis of $k[-2] \subseteq H_{2}(V(n),M(n))$; given $z\in S(V(n))$. 
We set
\begin{align}
   \label{eq:difkoszul2}
   d_{2}^{P}(z \otimes c) &= \sum_{j=1}^{n} z x_{j} \otimes e_{j},
   \\
   \label{eq:difkoszul3}
   d_{1}^{P}(z \otimes e_{i}) &= \sum_{j=1}^{n} z x_{j} \otimes x_{j} \wedge x_{i}.
\end{align}
The differential $d^{P}_{\bullet}$, for $\bullet \in \NN$, is given extending $k$-linearly. 

Finally, the augmentation morphism $d_{0}^{P} : P(M(n))_{0} \rightarrow M(n)$ is given by
\begin{equation}
\label{eq:aumcomkoszulm}
     d_{0}^{P} (z \otimes x_{i} \wedge x_{j}) = z.[x_{i},x_{j}].
\end{equation}
By Corollary \ref{coro:W(n)}, $M(n)$ is a finitely generated $S(V(n))$-module with set of generators $\{ [x_{i},x_{j}] : 1 \leq i < j \leq n\}$,
so $d_{0}^{P}$ is surjective.

It is readily verified that $d^{P}_{\bullet}$ is a homogeneous $S(V(n))$-linear and $\so(n)$-equivariant map of degree $0$
and $d_{p}^{P} \circ d_{p+1}^{P} = 0$, for all $p \in \NN_{0}$.

Furthermore, we will now prove  that the complex $P(M(n))_{\bullet}$ is acyclic in positive degrees and hence a resolution of $M(n)$.
On the one hand, since $H_{\bullet}(P(M(n))) = H_{\bullet+2}(R(M(n)))$ for $\bullet \geq 3$, the exactness of
$P(M(n)))_{\bullet}$, for $\bullet \geq 3$ is direct.
The case $\bullet = 2$ is also direct, for the differential given in \eqref{eq:difkoszul2} is injective.
The other cases, \textit{i.e.} $\bullet = 0, 1$, can be checked as follows.

For $\bullet = 1$, let us consider $z = z'+ z'' \in \Ker(d_{1}^{P})$ with
\begin{align*}
  z' &= \sum_{1 \leq i_{1} < i_{2} < i_{3} \leq n} z_{(i_{1},i_{2},i_{3})} \otimes x_{i_{1}} \wedge x_{i_{2}}
       \wedge x_{i_{3}} \in S(V(n)) \otimes (\Lambda^{3} V(n))[2],
  \\
  z''  &= \sum_{i=1}^{n} z_{i} \otimes e_{i} \in S(V(n)) \otimes V(n),
\end{align*}
and thus $d_{1}^{P}(z'') \in \mathrm{Im}(d_{3}^{\mathrm{CE}})$.
Since $\mathrm{Im}(d_{3}^{\mathrm{CE}}) = \Ker(d_{2}^{\mathrm{CE}})$, we see that 
\[     0 = d_{2}^{\mathrm{CE}}(d_{1}^{P}(z''))
      = \sum_{1 \leq i,j \leq n} (z_{i} x_{j}^{2} \otimes x_{i} - z_{i} x_{i} x_{j} \otimes x_{j}).     \]
Therefore, it turns out that $d_{2}^{\mathrm{CE}} \circ d_{1}^{P}|_{S(V(n)) \otimes V(n)} = d_{2}$, where $d_{2}$ is 
the differential of the complex $C_{\bullet}(\YM(n),S(V(n)))$ given by \eqref{eq:complejohomologiayangmills} 
and $z'' \in S(V(n)) \otimes V(n)$ belongs to the kernel of $d_{2}$.
By Proposition 3.5 of \cite{HS1}, $H_{2}(\ym(n), S(V(n))) = 0$ and, as a consequence,
there is a $w \in S(V(n))$ such that $z'' = d_{1} (w) = d_{2}^{P} (w \otimes c)$,
so the complex $P(M(n))_{\bullet}$ is exact in degree $1$.

The only condition left to prove for $P(M(n))_{\bullet}$ to be a resolution of $M(n)$ is that $\Ker(d_{0}^{P}) = \mathrm{Im}(d_{1}^{P})$.
We start observing that the following diagram 
\[
\xymatrix
{
S(V(n)) \otimes \Lambda^{2} V(n)
\ar[r]^{d_{0}^{P}}
\ar[d]^{d_{2}^{\mathrm{CE}}}
&
M(n)
\ar[d]^{\Phi}
\\
\Ker(d_{1})
\ar[r]^{\pi}
&
\Ker(d_{1})/\mathrm{Im}(d_{2})
}
\]
is commutative, where $\pi$ is the canonical projection and $\Phi$ is the isomorphism of Proposition \ref{prop:isow(n)}.
Then, given $w \in S(V(n)) \otimes \Lambda^{2} V(n)$, $w \in \Ker(d_{0}^{P})$ if and only if there exists 
$w' \in S(V(n)) \otimes V(n)$ such that $d_{2}^{\mathrm{CE}}(w) = d_{2}(w')$.
Taking into account that $d_{2} = d_{2}^{\mathrm{CE}} \circ d_{1}^{P}|_{S(V(n)) \otimes V(n)}$, it turns out that
$d_{2}^{\mathrm{CE}}(w) = d_{2}^{\mathrm{CE}}(d_{1}^{P}(w'))$, or, equivalently, $w - d_{1}^{P}(w') \in \Ker(d_{2}^{\mathrm{CE}}) = \mathrm{Im}(d_{3}^{\mathrm{CE}})$.
The fact that $d_{3}^{\mathrm{CE}} = d_{1}^{P}|_{R(M(n))_{3}}$ yields that $\Ker(d_{0}^{P}) = \mathrm{Im}(d_{1}^{P})$.

We have thus proved the following result.
\begin{proposition}
Let $n \geq 3$. 
The complex \eqref{eq:reskoszulm} provided with the $\so(n)$-equivariant differential $d_{\bullet}^{P}$ satisfying $d_{\bullet}^{P}|_{R(M(n))} = d_{\bullet+2}^{\mathrm{CE}}$, \eqref{eq:difkoszul2} and \eqref{eq:difkoszul3} 
is a minimal projective resolution of the graded $S(V(n))$-module $M(n)$.
\end{proposition}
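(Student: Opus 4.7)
The plan is to verify three essential properties of the complex $P(M(n))_\bullet$ equipped with the specified differential: the composites of consecutive differentials vanish, the complex is acyclic in positive homological degree, and the augmentation is surjective onto $M(n)$. Minimality is then automatic, since by the Koszul property of $M(n)$ established just above, the free module in homological position $p$ has rank equal to $\dim_k \Tor_p^{S(V(n))}(M(n),k)$, so any differential turning this into a resolution is automatically minimal.

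My first step is to observe that in homological degrees $p \geq 3$ the complex coincides, after the shift by two, with the Chevalley-Eilenberg complex $R(M(n))_\bullet = C_\bullet(V(n), S(V(n)))$, which is acyclic in positive degrees since $H_\bullet(V(n),S(V(n)))=0$ for $\bullet \geq 1$. This simultaneously handles $d^P \circ d^P = 0$ and exactness in degrees $\geq 3$ for free. In degree $2$ the map in \eqref{eq:difkoszul2} sends $z \otimes c \mapsto \sum_j z x_j \otimes e_j$, which is injective because $\{x_1,\dots,x_n\}$ is a regular sequence in $S(V(n))$. The remaining compatibility relations $d_2^P \circ d_3^P = 0$ and $d_1^P \circ d_2^P = 0$ must be checked by direct but routine computation from the defining formulas \eqref{eq:difkoszul2}, \eqref{eq:difkoszul3}, and the Chevalley-Eilenberg differential.

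The heart of the argument lies in exactness at positions $0$ and $1$. For $\bullet = 1$, I would decompose a cycle as $z = z' + z''$ with $z' \in S(V(n)) \otimes (\Lambda^3 V(n))[2]$ and $z'' \in S(V(n)) \otimes V(n)$; the piece $z'$ is a $d_3^{\mathrm{CE}}$-boundary by acyclicity of $R(M(n))$, and for $z''$ the crucial identification is that $d_2^{\mathrm{CE}} \circ d_1^P|_{S(V(n)) \otimes V(n)}$ agrees with the differential $d_2$ of the Koszul complex \eqref{eq:complejohomologiayangmills} for $Y = S(V(n))$. One can then invoke the vanishing $H_2(\ym(n),S(V(n))) = 0$ from \cite{HS1}, Proposition~3.5, to produce a preimage in $S(V(n))$ which lifts via $d_2^P$. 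For $\bullet = 0$, surjectivity of $d_0^P$ is Corollary \ref{coro:W(n)}, and exactness at $P(M(n))_0$ follows by contemplating the commutative square whose vertical arrows are $d_2^{\mathrm{CE}}$ and $\Phi$ and whose horizontal arrows are $d_0^P$ and the canonical projection $\Ker(d_1) \to \Ker(d_1)/\mathrm{Im}(d_2)$, where $\Phi$ is the isomorphism of Proposition \ref{prop:isow(n)}; a cycle $w$ then satisfies $d_2^{\mathrm{CE}}(w) \in \mathrm{Im}(d_2)$, after which subtracting an appropriate $d_1^P$-image lands one in $\mathrm{Im}(d_3^{\mathrm{CE}}) \subseteq \mathrm{Im}(d_1^P)$.

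I expect the main technical obstacle to be precisely the matching of the nonstandard low-degree pieces of $d^P$ with the Koszul complex of $\YM(n)$: it is this compatibility that allows the vanishing $H_2(\ym(n),S(V(n))) = 0$ and the isomorphism $\Phi$ to do the work, and it is the only place where genuinely noncommutative information about $\YM(n)$ enters a construction that otherwise lives entirely in the commutative world of $S(V(n))$. Once that bridge is in place, the remaining verifications are bookkeeping.
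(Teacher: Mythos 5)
Your proposal is correct and follows essentially the same route as the paper: acyclicity in degrees $\geq 3$ from the Chevalley--Eilenberg complex, injectivity of $d_2^{P}$, the identification $d_{2}^{\mathrm{CE}} \circ d_{1}^{P}|_{S(V(n)) \otimes V(n)} = d_{2}$ combined with $H_{2}(\ym(n),S(V(n))) = 0$ for exactness in degree $1$, and the commutative square involving $\Phi$ together with Corollary \ref{coro:W(n)} in degree $0$. The only cosmetic difference is the order of operations in degree $1$ (one first absorbs $z''$ into $\mathrm{Im}(d_{2}^{P})$ and only then sees that the remaining $z'$ is a $d_{4}^{\mathrm{CE}}$-boundary), which does not affect the argument.
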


Let $R$ be a graded commutative $k$-algebra, and let $N$, $N'$, $M$ and $M'$ be graded $R$-modules.
By \cite{CE1}, p. 204, 
the external product
\[     \Tor_{i}^{\text{\begin{tiny}$R$\end{tiny}}} (M,M') \otimes_{R} \Tor_{j}^{\text{\begin{tiny}$R$\end{tiny}}} (N,N') 
       \rightarrow \Tor_{i+j}^{\text{\begin{tiny}$R$\end{tiny}}} (M \otimes_{R} N , M' \otimes_{R} N')     \]
is an $R$-linear map homogeneous of degree $0$.
It can be constructed as follows.
If $P_{\bullet} \twoheadrightarrow M$, $P'_{\bullet} \twoheadrightarrow N$ and $P''_{\bullet} \twoheadrightarrow M \otimes_{R} N$
are respectively graded projective resolutions of the graded $R$-modules $M$, $N$ and $M \otimes_{R} N$, 
there exists a morphism $\mathrm{Tot}(P_{\bullet} \otimes_{R} P'_{\bullet}) \rightarrow P''_{\bullet}$,
unique up to chain homotopy equivalence.
The external product is then induced by the morphism
$\mathrm{Tot}(P_{\bullet} \otimes_{R} M' \otimes_{R} P'_{\bullet} \otimes_{R} N') \rightarrow P''_{\bullet} \otimes_{R} (M' \otimes_{R} N')$.

We are interested in the case $R = S(V(n))$, $N = N' = S(V(n))/\cl{q}$ (which will still be denoted by $A$, as in Proposition \ref{prop:succorta}), $M = M(n)$, $i = 0$ and $j = 1$.
We shall also assume that $M'$ is a graded $A$-module.
By taking into account the graded $S(V(n))$-linear isomorphisms $M(n) \otimes_{S(V(n))} A \simeq M(n)$
and $M' \otimes_{S(V(n))} A \simeq M'$, we shall consider the $S(V(n))$-linear map homogeneous of degree $0$
\[     \times_{p,1}^{\text{\begin{tiny}$M'$\end{tiny}}} : \Tor_{p}^{\text{\begin{tiny}$S(V(n))$\end{tiny}}} (M(n),M') 
       \otimes_{S(V(n))} \Tor_{1}^{\text{\begin{tiny}$S(V(n))$\end{tiny}}} (A,A) 
       \rightarrow \Tor_{p+1}^{\text{\begin{tiny}$S(V(n))$\end{tiny}}} (M(n) , M').    \]
But $\Tor_{1}^{\text{\begin{tiny}$S(V(n))$\end{tiny}}} (A,A) \simeq A$, so the previous map is in fact
\[     \times_{p,1}^{\text{\begin{tiny}$M'$\end{tiny}}} : \Tor_{p}^{\text{\begin{tiny}$S(V(n))$\end{tiny}}} (M(n),M') 
       \rightarrow \Tor_{p+1}^{\text{\begin{tiny}$S(V(n))$\end{tiny}}} (M(n) , M').     \]
We take $P_{\bullet} = P''_{\bullet} = P(M(n))_{\bullet}$ and $P'_{\bullet}$ given by \eqref{eq:resA} with $z = q$ of degree $d = 2$.
In this case we must find a chain map $t_{\bullet} : \mathrm{Tot}(P(M(n))_{\bullet} \otimes_{S(V(n))} P'_{\bullet}) \rightarrow P(M(n))_{\bullet}$.

Since $P'_{\bullet}$ consists of only two non-zero terms, the double complex 
$C_{p,q} = P(M(n))_{p} \otimes_{S(V(n))} P'_{q}$ has only two non trivial rows $q = 0, 1$.
We define the morphism $t_{\bullet}$ as follows.
In the first place, if $v \in C_{p,0} = P(M(n))_{p} \otimes_{S(V(n))} S(V(n)) \simeq P(M(n))_{p}$, let $t_{p}(v \otimes 1_{A}) = v$.
Suppose now that $p \geq 1$ and $v' \in C_{p-1,1} = P(M(n))_{p-1} \otimes_{S(V(n))} S(V(n))[-2] \simeq P(M(n))_{p-1}[-2]$.

There is an isomorphism $P(M(n))_{p} \simeq R(M(n))_{p+2} \oplus C_{p}$ as graded $S(V(n))$-modules,
where $C_{1} = S(V(n)) \otimes V(n)$, $C_{2} = S(V(n)) \otimes k[-2]$ and the other ones are trivial.
Consider an $S(V(n))$-linear map $s_{\bullet} : P(M(n))_{\bullet-1}[-2] \rightarrow P(M(n))_{\bullet}$ 
homogeneous of degree $0$ for $\bullet \in \NN$
and represented in matrix form as follows 
\[     s_{\bullet} = \left(\begin{matrix}
                     s^{1,1}_{\bullet} & s^{1,2}_{\bullet}
                     \\
                     s^{2,1}_{\bullet} & s^{2,2}_{\bullet}
                     \end{matrix}\right).     \] 
where $s_{\bullet}^{i,j}$ are maps of graded $S(V(n))$-modules defined as follows:
\begin{align*}
  s_{p}^{1,1} : R(M(n))_{p+1}[-2] &\rightarrow R(M(n))_{p+2}
  \\
  z \otimes x_{i_{1}} \wedge \dots \wedge x_{i_{p+1}} &\mapsto
  \sum_{l=1}^{n} z x_{l} \otimes x_{l} \wedge x_{i_{1}} \wedge \dots \wedge x_{i_{p+1}},\\
  s^{2,1}_{1} : S(V(n)) \otimes (\Lambda^{2} V(n))
     &\rightarrow
  S(V(n)) \otimes V(n)
  \\
  z \otimes x_{g} \wedge x_{h} &\mapsto
  (z x_{g} \otimes x_{h} - z x_{h} \otimes x_{g}),
\end{align*}
and $s^{2,1}_{p} = 0$, for $p \geq 1$. 
Also, 
\begin{align*}
  s^{2,2}_{2} : S(V(n)) \otimes V(n)[-2]
     &\rightarrow
  S(V(n)) \otimes (k[-2])
  \\
  z \otimes e_{i} &\mapsto z x_{i} \otimes c,
\end{align*}
and $s^{2,2}_{p} = 0$, if $p \neq 2$. 
Finally, $s^{1,2}_{p} = 0$, for all $p \in \NN$. 

It is trivially verified that $s_{\bullet}$ satisfies 
$s_{\bullet + 1} \circ s_{\bullet} = 0$, $d \circ s + s \circ d = q.(\place)$
and the external product $\times_{p,1}^{\text{\begin{tiny}$M'$\end{tiny}}}$ coincides with the morphism induced in homology
by $\id_{M'} \otimes_{S(V(n))} s_{p+1}$, for $p \in \NN_{0}$.

We have the following simple result. 
\begin{proposition}
\label{prop:gentor1}
Let 
\[     x = \sum_{i=1}^{n} (x_{i} \otimes 1 - 1 \otimes x_{i}) \otimes x_{i} \in A \otimes A \otimes V(n)     \]
be a cycle of the Chevalley-Eilenberg complex for the homology of $V(n)$ with coefficients in $A \otimes A$. 
It is a generator of $\Tor^{\text{\begin{tiny}$S(V(n))$\end{tiny}}}_{1}(A,A)$ as an $S(V(n))$-module.
\end{proposition}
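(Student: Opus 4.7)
The plan is to compute $\Tor^{\text{\begin{tiny}$S(V(n))$\end{tiny}}}_{1}(A,A)$ in two different ways and identify the class of $x$. On the one hand, I will use the standard identification $H_\bullet(V(n), A\otimes A) \simeq \Tor^{\text{\begin{tiny}$S(V(n))$\end{tiny}}}_\bullet(A,A)$, with $A\otimes A$ carrying the diagonal $V(n)$-action, as employed throughout the paper (see Remark \ref{rem:accionsvntor}), in order to realize classes via the Chevalley-Eilenberg complex. On the other hand, applying $A\otimes_{S(V(n))}(-)$ to the length-one projective resolution \eqref{eq:resA} with $z = q$ of degree $d = 2$ gives $\Tor^{\text{\begin{tiny}$S(V(n))$\end{tiny}}}_{1}(A,A) \simeq A[-2]$ (since multiplication by $q$ vanishes on $A$); in particular this Tor group is a free $A$-module of rank one whose generator sits in internal degree two, and since $q$ acts trivially the $S(V(n))$-module structure factors through $A$.

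The first, easy step is to check that $x$ is a cycle. Using the diagonal action $x_i\cdot(a\otimes b) = x_i a\otimes b + a \otimes x_i b$ one computes
\[
d_1(x) = \sum_{i=1}^n x_i\cdot (x_i \otimes 1 - 1 \otimes x_i) = \sum_{i=1}^n (x_i^2 \otimes 1 - 1\otimes x_i^2) = q\otimes 1 - 1 \otimes q = 0
\]
in $A\otimes A$, since $q$ vanishes in $A$. Hence $[x]$ is a well-defined element of $H_1(V(n), A\otimes A)$.

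The main step is to show that $[x]\neq 0$. Since $x$ is homogeneous of internal degree two, and $A[-2]$ is concentrated in degrees $\geq 2$ with a one-dimensional degree-two component containing the module generator, it suffices to show that $x$ is not a boundary in internal degree two. The internal-degree-two component of $C_2 = (A\otimes A)\otimes \Lambda^2 V(n)$ is spanned by the elements $1\otimes 1\otimes x_i \wedge x_j$ with $i < j$, because $\Lambda^2 V(n)$ is already in degree two and $(A\otimes A)_0 = k$. A direct computation yields
\[
d_2(1\otimes 1\otimes x_i\wedge x_j) = (x_i\otimes 1 + 1\otimes x_i)\otimes x_j - (x_j\otimes 1 + 1\otimes x_j)\otimes x_i,
\]
and comparing the coefficients of the off-diagonal terms $(x_a\otimes 1)\otimes x_b$ with $a\neq b$ in an arbitrary linear combination $\sum_{i<j} c_{ij}\, d_2(1\otimes 1\otimes x_i\wedge x_j)$ forces all $c_{ij}$ to vanish if this combination is to equal $x$, which contains only diagonal terms of the form $(x_i\otimes 1)\otimes x_i$ and $-(1\otimes x_i)\otimes x_i$.

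Once $[x]\neq 0$ is established, the conclusion that $[x]$ generates $\Tor^{\text{\begin{tiny}$S(V(n))$\end{tiny}}}_{1}(A,A)$ as an $S(V(n))$-module is immediate: the degree-two component of $A[-2]$ is one-dimensional and contains the free generator of this cyclic $A$-module, so any nonzero element there must generate the whole module. The main obstacle is essentially the nonvanishing argument, which is a purely linear-algebraic computation in the degree-two slice of the Chevalley-Eilenberg complex; the rest follows from the module structure supplied by the length-one resolution.
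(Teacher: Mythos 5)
Your proof is correct and follows the same overall route as the paper: verify that $x$ is a cycle, rule out that it is a boundary in internal degree $2$, and then invoke $\Tor^{S(V(n))}_{1}(A,A)\simeq A[-2]$ together with the one-dimensionality of the degree-$2$ component to conclude that any nonzero class there generates. The one place where you genuinely improve on the paper is the non-boundary step: the paper dismisses it with ``there are no boundaries for $\Tor^{S(V(n))}_{1,2}(A,A)$ by degree reasons,'' which, read literally in the Chevalley--Eilenberg complex, is false --- as your own computation shows, the degree-$2$ slice of $(A\otimes A)\otimes\Lambda^{2}V(n)$ is $k\otimes\Lambda^{2}V(n)\neq 0$ and its image under $d_{2}$ is nonzero. (The paper's phrase is only accurate if one computes $\Tor_{1}$ from the length-one resolution \eqref{eq:resA}, where there is no term in homological degree $2$; but then one must still transport $x$ across the comparison of complexes.) Your explicit check that $x$, having only ``diagonal'' terms $(x_{i}\otimes 1)\otimes x_{i}$ and $(1\otimes x_{i})\otimes x_{i}$, cannot be a linear combination of the boundaries $d_{2}(1\otimes 1\otimes x_{i}\wedge x_{j})$, whose off-diagonal coefficients would force all $c_{ij}=0$, closes this gap cleanly and makes the argument self-contained within the Chevalley--Eilenberg complex.
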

\begin{proof}
It is clear that $x$ is a cycle.
Also, since there are no boundaries for $\Tor^{\text{\begin{tiny}$S(V(n))$\end{tiny}}}_{1,2}(A,A)$, by degree reasons, it cannot be a boundary. 
Since $x$ has internal degree $2$ and $\Tor^{\text{\begin{tiny}$S(V(n))$\end{tiny}}}_{1} (A,A) \simeq A[-2]$, 
$x$ must be a generator of the $S(V(n))$-module $\Tor^{\text{\begin{tiny}$S(V(n))$\end{tiny}}}_{1}(A,A)$.
\end{proof}

The previous proposition yields a useful description of the image of $\times_{0,1}^{\text{\begin{tiny}$M(n)$\end{tiny}}}$.
\begin{corollary}
\label{coro:ciclostor2}
The image of $\times_{0,1}^{\text{\begin{tiny}$M(n)$\end{tiny}}}$ is the set of elements of degree $j \geq 2$ in $H_{1}(V(n),M(n) \otimes M(n))$ of the form
\[     \sum_{i=1}^{n} \sum_{l \in L} (m_{l} x_{i} \otimes m'_{l} - m_{l} \otimes m'_{l} x_{i}) \otimes x_{i},     \]
for $L$ a finite set of indices and $m_{l}, m'_{l} \in M(n)$ homogeneous of degrees $d_{l}$ and $d'_{l}$
such that $d_{l} + d'_{l} = j-2$, for all $l \in L$.
These elements will be called \emph{generic}.
\end{corollary}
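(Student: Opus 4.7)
The plan is to reduce the computation to the generator $[x]$ of $\Tor^{S(V(n))}_{1}(A,A)$ exhibited in Proposition \ref{prop:gentor1} and then make the external product explicit. Since every element of $\Tor^{S(V(n))}_{1}(A,A) \simeq A[-2]$ is of the form $a \cdot [x]$ with $a \in A$, and the external product $\times_{0,1}^{M(n)}$ is $k$-bilinear and $S(V(n))$-linear (the action factoring through $A$, because $q$ acts trivially on $M(n)$ by Proposition \ref{prop:W(n)}), its image is spanned over $k$ by elements of the form $\times_{0,1}^{M(n)}([m \otimes m'] \otimes [x])$ with $m, m' \in M(n)$ homogeneous. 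The problem thus reduces to computing this single product.

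For this I would invoke the Chevalley--Eilenberg description of $\Tor$: since $S(V(n)) = \U(V(n))$ for the abelian Lie algebra $V(n)$, there is a natural isomorphism $\Tor^{S(V(n))}_{\bullet}(Y,Z) \simeq H_{\bullet}(V(n), Y \otimes Z)$ (with diagonal action), under which the external product on $\Tor$ translates into the shuffle product of Chevalley--Eilenberg complexes. For the abelian Lie algebra $V(n)$ this shuffle reduces to the wedge product on exterior-algebra factors, tensored with the identity on coefficients, followed by the natural rearrangement and multiplication $(M(n) \otimes M(n)) \otimes (A \otimes A) \to (M(n) \otimes_{S(V(n))} A) \otimes (M(n) \otimes_{S(V(n))} A) \simeq M(n) \otimes M(n)$ given at the element level by $m \otimes m' \otimes a \otimes a' \mapsto ma \otimes m'a'$. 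Applied to the representatives $[m \otimes m'] \in H_{0}(V(n), M(n) \otimes M(n))$ and $[x] = [\sum_{i} (x_{i} \otimes 1 - 1 \otimes x_{i}) \otimes x_{i}] \in H_{1}(V(n), A \otimes A)$, this yields
\[
\times_{0,1}^{M(n)}([m \otimes m'] \otimes [x]) = \sum_{i=1}^{n} [(m x_{i} \otimes m' - m \otimes m' x_{i}) \otimes x_{i}].
\]
A direct check confirms this is indeed a cycle: the Chevalley--Eilenberg differential sends it to $m q \otimes m' - m \otimes q m' = 0$, using that $q$ acts as zero on $M(n)$.

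By $k$-bilinearity, the image of $\times_{0,1}^{M(n)}$ then consists of finite $k$-linear combinations of such cycles, that is, of the generic cycles of the statement. Conversely, any generic cycle $\sum_{l} \sum_{i} (m_{l} x_{i} \otimes m'_{l} - m_{l} \otimes m'_{l} x_{i}) \otimes x_{i}$ is the image under $\times_{0,1}^{M(n)}$ of $[\sum_{l} m_{l} \otimes m'_{l}] \cdot [x]$, yielding the reverse inclusion. The bound $j \geq 2$ is automatic: $M(n)$ is $\NN_{0}$-graded starting in degree $0$ and $[x]$ has internal degree $2$, so the minimal degree of an image element is $2$.

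The main obstacle is verifying the compatibility of the paper's explicit definition of the external product (via the chain map $t_{\bullet}$ and the homotopy $s_{\bullet}$) with the shuffle description on Chevalley--Eilenberg complexes. This is an instance of the standard fact that external products on $\Tor$ are independent of the choice of projective resolutions and so can be computed using any convenient one. If one prefers a direct verification staying within the paper's own formalism, one can lift $[m \otimes m']$ (writing $m' = \sum_{j} c_{j} [x_{g_{j}}, x_{h_{j}}]$ using Corollary \ref{coro:W(n)}) to $\sum_{j} m c_{j} \otimes (1 \otimes x_{g_{j}} \wedge x_{h_{j}}) \in M(n) \otimes_{S(V(n))} P(M(n))_{0}$, apply $\mathrm{id}_{M(n)} \otimes s_{1}$ to land in $M(n) \otimes_{S(V(n))} P(M(n))_{1}$, and transfer the resulting cycle to a Chevalley--Eilenberg cycle via an explicit comparison of resolutions; both routes arrive at the stated formula.
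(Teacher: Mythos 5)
Your proposal is correct and follows essentially the same route as the paper: reduce to the generator $\sum_{i}(x_{i}\otimes 1 - 1\otimes x_{i})\otimes x_{i}$ of $\Tor^{S(V(n))}_{1}(A,A)$ from Proposition \ref{prop:gentor1} and use $S(V(n))$-linearity and homogeneity of degree $2$ of $\times_{0,1}^{M(n)}$ to pin down the form of the image. You merely make explicit the chain-level computation (via the shuffle product on Chevalley--Eilenberg complexes, equivalently the paper's homotopy $s_{\bullet}$) that the paper leaves implicit.
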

\begin{proof}
By the previous proposition,
\[     \sum_{i=1}^{n} (x_{i} \otimes 1 - 1 \otimes x_{i}) \otimes x_{i}     \]
is a homogeneous generator of the graded $S(V(n))$-module $\Tor^{\text{\begin{tiny}$S(V(n))$\end{tiny}}}_{1}(A,A)$.
Since the morphism $\times_{0,1}^{\text{\begin{tiny}$M(n)$\end{tiny}}}$ is $S(V(n))$-linear homogeneous of degree $2$, the elements of its image are of the prescribed form.
\end{proof}

We want to describe the kernel and cokernel of the map $\times_{0,1}^{\text{\begin{tiny}$M(n)$\end{tiny}}}$.
As $\Tor_{1}^{\text{\begin{tiny}$S(V(n))$\end{tiny}}}(A,A) \simeq A[-2]$, 
it has homogeneous components of degree greater than or equal to $2$.
By degree reasons, the cokernel of the external product for $M' = M(n)$ contains the homogeneous component of internal degree $1$ of
$\Tor_{1}^{\text{\begin{tiny}$S(V(n))$\end{tiny}}}(M(n),M(n))$, which is an $\NN$-graded $S(V(n))$-module by the Koszul property of $M(n)$.

In this case, we can consider the following mixed complex of (non graded) $A$-modules $B(M')_{\bullet} = M' \otimes_{S(V(n))} P(M(n))_{\bullet}$
with vertical differential $\id_{M'} \otimes d_{\bullet}^{P}$ and horizontal differential $\id_{M'} \otimes_{S(V(n))} s_{\bullet+1}$.
Since $q$ acts by zero on any $A$-module, the previous conditions imply that  
$(B(M')_{\bullet},d_{\bullet},s_{\bullet-1})$ satisfies the definition of a mixed complex (see \cite{Ka1}).
We recall that the first quadrant Connes' double complex associated to the previous mixed complex is defined as $B(M')_{p,q}=B(M')_{q-p}$
if $0 \leq p \leq q$ and zero otherwise, $H_{\bullet}(B(M'))$ denotes the usual vertical homology of $B(M')_{\bullet}$, 
and the homology of its total complex is called the \emph{cyclic homology} $HC_{\bullet}(B(M'))$ of the mixed complex $B(M')_{\bullet}$.
We note that $HC_{0}(B(M')) \simeq H_{0}(B(M'))$. 

The filtration by columns of Connes' double complex yields the so-called Connes' spectral sequence,
which also gives that the first cyclic homology group $HC_{1} (B(M'))$ fits in the low degree exact sequence of $A$-modules (see \cite{Wei1}, 9.8.6)
\begin{equation}
\label{eq:connes}
     H_{1}(B(M')) \shortrightarrow H_{2}(B(M')) \shortrightarrow HC_{2}(B(M'))    
     \shortrightarrow H_{0}(B(M')) \shortrightarrow H_{1}(B(M')) \shortrightarrow HC_{1}(B(M')) \shortrightarrow 0,
\end{equation}
where the first and the fourth map are induced by $\id_{M'} \otimes s_{2}$ and $\id_{M'} \otimes s_{1}$, respectively. 
So, $HC_{1} (B(M'))$ is isomorphic to the cokernel of the map $\times_{0,1}^{\text{\begin{tiny}$M'$\end{tiny}}}$ (without considering the grading).
When dealing with these mixed complexes we will not take into account any grading.

If $0 \rightarrow M'_{1} \rightarrow M'_{2} \rightarrow M'_{3} \rightarrow 0$ is a short exact sequence of $A$-modules,
it induces in turn a short exact sequence of mixed complexes $0 \rightarrow B(M'_{1}) \rightarrow B(M'_{2}) \rightarrow B(M'_{3}) \rightarrow 0$,
so a short exact sequence of the corresponding total complexes 
and hence a long exact sequence
of cyclic homologies
\[     \dots \rightarrow HC_{p}(B(M'_{1})) \rightarrow HC_{p}(B(M'_{2})) \rightarrow HC_{p}(B(M'_{3})) \rightarrow HC_{p-1}(B(M'_{1})) \rightarrow \dots     \]

If $M' = A$, the total complex $\mathrm{Tot}(P(M(n))_{\bullet} \otimes_{S(V(n))} P'_{\bullet})$ computes 
$\Tor_{\bullet}^{\text{\begin{tiny}$S(V(n))$\end{tiny}}}(M(n),A)$,
which can also be computed from any of the complexes $M(n) \otimes_{S(V(n))} P'_{\bullet}$ or $P(M(n))_{\bullet} \otimes_{S(V(n))} A$.
Moreover, the canonical projections from the total complex to either $M(n) \otimes_{S(V(n))} P'_{\bullet}$ or $P(M(n))_{\bullet} \otimes_{S(V(n))} A$
are quasi-isomorphisms.
By diagram chasing arguments, the component of the quasi-isomorphism from $M(n) \otimes_{S(V(n))} P'_{\bullet}$
to $P(M(n))_{\bullet} \otimes_{S(V(n))} A$ passing through $\mathrm{Tot}(P(M(n))_{\bullet} \otimes_{S(V(n))} P'_{\bullet})$
in degree $\bullet = 1$ is induced by $(\pi_{A} \otimes \id_{P(M(n))}) \circ s_{1}$
, for $\pi_{A} : S(V(n)) \rightarrow A$ the canonical projection.
This implies that $\times_{0,1}^{\text{\begin{tiny}$A$\end{tiny}}}$ is an isomorphism.

On the other hand, the mixed complex $B(A)_{\bullet}$ has only vertical homology $H_{0}(B(A))$ and $H_{1}(B(A))$ and
the first term of the Connes' spectral sequence $E^{1}_{p,q} = H_{q-p}(B(A))$ for $B(A)_{\bullet}$ satisfies that
$d^{1}_{p,p} : E^{1}_{p,p} \rightarrow E^{1}_{p,p+1}$ coincides with $\times_{0,1}^{\text{\begin{tiny}$A$\end{tiny}}}$, hence it is an isomorphism.
As a consequence, $HC_{\bullet}(B(A))=0$, for $\bullet \geq 1$.

The short exact sequence of graded $A$-modules $0 \rightarrow A_{+} \rightarrow A \rightarrow k \rightarrow 0$ (where $A_{+}$ is the irrelevant ideal
of $A$) implies that $HC_{\bullet}(B(A_{+})) \simeq HC_{\bullet+1}(B(k))$, for $\bullet \geq 1$.
It is direct to check that $HC_{3}(B(k)) \simeq \Lambda^{5} V(n) \oplus \Lambda^{3} V(n) \oplus V(n)$ and 
$HC_{4}(B(k)) \simeq \Lambda^{6} V(n) \oplus \Lambda^{4} V(n) \oplus \Lambda^{2} V(n) \oplus k$, 
since all morphisms of $B(k)$ are zero.
It follows that $HC_{2}(B(A_{+})) \simeq \Lambda^{5} V(n) \oplus \Lambda^{3} V(n) \oplus V(n)$ 
and $HC_{3}(B(A_{+})) \simeq \Lambda^{6} V(n) \oplus \Lambda^{4} V(n) \oplus \Lambda^{2} V(n) \oplus k$.

The long exact sequence of cyclic homology obtained from the short exact sequence of graded $A$-modules given by \eqref{eq:euler},
is given by 
\[     0 \rightarrow HC_{1}(B(\Omega_{A/k})) \rightarrow HC_{0}(B(A[-2])) \rightarrow HC_{0}(B(A[-1]^{n}))
         \rightarrow HC_{0}(B(\Omega_{A/k})) \rightarrow 0.     \]
It is directly verified that $HC_{0}(B(A[-2])) \simeq M(n)[-2]$, $HC_{0}(B(A[-1]^{n})) \simeq M(n)[-1]^{n}$ and, 
under these identifications, the map $HC_{0}(B(A[-2])) \rightarrow HC_{0}(B(A[-1]^{n}))$ coincides with $w \mapsto \sum_{i=1}^{n} w x_{i} \otimes x_{i}$ and it  is injective, since $\mathrm{ann}_{W(n)}(x_{i}) = 0$ for all $i = 1, \dots, n$ (see Prop. \ref{prop:Minyectivo}),
implying that $HC_{1}(B(\Omega_{A/k}))=0$.
Since $HC_{\bullet}(B(A))=0$, for $\bullet \geq 1$, we find that $HC_{\bullet}(B(\Omega_{A/k}))=0$, for $\bullet \geq 2$.
Hence, the long exact sequence of cyclic homology obtained from the short exact sequence of graded $A$-modules given by \eqref{eq:sucW(n)}
assures that $HC_{2}(B(A_{+})) \simeq HC_{1}(B(M(n)))$, so $HC_{1}(B(M(n))) \simeq \Lambda^{5} V(n) \oplus \Lambda^{3} V(n) \oplus V(n)$. 
Hence, the cokernel of the map $\times_{0,1}^{\text{\begin{tiny}$M(n)$\end{tiny}}}$ is isomorphic to $\Lambda^{5} V(n) \oplus \Lambda^{3} V(n) \oplus V(n)$ as $k$-vector spaces.

The previous considerations also imply that $HC_{3}(B(A_{+})) \simeq HC_{2}(B(M(n)))$, and, in consequence, 
$HC_{2}(B(M(n))) \simeq \Lambda^{6} V(n) \oplus \Lambda^{4} V(n) \oplus \Lambda^{2} V(n) \oplus k$. 
Moreover, Theorem \ref{teo:exacta} tells us that $H_{2}(B(M(n))) \simeq H_{2}(V(n),M(n)^{\otimes 2}) \simeq \Lambda^{2} V(n)$, 
so, from the proof of Proposition \ref{prop:b1b2} in the next section, we obtain that the homology classes of the cycles 
\[     \Big\{ \sum_{\sigma \in \SSS_{6}} \epsilon(\sigma) [x_{i_{\sigma(1)}}, x_{i_{\sigma(2)}}] \otimes [x_{i_{\sigma(3)}}, x_{i_{\sigma(4)}}]
       \otimes x_{i_{\sigma(5)}} \wedge x_{i_{\sigma(6)}}
       : 1 \leq i_{1} < i_{2} < i_{3} < i_{4} < i_{5} < i_{6} \leq n \Big\}     \]
provide a basis composed of elements of degree $2$ of $H_{2}(V(n),M(n)^{\otimes 2})$. 
Since the second map of the low degree exact sequence \eqref{eq:connes} is induced by the inclusion of the complex 
$(B(M(n))_{\bullet},d_{\bullet})$ in $\mathrm{Tot}(B(M(n))_{\bullet,\bullet})$ and the components of the boundaries of  $\mathrm{Tot}(B(M(n))_{\bullet,\bullet})$ in $B(M(n))_{0,2}$ have degree strictly greater than $2$, we deduce that the previous list of 
homology classes is also linearly independent when considered in $\mathrm{Tot}(B(M(n))_{\bullet,\bullet})$, and therefore, 
the second map of long exact sequence \eqref{eq:connes} is injective and the kernel of the map $\times_{0,1}^{\text{\begin{tiny}$M(n)$\end{tiny}}}$ is isomorphic to $\Lambda^{4} V(n) \oplus \Lambda^{2} V(n) \oplus k$ 
as $k$-vector spaces.

We shall first give an explicit description of the cokernel of the external product.
\begin{proposition}
\label{prop:ciclostor1}
Let $n \geq 3$. 
There is a sequence of $k$-vector space isomorphisms
\[     \Coker(\times_{0,1}^{\text{\begin{tiny}$M(n)$\end{tiny}}}) \simeq \Tor^{\text{\begin{tiny}$S(V(n))$\end{tiny}}}_{1,1}(M(n),M(n))
       \simeq \Lambda^{5} V(n) \oplus \Lambda^{3} V(n) \oplus V(n),     \]
where we consider for each direct summand of $\Tor^{\text{\begin{tiny}$S(V(n))$\end{tiny}}}_{1,1}(M(n),M(n))$
the following bases given by the homology classes of the cycles of the Chevalley-Eilenberg
complex $C_{1}(V(n),M(n) \otimes_{S(V(n))} M(n))$:
\begin{itemize}
\item[(i)] for $V(n)$:
\[     \Big\{ \sum_{1 \leq i < j \leq n} [x_{i},x_{j}] \otimes [x_{i},x_{j}] \otimes x_{l}
           + \sum_{1 \leq i, j \leq n} \big([x_{l},x_{i}] \otimes [x_{i},x_{j}] \otimes x_{j} + [x_{i},x_{j}] \otimes [x_{l},x_{i}] \otimes x_{j}\big)  
           \Big\}_{1 \leq l \leq n},
\]
\item[(ii)] for $\Lambda^{3} V(n)$:
\[     \Big\{ \sum_{l=1}^{n} \sum_{\sigma \in \SSS_{3}} \epsilon(\sigma)
           \big([x_{i_{\sigma(1)}},x_{i_{\sigma(2)}}] \wedge [x_{i_{\sigma(3)}},x_{l}] \otimes x_{l}
          + [x_{i_{\sigma(1)}},x_{l}] \wedge [x_{i_{\sigma(2)}},x_{l}] \otimes x_{i_{\sigma(3)}}\big)
           \Big\}_{1 \leq i_{1} < i_{2} < i_{3} \leq n},
\]
\item[(iii)] for $\Lambda^{5} V(n)$:
\[     \Big\{ \sum_{\sigma \in \SSS_{5}} \epsilon(\sigma) [x_{i_{\sigma(1)}},x_{i_{\sigma(2)}}] \otimes
        [x_{i_{\sigma(3)}},x_{i_{\sigma(4)}}] \otimes x_{i_{\sigma(5)}}
        \Big\}_{1 \leq i_{1} < i_{2} < i_{3} < i_{4} < i_{5} \leq n}.
\]
\end{itemize}
\end{proposition}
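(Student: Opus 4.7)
The plan is to establish the two isomorphisms in turn: first identify $\Coker(\times_{0,1}^{M(n)})$ with $\Tor^{S(V(n))}_{1,1}(M(n),M(n))$ by an internal-degree argument, and then exhibit enough explicit cycles in the Chevalley--Eilenberg complex to account for the full dimension.

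\textbf{Identifying the cokernel.} By Proposition \ref{prop:gentor1} the generator of $\Tor^{S(V(n))}_{1}(A,A) \simeq A[-2]$ sits in internal degree $2$, so the external product $\times_{0,1}^{M(n)}$ is $S(V(n))$-linear and homogeneous of internal degree $2$. Consequently its image is contained in $\Tor^{S(V(n))}_{1,\geq 2}(M(n),M(n))$, and the internal grading splits $\Tor^{S(V(n))}_{1}(M(n),M(n))$ as a direct sum of $\Tor^{S(V(n))}_{1,1}(M(n),M(n))$ and $\Tor^{S(V(n))}_{1,\geq 2}(M(n),M(n))$; the projection to the cokernel then restricts to a $k$-linear injection $\Tor^{S(V(n))}_{1,1}(M(n),M(n)) \hookrightarrow \Coker(\times_{0,1}^{M(n)})$. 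The cyclic-homology computation established immediately before the proposition yields $\dim_{k}\Coker(\times_{0,1}^{M(n)}) = \binom{n}{5}+\binom{n}{3}+n$, so the remaining task is to produce that many linearly independent classes inside $\Tor^{S(V(n))}_{1,1}(M(n),M(n))$.

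\textbf{Constructing the cycles.} Use the standard isomorphism $\Tor^{S(V(n))}_{\bullet}(M(n),M(n)) \simeq H_{\bullet}(V(n), M(n)\otimes_{k} M(n))$ with the diagonal $V(n)$-action, and compose with the canonical projection $M(n)\otimes_{k} M(n) \twoheadrightarrow M(n)\otimes_{S(V(n))} M(n)$ to read each displayed sum as a chain in $C_{1}(V(n), M(n)\otimes_{S(V(n))} M(n))$. In internal degree $1$ the Chevalley--Eilenberg term $(C_{2})_{1}$ vanishes since $M(n)$ sits in non-negative degrees while $\Lambda^{2} V(n)$ contributes degree $2$, so every internal-degree-$1$ cycle of $C_{1}$ represents a non-trivial class. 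For each of the three families (i), (ii) and (iii), apply $d^{\mathrm{CE}}_{1}$, expand using the identity $x_{i}\cdot [x_{j},x_{k}] = -[x_{i},[x_{j},x_{k}]]$ valid in $W(n)_{3}$, and reduce the result by the Yang--Mills relation $\sum_{i}[x_{i},[x_{i},x_{j}]]=0$ together with the Jacobi identity to obtain zero.

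\textbf{Linear independence and conclusion.} The three families are $\SO(n)$-equivariant and transform respectively as $V(n)$, $\Lambda^{3} V(n)$ and $\Lambda^{5} V(n)$. For $n$ large enough these are pairwise non-isomorphic irreducible $\so(n)$-modules, so Schur's lemma places their images in direct sum, and the multi-index parametrization within each family gives the required linear independence. The small values of $n$ are treated by a direct weight-component verification, with the observation that the families whose indexing range exceeds $n$ are automatically empty. The total count $n+\binom{n}{3}+\binom{n}{5}$ matches $\dim_{k}\Coker(\times_{0,1}^{M(n)})$, which forces the injection of the first step to be an isomorphism and completes the proof. The main obstacle is the verification that family (i) is a cycle: unlike families (ii) and (iii), which are antisymmetrizations over $\SSS_{3}$ and $\SSS_{5}$ and cancel by an exchange of summation indices, family (i) is a genuine three-term combination whose annihilation by $d^{\mathrm{CE}}_{1}$ demands invoking the Yang--Mills relation in each tensor factor together with the Jacobi identity and a careful bookkeeping of the resulting cancellations.
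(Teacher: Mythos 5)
Your proposal is correct and follows essentially the same route as the paper: exhibit the listed classes as internal-degree-$1$ cycles, observe that there are no boundaries in that degree, note that the image of $\times_{0,1}^{\text{\begin{tiny}$M(n)$\end{tiny}}}$ lies in internal degree at least $2$ so that $\Tor^{\text{\begin{tiny}$S(V(n))$\end{tiny}}}_{1,1}(M(n),M(n))$ injects into the cokernel, and close the sandwich using the dimension of the cokernel obtained from the cyclic homology computation preceding the proposition. The only cosmetic difference is that you justify linear independence of the three families by $\so(n)$-equivariance and Schur's lemma, where the paper appeals to degree considerations; both are adequate.
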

\begin{proof}
The fact that these classes are cycles of the Chevalley-Eilenberg complex is direct but rather tedious to check.
Also, it is easy to see that they all belong to
\[     (C_{1}(V(n),M(n) \otimes_{S(V(n))} M(n)))_{1} = M(n)_{0} \otimes M(n)_{0} \otimes V(n)
= \Lambda^{2} V(n) \otimes \Lambda^{2} V(n) \otimes V(n),     \]
where there are no boundaries by degree reasons.
We leave the simple but rather long proof that they all form a linearly independent set, which mostly depends on degree reasons. 

On the other hand, since
\[     \Lambda^{5} V(n) \oplus \Lambda^{3} V(n) \oplus V(n)
       = \Coker(\times_{0,1}^{\text{\begin{tiny}$M(n)$\end{tiny}}}) \supseteq \Tor^{\text{\begin{tiny}$S(V(n))$\end{tiny}}}_{1,1}(M(n),M(n))
       \supseteq \Lambda^{5} V(n) \oplus \Lambda^{3} V(n) \oplus V(n),     \]
all previous inclusions must be equalities and the proposition follows.
\end{proof}

Finally, we will provide an explicit description of the kernel of the external product.
\begin{proposition}
\label{prop:ciclosnucleo}
Let $n \geq 3$. 
There is an isomorphism of graded $k$-vector spaces, homogeneous of degree $0$,
\[     \Ker(\times_{0,1}^{\text{\begin{tiny}$M(n)$\end{tiny}}}) \simeq \Lambda^{4} (V(n)[1]) \oplus \Lambda^{2} (V(n)[1]) \oplus k,     \]
where we consider for each direct summand the bases given by the homology classes of the following cycles of the 
Chevalley-Eilenberg complex $C_{0}(V(n),M(n) \otimes_{S(V(n))} M(n))$:
\begin{itemize}
\item[(i)] for $k$:
\[     \Big\{ \sum_{1 \leq i < j \leq n} [x_{i},x_{j}] \otimes [x_{i},x_{j}] \Big\},
\]
\item[(ii)] for $\Lambda^{2} V(n)$:
\[     \Big\{ \sum_{l=1}^{n} [x_{i},x_{l}] \wedge [x_{j},x_{l}] \Big\}_{1 \leq i < j \leq n},
\]
\item[(iii)] for $\Lambda^{4} V(n)$:
\[     \Big\{ \sum_{\sigma \in \SSS_{4}} \epsilon(\sigma) [x_{i_{\sigma(1)}},x_{i_{\sigma(2)}}] \otimes
        [x_{i_{\sigma(3)}},x_{i_{\sigma(4)}}] \Big\}_{1 \leq i_{1} < i_{2} < i_{3} < i_{4} \leq n}.
\]
\end{itemize}
\end{proposition}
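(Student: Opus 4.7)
The plan is to invoke Connes' low-degree exact sequence \eqref{eq:connes} for the mixed complex $B(M(n))$ and combine it with the facts established immediately before the statement. Since the map $H_2(B(M(n))) \to HC_2(B(M(n)))$ has been shown to be injective (via the explicit family of $\binom{n}{6}$ cycles of internal degree $2$), the relevant portion of \eqref{eq:connes} simplifies to yield a $k$-linear isomorphism
\[
\Ker(\times_{0,1}^{M(n)}) \simeq HC_2(B(M(n)))/H_2(B(M(n))).
\]
Combining the previously obtained $HC_2(B(M(n))) \simeq \Lambda^6 V(n) \oplus \Lambda^4 V(n) \oplus \Lambda^2 V(n) \oplus k$ with $H_2(B(M(n))) \simeq H_2(V(n), M(n)^{\otimes 2})$, whose $k$-dimension equals $\binom{n}{6}$, this gives $\dim_k \Ker(\times_{0,1}^{M(n)}) = \binom{n}{4} + \binom{n}{2} + 1$, matching $\dim_k(\Lambda^4(V(n)[1]) \oplus \Lambda^2(V(n)[1]) \oplus k)$.

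It then remains to exhibit $\binom{n}{4} + \binom{n}{2} + 1$ linearly independent elements of $\Ker(\times_{0,1}^{M(n)})$ with the claimed structure, and the three families (i)--(iii) are the natural candidates. Each is trivially a zero-cycle since $C_{-1}(V(n), M(n) \otimes_{S(V(n))} M(n)) = 0$. To verify kernel membership I would invoke the explicit description of $\times_{0,1}^{M(n)}$ from Corollary \ref{coro:ciclostor2}, which sends $m \otimes_{S(V(n))} m'$ to the class in $H_1(V(n), M(n) \otimes M(n))$ of
\[
\sum_i (m x_i \otimes m' - m \otimes m' x_i) \otimes x_i,
\]
and then construct, for each of (i)--(iii), an explicit $2$-chain in $C_2(V(n), M(n) \otimes M(n))$ whose Chevalley-Eilenberg boundary equals this expression. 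The key inputs will be the Yang-Mills relation encoded in the $S(V(n))$-module structure of $W(n)$ (Proposition \ref{prop:W(n)}), which forces $q = \sum_i x_i^2$ to act trivially on $M(n)$, together with the antisymmetrization pattern built into each family.

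Finally, since all proposed elements concentrate in the lowest internal degree of $M(n) \otimes_{S(V(n))} M(n)$, namely the finite-dimensional subspace $(\Lambda^2 V(n)) \otimes (\Lambda^2 V(n))$, their linear independence can be checked by direct combinatorial inspection of index patterns and signs, in the spirit of the argument used for Proposition \ref{prop:ciclostor1}. Combined with the dimension count from the first paragraph, this forces the collection to form a basis of $\Ker(\times_{0,1}^{M(n)})$ and completes the proof. The main obstacle will be the explicit construction of the $2$-chain primitives witnessing kernel membership for each of the three families, which requires a somewhat delicate case-by-case manipulation of the $S(V(n))$-action on $M(n)$ via the Yang-Mills relations.
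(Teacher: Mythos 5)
Your proposal follows essentially the same route as the paper: the dimension of $\Ker(\times_{0,1}^{M(n)})$ is extracted from Connes' low-degree exact sequence for the mixed complex $B(M(n))$ exactly as in the paragraphs preceding Proposition \ref{prop:ciclostor1}, and the proof is then completed by checking that the three explicit families are linearly independent kernel elements concentrated in $\Lambda^{2}V(n)\otimes\Lambda^{2}V(n)$ (where there are no boundaries for degree reasons) and invoking the dimension count. The paper simply declares the verification of kernel membership ``direct but tedious,'' whereas you outline how to carry it out by exhibiting explicit Chevalley--Eilenberg $2$-chain primitives using $q\cdot M(n)=0$; this is the right plan and the only part left to execute.
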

\begin{proof}
As in the proof of the previous proposition, it is direct but tedious to verify the fact that these classes are cycles of the Chevalley-Eilenberg complex.
Also, it is easy to see that they all belong to
\[     (C_{0}(V(n),M(n) \otimes_{S(V(n))} M(n)))_{0} = M(n)_{0} \otimes M(n)_{0}
       = \Lambda^{2} V(n) \otimes \Lambda^{2} V(n),     \]
that they belong to $\Ker(\times_{0,1}^{\text{\begin{tiny}$M(n)$\end{tiny}}})$ and that
they all form a linearly independent set, always using degree considerations. 
Finally, since
\[     \Lambda^{4} (V(n)[1]) \oplus \Lambda^{2} (V(n)[1]) \oplus k
       = \Ker(\times_{0,1}^{\text{\begin{tiny}$M(n)$\end{tiny}}}) \supseteq \Tor^{\text{\begin{tiny}$S(V(n))$\end{tiny}}}_{0,0}(M(n),M(n)),     \]
       and
\[     \Tor^{\text{\begin{tiny}$S(V(n))$\end{tiny}}}_{0,0}(M(n),M(n)) \supseteq \Lambda^{4} (V(n)[1]) \oplus \Lambda^{2} (V(n)[1]) \oplus k,     \]
all the inclusions must be equalities and the proposition follows.
\end{proof}

\begin{remark}
\label{rem:s2wwedge2w}
It is clear that the following inclusions hold: 
$k \oplus \Lambda^{4} (V(n)[1]) \subseteq H_{0}(V(n),S^{2}M(n))$ and $\Lambda^{2} (V(n)[1]) \subseteq H_{0}(V(n),\Lambda^{2}M(n))$. 
It is not difficult to find exactly which irreducible representations of $\so(n)$ among the ones given for the decomposition of $H_{0,0}(V(n),M(n)^{\otimes 2})$ in Corollary \ref{coro:compisownwn} belong to the component $H_{0,0}(V(n),S^{2}M(n))$ or to $H_{0,0}(V(n),\Lambda^{2}M(n))$. 
On the one hand, we recall that if $\lambda$ is a dominant weight and $\alpha$ a positive root of $\so(n)$ such that $\lambda(H_{\alpha}) \neq 0$, 
then $\Gamma_{2 \lambda} \subseteq S^{2}\Gamma_{\lambda}$ and $\Gamma_{2 \lambda - \alpha} \subseteq \Lambda^{2}\Gamma_{\lambda}$ 
(see \cite{FH1}, Exercise 25.32). 
First, applying the previous fact to $\lambda = L_{1}$ for $n=3$, we see that 
\[     \Gamma_{2 L_{1}} \subseteq H_{0,0}(V(n),S^{2}M(n)),     \]
and secondly, to $\lambda = L_{1} + L_{2}$ for $n \geq 4$, we find that
\[     \Gamma_{2 L_{1} + 2 L_{2}} \subseteq H_{0,0}(V(n),S^{2}M(n)).     \]
Also, for a suitable $\alpha$ in each case, we conclude that 
\begin{align*}
   \Gamma_{2 L_{1} + L_{2}} &\subseteq H_{0,0}(V(n),\Lambda^{2}M(n)), &\text{if $n = 5$,}
   \\
   \Gamma_{2 L_{1} + L_{2} + L_{3}} \oplus \Gamma_{2 L_{1} + L_{2} - L_{3}} &\subseteq H_{0,0}(V(n),\Lambda^{2}M(n)), &\text{if $n = 6$,}
   \\
   \Gamma_{2 L_{1} + L_{2} + L_{3}} &\subseteq H_{0,0}(V(n),\Lambda^{2}M(n)), &\text{if $n \geq 7$.}
\end{align*}
By a dimension argument, it is not difficult to check that, for $n \geq 5$, 
$H_{0,0}(V(n), \Lambda^{2}M(n)) \simeq \Lambda^{2} (V(n)[1]) \wedge \Lambda^{2}(V(n)[1])$ 
is the direct sum of $\Lambda^{2}(V(n)[1])$ and the modules appearing in the previous list, in each case
(see \cite{FH1}, Eq. (24.29) and (24.41)). 
The same argument implies that $H_{0,0}(V(n), \Lambda^{2}M(n)) \simeq \Lambda^{2} (V(n)[1]) \wedge \Lambda^{2}(V(n)[1])$ 
is the direct sum of $\Lambda^{2}(V(n)[1])$ and $\Gamma_{2 L_{1}}$ for $n = 4$. 
Finally, for $n=3$, $H_{0,0}(V(n), \Lambda^{2}M(n))$ is isomorphic to $\Lambda^{2} (V(n)[1])$. 
\qed
\end{remark}

As a consequence of Corollary \ref{coro:anulacion} and Proposition \ref{prop:ciclostor1}, we may obtain the following corollary, which corrects the one stated in \cite{Mov1},  Corollary 21. 
\begin{corollary}
\label{coro:anulacion2}
Let $n \geq 3$. 
The graded $S(V(n))$-module $W(n)^{\otimes i}$ is free if and only if $i > \mathrm{max}\{2,(n-1)/2\}$.
\end{corollary}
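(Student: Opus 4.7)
The plan is to reduce the freeness of $W(n)^{\otimes i}$ as a graded $S(V(n))$-module (with the diagonal action) to the single vanishing $H_1(V(n),W(n)^{\otimes i}) = 0$. Indeed, $W(n)$ is bounded below with finite-dimensional homogeneous components, hence so is $W(n)^{\otimes i}$; therefore a minimal graded free resolution exists and a graded Nakayama argument shows that this module is free if and only if $\Tor^{S(V(n))}_{1}(W(n)^{\otimes i},k) \simeq H_1(V(n),W(n)^{\otimes i})$ vanishes. The task will then split into determining this single homology group for each $i \in \NN$.

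I will first dispatch the small cases $i = 1$ and $i = 2$ directly. For $i = 1$, Theorem \ref{teo:exacta} yields $H_1(V(n),W(n)) \simeq \Lambda^3 V(n) \oplus V(n)[-2]$, which is nonzero because of the second summand. For $i = 2$, Proposition \ref{prop:ciclostor1} asserts that the internal degree-one part of $\Tor^{S(V(n))}_{1}(M(n),M(n))$ equals $\Lambda^5 V(n) \oplus \Lambda^3 V(n) \oplus V(n)$, which contains $V(n) \neq 0$. Using the standard identification $\Tor^{S(V(n))}_\bullet(M,N) \simeq H_\bullet(V(n), M \otimes_k N)$ together with the shift $M(n) = W(n)[2]$, this transports to a nonzero piece of $H_1(V(n),W(n)^{\otimes 2})$, establishing non-freeness.

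The core case $i \geq 3$ will be handled by applying the long exact sequence of Theorem \ref{teo:exacta} with index $i-1 \geq 2$ at position $p = 3$. Both neighbouring terms $H_3(\ym(n),W(n)^{\otimes(i-1)})$ and $H_2(\ym(n),W(n)^{\otimes(i-1)})$ vanish: the former by Proposition \ref{prop:annh3yangmills} and the latter by Corollary \ref{coro:anulacion} (which requires $i-1 \geq 2$, hence $i \geq 3$). The connecting map $S'_{i-1}$ therefore becomes an isomorphism, and together with Theorem \ref{teo:exacta} this will yield
\[ H_1(V(n),W(n)^{\otimes i}) \simeq H_3(V(n),W(n)^{\otimes(i-1)}) \simeq \Lambda^{2i+1} V(n), \]
the last isomorphism being legitimate because the pair $(p,i-1)=(3,i-1)$ avoids the exceptional case $(2,1)$ of Theorem \ref{teo:exacta}. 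This vanishes exactly when $2i + 1 > n$, i.e.\ when $i > (n-1)/2$. Combining the three cases, $W(n)^{\otimes i}$ will be free if and only if $i \geq 3$ and $i > (n-1)/2$, equivalently $i > \max\{2,(n-1)/2\}$. The only technically delicate step is the derivation of the shift isomorphism $H_1(V(n),W(n)^{\otimes i}) \simeq \Lambda^{2i+1}V(n)$ for $i \geq 3$; once this is in hand, the result follows by elementary dimension considerations.
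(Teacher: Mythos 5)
Your proof is correct and follows essentially the same route as the paper: the small cases $i=1,2$ are settled by Theorem \ref{teo:exacta} and Proposition \ref{prop:ciclostor1}, and for $i\geq 3$ the vanishing of $H_{2}(\ym(n),W(n)^{\otimes(i-1)})$ from Corollary \ref{coro:anulacion} forces $S'_{i-1}$ to be an isomorphism, yielding $H_{1}(V(n),W(n)^{\otimes i})\simeq\Lambda^{2i+1}V(n)$ and hence the threshold $i>(n-1)/2$. The only difference is cosmetic: you reduce freeness to the vanishing of $\Tor^{S(V(n))}_{1}(W(n)^{\otimes i},k)$ alone via a graded Nakayama argument, whereas the paper invokes the criterion that all positive-degree homology groups must vanish and then observes that the constraint coming from $H_{1}$ is the binding one.
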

\begin{proof}
Since $W(n)^{\otimes i}$ is a bounded below graded $S(V(n))$-module, it is free  
if and only if the homology groups $H_{\bullet}(V(n),W(n)^{\otimes i})$ vanish for all $\bullet > 0$ (see \cite{Ber2}, Prop. 2.3 and Cor. 2.4). 
By Theorem \ref{teo:exacta}, we see that this is never the case for $i = 1$. 
Moreover, Proposition \ref{prop:ciclostor1} implies that $H_{1}(V(n),W(n)^{\otimes 2})$ does not vanish, so we assume that $i \geq 3$. 

Theorem \ref{teo:exacta} tells us that, if $p \geq 2$, then $H_{p} (V(n),W(n)^{\otimes i}) \simeq \Lambda^{p + 2 i} V(n)$ vanishes 
if and only if $p + 2 i > n$, \textit{i.e.} $i > (n - p)/2$. 
Hence, the collection of homology groups $\{ H_{p} (V(n),W(n)^{\otimes i}) \}_{p \geq 2}$ vanish if and only if $i > (n-2)/2$.

Finally, by Corollary \ref{coro:anulacion}, $H_{2}(\ym(n),W(n)^{\otimes i}) = 0$ for $i \geq 2$, which implies that the map  
$S'_{i} : H_{3}(V(n),W(n)^{\otimes i}) \rightarrow H_{1}(V(n),W(n)^{\otimes (i+1)})$ is an isomorphism.
Therefore, there is an isomorphism $H_{1}(V(n),W(n)^{\otimes i}) \simeq \Lambda^{3 + 2 (i-1)} V(n)$, for $i \geq 3$.
Then, $H_{1}(V(n),W(n)^{\otimes i}) = 0$ if and only if $1 + 2i > n$, \textit{i.e.} $i > (n - 1)/2$. 
The corollary is thus proved.
\end{proof}

\begin{remark}
Since $M(2)=W(2)[2]$ is isomorphic to the trivial equivariant $S(V(2))$-module (see Proposition \ref{prop:W(2)}), 
it is not hard to see that Theorem \ref{teo:exacta}, Propositions \ref{prop:annh3yangmills} and \ref{prop:wni}, 
and Corollaries \ref{coro:anulacion} and \ref{coro:anulacion2} are not true if $n = 2$. 
Nevertheless, it may be directly verified that Propositions \ref{prop:s1inyectivo}, \ref{prop:ciclostor1} and \ref{prop:ciclosnucleo} 
are valid even if $n = 2$. 
\qed
\end{remark}

\section
{\texorpdfstring{The cohomology $H^{\bullet}(\ym(n),\Ker(\epsilon_{\tym(n)}))$}
{The homology of the Yang-Mills algebra with coefficients in the augmentation ideal of TYM(n)}}
\label{sec:calculoh1tymn}

In this section we shall compute the intermediate cohomology groups $H^{0}(\ym(n),\Ker(\epsilon_{\tym(n)}))$ and 
$H^{1}(\ym(n),\Ker(\epsilon_{\tym(n)}))$, which 
will be used in the next section in order to determine the first Hochschild cohomology group of the Yang-Mills algebra $\YM(n)$, for $n \geq 3$. 
From now on, we shall assume that $n \geq 3$, unless otherwise stated.
All morphisms will also be $\so(n)$-equivariant, unless we state the contrary.

\subsection{Analysis of the spectral sequence}
\label{subsection:espe}

The aim of this section is to prove the following theorem.
\begin{theorem}
\label{teo:homi}
If $n \geq 3$, $H_{2}(\ym(n),\Ker(\epsilon_{\tym(n)})) = H_{2}(\ym(n),\tym(n))$ is
isomorphic to $V(n)[-4]$ as a graded $\so(n)$-module and $H_{3}(\ym(n),\Ker(\epsilon_{\tym(n)}))$ vanishes.
\end{theorem}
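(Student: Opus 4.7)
The plan is to analyze the spectral sequence associated to the filtration of $I = \Ker(\epsilon_{\tym(n)})$ by the powers $I \supseteq I^{2} \supseteq I^{3} \supseteq \cdots$ of its augmentation-ideal structure inside $\TYM(n)$. This filtration is $\ym(n)$-stable because $\tym(n)$ is a Lie ideal of $\ym(n)$; since $\tym(n)$ is free as a Lie algebra on $W(n)$, the Poincar\'e-Birkhoff-Witt theorem identifies $\TYM(n) \simeq T(W(n))$ as associative algebras, and therefore $I^{p}/I^{p+1} \simeq W(n)^{\otimes p}$ for $p \geq 1$, with the induced $V(n)$-action on the associated graded being the standard diagonal action on $W(n)^{\otimes p}$. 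Because $W(n)$ sits in degrees $\geq 2$, one has $(I^{p})_{d} = 0$ whenever $p > d/2$, so in each fixed internal degree the filtration is finite and the resulting spectral sequence, whose $E^{1}$-page is given by the graded $\so(n)$-modules $H_{\bullet}(\ym(n), W(n)^{\otimes p})$ for $p \geq 1$, converges to $H_{\bullet}(\ym(n), I)$.

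The vanishing of $H_{3}(\ym(n), I)$ is immediate: Proposition~\ref{prop:annh3yangmills} gives $H_{3}(\ym(n), W(n)^{\otimes p}) = 0$ for every $p \geq 1$, so every $E^{1}$ entry contributing to $H_{3}(\ym(n), I)$ is zero. For $H_{2}(\ym(n), I)$, Corollary~\ref{coro:anulacion} yields $H_{2}(\ym(n), W(n)^{\otimes p}) = 0$ for $p \geq 2$, so the only possible contributor on the $E^{1}$-page is $H_{2}(\ym(n), W(n))$. I will identify the latter using Theorem~\ref{teo:exacta} with $i=1$ combined with Proposition~\ref{prop:s1inyectivo} (which says $I'_{1} = 0$, whence $B'_{1}$ is surjective): the long exact sequence collapses to
\[
   0 \to H_{3}(V(n), W(n)) \to H_{1}(V(n), W(n)^{\otimes 2}) \to H_{2}(\ym(n), W(n)) \to 0.
\]
Theorem~\ref{teo:exacta} identifies $H_{3}(V(n), W(n)) \simeq \Lambda^{5} V(n)$, and using the isomorphism $M(n) = W(n)[2]$ together with Proposition~\ref{prop:ciclostor1} (which gives $\Tor^{S(V(n))}_{1,1}(M(n), M(n)) \simeq \Lambda^{5} V(n) \oplus \Lambda^{3} V(n) \oplus V(n)$), the internal degree-five component of $H_{1}(V(n), W(n)^{\otimes 2})$ is $\Lambda^{5} V(n) \oplus \Lambda^{3} V(n) \oplus V(n)$. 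Since $S'_{1}$ is an injective $\so(n)$-equivariant morphism into this space in degree $5$, Schur's lemma forces it to be an isomorphism onto the $\Lambda^{5} V(n)$ summand, so the cokernel $H_{2}(\ym(n), W(n))$ contains the summands $\Lambda^{3} V(n)[-2]$ and $V(n)[-4]$ in degree $5$.

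The main obstacle is to analyze the differentials leaving $E^{1}_{1, 2} = H_{2}(\ym(n), W(n))$ and conclude that only $V(n)[-4]$ survives to $E^{\infty}$. No differentials come in, as the filtration is concentrated in $p \geq 1$. The higher outgoing differentials $d_{r}$ for $r \geq 2$ land in subquotients of $H_{\star}(\ym(n), W(n)^{\otimes (1+r)})$ with $\star \leq 1$; on the relevant degree-$5$ piece these vanish since $W(n)^{\otimes (1+r)}$ begins in internal degree $\geq 2(1+r) \geq 6$. Only $d_{1} : H_{2}(\ym(n), W(n)) \to H_{1}(\ym(n), W(n)^{\otimes 2})$ remains to be analyzed: it is the connecting homomorphism of $0 \to I^{2}/I^{3} \to I/I^{3} \to W(n) \to 0$, computed by lifting a cycle $c = \sum_{i} \xi_{i} \wedge \eta_{i} \otimes w_{i} \in C_{2}(\ym(n), W(n))$ into $C_{2}(\ym(n), I/I^{3})$ and reading off the $I^{2}/I^{3}$-component of its Chevalley-Eilenberg boundary, which is governed by the higher-weight corrections $\rho^{2}_{\xi_{i}}(w_{i})$ appearing in the expansion \eqref{eq:acciontotal}. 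The hard part is the explicit cocycle-level calculation establishing that the representative of the $V(n)[-4]$ summand corresponds to a symmetric combination of iterated brackets encoding the Yang-Mills relation $\sum_{i} [x_{i}, [x_{i}, x_{j}]] = 0$ in $\YM(n)$, which forces the lift to close up and gives $d_{1} = 0$ on $V(n)[-4]$, while the complementary $\Lambda^{3} V(n)[-2]$ is genuinely obstructed and is hit injectively by $d_{1}$. Combined with the previous paragraph this yields $E^{\infty}_{1, 2} \simeq V(n)[-4]$ and hence $H_{2}(\ym(n), I) \simeq V(n)[-4]$ as graded $\so(n)$-modules.
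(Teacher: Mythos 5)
Your setup coincides with the paper's: the same filtration of $I=\Ker(\epsilon_{\tym(n)})$ by its powers, the same identification of the associated graded pieces with $W(n)^{\otimes p}$, and the same use of Proposition \ref{prop:annh3yangmills} and Corollary \ref{coro:anulacion} to kill every $E^{1}$-entry in total degrees $2$ and $3$ except $H_{2}(\ym(n),W(n))$; in particular your proof of the vanishing of $H_{3}(\ym(n),I)$ is complete. The gap is in the computation of $\Ker(d^{1}_{-1,3})$, which is the entire content of Subsection \ref{subsec:kerneld} and which your proposal does not actually carry out. Even on the internal-degree-$5$ part $\Lambda^{3}V(n)[-2]\oplus V(n)[-4]$ of $H_{2}(\ym(n),W(n))$ you only assert the outcome (``the lift closes up on $V(n)[-4]$'', ``$\Lambda^{3}V(n)[-2]$ is hit injectively''); these are exactly the cocycle computations \eqref{eq:b1v} and \eqref{eq:b1wedge3v}, which genuinely use the Yang--Mills relations and the Jacobi identity and are not forced by any formal argument. (Also, Schur's lemma does not by itself identify the image of $S'_{1}$ with the displayed $\Lambda^{5}V(n)$ summand when, e.g., $n=6$ or $n=8$, where $\Lambda^{5}V(n)\simeq V(n)$ or $\Lambda^{3}V(n)$; what you need, and what semisimplicity does give, is only that the cokernel is $\Lambda^{3}V(n)\oplus V(n)$ in that degree.)

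More seriously, $H_{2}(\ym(n),W(n))\simeq H_{1}(V(n),W(n)^{\otimes 2})/S'_{1}(\Lambda^{5}V(n))$ is not concentrated in internal degree $5$: by Corollary \ref{coro:ciclostor2} the image of $\times_{0,1}^{M(n)}$ contributes ``generic'' classes in every internal degree $\geq 6$, and these have nonzero image in $H_{2}(\ym(n),W(n))$ under $B'_{1}$ (Lemma \ref{lema:morfb1}). Your degree argument disposing of the higher differentials applies only in degree $5$, and your description of $d^{1}_{-1,3}$ never touches these higher-degree classes. Since there are no incoming differentials, $H_{2}(\ym(n),I)$ is what remains of $\Ker(d^{1}_{-1,3})$ after the higher differentials act, so unless you prove that $d^{1}_{-1,3}$ is injective on the whole generic part you cannot exclude extra surviving classes in degrees $>5$. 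That injectivity is precisely the hardest step of the paper's proof: it needs the commutator identity of Lemma \ref{lema:granconmutadorfinal}, the identification $B'_{1}\circ I_{2}\circ d^{1}_{-1,3}=\pi_{a}$ of Proposition \ref{prop:proycan} (reducing the question to the symmetric part $H_{1}(V(n),S^{2}W(n))$), the auxiliary object $D(\h(n))$ with $\h(n)=\tym(n)/\C^{2}(\tym(n))$ and the connecting map $\delta$ of \eqref{eq:mordelta}, and the long isotypic-component computation with the map $\xi$ in Proposition \ref{prop:vnker}. Without this (or a substitute for it), the claim $H_{2}(\ym(n),I)\simeq V(n)[-4]$ is not established.
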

\begin{proof}
To simplify notation, we shall denote the augmentation ideal $\Ker(\epsilon_{\tym(n)})$ simply by $I$.

Let us consider the increasing filtration $F_{\bullet}I$ of equivariant $\YM(n)$-modules of $I$ given by
\[     F_{p}I = \begin{cases}
                   I, &\text{if $p \geq -1$,}
                   \\
                   I^{-p}, &\text{if $p \leq -2$.}
                \end{cases}
                \]
The previous filtration is exhaustive and Hausdorff and it induces an exhaustive and Hausdorff increasing filtration 
$F_{p} C_{\bullet}(\YM(n),I) = C_{\bullet}(\YM(n),F_{p} I)$ on the complex $C_{\bullet}(\YM(n),I)$. 
Therefore, we obtain a spectral sequence whose zeroth term is 
\[     E^{0}_{p,q} = F_{p} C_{p+q}/F_{p-1}C_{p+q} \simeq C_{p+q}(\YM(n),F_{p}I/F_{p-1}I).     \]

Moreover, the isomorphism of graded algebras $\TYM(n) \simeq T W(n)$ yields an isomorphism of equivariant $\ym(n)$-modules
\[     F_{p} I/F_{p-1} I = \begin{cases}
                              I^{-p}/I^{-p+1} \simeq W(n)^{\otimes (-p)}, &\text{if $p \leq -1$,}
                              \\
                              0, &\text{if $p \geq 0$.}
                           \end{cases}
\]
So the initial term of the spectral sequence may be written as 
\[     E^{0}_{p,q} \simeq C_{p+q}(\YM(n),F_{p}I/F_{p-1}I) \simeq \begin{cases}
                                                                    C_{p+q}(\ym(n),W(n)^{\otimes (-p)}),
                                                                    &\text{if $p \leq -1$,}
                                                                    \\
                                                                    0, &\text{if $p \geq 0$,}
                                                                 \end{cases}
\]
As a consequence,
\[     E^{1}_{p,q} \simeq \begin{cases}
                             H_{p+q}(\ym(n),W(n)^{\otimes (-p)}),
                             &\text{if $p \leq -1$,}
                             \\
                             0, &\text{if $p \geq 0$.}
                          \end{cases}
\]

By Corollary \ref{coro:anulacion}, $H_{2}(\ym(n), W(n)^{\otimes i}) = 0$, for $i \geq 2$, so we see that $E^{1}_{p,2-p} = 0$, for all 
$p \in \ZZ \setminus \{ -1 \}$.
Also, Proposition \ref{prop:annh3yangmills} tells us that $H_{3}(\ym(n), W(n)^{\otimes i}) = 0$, for $i \geq 1$,
so we have that $E^{1}_{p,3-p} = 0$, for all $p \in \ZZ$.
Furthermore, $H_{j}(\ym(n), W(n)^{\otimes i}) = 0$ for $j \notin \{ 0, 1, 2, 3 \}$ and $i \in \NN_{0}$, so
by Corollary \ref{coro:anulacion2} the spectral sequence is bounded and therefore convergent. 
Thus, $H_{3}(\ym(n),I) = 0$.

We may present the previous results in a pictorial way. 
\begin{center}
\begin{figure}[H]
\[
\xymatrix@R-10pt
{
&
&
&
&
&
&
\\
\bullet
&
0
&
0
&
0
&
0
&
&
\\
\bullet
&
\bullet
\ar[l]
&
0
&
0
&
0
&
E_{\bullet,\bullet}^{1}
&
\\
0
&
\bullet
&
\bullet
\ar[l]
\ar@{.}[uull]
&
\bullet
\ar[l]^{d^{1}_{-1,3}}
&
0
&
&
\\
0
&
0
&
\bullet
&
\bullet
\ar[l]
&
0
&
&
\\
0
&
0
&
0
&
\bullet
\ar@{.}[uuulll]
\ar@{.}[uu]
&
0
&
&
\\
0
&
0
&
0
&
0
&
0
\ar[uuuuuu]^>{q}
\ar[rr]^>{p}
&
&
}
\]
\caption{First step $E^{1}_{\bullet,\bullet}$ of the spectral sequence.
The dotted lines show the limits wherein the spectral sequence is concentrated}
\end{figure}
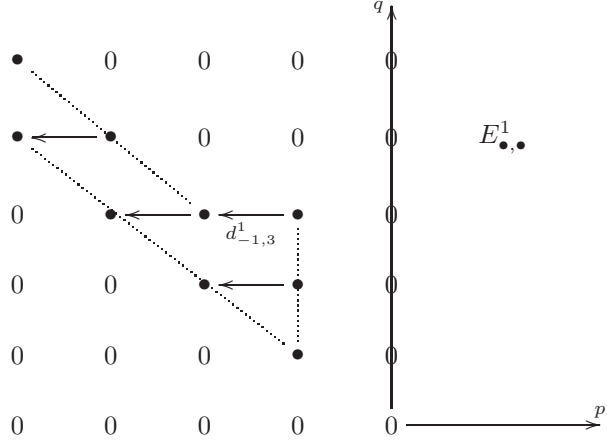
\end{center}

We shall study the differential
\[     d^{1}_{-1,3} : E^{1}_{-1,3} = H_{2}(\ym(n), W(n)) \rightarrow H_{1}(\ym(n), W(n)^{\otimes 2}) = E^{1}_{-2,3}.     \]
We will prove in Proposition \ref{prop:ker} that the kernel of the map $d^{1}_{-1,3}$ is exactly $V(n)[-4]$,
for which a basis was given in Proposition \ref{prop:ciclostor1} and Corollary \ref{coro:ciclostor2}.
This implies that $E^{2}_{-1,3} \simeq V(n)[-4]$. 
Hence, $H_{2}(\ym(n),I)$ is a subquotient of $V(n)[-4]$.
However, since the spectral sequence is defined in the category of $\so(n)$-modules, $H_{2}(\ym(n),I)$ must be a subquotient as an $\so(n)$-module. 
As $V(n)[-4]$ is an irreducible $\so(n)$-module, $H_{2}(\ym(n),I)$ can only be $V(n)[-4]$ or trivial.
This last possibility is impossible, due to the fact that, given $i = 1, \dots, n$, the homology classes of the cycles 
\[     \sum_{l = 1} [x_{i},x_{l}] \otimes x_{l} \in C_{2}(\ym(n),\tym(n)) \subseteq C_{2}(\ym(n),I)     \]
considered in Lemma \ref{lema:morfb1} form a linearly independent set, by internal weight reasons. 
\end{proof}

\subsection{\texorpdfstring
{Computation of the kernel of $d_{-1,3}^{1}$}
{Computation of the kernel of d}
}
\label{subsec:kerneld}

In this subsection we shall prove that $\Ker(d^{1}_{-1,3}) \simeq V(n)[-4]$.

We recall the isomorphism of equivariant $\ym(n)$-modules 
$W(n)^{\otimes 2} \simeq \Lambda^{2} W(n) \oplus S^{2} W(n)$, and the surjective map 
$B_{1}' : H_{1}(V(n), W(n)^{\otimes 2}) \rightarrow H_{2}(\ym(n), W(n))$ given in Theorem \ref{teo:exacta}
(see Proposition \ref{prop:s1inyectivo}). 
If we define the homology groups 
\[     H_{2,s}(\ym(n), W(n)) = \mathrm{Im}(B_{1}'|_{H_{0}(V(n),S^{2} W(n))})     \]
and 
\[     H_{2,a}(\ym(n), W(n)) = \mathrm{Im}(B_{1}'|_{H_{1}(V(n),\Lambda^{2} W(n))}),     \] 
it turns out that $H_{2}(\ym(n), W(n)) = H_{2,s}(\ym(n), W(n)) + H_{2,a}(\ym(n), W(n))$.

The following lemmas will be useful in the sequel. 
\begin{lemma}
\label{lema:morfb1}
The image under $B_{1}'$ of the homology classes of the cycles $x$ in $W(n)^{\otimes 2} \otimes V(n)$ 
considered in Corollary \ref{coro:ciclostor2} and Proposition \ref{prop:ciclostor1} is described as follows 
\begin{description}
\item[($V(n)$-type component)] Given $l$, with $1 \leq l \leq n$, and
\[     x = \sum_{1 \leq i < j \leq n} [x_{i},x_{j}] \otimes [x_{i},x_{j}] \otimes x_{l}
           + \sum_{1 \leq i, j \leq n} \big([x_{l},x_{i}] \otimes [x_{i},x_{j}] \otimes x_{j} + [x_{i},x_{j}] \otimes [x_{l},x_{i}] \otimes x_{j}\big),
\]
$B_{1}'(\bar{x})$ is the homology class of the cycle $\sum_{j = 1}^{n} [x_{l},x_{j}] \otimes x_{j} \in C_{2}(\YM(n),W(n))$.   

\item[($\Lambda^{3} V(n)$-type component)] Given $i_{1}, i_{2}, i_{3}$ such that $1 \leq i_{1} < i_{2} < i_{3} \leq n$, and 
\[     x = \sum_{l=1}^{n} \sum_{\sigma \in \SSS_{3}} \epsilon(\sigma) 
          \big([x_{i_{\sigma(1)}},x_{i_{\sigma(2)}}] \wedge [x_{i_{\sigma(3)}},x_{l}] \otimes x_{l}
          + [x_{i_{\sigma(1)}},x_{l}] \wedge [x_{i_{\sigma(2)}},x_{l}] \otimes x_{i_{\sigma(3)}}\big),
\]
$B_{1}'(\bar{x})$ is the homology class of the cycle 
$\sum_{\sigma \in \mathbb{A}_{3}} [x_{i_{\sigma(1)}},x_{i_{\sigma(2)}}] \otimes x_{i_{\sigma(3)}} \in C_{2}(\YM(n),W(n))$. 

\item[($\Lambda^{5} V(n)$-type component)] Given $i_{1}, i_{2}, i_{3}, i_{4}, i_{5}$ such that $1 \leq i_{1} < i_{2} < i_{3} < i_{4} < i_{5} \leq n$, and
\[     x = \sum_{\sigma \in \SSS_{5}} \epsilon(\sigma) [x_{i_{\sigma(1)}},x_{i_{\sigma(2)}}] \otimes
        [x_{i_{\sigma(3)}},x_{i_{\sigma(4)}}] \otimes x_{i_{\sigma(5)}},
\]
$B_{1}'(\bar{x})$ vanishes. 

\item[(generic-type component)] If $x$ is a generic cycle of the form 
\[     x = \sum_{l=1}^{n} \big(w x_{l} \otimes [x_{i},x_{j}] - w \otimes [x_{i},x_{j}] x_{l}\big) \otimes x_{l},     \]
then $B_{1}'(\bar{x})$ is the homology class of the cycle
$(w x_{j} \otimes x_{i} - w x_{i} \otimes x_{j}) \in C_{2}(\YM(n),W(n))$. 
\end{description}
\end{lemma}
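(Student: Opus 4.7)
Following Remark \ref{obs:morfismossucesp}, the plan is to compute each value of $B_1'$ by lifting a cycle representing the given class in $E^2_{1,1}$ of the double complex \eqref{eq:complejodoblesucesp} (with $i=1$) to a cycle in the total complex, and then applying the quasi-isomorphism \eqref{eq:quasiisosucesp} to read off the image in $C_2(\YM(n), W(n))$. Concretely, given a cycle $z \in W(n) \otimes W(n) \otimes V(n)$, I would reorganize it as an element of $V(n) \otimes W(n) \otimes W(n)$ and lift the second $W(n)$ factor to $\Ker(d_1) \subseteq C_1(\YM(n), S(V(n)))$ via the map $\phi'$ of \eqref{eq:phi'2}, sending $[x_a, x_b] \mapsto x_a \otimes x_b - x_b \otimes x_a$; the resulting element $\tilde z \in {}^1 C_{1,1}$ is automatically a horizontal cycle because $\phi'$ lands in $\Ker(d_1)$. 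I would then compute $d^v(\tilde z) \in {}^1 C_{1,0}$ via the Chevalley--Eilenberg differential for the diagonal $V(n)$-action on $W(n) \otimes S(V(n))$, find a correction $\tilde z' \in {}^1 C_{2,0} = W(n) \otimes C_2(\YM(n), S(V(n)))$ satisfying $d^h(\tilde z') = d^v(\tilde z)$, and finally project $\tilde z'$ under the quasi-iso $w \otimes s \otimes v \mapsto (w \cdot s) \otimes v$ to obtain a cycle in $C_2(\YM(n), W(n))$ representing $B_1'([z])$.

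The generic case can be carried out directly and illustrates the whole procedure. Starting from $z = \sum_l (w x_l \otimes [x_i, x_j] - w \otimes [x_i, x_j] x_l) \otimes x_l$, the Chevalley--Eilenberg computation of $d^v(\tilde z)$ simplifies to $-w \otimes q(x_i \otimes x_j - x_j \otimes x_i)$ after the two middle cross terms cancel and the $wq$ contribution disappears by Proposition \ref{prop:W(n)}. Since $d_2(x_i \otimes x_j - x_j \otimes x_i) = q(x_i \otimes x_j - x_j \otimes x_i)$ in $C_\bullet(\YM(n), S(V(n)))$, the choice $\tilde z' = -w \otimes (x_i \otimes x_j - x_j \otimes x_i)$ works, and projecting yields $w x_j \otimes x_i - w x_i \otimes x_j$, matching the statement.

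The $V(n)$-type and $\Lambda^3 V(n)$-type cases are handled by analogous but more intricate computations. In both, lifting via $\phi'$ and expanding $d^v(\tilde z)$ reduces, after several cancellations that crucially use the Yang--Mills relation $\sum_i [x_i, [x_i, x_l]] = 0$ in $\tym(n)$ together with the formula \eqref{eq:accion} for the action of $V(n)$ on $W(n)$, to an expression of the form $w \otimes q(x_a \otimes x_b - x_b \otimes x_a)$ for appropriate $w \in W(n)$ and indices $a, b$, so that $\tilde z'$ is again of the form $\pm w \otimes (x_a \otimes x_b - x_b \otimes x_a)$ and the projection reproduces the advertised cycle. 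The $\Lambda^5 V(n)$-type case, on the other hand, can be bypassed entirely: by Theorem \ref{teo:exacta} and Proposition \ref{prop:s1inyectivo} we have $\Ker(B_1') = \mathrm{Im}(S_1')$, and $S_1' \colon H_3(V(n), W(n)) \simeq \Lambda^5 V(n) \hookrightarrow H_1(V(n), W(n)^{\otimes 2})$ is an $\so(n)$-equivariant injection concentrated in total degree $5$; the $\Lambda^5 V(n)$-type cycles span an $\so(n)$-submodule of the same type and in the same degree, and a multiplicity count using Corollary \ref{coro:W(n)son} and the \v{Z}elobenko fusion rules shows that $\Lambda^5 V(n)$ appears with multiplicity one in degree $5$ of $H_1(V(n), W(n)^{\otimes 2})$, so the two submodules coincide and $B_1'$ vanishes on the $\Lambda^5 V(n)$-type cycles.

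The hard part will be the systematic execution of the $V(n)$-type and $\Lambda^3 V(n)$-type computations: they require careful sign bookkeeping, iterated use of the Yang--Mills relation and the somewhat delicate formula \eqref{eq:accion}, and judicious choices of lifts of elements such as $x_l \cdot [x_i, x_j] \in W(n)$ to $\Ker(d_1)$ modulo $\mathrm{Im}(d_2)$ when applying $\phi'$. That each resulting formula represents a nontrivial homology class in $C_2(\YM(n), W(n))$ is easily checked by a degree argument, since the displayed cycles live in the lowest possible homological degree.
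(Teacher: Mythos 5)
Your recipe is exactly the computation the paper's (one-line) proof delegates to Remark \ref{obs:morfismossucesp}: lift the second $W(n)$-factor of a Chevalley--Eilenberg cycle to $\Ker(d_{1}) \subseteq C_{1}(\YM(n),S(V(n)))$ via $\phi'$, observe that the result is a horizontal cycle, compute the vertical differential, solve $d^{h}(\tilde z') = d^{v}(\tilde z)$ in ${}^{1}C_{2,0}$, and push $\tilde z'$ down through the quasi-isomorphism \eqref{eq:quasiisosucesp2}. Your generic-type computation is correct (up to the overall sign fixed by the total-complex convention): $d^{v}(\tilde z) = -w \otimes q(x_{i}\otimes x_{j}-x_{j}\otimes x_{i})$ and $d_{2}(x_{i}\otimes x_{j}-x_{j}\otimes x_{i}) = q(x_{i}\otimes x_{j}-x_{j}\otimes x_{i})$, so the stated answer drops out. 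One inaccuracy to flag in your preview of the $V(n)$- and $\Lambda^{3}V(n)$-type cases: there the correction $\tilde z'$ \emph{cannot} have the form $\pm w\otimes(x_{a}\otimes x_{b}-x_{b}\otimes x_{a})$, since projecting such a term yields a two-term expression $\pm(wx_{a}\otimes x_{b}-wx_{b}\otimes x_{a})$, whereas the advertised answers $\sum_{j}[x_{l},x_{j}]\otimes x_{j}$ and $\sum_{\sigma\in\mathbb{A}_{3}}[x_{i_{\sigma(1)}},x_{i_{\sigma(2)}}]\otimes x_{i_{\sigma(3)}}$ are sums of $n$ (resp.\ three) terms of the form $w'\otimes x_{c}$. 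The correct correction there is of the form $\sum_{c} w_{c}\otimes 1\otimes x_{c}$, whose horizontal differential is $\sum_{c} w_{c}\otimes(q\otimes x_{c}-\sum_{m}x_{c}x_{m}\otimes x_{m})$; the method still goes through, but the cancellations are organized differently from the generic case, so the phrase ``reduces to an expression of the form $w\otimes q(x_{a}\otimes x_{b}-x_{b}\otimes x_{a})$'' would have to be repaired before those two cases count as done.

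Your handling of the $\Lambda^{5}V(n)$-type cycles is a genuine departure from the paper and is valid: by the exact sequence of Theorem \ref{teo:exacta}, $\Ker(B_{1}') = \mathrm{Im}(S_{1}')\simeq \Lambda^{5}V(n)$ sits in internal degree $5$, and since Proposition \ref{prop:ciclostor1} identifies the degree-$5$ part of $H_{1}(V(n),W(n)^{\otimes 2})$ as $\Lambda^{5}V(n)\oplus\Lambda^{3}V(n)\oplus V(n)$ with the displayed $\Lambda^{5}$-cycles spanning the unique $\Lambda^{5}$-isotypic summand, $\so(n)$-equivariance forces $B_{1}'$ to annihilate them. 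This is not circular (it uses only \ref{teo:exacta} and \ref{prop:ciclostor1}) and it is cleaner than the direct computation; the paper runs essentially this argument in the other direction in Proposition \ref{prop:b1b2}, using the present lemma to locate the kernel, so the two remain consistent. In short: same approach as the paper for three of the four cases (one fully verified, two only sketched with a fixable misstatement of the intermediate shape), and a legitimate alternative for the fourth.
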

\begin{proof}
Consider the double complex \eqref{eq:complejodoblesucesp}. 
It yields a spectral sequence converging to the homology $H_{\bullet} (\ym(n) , W(n)^{\otimes i})$ and an associated long exact sequence. 
Remark \ref{obs:diagram} indicates how to compute the map $B_{1}'$ for each cycle of the previous list. 
This is a lengthy but straightforward and usual homological computation. 
\end{proof}

Let us define a map 
\begin{align*}
\Delta : W(n) \rightarrow \YM(n)
\\
w \mapsto \sum_{i=1}^{n} [x_{i},[x_{i},w]].
\end{align*}
Since $q.w=0$, it turns out that 
\begin{equation}
\label{eq:delta}
     \Delta(w) = \sum_{i=1}^{n} x_{i} \rho_{i}^{2}(w) + \rho_{i}^{2}(x_{i}w) + \tilde{w},     
\end{equation}
where $\rho_{i}^{2}$ is given by \eqref{eq:acciontotal} and $\tilde{w} \in \oplus_{p\geq 3} \tym(n)^{p}$. 

\begin{lemma}
\label{lema:granconmutadorfinal}
If $p_{2} : \tym(n) \rightarrow \tym(n)^{2}$ denotes the canonical projection, then 
\begin{multline}
\label{eq:granconmutadorfinal}
     p_{2}\big(\sum_{l = 1}^{n} [x_{l},[x_{l},[x_{i_{1}}, \dots, [x_{i_{r}},[x_{p},x_{q}]]\dots]]]\big)
     \\
     = 2\sum_{h = 1}^{r} \sum_{l = 1}^{n} x_{l} x_{i_{1}} \dots x_{i_{h-1}}[[x_{l},x_{i_{h}}], x_{i_{h+1}} \dots x_{i_{r}} [x_{p},x_{q}]]
     + 2\sum_{l = 1}^{n} x_{i_{1}} \dots x_{i_{r}} [[x_{l},x_{p}],[x_{l},x_{q}]],
\end{multline}
for all $r \in \NN_{0}$. 
\end{lemma}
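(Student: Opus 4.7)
The strategy is induction on $r \geq 0$. For $r = 0$, expanding $\sum_l [x_l,[x_l,[x_p,x_q]]]$ twice via the Jacobi identity gives
$$\sum_l [[x_l,[x_l,x_p]],x_q] + 2\sum_l [[x_l,x_p],[x_l,x_q]] + \sum_l [x_p,[x_l,[x_l,x_q]]],$$
whose outer terms vanish in $\ym(n)$ by the Yang-Mills relations; the surviving middle term already lies in $\tym(n)^2 = [W(n),W(n)]$, so $p_2$ leaves it unchanged. This is essentially the computation performed in the proof of Proposition \ref{prop:W(n)}.

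For the inductive step, set $w_r = [x_{i_1}, w_{r-1}]$ and $F(r) = \sum_l [x_l,[x_l,w_r]]$. A straightforward iteration of Jacobi, combined with $\sum_l [x_l,[x_l,x_{i_1}]] = 0$ in $\ym(n)$, produces the recursion
$$F(r) = 2\sum_l [[x_l,x_{i_1}],[x_l,w_{r-1}]] + [x_{i_1}, F(r-1)].$$
To project onto $\tym(n)^2$, I will use the structural fact that $[x_i,\cdot]$ cannot lower internal weight, so for any $v \in \tym(n)$,
$$p_2 [x_i, v] = \rho_i^2(p_1 v) + x_i \cdot p_2(v),$$
where the action on $\tym(n)^2$ is the derivation extension $x \cdot [a,b] = [x \cdot a, b] + [a, x \cdot b]$ of the $V(n)$-action on $W(n)$, and $\rho_i^2$ is as in \eqref{eq:acciontotal}. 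Two consequences of Proposition \ref{prop:W(n)} are crucial: first, $p_1(F(r-1)) = \sum_l x_l \cdot x_l \cdot p_1(w_{r-1}) = q \cdot p_1(w_{r-1}) = 0$, which makes $p_2 [x_{i_1}, F(r-1)] = x_{i_1} \cdot p_2(F(r-1))$; second, $\sum_l x_l \cdot [x_l, x_{i_1}] = p_1\bigl(\sum_l [x_l,[x_l,x_{i_1}]]\bigr) = 0$.

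I then apply the inductive hypothesis to $p_2(F(r-1))$ for the sequence $(i_2,\dots,i_r)$ and combine it with the first-term contribution $2\sum_l [[x_l,x_{i_1}], x_l \cdot p_1(w_{r-1})]$, identifying $p_1(w_{r-1})$ with $x_{i_2} \cdots x_{i_r} \cdot [x_p,x_q]$ by the same weight argument. Commutativity in $S(V(n))$ reorganizes the $h \geq 2$ summands (since $x_{i_1} x_l = x_l x_{i_1}$), while the identity
$$\sum_l [[x_l,x_{i_1}], x_l \cdot A] = \sum_l x_l \cdot [[x_l,x_{i_1}], A],$$
a consequence of the derivation property together with the vanishing $\sum_l x_l \cdot [x_l, x_{i_1}] = 0$, turns the first-term contribution into exactly the $h=1$ summand of the RHS of \eqref{eq:granconmutadorfinal}. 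The main obstacle is the book-keeping of the weight decomposition, in particular verifying that the $S(V(n))$-action appearing on the RHS of \eqref{eq:granconmutadorfinal} truly coincides with the derivation extension produced by $p_2 \circ [x_i, \cdot]$; once this is in place, the remaining combinatorial assembly is routine.
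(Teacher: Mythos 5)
Your proof is correct, and it takes a genuinely different (and arguably cleaner) route than the one in the paper. The paper's argument is essentially a single global computation: it evaluates $\sum_{l}[x_{l},[x_{l},[x_{i_{1}},\dots,[x_{i_{r}},[x_{p},x_{q}]]\dots]]]$ in two ways --- once by substituting $[x_{t},\place]\mapsto x_{t}.(\place)+\rho_{t}^{2}(\place)$ to isolate the $\tym(n)^{2}$-component (yielding \eqref{eq:reduccion}), and once by pushing the two outer $\mathrm{ad}(x_{l})$'s inward with iterated Jacobi identities and the Yang--Mills relations, discarding everything of internal weight $>2$ at the end. Your version replaces that long expansion by the recursion $F(r)=2\sum_{l}[[x_{l},x_{i_{1}}],[x_{l},w_{r-1}]]+[x_{i_{1}},F(r-1)]$ (which I have checked: one application of Jacobi to each of the two outer brackets, plus the relation $\sum_{l}[x_{l},[x_{l},x_{i_{1}}]]=0$), and then controls the projection through the identity $p_{2}[x_{i},v]=\rho_{i}^{2}(p_{1}v)+x_{i}\cdot p_{2}(v)$, valid because $\mathrm{ad}(x_{i})$ cannot lower internal weight. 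The two auxiliary vanishing statements you invoke --- $p_{1}(F(r-1))=q\cdot p_{1}(w_{r-1})=0$ and $\sum_{l}x_{l}\cdot[x_{l},x_{i_{1}}]=0$ --- are exactly right and are both consequences of Proposition \ref{prop:W(n)} and the Yang--Mills relations; the second one is precisely what converts $2\sum_{l}[[x_{l},x_{i_{1}}],x_{l}\cdot p_{1}(w_{r-1})]$ into the $h=1$ summand $2\sum_{l}x_{l}\cdot[[x_{l},x_{i_{1}}],p_{1}(w_{r-1})]$. The "obstacle" you flag at the end --- that the $S(V(n))$-multiplication written on the right-hand side of \eqref{eq:granconmutadorfinal} is the derivation extension of the action on $W(n)$, and that the resulting operators for distinct generators commute so the prefix monomials can be reordered --- is genuine but routine: commuting operators on $W(n)$ extend to commuting derivations of $\Lambda^{2}W(n)\simeq\tym(n)^{2}$, so the reorganization of the $h\geq 2$ summands goes through. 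What your approach buys is that the Jacobi bookkeeping is confined to a two-term recursion rather than a page of nested brackets; what the paper's approach buys is that it simultaneously produces the intermediate identity \eqref{eq:reduccion} relating the left-hand side to $\Delta(\bar{x}_{\bar{i}}[x_{p},x_{q}])$, which is reused later in the proof of Proposition \ref{prop:vnker}.
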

\begin{proof}
We proceed by induction. 
In the first place, using the Yang-Mills relations \eqref{eq:relym}, it is not hard to prove that 
\[     \Delta([x_{i},x_{j}]) = 2 \sum_{l = 1}^{n} [[x_{l},x_{i}],[x_{l},x_{j}]].     \]

We must now show a similar expression for $\Delta(\bar{x}^{\bar{i}}[x_{p},x_{q}])$, for $|\bar{i}| > 0$, 
where $\bar{x}_{\bar{i}} = x_{i_{1}} \dots x_{i_{r}}$ and $|\bar{i}| = r \geq 1$. 
In order to do so we consider the element in $\tym(n)$
\begin{equation}
\label{eq:granconmutador}
     \sum_{l = 1}^{n} [x_{l},[x_{l},[x_{i_{1}}, \dots, [x_{i_{r}},[x_{p},x_{q}]]\dots]]],
\end{equation}
whose component in $\tym(n)^{2}$ is obtained by replacing in the previous expression 
$[x_{t},\place]$ by $x_{t}.(\place) + \rho_{t}^{2}(\place)$, and which gives 
\begin{equation}
\label{eq:reduccion}
     \Delta(\bar{x}_{\bar{i}} [x_{p},x_{q}])
       + \sum_{h = 1}^{r} \sum_{l = 1}^{n} x_{l}^{2} x_{i_{1}} \dots x_{i_{h-1}} \rho_{i_{h}}^{2}(x_{i_{h+1}} \dots x_{i_{r}} [x_{p},x_{q}]).
\end{equation}

On the other hand, using the Jacobi identity and the Yang-Mills relations \eqref{eq:relym}, 
the element \eqref{eq:granconmutador} may be rewritten as 
\begin{equation}
\label{eq:granconmutador2}
\begin{split}
     &\sum_{l = 1}^{n} [x_{l},[x_{l},[x_{i_{1}}, \dots, [x_{i_{r}},[x_{p},x_{q}]]\dots]\dots]]
     \\
     &=\sum_{l = 1}^{n} \Big(\sum_{h = 1}^{r} [x_{l},[x_{i_{1}}, \dots,\big([[x_{l},x_{i_{h}}],\dots[x_{i_{r}},[x_{p},x_{q}]]\dots]
     + [x_{i_{r}},[x_{l},[x_{p},x_{q}]]\dots]\big)\dots]]\Big)
     \\
     &=\begin{aligned}[t]
         \sum_{l = 1}^{n} \Big(\sum_{h = 1}^{r} [x_{l},[x_{i_{1}}, \dots[[x_{l},x_{i_{h}}], \dots[x_{i_{r}},&[x_{p},x_{q}]]\dots]\dots]]
         + 2 [x_{i_{1}}, \dots, [x_{i_{r}},[[x_{l},x_{p}],[x_{l},x_{q}]]]\dots]
         \\
         + &\sum_{h = 1}^{r} [x_{i_{1}}, \dots[[x_{l},x_{i_{h}}], \dots[x_{i_{r}},[x_{l},[x_{p},x_{q}]]]\dots]\dots]\Big).
     \end{aligned}
\end{split}
\end{equation}
Furthermore, the same relations may be used to further simplify as follows 
\begin{align*}
     &\sum_{h = 1}^{r} \sum_{l = 1}^{n} [x_{l},[x_{i_{1}}, \dots,[[x_{l},x_{i_{h}}], \dots[x_{i_{r}},[x_{p},x_{q}]]\dots]\dots]]
       \\
       &= \sum_{h = 1}^{r} \sum_{l = 1}^{n} \Big(\sum_{g = 1}^{h}
          [x_{i_{1}}, \dots,[[x_{l},x_{i_{g}}], \dots[[x_{l},x_{i_{h}}], \dots[x_{i_{r}},[x_{p},x_{q}]]\dots]\dots]\dots]
       \\
       &+  \sum_{g = h+1}^{r}
          [x_{i_{1}}, \dots[[x_{l},x_{i_{h}}], \dots[[x_{l},x_{i_{g}}], \dots[x_{i_{r}},[x_{p},x_{q}]]\dots]\dots]\dots]
       \\
       &+ [x_{i_{1}}, \dots,\big([[x_{l},[x_{l},x_{i_{h}}]], \dots[x_{i_{r}},[x_{p},x_{q}]]\dots] 
       + [[x_{l},x_{i_{h}}], \dots[x_{i_{r}},[x_{l},[x_{p},x_{q}]]]\dots]\big)\dots]\Big),
\end{align*}
which also coincides with
\begin{align*}
       \sum_{h = 1}^{r} \sum_{l = 1}^{n} &\Big(\sum_{g = 1}^{h}
          [x_{i_{1}}, \dots,[[x_{l},x_{i_{g}}], \dots[[x_{l},x_{i_{h}}], \dots[x_{i_{r}},[x_{p},x_{q}]]\dots]\dots]\dots]
       \\
       &+  \sum_{g = h+1}^{r}
          [x_{i_{1}}, \dots[[x_{l},x_{i_{h}}], \dots[[x_{l},x_{i_{g}}], \dots[x_{i_{r}},[x_{p},x_{q}]]\dots]\dots]\dots]
       \\
       &+ [x_{i_{1}}, \dots,[[x_{l},x_{i_{h}}], \dots[x_{i_{r}},[x_{l},[x_{p},x_{q}]]]\dots]\dots]\Big).
\end{align*}
Hence,
\begin{align*}
   &\sum_{h = 1}^{r} \sum_{l = 1}^{n} [x_{l},[x_{i_{1}}, \dots,[[x_{l},x_{i_{h}}], \dots[x_{i_{r}},[x_{p},x_{q}]]\dots]\dots]]
   \\
   &- \sum_{h = 1}^{r} \sum_{l = 1}^{n}
    [x_{i_{1}}, \dots,[[x_{l},x_{i_{h}}], \dots[x_{i_{r}},[x_{l},[x_{p},x_{q}]]]\dots]\dots] \in \bigoplus_{p > 2}\tym(n)^{p}.
\end{align*}
Therefore, we have proved that 
\begin{align*}
     \sum_{l = 1}^{n} [x_{l},[x_{l},[x_{i_{1}}, \dots, [x_{i_{r}},[x_{p},x_{q}]]\dots]]]
     &= 2\sum_{h = 1}^{r} \sum_{l = 1}^{n} [x_{l},[x_{i_{1}}, \dots[[x_{l},x_{i_{h}}], \dots[x_{i_{r}},[x_{p},x_{q}]]\dots]\dots]]
     \\
     &+ 2\sum_{l = 1}^{n} [x_{i_{1}}, \dots, [x_{i_{r}},[[x_{l},x_{p}],[x_{l},x_{q}]]]\dots] +\tilde{w},
\end{align*}
where $\tilde{w} \in \bigoplus_{p > 2} \tym(n)^{p}$.
Then, we see that 
\begin{multline*}
     p_{2}\big(\sum_{l = 1}^{n} [x_{l},[x_{l},[x_{i_{1}}, \dots, [x_{i_{r}},[x_{p},x_{q}]]\dots]]]\big)
     \\
     = 2\sum_{h = 1}^{r} \sum_{l = 1}^{n} x_{l} x_{i_{1}} \dots [[x_{l},x_{i_{h}}], \dots x_{i_{r}} [x_{p},x_{q}]]
     + 2\sum_{l = 1}^{n} \bar{x}_{\bar{i}} [[x_{l},x_{p}],[x_{l},x_{q}]]
\end{multline*}
and the lemma is thus proved. 
\end{proof}

The following proposition is a consequence of the previous lemmas.
\begin{proposition}
\label{prop:proycan}
The restriction
\[     d^{1}_{-1,3}|_{H_{2,a}(\ym(n), W(n))} : H_{2,a}(\ym(n), W(n))
                                                   \rightarrow H_{1}(\ym(n), W(n)^{\otimes 2})     \]
is an injection.
Moreover, $B'_{1} \circ I_{2} \circ d^{1}_{-1,3} : H_{2}(\ym(n), W(n)) \rightarrow H_{2}(\ym(n), W(n))$
coincides with the map $\pi_{a} : H_{2}(\ym(n), W(n)) \rightarrow H_{2,a}(\ym(n), W(n))$ induced by the canonical projection 
$W(n)^{\otimes 2} \rightarrow \Lambda^{2} W(n)$, 
where $I_{2}$ is the map defined in Theorem \ref{teo:exacta}. 
\end{proposition}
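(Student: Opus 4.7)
The plan is to verify the identity $B'_1 \circ I_2 \circ d^1_{-1,3} = \pi_a$ by explicit computation on a generating set of $H_2(\ym(n), W(n))$. Since $B'_1$ is surjective (Proposition \ref{prop:s1inyectivo}) and Proposition \ref{prop:ciclostor1} together with Corollary \ref{coro:ciclostor2} give explicit representatives of $H_1(V(n), W(n)^{\otimes 2})$ in four families (of type $V(n)$, $\Lambda^3 V(n)$, $\Lambda^5 V(n)$ and generic), and Lemma \ref{lema:morfb1} describes the image of each family under $B'_1$, it suffices to verify the identity on these generators.

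For each such representative $c \in C_2(\YM(n), W(n))$, the class $d^1_{-1,3}[c]$ is computed by the standard spectral sequence procedure: lift $c$ to $\tilde{c} \in C_2(\YM(n), I)$ via the inclusion $W(n) \hookrightarrow \tym(n) \hookrightarrow I$, apply the differential $d_2$ of the Koszul complex $C_\bullet(\YM(n), I)$ (with $I$ carrying the adjoint $\YM(n)$-module structure), verify that the resulting chain in $I \otimes V(n)$ in fact lies in $I^2 \otimes V(n)$, and pass to its class modulo $I^3 \otimes V(n)$, which lives in $W(n)^{\otimes 2} \otimes V(n) \simeq C_1(\YM(n), W(n)^{\otimes 2})$. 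The essential computational input here is Lemma \ref{lema:granconmutadorfinal}, which gives a closed form for the $\tym(n)^2$-projection of an arbitrary double commutator $\sum_l [x_l, [x_l, y]]$ with $y$ an iterated bracket in a single $[x_p, x_q]$. The map $I_2$ is induced by the canonical projection $\ym(n) \to V(n)$ (Remark \ref{obs:morfismossucesp}), and the outer application of $B'_1$ is computed once more using Lemma \ref{lema:morfb1}.

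The case-by-case comparison should then show: on generators coming from $H_1(V(n), \Lambda^2 W(n))$ (the $\Lambda^3 V(n)$-type cycle and the antisymmetric components of the $V(n)$- and generic-type cycles), the composition returns $B'_1(\xi)$ modulo coboundaries, so $B'_1 \circ I_2 \circ d^1_{-1,3}$ acts as the identity on $H_{2,a}$; on generators from $H_1(V(n), S^2 W(n))$ (the $\Lambda^5 V(n)$-type cycle, for which $B'_1$ already vanishes by Lemma \ref{lema:morfb1}, together with the symmetric remnants of the others), the output of the composition belongs to $H_{2,s}$ and is annihilated by $\pi_a$. Once the identity $B'_1 \circ I_2 \circ d^1_{-1,3} = \pi_a$ is established, the injectivity of $d^1_{-1,3}|_{H_{2,a}}$ is formal: for $\xi \in H_{2,a}$ we have $\pi_a(\xi) = \xi$, so $d^1_{-1,3}(\xi) = 0$ forces $\xi = \pi_a(\xi) = B'_1 \circ I_2 \circ 0 = 0$.

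The principal obstacle is extracting the $I^2/I^3$-component of $d_2(\tilde{c})$ explicitly. This amounts to expanding nested commutators of generators of $\tym(n)$ modulo $\tym(n)^{\geq 3}$, which requires repeated applications of the Jacobi identity and the Yang-Mills relations; Lemma \ref{lema:granconmutadorfinal} handles the core double-commutator term but leaves the cross terms $[x_j, [x_i, y]]$ and $[x_i, [x_j, y]]$ of the Koszul differential to be separately controlled. The symmetric-versus-antisymmetric bookkeeping of the resulting $W(n)^{\otimes 2}$-valued cycles is the second delicate point, since the required identities hold only modulo Chevalley--Eilenberg coboundaries in $C_\bullet(V(n), W(n)^{\otimes 2})$; verifying these makes essential use of the explicit degree-one representatives exhibited in Proposition \ref{prop:ciclostor1}, where internal-degree considerations confine the possible coboundary ambiguity.
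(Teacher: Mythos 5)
Your proposal follows essentially the same route as the paper: reduce to the generators produced by Lemma \ref{lema:morfb1}, compute $d^{1}_{-1,3}$ at the chain level by lifting to $I$ and projecting to $I^{2}/I^{3}$ with the help of Lemma \ref{lema:granconmutadorfinal}, identify $I_{2}$ and the outer $B'_{1}$ via Remark \ref{obs:morfismossucesp}, and then deduce the injectivity formally from the identity $B'_{1}\circ I_{2}\circ d^{1}_{-1,3}=\pi_{a}$. One small correction to the plan: on the symmetric generators (the $V(n)$-type cycles and the symmetric generic ones) you must show that the composition $B'_{1}\circ I_{2}\circ d^{1}_{-1,3}$ itself vanishes --- as the paper does, e.g.\ $d^{1}_{-1,3}$ already annihilates the $V(n)$-type cycles --- rather than merely that its output lies in $H_{2,s}$ and is killed by $\pi_{a}$, since the latter would not establish the asserted equality of maps.
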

\begin{proof}
It suffices to prove the second statement, since the former is a direct consequence of the latter. 

The fact that $B_{1}'$ is surjective implies that $H_{2}(\ym(n),W(n))$ is generated as $k$-module by the images under $B_{1}'$ of 
the cycles given in Lemma \ref{lema:morfb1}. 
First, the cycles corresponding to the $V(n)$-type component belong to the symmetric part $H_{2,s}(\ym(n),W(n))$ 
and they vanish when we apply $d_{-1,3}^{1}$ to them, 
since $d_{-1,3}^{1}$ corresponds to the composition of the map given by
\begin{equation}
\label{eq:b1v}
\begin{split}
       \sum_{j=1}^{n} [x_{l},x_{j}] \otimes x_{j}
       &\mapsto \sum_{j,p=1}^{n} \big([[[x_{l},x_{j}],x_{p}],x_{p}] \otimes x_{j} - 2 [[[x_{l},x_{j}],x_{p}],x_{j}] \otimes x_{p} 
         + [[[x_{l},x_{j}],x_{j}],x_{p}] \otimes x_{p}\big)
       \\  
       &= \sum_{j,p=1}^{n} \big(2 [[x_{l},x_{p}],[x_{j},x_{p}]] \otimes x_{j} - 2 [[x_{l},x_{j}],[x_{p},x_{j}]] \otimes x_{p}\big) 
       = 0 
\end{split}   
\end{equation}
and taking the class in $I^{2}/I^{3} \simeq W(n)^{\otimes 2}$. 
Notice that we have used the Jacobi identity and the Yang-Mills relations. 

Second, the cycles of the $\Lambda^{3} V(n)$-type component belong to $H_{2,a}(\ym(n),W(n))$. 
In~order to obtain the image under $d_{-1,3}^{1}$ of a representative of this component, 
we see that $d_{-1,3}^{1}$ is induced by the class in $I^{2}/I^{3}$ of
\begin{equation}
\label{eq:b1wedge3v}
\begin{split}
\MoveEqLeft
       \sum_{\sigma \in \mathbb{A}_{3}} [x_{i_{\sigma(1)}},x_{i_{\sigma(2)}}] \otimes x_{i_{\sigma(3)}}
       \\
       &\begin{multlined}[t][0.8\displaywidth] 
         \mapsto \sum_{\sigma \in \mathbb{A}_{3}} \sum_{p=1}^{n} 
         \Big([[[x_{i_{\sigma(1)}},x_{i_{\sigma(2)}}],x_{p}],x_{p}] \otimes x_{i_{\sigma(3)}} 
         - 2 [[[x_{i_{\sigma(1)}},x_{i_{\sigma(2)}}],x_{p}],x_{i_{\sigma(3)}}] \otimes x_{p}
         \\
         + [[[x_{i_{\sigma(1)}},x_{i_{\sigma(2)}}],x_{i_{\sigma(3)}}],x_{p}] \otimes x_{p}\Big) 
         \end{multlined}
       \\
       &= \sum_{\sigma \in \mathbb{A}_{3}} \sum_{p=1}^{n} 
       \big( 2 [[x_{i_{\sigma(1)}},x_{p}],[x_{i_{\sigma(2)}},x_{p}]] \otimes x_{i_{\sigma(3)}} 
       - 2 [[x_{i_{\sigma(1)}},x_{i_{\sigma(2)}}],[x_{p},x_{i_{\sigma(3)}}]] \otimes x_{p}\big)
       \\
       &= \sum_{\sigma \in \SSS_{3}} \sum_{p=1}^{n} \epsilon(\sigma)
           \Big( [x_{i_{\sigma(1)}},x_{i_{\sigma(2)}}] \wedge [x_{i_{\sigma(3)}},x_{p}] \otimes x_{p}
        + [x_{i_{\sigma(1)}},x_{p}] \wedge [x_{i_{\sigma(2)}},x_{p}] \otimes x_{i_{\sigma(3)}} \Big),
\end{split}
\end{equation}
where we have again used the Jacobi identity and the Yang-Mills relations. 
By Lemma \ref{lema:morfb1}, we conclude that $B'_{1} \circ I_{2} \circ d_{-1,3}^{1}$ applied to 
$\sum_{\sigma \in \mathbb{A}_{3}} [x_{i_{\sigma(1)}},x_{i_{\sigma(2)}}] \otimes x_{i_{\sigma(3)}}$ is the identity. 

Since the image under $B_{1}'$ of the representatives of the $\Lambda^{5} V(n)$-type component vanishes, 
it is not necessary to consider it.

Finally, we shall prove that $B'_{1} \circ I_{2} \circ d_{-1,3}^{1} \circ B_{1}' \circ \times^{\text{\begin{tiny}$M(n)$\end{tiny}}}_{0,1} 
= \pi_{a} \circ B_{1}' \circ \times^{\text{\begin{tiny}$M(n)$\end{tiny}}}_{0,1}$. 
In order to do so, it suffices to prove the previous identity only for the case of cycles 
$w \otimes [x_{i},x_{j}] \in H_{0}(V(n),W(n)^{\otimes 2})$, 
with $w = x_{i_{1}} \dots x_{i_{r}} [x_{p},x_{q}]$. 
Taking into account that $d_{-1,3}^{1} \circ B_{1}' \circ \times^{\text{\begin{tiny}$M(n)$\end{tiny}}}_{0,1}$ 
is given by the class in $I^{2}/I^{3}$ of the map that sends $w \otimes [x_{i},x_{j}]$ to
\begin{align*}
     &\sum_{l=1}^{n} \Big(2([x_{j},[x_{l},[x_{i},[x_{i_{1}}, \dots,[x_{i_{r}},[x_{p},x_{q}]]\dots]]]] \otimes x_{l} 
     \\
     &\phantom{\sum_{l=1}^{n}}- [x_{i},[x_{l},[x_{i},[x_{i_{1}}, \dots,[x_{i_{r}},[x_{p},x_{q}]]\dots]]]] \otimes x_{l})
     \\
     &\phantom{\sum_{l=1}^{n}}+ [x_{l},[x_{l},[x_{j},[x_{i_{1}}, \dots,[x_{i_{r}},[x_{p},x_{q}]]\dots]]]] \otimes x_{i}  
     - [x_{l},[x_{l},[x_{i},[x_{i_{1}}, \dots,[x_{i_{r}},[x_{p},x_{q}]]\dots]]]] \otimes x_{j}   
     \\
     &\phantom{\sum_{l=1}^{n}}- [x_{l},\big([x_{j},[x_{i},[x_{i_{1}}, \dots,[x_{i_{r}},[x_{p},x_{q}]]\dots]]]
     + [x_{i},[x_{i},[x_{i_{1}}, \dots,[x_{i_{r}},[x_{p},x_{q}]]\dots]]]\big)] \otimes x_{l} \Big),     
\end{align*}
we see that $d_{-1,3}^{1} \circ B_{1}' \circ \times^{\text{\begin{tiny}$M(n)$\end{tiny}}}_{0,1} (w \otimes [x_{i},x_{j}])$ is 
\begin{equation}
\label{eq:conmutaparte}
\begin{split}
      &\sum_{l=1}^{n} 
      \Big(2 \sum_{h=0}^{r} 
      \big(x_{l} \underset{i_{0}=j}{\underbrace{x_{i_{0}}}} \dots [[x_{l},x_{i_{h}}], x_{i_{h+1}}\dots x_{i_{r}} [x_{p},x_{q}]] \otimes x_{i}
      \\
      &\phantom{\sum_{l=1}^{n} \Big(2 \sum_{h=0}^{r}} 
      - x_{l} \underset{i_{0}=i}{\underbrace{x_{i_{0}}}} \dots [[x_{l},x_{i_{h}}], x_{i_{h+1}}\dots x_{i_{r}} [x_{p},x_{q}]] \otimes x_{j} \big)
      \\
      &\phantom{\sum_{l=1}^{n}}- 2 x_{i} x_{i_{1}} \dots x_{i_{r}} [[x_{l},x_{p}],[x_{l},x_{p}]] \otimes x_{j}
      + 2 x_{j} x_{i_{1}} \dots x_{i_{r}} [[x_{l},x_{p}],[x_{l},x_{p}]] \otimes x_{i}
      \\
      &\phantom{\sum_{l=1}^{n}}+ 2 [[x_{j},x_{l}], x_{i} x_{i_{1}} \dots x_{i_{r}} [x_{p},x_{q}]] \otimes x_{l}
      - 2 [[x_{i},x_{l}],x_{j} x_{i_{1}} \dots x_{i_{r}}[x_{p},x_{q}]] \otimes x _{l} 
      \\
      &\phantom{\sum_{l=1}^{n}}- x_{l}[[x_{i},x_{j}], x_{i_{1}} \dots x_{i_{r}}[x_{p},x_{q}]] \otimes x _{l} \Big),         
\end{split}
\end{equation}
where we have used the identity \eqref{eq:granconmutadorfinal} and the following equality 
\begin{gather*}
      2 [x_{j},[x_{l},[x_{i},[x_{i_{1}}, \dots,[x_{i_{r}},[x_{p},x_{q}]]\dots]]]] \otimes x_{l} 
      -  [x_{l},[x_{j},[x_{i},[x_{i_{1}}, \dots,[x_{i_{r}},[x_{p},x_{q}]]\dots]]]] \otimes x_{l}  
      \\
      -2 [x_{i},[x_{l},[x_{j},[x_{i_{1}}, \dots,[x_{i_{r}},[x_{p},x_{q}]]\dots]]]] \otimes x_{l} 
      +  [x_{l},[x_{i},[x_{j},[x_{i_{1}}, \dots,[x_{i_{r}},[x_{p},x_{q}]]\dots]]]] \otimes x_{l}
      \\
      =2 [[x_{j},x_{l}],[x_{i},[x_{i_{1}}, \dots,[x_{i_{r}},[x_{p},x_{q}]]\dots]]] \otimes x_{l}
      -2 [[x_{i},x_{l}],[x_{j},[x_{i_{1}}, \dots,[x_{i_{r}},[x_{p},x_{q}]]\dots]]] \otimes x_{l} 
      \\
      -  [x_{l},[[x_{i},x_{j}],[x_{i_{1}}, \dots,[x_{i_{r}},[x_{p},x_{q}]]\dots]]] \otimes x_{l}.
\end{gather*}
We may rewrite $d_{-1,3}^{1} \circ B_{1}' \circ \times^{\text{\begin{tiny}$M(n)$\end{tiny}}}_{0,1} (w \otimes [x_{i},x_{j}])$ 
given in \eqref{eq:conmutaparte} as the sum of a coboundary 
\begin{align*}
     &\sum_{l=1}^{n} \Big(
     d_{2}^{\mathrm{CE}}
     \big(2 [[x_{l},x_{i}], x_{i_{1}} \dots x_{i_{r}} [x_{p},x_{q}]] \otimes x_{l} \wedge x_{j} 
     - 2 [[x_{l},x_{j}], x_{i_{1}} \dots x_{i_{r}} [x_{p},x_{q}]] \otimes x_{l} \wedge x_{i}\big) 
     \\
     &\phantom{\sum_{l=1}^{n}}
     + 2 d_{2}^{\mathrm{CE}}
     \big(\sum_{h=1}^{r} x_{l} x_{i_{1}} \dots x_{i_{h-1}} [[x_{l},x_{i_{h}}],x_{i_{h+1}} \dots x_{i_{r}} [x_{p},x_{q}]] \otimes x_{i} \wedge x_{j}\big)
     \\
     &\phantom{\sum_{l=1}^{n}}+ 2 d_{2}^{\mathrm{CE}}\big(x_{i_{1}} \dots x_{i_{r}} [[x_{l},x_{p}],[x_{l},x_{p}]] \otimes x_{i} \wedge x_{j}\big)\Big)
\end{align*}
in $C_{1}(V(n),W(n)^{\otimes 2})$ and a cycle in $C_{1}(\ym(n),W(n)^{\otimes 2})$
\begin{align*}
     &\sum_{l=1}^{n} \Big(
     - 2 [x_{j}[x_{l},x_{i}], x_{i_{1}} \dots x_{i_{r}} [x_{p},x_{q}]] \otimes x_{l} 
     + 2 [x_{i}[x_{l},x_{j}], x_{i_{1}} \dots x_{i_{r}} [x_{p},x_{q}]] \otimes x_{l}
     \\
     &- x_{l} [[x_{i},x_{j}], x_{i_{1}} \dots x_{i_{r}} [x_{p},x_{q}]] \otimes x_{l} 
     + 2 [[x_{l},x_{i}], x_{i_{1}} \dots x_{i_{r}} [x_{p},x_{q}]] \otimes [x_{l}, x_{j}]
     \\
     &+ 2 \sum_{h=1}^{r} x_{l} x_{i_{1}} \dots x_{i_{h-1}} [[x_{l},x_{i_{h}}],x_{i_{h+1}} \dots x_{i_{r}} [x_{p},x_{q}]] \otimes [x_{i}, x_{j}]
     \\
     &- 2 [[x_{l},x_{j}], x_{i_{1}} \dots x_{i_{r}} [x_{p},x_{q}]] \otimes [x_{l}, x_{i}] 
     + 2 x_{i_{1}} \dots x _{i_{r}} [[x_{l},x_{p}],[x_{l},x_{q}]] \otimes [x_{i},x_{j}] 
     \Big).
\end{align*}     
Using the description of $I_{2}$ given in Remark \ref{obs:morfismossucesp}, we get that $I_{2} \circ d_{-1,3}^{1} \circ B_{1}' \circ \times^{\text{\begin{tiny}$M(n)$\end{tiny}}}_{0,1} (w \otimes [x_{i},x_{j}])$ is the class of the cycle in $C_{1}(V(n),W(n)^{\otimes 2})$ 
given by 
\begin{equation}
\label{eq:imporcom*}
\begin{split}
     \sum_{l=1}^{n}
     \Big(&
     2 [x_{i}[x_{l},x_{j}], x_{i_{1}} \dots x_{i_{r}} [x_{p},x_{q}]] \otimes x_{l} 
     - 2 [x_{j}[x_{l},x_{i}], x_{i_{1}} \dots x_{i_{r}} [x_{p},x_{q}]] \otimes x_{l} 
     \\
     - &x_{l} [[x_{i},x_{j}], x_{i_{1}} \dots x_{i_{r}} [x_{p},x_{q}]] \otimes x_{l} \Big),
\end{split}
\end{equation}
which is equal to 
\begin{equation}
\label{eq:imporcom2*}
     \sum_{l=1}^{n} \Big( 
       ([x_{l} [x_{i},x_{j}], x_{i_{1}} \dots x_{i_{r}} [x_{p},x_{q}]] 
          - [[x_{i},x_{j}], x_{l} x_{i_{1}} \dots x_{i_{r}} [x_{p},x_{q}]] ) \otimes x_{l} \Big)
\end{equation}   
and may be simplified to give the following expression for 
$I_{2} \circ d_{-1,3}^{1} \circ B_{1}' \circ \times^{\text{\begin{tiny}$M(n)$\end{tiny}}}_{0,1} 
(w \otimes [x_{i},x_{j}])$:
\begin{equation}
\label{eq:imporcom}
     \sum_{l=1}^{n} \Big(
        (w \wedge [x_{i},x_{j}] x_{l} - w x_{l} \wedge [x_{i},x_{j}]) \otimes x_{l} \Big). 
\end{equation}   
The proposition is thus proved.
\end{proof}

Let us consider the Lie algebra $\h(n) = \tym(n)/\C^{2}(\tym(n))$, where $\C^{2}(\tym(n))$ is the second step 
of the lower central series of $\tym(n)$. 
There is an isomorphism $\h(n) \simeq W(n) \oplus \Lambda^{2} W(n)$ as graded vector spaces. 
Besides, $\Lambda^{2} W(n) = \Z(\h(n))$ and $[\hskip 0.6mm,] : W(n) \wedge W(n) \rightarrow \Lambda^{2} W(n)$ 
is a homogeneous isomorphism of degree $0$. 
The Lie algebra $\tym(n)$ being free, $\h(n)$ is a free nilpotent Lie algebra of nilpotency index equal to $2$.

Since the adjoint action of $\ym(n)$ on $\tym(n)$ induces an action on the quotient $\h(n)$, 
the graded vector space $S^{2} \h(n)$ has a natural graded action of $\ym(n)$. 
We will denote $D(\h(n))$ the graded vector space $(S^{2} \h(n))_{\tym(n)}$.
Hence, $D(\h(n))$ is provided with a graded action of $\ym(n)$ such that $\tym(n)$ vanishes, and, in consequence,
the graded action of $\ym(n)$ on $D(\h(n))$ in turn induces a graded action of $V(n) = \ym(n)/\tym(n)$ on $D(\h(n))$.

If $a, b \in \h(n)$, we shall denote  by $a \circ b$ the class of $a \otimes_{s} b = (a \otimes b + b \otimes a)/2 \in S^{2}\h(n)$ 
in $D(\h(n))$. 
In this case,
\[     x_{i}.(a \circ b) = [x_{i},a] \circ b + a \circ [x_{i},b].     \]

\begin{proposition}
\label{prop:sucexactcort}
There is a short exact sequence of graded $S(V(n))$-modules
\[     0 \rightarrow \Lambda^{3} W(n) \overset{\alpha}{\rightarrow} D(\h(n))
                              \overset{\beta}{\rightarrow} S^{2} W(n) \rightarrow 0,     \]
where $\beta$ is induced by the natural projection of $S^{2} \h(n) \rightarrow S^{2} W(n)$ and $\alpha$ is given by
\[     w_{1} \wedge w_{2} \wedge w_{3} \mapsto \bar{w}_{1} \circ \overline{[w_{2},w_{3}]},     \]
for $w_{1}, w_{2}, w_{3} \in W(n)$.
\end{proposition}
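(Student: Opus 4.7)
The strategy is to reduce the problem to a linear-algebraic analysis inside $S^{2}\h(n)$, using the splitting $\h(n) = W(n) \oplus \Lambda^{2}W(n)$, which gives
\[ S^{2}\h(n) = S^{2}W(n) \oplus (W(n) \otimes \Lambda^{2}W(n)) \oplus S^{2}(\Lambda^{2}W(n)) \]
as graded $k$-vector spaces. Since the adjoint action of $\tym(n)$ on $\h(n)$ factors through $\h(n)$ itself (because $\C^{2}(\tym(n))$ acts trivially on the quotient $\h(n)$) and then through $W(n)$ (because $\Lambda^{2}W(n)$ is the center of $\h(n)$), it suffices to analyze the action of $W(n) \subseteq \h(n)$. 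A direct computation using $[W(n),W(n)] \subseteq \Lambda^{2}W(n)$ and the centrality of the second summand shows that each $w \in W(n)$ shifts the three summands in a zig-zag: $w \cdot S^{2}W(n) \subseteq W(n) \otimes \Lambda^{2}W(n)$, $w \cdot (W(n) \otimes \Lambda^{2}W(n)) \subseteq S^{2}(\Lambda^{2}W(n))$, and $w \cdot S^{2}(\Lambda^{2}W(n)) = 0$. Throughout, all maps are $S(V(n))$-linear by construction, being built from $\ym(n)$-equivariant natural operations.

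With this in hand, the map $\beta$ is handled as follows: the projection $\pi \colon S^{2}\h(n) \twoheadrightarrow S^{2}W(n)$ killing the last two summands vanishes on $\tym(n) \cdot S^{2}\h(n)$, so it descends to the claimed surjection $\beta \colon D(\h(n)) \to S^{2}W(n)$. To identify $\ker\beta$, I would use that every element of $\Lambda^{2}W(n)$ is a sum of brackets $[w,w']$ with $w,w' \in W(n)$, which makes the action map $W(n) \otimes (W(n) \otimes \Lambda^{2}W(n)) \to S^{2}(\Lambda^{2}W(n))$ surjective. Hence $S^{2}(\Lambda^{2}W(n))$ is killed in $D(\h(n))$ and $\ker\beta$ coincides with the image of $W(n) \otimes \Lambda^{2}W(n)$ in $D(\h(n))$, which by the very definition of $\alpha$ is $\mathrm{Im}(\alpha)$.

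Next, I would verify that $\alpha$ is well-defined on $\Lambda^{3}W(n)$ by checking that the expression $\bar{w}_{1} \circ \overline{[w_{2},w_{3}]}$ is alternating in $(w_{1},w_{2},w_{3})$ modulo the relations of $D(\h(n))$. Antisymmetry in the last two slots is automatic. For the swap $w_{1} \leftrightarrow w_{2}$, the relation $\overline{[w_{3},w_{1}]} \circ w_{2} + w_{1} \circ \overline{[w_{3},w_{2}]} \equiv 0$ (which arises from the action of $w_{3} \in W(n)$ on $w_{1} \circ w_{2} \in S^{2}W(n)$), together with the commutativity of $\circ$, yields the required identity.

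Finally, for the injectivity of $\alpha$, I would construct a retraction. Define $\sigma \colon S^{2}\h(n) \to \Lambda^{3}W(n)$ to vanish on $S^{2}W(n)$ and $S^{2}(\Lambda^{2}W(n))$ and to send $w_{1} \otimes (w_{2} \wedge w_{3}) \in W(n) \otimes \Lambda^{2}W(n)$ to $w_{1} \wedge w_{2} \wedge w_{3}$, using the identification $[w_{2},w_{3}] \leftrightarrow w_{2} \wedge w_{3}$. The descent of $\sigma$ to $\bar{\sigma} \colon D(\h(n)) \to \Lambda^{3}W(n)$ reduces to the nontrivial check
\[ \sigma\bigl([w,w_{1}] \circ w_{2} + w_{1} \circ [w,w_{2}]\bigr) = w_{2} \wedge w \wedge w_{1} + w_{1} \wedge w \wedge w_{2} = 0 \]
in $\Lambda^{3}W(n)$. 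Then $\bar{\sigma} \circ \alpha = \id_{\Lambda^{3}W(n)}$ by inspection, so $\alpha$ is injective. The main subtlety in the argument is the bookkeeping of the three summands of $S^{2}\h(n)$ under the $\tym(n)$-action; once this is in place, the rest is formal.
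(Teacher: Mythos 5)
Your argument follows the paper's proof essentially step for step: the same three-summand decomposition of $S^{2}\h(n)$, the same observation that the $W(n)$-action kills $S^{2}(\Lambda^{2}W(n))$ in the coinvariants and identifies $\Ker(\beta)$ with the image of $W(n)\otimes\Lambda^{2}W(n)$, and the same relation $w_{3}.(w_{1}\otimes_{s}w_{2})$ to establish that $\alpha$ is alternating. The one place you go beyond the paper is the injectivity of $\alpha$, where the paper merely appeals to the freeness of $\tym(n)$ while you construct an explicit retraction $\bar{\sigma}$; your verification that $\sigma$ kills $W(n).S^{2}W(n)$ is correct, so this is a welcome, more explicit rendering of the same underlying fact.
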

\begin{proof}
We know that $\h(n) \simeq W(n) \oplus \Lambda^{2} W(n)$ as graded vector spaces.
Therefore, $D(\h(n))$ is a quotient of $S^{2} \h(n) \simeq S^{2} W(n) \oplus (W(n) \otimes \Lambda^{2} W(n)) \oplus S^{2} \Lambda^{2} W(n)$
(isomorphism of graded vector spaces).

However, the last direct summand belongs to $\tym(n).S^{2}\h(n)$.
This follows from the fact that, given $v, v', w, w' \in W(n)$, then
\[     v.(\bar{w} \circ \overline{[v' , w']}) = \overline{[v,w]} \circ \overline{[v',w']} + \bar{w} \circ \overline{[v,[v',w']]}
                                                  = \overline{[v,w]} \circ \overline{[v',w']},     \]
since $[v,[v',w']] \in \C^{2}(\tym(n))$.
Also, by degree reasons, 
we see that the subset of $D(\h(n))$ defined by the classes of the elements of the graded vector space 
$W(n) \otimes \Lambda^{2} W(n) \subseteq S^{2} \h(n)$ is in fact a graded $S(V(n))$-submodule of $D(\h(n))$.

Since $\tym(n)$ is a free Lie algebra generated by $W(n)$, it is clear by an internal weight argument that 
$S^{2} W(n) \cap \tym(n).S^{2} \h(n) = 0$.
As a consequence, the projection $p : D(\h(n)) \rightarrow S^{2} W(n)$ is a $k$-linear epimorphism.
It is also homogeneous $S(V(n))$-linear of degree $0$, 
for the classes in $D(\h(n))$ of the elements of $W(n) \otimes \Lambda^{2} W(n) \subseteq S^{2} \h(n)$
form a graded $S(V(n))$-module and $p$ is the natural projection given by the quotient by this submodule.

On the other hand, given $v \otimes_{s} [w,w']$ in the component $W(n) \otimes \Lambda^{2} W(n) \subseteq S^{2} \h(n)$, it turns out that 
\[     v \otimes_{s} [w,w'] = - v \otimes_{s} [w',w] = - w' \otimes_{s} [w,v] + w.(v \otimes_{s} w').     \]
Therefore, the map $\alpha$ is well-defined, and it is readily verified to be $S(V(n))$-linear.
Furthermore, $\mathrm{Im}(\alpha)$ coincides with the collection of classes in $D(\h(n))$ of the elements in 
$W(n) \otimes \Lambda^{2} W(n) \subseteq S^{2} \h(n)$.
The injectivity of $\alpha$ follows from the fact that $\tym(n)$ is a free Lie algebra generated by $W(n)$. 
\end{proof}

From now on, we shall identify $\Lambda^{3} W(n)$ with the image of $\alpha$ in $D(\h(n))$.
Moreover, by the previous proposition, we will not write the bars denoting class for the elements of $W(n)$, and hence,
we shall often write $w_{1} \circ [w_{2},w_{3}] \in \Lambda^{3} W(n)$ instead of $w_{1} \wedge w_{2} \wedge w_{3}$.

The previous short exact sequence implies that there exists a map in homology of the form  
\begin{equation}
\label{eq:mordelta}
     \delta : H_{1} (V(n),S^{2} W(n)) \rightarrow H_{0}(V(n),\Lambda^{3} W(n)),     
\end{equation}
which by the Snake Lemma is induced by 
\begin{equation}
\label{eq:morfdelta}
   \sum_{i=1}^{n} w_{i} \otimes_{s} w'_{i} \otimes x_{i} \mapsto
    \sum_{i=1}^{n} (\rho_{i}^{2}(w_{i}) \circ w'_{i} + \rho_{i}^{2}(w'_{i}) \circ w_{i}),
\end{equation}
where we have used the notation 
%
given in \eqref{eq:acciontotal}, then 

\begin{remark}
\label{rem:obsdelta}
The map $\delta$ is naturally obtained in the following way. 
From the short exact sequence of $\ym(n)$-modules given by 
\begin{equation}
\label{eq:sucasoc}
     0 \rightarrow W(n)^{\otimes 2} \rightarrow I/I^{3} \rightarrow W(n) \rightarrow 0,     
\end{equation}
where $I = \mathrm{ker}(\epsilon_{\tym(n)})$, we obtain the long exact sequence in homology 
\[     \dots \rightarrow H_{p}(\ym(n), W(n)^{\otimes 2}) \rightarrow H_{p}(\ym(n), I/I^{3}) \rightarrow H_{p}(\ym(n), W(n)) \rightarrow 
                         H_{p-1}(\ym(n), W(n)^{\otimes 2}) \rightarrow \dots     \]
It is clear that $d^{1}_{-1,3}$ coincides with the map $H_{2}(\ym(n), W(n)) \rightarrow H_{1}(\ym(n), W(n)^{\otimes 2})$ in the long exact sequence. 
By Remark \ref{rem:accionsvntor} and the functoriality of the Hochschild-Serre spectral sequence, there are maps between the long exact sequences 
\eqref{eq:longexacseq} for $X = W(n)^{2}$, $X = I/I^{3}$ and $X = W(n)$ in the obvious way, which could be represented as a bicomplex. 
In particular, there is a map $H_{1}(V(n),W(n)^{\otimes 2}) \rightarrow H_{0}(V(n),W(n)^{\otimes 3})$. 
It induces thus a morphism $H_{1}(V(n),S^{2}W(n)) \rightarrow H_{0}(V(n),\Lambda^{3}W(n))$, which coincides with $\delta$. 
\qed
\end{remark}

On the other hand, the restriction of the map $B_{2} : H_{0}(V(n),W(n)^{\otimes 3}) \rightarrow H_{1}(\ym(n),W(n)^{\otimes 2})$ 
of the exact sequence of Theorem \ref{teo:exacta} to $H_{0}(V(n),\Lambda^{3}W(n))$ is induced by (see Remark \ref{obs:morfismossucesp})
\begin{equation}
\label{eq:morfb2}
     w_{1} \wedge w_{2} \wedge w_{3} \mapsto w_{1} \wedge w_{2} \otimes w_{3} + w_{2} \wedge w_{3} \otimes w_{1} + w_{3} \wedge w_{1} \otimes w_{2}.
\end{equation}

\begin{proposition}
\label{prop:diagramaconmutativo}
If $\delta : H_{1} (V(n),S^{2} W(n)) \rightarrow H_{0}(V(n),\Lambda^{3} W(n))$ is the map \eqref{eq:mordelta}, 
the following diagram is commutative 
\[
\xymatrix
{
H_{1}(V(n),S^{2}W(n))
\ar[r]^{\delta}
\ar[d]^{B'_{1}}
&
H_{0}(V(n),\Lambda^{3} W(n))
\ar[d]^{B_{2}}
\\
H_{2}(\ym(n),W(n))
\ar[r]^{d^{1}_{-1,3}}
&
H_{1}(\ym(n), W(n)^{\otimes 2})
}
\]
\end{proposition}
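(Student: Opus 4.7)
The strategy is to deduce the commutativity of the square from the functoriality of the base--change spectral sequence (equivalently, the Hochschild--Serre spectral sequence) with respect to short exact sequences of coefficient modules, following the idea outlined at the end of Remark \ref{rem:obsdelta}. The short exact sequence \eqref{eq:sucasoc} of equivariant $\ym(n)$-modules
\[
0 \longrightarrow W(n)^{\otimes 2} \longrightarrow I/I^{3} \longrightarrow W(n) \longrightarrow 0
\]
yields, upon applying $C_{\bullet}(\YM(n),\place)$ termwise (which is exact because the Koszul complex consists of free $\YM(n)$-modules), a short exact sequence of complexes. Its connecting homomorphism is, by Remark \ref{rem:obsdelta}, precisely $d^{1}_{-1,3}$. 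Analogously, applying the Chevalley--Eilenberg complex $C_{\bullet}(V(n),\place\otimes W(n))$ produces a connecting homomorphism $\bar{\delta}: H_{1}(V(n), W(n)\otimes W(n)) \to H_{0}(V(n), W(n)^{\otimes 2}\otimes W(n))$.

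First I would assemble these two connecting homomorphisms into a morphism of long exact sequences \eqref{eq:longexacseq}, obtained by regarding the base--change bicomplex \eqref{eq:complejodoblesucesp} as a functor on the coefficient module and applying it to \eqref{eq:sucasoc}. By standard bicomplex/naturality arguments (the triple complex obtained by also filtering by powers of $I$), one obtains a commutative square
\[
\xymatrix@C+15pt{
H_{1}(V(n), W(n)\otimes W(n)) \ar[r]^-{B_{W(n)}} \ar[d]^{\bar{\delta}} & H_{2}(\ym(n),W(n)) \ar[d]^{d^{1}_{-1,3}} \\
H_{0}(V(n), W(n)^{\otimes 2}\otimes W(n)) \ar[r]^-{B_{W(n)^{\otimes 2}}} & H_{1}(\ym(n), W(n)^{\otimes 2})
}
\]
where the horizontal arrows come from the long exact sequence \eqref{eq:longexacseq} for $X = W(n)$ and $X = W(n)^{\otimes 2}$ respectively. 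The top row is $B'_{1}$ by definition, and the bottom row coincides with $B_{2}$ restricted to the appropriate part of its domain (as described in Remark \ref{obs:morfismossucesp}).

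Second I would check that $\bar{\delta}$, restricted along the inclusion $H_{1}(V(n),S^{2}W(n)) \hookrightarrow H_{1}(V(n),W(n)^{\otimes 2})$ and composed with the image, coincides with $\delta$ followed by the inclusion of $\Lambda^{3} W(n) \simeq \mathrm{Im}(\alpha) \subseteq D(\h(n))$ into $H_{0}(V(n),W(n)^{\otimes 3})$ via the alternator \eqref{eq:morfb2}. Concretely, given a cycle $\sum_{i} w_{i}\otimes_{s} w'_{i}\otimes x_{i}$ in $C_{1}(V(n),S^{2}W(n))$, choose a lift to $C_{1}(V(n),I/I^{3})$ via the $k$-linear splitting $W(n)\hookrightarrow I/I^{3}$, apply the Chevalley--Eilenberg differential, and read off the image in the $W(n)^{\otimes 2}$-component using \eqref{eq:acciontotal}: the bracket $[x_{i},w_{i}\otimes_{s} w'_{i}]$ computed in $I/I^{3}$ produces exactly $\rho_{i}^{2}(w_{i})\otimes_{s} w'_{i} + w_{i}\otimes_{s} \rho_{i}^{2}(w'_{i})$ in $W(n)^{\otimes 2}$ modulo the cycle condition, which is precisely the formula \eqref{eq:morfdelta} for $\delta$ after using the identification in Proposition \ref{prop:sucexactcort} and the alternation \eqref{eq:morfb2}.

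The main obstacle, modest but requiring care, is the bookkeeping in this second step: one must verify that the Snake--Lemma lift through \eqref{eq:sucasoc} interacts correctly with the symmetric/alternating splitting $W(n)^{\otimes 2} = S^{2}W(n) \oplus \Lambda^{2} W(n)$, so that the symmetric class on the source produces a totally antisymmetric class in $W(n)^{\otimes 3}$ on the target, in agreement with the embedding $\alpha$ of Proposition \ref{prop:sucexactcort} composed with \eqref{eq:morfb2}. Once this compatibility is verified, the commutativity asserted by the proposition is an immediate consequence of the commutative diagram above.
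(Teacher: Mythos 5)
Your route is genuinely different from the paper's: the paper proves the proposition by an explicit check on generators. Since $H_{1}(V(n),S^{2}W(n))$ is spanned by the classes exhibited in Proposition \ref{prop:ciclostor1} and Corollary \ref{coro:ciclostor2}, the proof consists of comparing, generator by generator, the formulas already obtained for $d^{1}_{-1,3}\circ B'_{1}$ (namely \eqref{eq:b1v}, \eqref{eq:b1wedge3v}, \eqref{eq:imporcom} together with Lemma \ref{lema:morfb1}) with those for $B_{2}\circ\delta$ (namely \eqref{eq:morfdelta} and \eqref{eq:morfb2}). What you propose is instead the functorial argument sketched in Remark \ref{rem:obsdelta}; carried out in full it would be cleaner, but as written it has a gap precisely at the step you dismiss as ``standard bicomplex/naturality arguments.''

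Concretely: (a) $I/I^{3}$ is not an $S(V(n))$-module, since $\tym(n)$ acts nontrivially on it, so the long exact sequence \eqref{eq:longexacseq} in its base-change form cannot be applied with $X=I/I^{3}$. One must pass to the Hochschild--Serre formulation (Remark \ref{rem:accionsvntor}), and then the middle column of your purported morphism of long exact sequences involves $H_{\bullet}(V(n),H_{1}(\tym(n),I/I^{3}))$, which is \emph{not} of the form $H_{\bullet}(V(n),X\otimes W(n))$; the three coefficient modules therefore do not give a short exact sequence of two-row spectral sequences, and the existence of your commutative square is not automatic. (b) Even granting the assembly, your square mixes the edge maps $B'_{1},B_{2}$ of a two-row spectral sequence with two connecting homomorphisms of short exact sequences of coefficients; such squares commute only up to sign in general, while the proposition asserts commutativity on the nose, so the sign must be checked. (c) The identification of the resulting left-hand vertical map, restricted to symmetric classes, with $\delta$ followed by the alternator is exactly the compatibility encoded by Proposition \ref{prop:sucexactcort} (whose $\delta$ is the snake map of the $D(\h(n))$-sequence, not of \eqref{eq:sucasoc}) and by formula \eqref{eq:morfdelta}; your sketch via $\rho_{i}^{2}$ points in the right direction but is not yet a proof, and the assertion that a symmetric class necessarily lands in the totally antisymmetric part of $W(n)^{\otimes 3}$ is precisely what needs verification. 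All three points are what the paper's explicit computation on generators settles; to keep the conceptual route you would need to supply the $\tym(n)$-homology computation for $I/I^{3}$ and the sign bookkeeping explicitly.
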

\begin{proof}
It is clear from the expressions of the maps given by \eqref{eq:morfdelta}, \eqref{eq:morfb2}, \eqref{eq:imporcom}, 
\eqref{eq:b1v}, \eqref{eq:b1wedge3v} and Lemma~\ref{lema:morfb1}. 
\end{proof}

\begin{proposition}
\label{prop:b1b2}
The restriction of $B'_{1}$ to $H_{1}(V(n),S^{2}W(n))$ has kernel isomorphic to $\Lambda^{5} V(n)$ and the 
restriction of $B_{2}$ to $H_{0} (V(n),\Lambda^{3} W(n))$ is injective.
\end{proposition}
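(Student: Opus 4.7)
The plan is to reduce both statements to an explicit description of the spectral sequence differentials $S'_{1}$ and $S_{2}$ on bases of their sources, and then read off the symmetry properties of their images. First, by the exactness of the long exact sequence in Theorem~\ref{teo:exacta} for $i=1$, the kernel of $B'_{1}$ coincides with the image of $S'_{1}$; moreover, since the first map in that sequence is the zero map into $H_{3}(V(n),W(n))$, the morphism $S'_{1}$ is injective. Combined with the isomorphism $H_{3}(V(n),W(n)) \simeq \Lambda^{5}V(n)$ from Theorem~\ref{teo:exacta}, we obtain $\Ker(B'_{1}) \simeq \Lambda^{5}V(n)$. Similarly, the exactness at $H_{0}(V(n),W(n)^{\otimes 3})$ gives $\Ker(B_{2}) = \mathrm{Im}(S_{2})$, and the vanishing $H_{2}(\ym(n),W(n)^{\otimes 2}) = 0$ from Corollary~\ref{coro:anulacion} together with the exactness at $H_{2}(V(n),W(n)^{\otimes 2})$ forces $S_{2}$ to be injective, so $\mathrm{Im}(S_{2}) \simeq H_{2}(V(n),W(n)^{\otimes 2}) \simeq \Lambda^{6}V(n)$.

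It therefore suffices to show the inclusion $\mathrm{Im}(S'_{1}) \subseteq H_{1}(V(n),S^{2}W(n))$ and the equality $\mathrm{Im}(S_{2}) \cap H_{0}(V(n),\Lambda^{3}W(n)) = 0$. By Remark~\ref{obs:morfismossucesp} the morphisms $S'_{1}$ and $S_{2}$ are identified with the second page differentials $d^{2}_{3,0}$ and $d^{2}_{2,0}$ of the base-change spectral sequence arising from the double complex~\eqref{eq:complejodoblesucesp}. Iterating the identifications supplied by Theorem~\ref{teo:exacta} (using the isomorphisms $H_{p}(V(n),W(n)^{\otimes i}) \simeq H_{p-2}(V(n),W(n)^{\otimes(i+1)})$ for $p \geq 4$) and tracing through the standard zig-zag procedure that computed the basis of $H_{2}(V(n),W(n))$ in Proposition~\ref{prop:s1inyectivo}, I would obtain that $H_{3}(V(n),W(n))$ admits the basis
\[
\Big\{ \sum_{\sigma \in \SSS_{5}} \epsilon(\sigma) x_{i_{\sigma(1)}} \wedge x_{i_{\sigma(2)}} \wedge x_{i_{\sigma(3)}} \otimes [x_{i_{\sigma(4)}},x_{i_{\sigma(5)}}] \Big\}_{1 \leq i_{1} < \dots < i_{5} \leq n},
\]
and that $S'_{1}$ acts on such a generator by ``replacing the last wedge with a commutator'', producing
\[
\sum_{\sigma \in \SSS_{5}} \epsilon(\sigma) x_{i_{\sigma(1)}} \otimes [x_{i_{\sigma(2)}},x_{i_{\sigma(3)}}] \otimes [x_{i_{\sigma(4)}},x_{i_{\sigma(5)}}] \in C_{1}(V(n),W(n)^{\otimes 2}).
\]
An analogous iteration gives a basis of $H_{2}(V(n),W(n)^{\otimes 2})$ indexed by $\Lambda^{6}V(n)$, on which $S_{2}$ takes the value
\[
\sum_{\sigma \in \SSS_{6}} \epsilon(\sigma) [x_{i_{\sigma(1)}},x_{i_{\sigma(2)}}] \otimes [x_{i_{\sigma(3)}},x_{i_{\sigma(4)}}] \otimes [x_{i_{\sigma(5)}},x_{i_{\sigma(6)}}] \in C_{0}(V(n),W(n)^{\otimes 3}).
\]

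Now the symmetry argument becomes transparent. The transposition that swaps the two $W(n)$-factors in the image of $S'_{1}$ corresponds to post-composing the summation index with the permutation $(2\,4)(3\,5)$, which is a product of two transpositions and hence has sign $+1$. Therefore the image is invariant under this swap, so it lies in $H_{1}(V(n),S^{2}W(n))$, proving the first statement. For $S_{2}$, swapping any two of the three $W(n)$-factors of the image corresponds to composing with an even permutation of $\SSS_{6}$ of the shape $(a\,c)(b\,d)$, so the image is invariant under the full $\SSS_{3}$-action on the factors, i.e., it lies in $H_{0}(V(n),S^{3}W(n))$. Since $S^{3}W(n) \cap \Lambda^{3}W(n) = 0$, we deduce $\mathrm{Im}(S_{2}) \cap H_{0}(V(n),\Lambda^{3}W(n)) = 0$, which proves the injectivity claim.

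The main obstacle in the proof is the explicit determination of the basis of $H_{3}(V(n),W(n))$ and $H_{2}(V(n),W(n)^{\otimes 2})$, together with the computation of $S'_{1}$ and $S_{2}$ on them: this requires carefully iterating the zig-zag procedure for the differentials $d^{2}_{p,0}$ of the base-change spectral sequence, extending the single-step calculation performed in Proposition~\ref{prop:s1inyectivo}. Once the explicit formulas are in place, the symmetry argument described above finishes the proof immediately; the only subtlety is confirming that each swap of a pair of $W(n)$-factors is realized by an even permutation of the summation index, which is automatic since one pair swap is always a product of two transpositions.
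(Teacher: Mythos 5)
Your proposal is correct and follows essentially the same route as the paper: identify $\Ker(B'_{1})$ and $\Ker(B_{2})$ with $\mathrm{Im}(S'_{1})$ and $\mathrm{Im}(S_{2})$ via the long exact sequence of Theorem \ref{teo:exacta}, compute these images on fully antisymmetrized cycles (the paper obtains the same explicit representatives through Lemma \ref{lema:morfb1}, Proposition \ref{prop:ciclostor1} and the zig-zag computation modelled on Proposition \ref{prop:s1inyectivo}), and conclude from the fact that each swap of two $W(n)$-factors is an even permutation of the summation indices, so the images land in the symmetric parts $H_{1}(V(n),S^{2}W(n))$ and $H_{0}(V(n),S^{3}W(n))$, the latter meeting $\Lambda^{3}W(n)$ trivially.
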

\begin{proof}
Let us first show that $\Ker(B'_{1}|_{H_{1}(V(n),S^{2}V(n))}) = \Lambda^{5} V(n)$.
Lemma \ref{lema:morfb1} tells us that $\Lambda^{5} V(n)$ is included in $\Ker(B'_{1})$. 
Furthermore, the expression of the cycles in Proposition \ref{prop:ciclostor1} implies that 
$\Lambda^{5} V(n) \subseteq H_{1}(V(n),S^{2}W(n))$.

On the other hand, by the long exact sequence of Theorem \ref{teo:exacta}, $\Ker(B'_{1}) = \mathrm{Im}(S'_{1})$.
Using the same theorem, we derive that $S'_{1}$ is injective and $H_{3}(V(n),W(n)) \simeq \Lambda^{5} V(n)$, so 
$\Ker(B'_{1}) \simeq \Lambda^{5} V(n)$.
Therefore, the restriction of the morphism $B'_{1}$ to $H_{1}(V(n),S^{2}W(n))$ has kernel $\Lambda^{5} V(n)$.

Let us now prove that the map $B_{2}|_{H_{0} (V(n),\Lambda^{3} W(n))}$ is injective.
On the one hand, the exact sequence of Theorem \ref{teo:exacta} tells us that 
$\Ker(B_{2}) = \mathrm{Im}(S_{2}) = S_{2}(H_{2}(V(n),W(n)^{\otimes 2}))$.
By the same theorem, $H_{2}(V(n),W(n)^{\otimes 2}) \simeq \Lambda^{6} V(n)$. 
In fact, using the same ideas explained in Proposition \ref{prop:s1inyectivo}, the cycles 
\[     \Big\{ \sum_{\sigma \in \SSS_{6}} \epsilon(\sigma) [x_{i_{\sigma(1)}}, x_{i_{\sigma(2)}}] \otimes [x_{i_{\sigma(3)}}, x_{i_{\sigma(4)}}]
       \otimes x_{i_{\sigma(5)}} \wedge x_{i_{\sigma(6)}}
       : 1 \leq i_{1} < i_{2} < i_{3} < i_{4} < i_{5} < i_{6} \leq n \Big\}     \]
form a basis of the homology group $H_{2}(V(n),W(n)^{\otimes 2})$. 
This is due to the fact that, by the double complex \eqref{eq:complejodoblesucesp}, Remark \ref{obs:morfismossucesp} 
and standard computations on the second term of a spectral sequence, 
the map $\Lambda^{6} V(n) = H_{6}(V(n) , k) \hookrightarrow H_{2}(V(n),W(n)^{\otimes 2})$ is induced by 
\[     x_{i_{1}} \wedge x_{i_{2}} \wedge x_{i_{3}} \wedge x_{i_{4}} \wedge x_{i_{5}} \wedge x_{i_{6}} \mapsto
       \sum_{\sigma \in \SSS_{6}} \epsilon(\sigma) [x_{i_{\sigma(1)}}, x_{i_{\sigma(2)}}] \otimes [x_{i_{\sigma(3)}}, x_{i_{\sigma(4)}}]
       \otimes x_{i_{\sigma(5)}} \wedge x_{i_{\sigma(6)}} ,      \]
where the last element is a cycle in $W(n)^{\otimes 2} \otimes \Lambda^{2} V(n)$. 
Similar arguments as those used for the map $S_{1}$ in Proposition \ref{prop:s1inyectivo} assure that 
the image under the morphism $S_{2}$ of an element of this basis is the class of the cycle 
\[     \sum_{\sigma \in \SSS_{6}}
       \epsilon(\sigma) [x_{i_{\sigma(1)}}, x_{i_{\sigma(2)}}] \otimes [x_{i_{\sigma(3)}}, x_{i_{\sigma(4)}}]
       \otimes [x_{i_{\sigma(5)}},x_{i_{\sigma(6)}}].     \]
Hence, $\mathrm{Im}(S_{2}) \subseteq H_{0}(V(n),S^{3}W(n))$. 
Taking into account that $S^{3}W(n) \cap \Lambda^{3} W(n) = \{ 0 \}$, it turns out that the kernel of the map $B_{2}|_{H_{0}(V(n),\Lambda^{3}W(n))}$ vanishes.
\end{proof}

\begin{proposition}
\label{prop:vnker}
The kernel of the map $\delta : H_{1}(V(n),S^{2}W(n)) \rightarrow H_{0}(V(n),\Lambda^{3}W(n))$ given in \eqref{eq:morfdelta}
is $V(n)[-4] \oplus \Lambda^{5} V(n)$.
\end{proposition}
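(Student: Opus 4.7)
The plan is to establish the two inclusions making up the claimed equality $\Ker(\delta) = V(n)[-4] \oplus \Lambda^{5} V(n)$, using the commutative diagram of Proposition~\ref{prop:diagramaconmutativo}, the kernel/injectivity information of Proposition~\ref{prop:b1b2}, and the explicit cycles exhibited in Corollary~\ref{coro:ciclostor2} and Proposition~\ref{prop:ciclostor1}.

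For the forward inclusion $V(n)[-4] \oplus \Lambda^{5}V(n) \subseteq \Ker(\delta)$, I would argue on each summand separately. Since $B_{2}|_{H_{0}(V(n),\Lambda^{3}W(n))}$ is injective by Proposition~\ref{prop:b1b2}, it suffices to check that $d^{1}_{-1,3} \circ B'_{1}$ vanishes on each. The case of $\Lambda^{5}V(n)$ is immediate from Proposition~\ref{prop:b1b2}, which identifies $\Lambda^{5}V(n) = \Ker(B'_{1}|_{H_{1}(V(n),S^{2}W(n))})$. For $V(n)[-4]$ I would use the $V(n)$-type cycles of Proposition~\ref{prop:ciclostor1}(i), check by direct inspection that they are symmetric in the two $W(n)$-factors (hence lie in $H_{1}(V(n),S^{2}W(n))$), apply Lemma~\ref{lema:morfb1} to compute their image under $B'_{1}$ as the class of $\sum_{j=1}^{n}[x_{l},x_{j}] \otimes x_{j}$ in $H_{2}(\ym(n),W(n))$, and finally invoke the computation \eqref{eq:b1v} from the proof of Proposition~\ref{prop:proycan} to conclude that $d^{1}_{-1,3}$ kills this class.

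For the reverse inclusion, I would decompose as a $k$-vector space
\[     H_{1}(V(n),S^{2}W(n)) = \mathrm{Im}(\times^{M(n)}_{0,1})_{\mathrm{sym}} \oplus V(n)[-4] \oplus \Lambda^{5}V(n),     \]
where the first summand collects the symmetric part of the image of the external product. This decomposition is obtained by inspecting which of the representatives of the three ``exotic'' summands of $\Coker(\times^{M(n)}_{0,1})$ listed in Proposition~\ref{prop:ciclostor1} are symmetric in the two $W(n)$-factors (the $V(n)$- and $\Lambda^{5}V(n)$-type cycles are) and which are antisymmetric (the $\Lambda^{3}V(n)$-type cycles, hence living instead in $H_{1}(V(n),\Lambda^{2}W(n))$). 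Since the forward inclusion already shows $\delta$ vanishes on the two right-hand summands, the reverse inclusion reduces to proving that $\delta$ is injective on $\mathrm{Im}(\times^{M(n)}_{0,1})_{\mathrm{sym}}$.

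The hard part will be precisely this injectivity statement. I would attempt a direct calculation via formula \eqref{eq:morfdelta} applied to a symmetrized generic cycle of the form $\sum_{i=1}^{n}\bigl[(wx_{i})\otimes_{s}w' - w\otimes_{s}(w'x_{i})\bigr]\otimes x_{i}$; the result involves terms $\rho_{i}^{2}(wx_{i})$ and $\rho_{i}^{2}(w'x_{i})$, which one can expand using \eqref{eq:acciontotal} and the Yang-Mills relations \eqref{eq:relym}, and then project into $H_{0}(V(n),\Lambda^{3}W(n))$ to produce a nonzero ``triple-bracket'' contribution that cannot be absorbed by any boundary. As a cleaner alternative, one may exploit the $\so(n)$-equivariance of $\delta$ and compare the finite-dimensional isotypic decompositions of source and target in each internal degree (along the lines of Corollaries~\ref{coro:W(n)son} and~\ref{coro:compisownwn}), thereby ruling out further isotypic components beyond $V(n)[-4]$ and $\Lambda^{5}V(n)$ in $\Ker(\delta)$.
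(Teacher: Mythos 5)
Your overall skeleton matches the paper's: the forward inclusion is handled the same way (Propositions \ref{prop:diagramaconmutativo} and \ref{prop:b1b2} for $\Lambda^{5}V(n)$, the explicit $V(n)$-type cycles together with \eqref{eq:b1v} for $V(n)[-4]$), and the reverse inclusion is correctly reduced, exactly as in the paper, to showing that $\delta$ is injective on the symmetric generic part, i.e.\ on $\times^{M(n)}_{0,1}\bigl(H_{0}(V(n),\Lambda^{2}W(n))\bigr)$. The gap is that this injectivity --- which you yourself flag as the hard part --- is never actually established. Your first route (expand $\delta$ of a generic cycle via \eqref{eq:morfdelta} and exhibit a ``nonzero triple-bracket contribution that cannot be absorbed by any boundary'') underestimates the difficulty: when the expansion is carried out, many of the resulting terms \emph{are} coboundaries (the paper discards several $d_{1}^{\mathrm{CE}}$-exact pieces before anything recognizable survives), so the nonvanishing of the remainder is precisely the point at issue and cannot be asserted by inspection. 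The paper's resolution requires a genuinely new device: an auxiliary $S(V(n))$-linear, $\so(n)$-equivariant map $\xi:\Lambda^{3}W(n)\rightarrow\Lambda^{2}V(n)\otimes\Lambda^{2}W(n)$ for which $\bar{\xi}\bigl(\delta(\times^{M(n)}_{0,1}(\bar{\tilde{c}}))\bigr)$ equals $2\sum_{s<t}x_{s}\wedge x_{t}\otimes(x_{s}\wedge x_{t}).\bar{\tilde{c}}$ plus correction terms present only in internal degree $4$. In degree $>4$ this forces any class in the kernel to be $\so(n)$-invariant, which Corollary \ref{coro:compisownwn} forbids; in degree $4$ one must still check nonvanishing by hand on a highest-weight vector of each isotypic component of $H_{0,4}(V(n),\Lambda^{2}W(n))$ in a polarized basis.

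Your alternative route --- comparing isotypic decompositions of source and target --- does not suffice on its own: an $\so(n)$-equivariant map between modules sharing isotypic components can perfectly well annihilate some of them, so decomposition bookkeeping only tells you where the kernel \emph{could} live. One must still verify that $\delta$ is nonzero on each candidate component, and there are infinitely many of these (one family per internal degree), which is exactly why the uniform argument via $\xi$ and Corollary \ref{coro:compisownwn} is needed.
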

\begin{proof}
It is clear from Propositions \ref{prop:diagramaconmutativo} and \ref{prop:b1b2} that $\Lambda^{5}V(n) \subseteq \Ker(\delta)$. 
Also, the expression for the cycles of $V(n)$ given in Lemma \ref{lema:morfb1} 
and the expression for $\delta$ given in \eqref{eq:morfdelta} tell us that $V(n)[-4] \subseteq \Ker(\delta)$. 
The change of degree comes from the fact that, in Lemma \ref{lema:morfb1}, we have studied the homology of $M(n)$. 
The elements given by the cycles of $(\Lambda^{3} V(n))[-2]$ belong to $H_{1}(V(n),\Lambda^{2}W(n))$ and therefore are not in the kernel 
of the map $\delta$.

In consequence, it suffices to prove that $\delta$ is injective if restricted to the subspace spanned by the generic elements of 
$H_{1}(V(n),S^{2}W(n))$, which is rather tedious.
We shall anyway include the proof of this fact because it is quite non evident.

Since $\delta$ is a homogeneous morphism of degree $0$, it suffices to restrict to homogeneous generic elements. 

Now, we may consider a generic element of $H_{1}(V(n),S^{2}W(n))$ which we shall assume to be 
the homology class of a $k$-linear combination of cycles of the form 
\begin{equation}
\label{eq:ciclossimgen}
\begin{split}
     \sum_{l=1}^{n} \Big( &\big([x_{i},x_{j}] x_{l} \otimes \bar{x}_{\bar{i}} [x_{p},x_{q}]
                    - [x_{i},x_{j}] \otimes \bar{x}_{\bar{i}} [x_{p},x_{q}]x_{l}\big) \otimes x_{l}
     \\
                    + &\big(\bar{x}_{\bar{i}} [x_{p},x_{q}] \otimes  [x_{i},x_{j}] x_{l}
                    - \bar{x}_{\bar{i}} [x_{p},x_{q}] x_{l} \otimes [x_{i},x_{j}]\big) \otimes x_{l}  \Big),
\end{split}
\end{equation}
for $|\bar{i}| \geq 0$, and which is obtained from the map $\times^{\text{\begin{tiny}$M(n)$\end{tiny}}}_{0,1}$ applied to the element 
\begin{equation}
\label{eq:ciclossimgen0}
\begin{split}
     \big([x_{i},x_{j}] \otimes \bar{x}_{\bar{i}} [x_{p},x_{q}] - \bar{x}_{\bar{i}} [x_{p},x_{q}] \otimes [x_{i},x_{j}]\big)
       \otimes \big(\sum_{l=1}^{n} (x_{l} \otimes 1 - 1 \otimes x_{l}) \otimes x_{l}\big)
\end{split}
\end{equation}
in $C_{0}(V(n),\Lambda^{2} W(n)) \otimes C_{1}(V(n),(S(V(n))/\cl{q})^{\otimes 2})$. 
We shall denote $\tilde{c} = [x_{i},x_{j}] \otimes \bar{x}_{\bar{i}} [x_{p},x_{q}] - \bar{x}_{\bar{i}} [x_{p},x_{q}] \otimes [x_{i},x_{j}]$. 
We notice that the generic elements \eqref{eq:ciclossimgen} have degree greater than or equal to $6$. 

Note that $\times^{\text{\begin{tiny}$M(n)$\end{tiny}}}_{0,1}$ ``interchanges parity'' between $H_{0}(V(n),W(n)^{\otimes 2})$ and $H_{1}(V(n),W(n)^{\otimes 2})$,
\textit{i.e.} it sends $H_{0}(V(n),\Lambda^{2} W(n))$ to $H_{1}(V(n),S^{2} W(n))$
and $H_{0}(V(n),S^{2} W(n))$ to $H_{1}(V(n),\Lambda^{2} W(n))$.

Let $c$ be the cycle given by \eqref{eq:ciclossimgen} and $\bar{c}$ its homology class. 
If we use the identity given in \eqref{eq:morfdelta}, then $\delta(\bar{c})$ is given by the cycle 
\begin{equation}
\begin{split}
     \sum_{l=1}^{n} \Big( &
                                               \rho_{l}^{2}([x_{i},x_{j}] x_{l}) \circ \bar{x}_{\bar{i}} [x_{p},x_{q}]
                          + 
                                               [x_{i},x_{j}] x_{l} \circ \rho_{l}^{2}((\bar{x}_{\bar{i}} [x_{p},x_{q}])
     \\
                    - & 
                                              \rho_{l}^{2}([x_{i},x_{j}]) \circ \bar{x}_{\bar{i}} [x_{p},x_{q}] x_{l}
                    - 
                                              [x_{i},x_{j}] \circ \rho_{l}^{2}(\bar{x}_{\bar{i}} [x_{p},x_{q}] x_{l})
                    \label{eq:ciclossimgendelta}
     \\
                    + &
                                              \rho_{l}^{2}(\bar{x}_{\bar{i}} [x_{p},x_{q}]) \circ [x_{i},x_{j}] x_{l}
                    + 
                                              \bar{x}_{\bar{i}}[x_{p},x_{q}] \circ \rho_{l}^{2}([x_{i},x_{j}] x_{l})
     \\
                    - &
                                              \rho_{l}^{2}(\bar{x}_{\bar{i}} [x_{p},x_{q}] x_{l}) \circ [x_{i},x_{j}]
                    - 
                                              \bar{x}_{\bar{i}} [x_{p},x_{q}] x_{l} \circ \rho_{l}^{2}([x_{i},x_{j}])
                                              \Big),
\end{split}
\end{equation}
and we denote by $a_{j}^{l}$ and $b_{j}^{l}$ the first and second summand (without signs), respectively, of the $j$-th line in the previous equation. 
Since
\begin{align*}
  a_{1}^{l} - a_{2}^{l} &= \big(\rho_{l}^{2}([x_{i},x_{j}] x_{l}) + \rho_{l}^{2}([x_{i},x_{j}]) x_{l}\big) \circ \bar{x}_{\bar{i}} [x_{p},x_{q}]
                         - d_{1}^{\mathrm{CE}}\big(\rho_{l}^{2}([x_{i},x_{j}]) \circ \bar{x}_{\bar{i}} [x_{p},x_{q}] \otimes x_{l}\big),
  \\
  b_{1}^{l} - b_{2}^{l} &= -[x_{i},x_{j}] \circ \big(\rho_{l}^{2}(\bar{x}_{\bar{i}} [x_{p},x_{q}]) x_{l}
                         + \rho_{l}^{2}(\bar{x}_{\bar{i}} [x_{p},x_{q}] x_{l})\big)
                         + d_{1}^{\mathrm{CE}}\big([x_{i},x_{j}] \circ \rho_{l}^{2}(\bar{x}_{\bar{i}} [x_{p},x_{q}]) \otimes x_{l}\big),
  \\
  a_{3}^{l} - a_{4}^{l} &= -\big(\rho_{l}^{2}(\bar{x}_{\bar{i}}[x_{p},x_{q}] x_{l})
                         + \rho_{l}^{2}(\bar{x}_{\bar{i}}[x_{p},x_{q}] x_{l})\big) \circ [x_{i},x_{j}]
                         + d_{1}^{\mathrm{CE}}\big(\rho_{l}^{2}( \bar{x}_{\bar{i}} [x_{p},x_{q}]) \circ [x_{i},x_{j}] \otimes x_{l}\big),
  \\
  b_{3}^{l} - b_{4}^{l} &=  \bar{x}_{\bar{i}} [x_{p},x_{q}] \circ \big(\rho_{l}^{2}([x_{i},x_{j}] x_{l})
                         + \rho_{l}^{2}([x_{i},x_{j}]) x_{l}\big)
                         - d_{1}^{\mathrm{CE}}\big( \bar{x}_{\bar{i}} [x_{p},x_{q}] \circ \rho_{l}^{2}([x_{i},x_{j}]) \otimes x_{l}\big),
\end{align*}
it turns out that $\delta(\bar{c})$ is the class of the cycle
\begin{equation}
\begin{split}
     \sum_{l=1}^{n} \Big( &\big(\rho_{l}^{2}([x_{i},x_{j}] x_{l}) + \rho_{l}^{2}([x_{i},x_{j}]) x_{l}\big) \circ  \bar{x}_{\bar{i}}[x_{p},x_{q}]
                          -[x_{i},x_{j}] \circ \big(\rho_{l}^{2}(\bar{x}_{\bar{i}} [x_{p},x_{q}]) x_{l} 
                          + \rho_{l}^{2}(\bar{x}_{\bar{i}} [x_{p},x_{q}] x_{l})\big)
                    \label{eq:ciclossimgendelta2}
     \\
                    -&\big(\rho_{l}^{2}(\bar{x}_{\bar{i}} [x_{p},x_{q}] x_{l}) + \rho_{l}^{2}(\bar{x}_{\bar{i}} [x_{p},x_{q}]) x_{l}\big) 
                    \circ [x_{i},x_{j}]
                    + \bar{x}_{\bar{i}} [x_{p},x_{q}] \circ \big(\rho_{l}^{2}([x_{i},x_{j}] x_{l}) + \rho_{l}^{2}([x_{i},x_{j}]) x_{l}\big) \Big).
\end{split}
\end{equation}

Using the identity \eqref{eq:delta}, we may rewrite the cycle \eqref{eq:ciclossimgendelta2} as follows
\begin{equation}
\label{eq:ciclossimgendelta3}
\begin{split}
               \Delta([x_{i},x_{j}]) \circ \bar{x}_{\bar{i}} [x_{p},x_{q}]
               -[x_{i},x_{j}] \circ \Delta(\bar{x}_{\bar{i}} [x_{p},x_{q}])
               -\Delta(\bar{x}_{\bar{i}} [x_{p},x_{q}]) \circ [x_{i},x_{j}]
               + \bar{x}_{\bar{i}} [x_{p},x_{q}] \circ \Delta([x_{i},x_{j}])
               \\
               = 2\Big(\Delta([x_{i},x_{j}]) \circ \bar{x}_{\bar{i}} [x_{p},x_{q}]
               -[x_{i},x_{j}] \circ \Delta(\bar{x}_{\bar{i}} [x_{p},x_{q}])\Big).
\end{split}
\end{equation}

Equality \eqref{eq:reduccion} tells us that 
\begin{equation}
\label{eq:id}
\begin{split}
     &[x_{i},x_{j}] \circ p_{2}\big(\sum_{l = 1}^{n} [x_{l},[x_{l},[x_{i_{1}}, \dots, [x_{i_{r}},[x_{p},x_{q}]]\dots]]]\big)
     \\
     &= [x_{i},x_{j}] \circ \Big(\Delta(\bar{x}_{\bar{i}} [x_{p},x_{q}])
     + \sum_{h = 1}^{r} \sum_{l = 1}^{n} x_{l}^{2} x_{i_{1}} \dots x_{i_{h-1}} \rho_{i_{h}}^{2}(x_{i_{h+1}} \dots x_{i_{r}} [x_{p},x_{q}])\Big)
     \\
     &= [x_{i},x_{j}] \circ \Delta(\bar{x}_{\bar{i}} [x_{p},x_{q}])
     - d_{1}^{\mathrm{CE}}\Big([x_{i},x_{j}]x_{l} \circ \sum_{h = 1}^{r} \sum_{l = 1}^{n}
     x_{i_{1}} \dots x_{i_{h-1}} \rho_{i_{h}}^{2}(x_{i_{h+1}} \dots x_{i_{r}} [x_{p},x_{q}]) \otimes x_{l}\Big)
     \\
     &-  d_{1}^{\mathrm{CE}}\Big([x_{i},x_{j}] \circ \sum_{h = 1}^{r} \sum_{l = 1}^{n}
     x_{l} x_{i_{1}} \dots x_{i_{h-1}} \rho_{i_{h}}^{2}(x_{i_{h+1}} \dots x_{i_{r}} [x_{p},x_{q}]) \otimes x_{l}\Big),
\end{split}
\end{equation}
so $\delta(\bar{c})$ is given by the cycle 
\begin{equation}
\label{eq:ciclossimgendelta4}
               2\Big(2 \sum_{l = 1}^{n} [[x_{l},x_{i}],[x_{l},x_{j}]] \circ  \bar{x}_{\bar{i}} [x_{p},x_{q}]
               -[x_{i},x_{j}] \circ p_{2}\big(\sum_{l = 1}^{n} [x_{l},[x_{l},[x_{i_{1}}, \dots, [x_{i_{r}},[x_{p},x_{q}]]\dots]]]\big)\Big).
\end{equation}

Making use of \eqref{eq:granconmutadorfinal} of Lemma \ref{lema:granconmutadorfinal} 
in the previous equation
, we obtain that   
\begin{equation}
\label{eq:ciclossimgendelta5}
\begin{split}
               4\sum_{l = 1}^{n} &\Big((\overset{*_{1}}{\overbrace{[[x_{l},x_{i}],[x_{l},x_{j}]] \circ  \bar{x}_{\bar{i}} [x_{p},x_{q}]}}
               - \overset{*_{2}}{\overbrace{[x_{i},x_{j}] \circ \bar{x}_{\bar{i}} [[x_{l},x_{p}],[x_{l},x_{q}]]}})
               \\
               &- \overset{*_{3}}
               {\overbrace{\sum_{h = 1}^{r} [x_{i},x_{j}] \circ x_{l} x_{i_{1}} \dots [[x_{l},x_{i_{h}}], \dots x_{i_{r}} [x_{p},x_{q}]]}}\Big).
\end{split}
\end{equation}
We shall denote this cycle by $c'$.

Let us now consider the $k$-linear map
\[     \xi : \Lambda^{3} W(n) \rightarrow
             \Lambda^{2} V(n) \otimes \Lambda^{2} W(n),     \]
given by
\begin{align*}
   \xi([x_{i_{1}},x_{j_{1}}] z_{1} \wedge [x_{i_{2}},x_{j_{2}}] z_{2} \wedge [x_{i_{3}},x_{j_{3}}] z_{3})
   &= z_{1}(0) x_{i_{1}} \wedge x_{j_{1}} \otimes [x_{i_{2}},x_{j_{2}}] z_{2} \wedge [x_{i_{3}},x_{j_{3}}] z_{3}
   \\
   &+ z_{2}(0) x_{i_{2}} \wedge x_{j_{2}} \otimes [x_{i_{3}},x_{j_{3}}] z_{3} \wedge [x_{i_{1}},x_{j_{1}}] z_{1}
   \\
   &+ z_{3}(0) x_{i_{3}} \wedge x_{j_{3}} \otimes [x_{i_{1}},x_{j_{1}}] z_{1} \wedge [x_{i_{2}},x_{j_{2}}] z_{2},
\end{align*}
for $z_{i} \in S(V(n))$, $i= 1,2,3$.

It is readily verified that, if $\Lambda^{2} V(n)$ is provided with the trivial action of $S(V(n))$, 
$\xi$ is an $S(V(n))$-linear and $\so(n)$-equivariant map, so it induces a morphism between the homology groups 
\[     \bar{\xi} : H_{0}(V(n),\Lambda^{3} W(n)) \rightarrow
                      H_{0}(V(n),\Lambda^{2} V(n) \otimes \Lambda^{2} W(n))
                      = \Lambda^{2} V(n) \otimes H_{0}(V(n),\Lambda^{2} W(n)).     \]

We shall prove that $\bar{\xi}(\delta(\bar{c}))$ is the class of the cycle 
\begin{equation}
\label{eq:xi}
   2 \sum_{1 \leq s < t \leq n} x_{s} \wedge x_{t} \otimes (x_{s} \wedge x_{t}).\tilde{c} 
   + \sum_{l=1}^{n} \Big( \delta_{|\bar{i}|,0} x_{p} \wedge x_{q} \otimes [x_{l},x_{i}] \wedge [x_{l},x_{j}] 
   - \delta_{|\bar{i}|,0} x_{i} \wedge x_{j} \otimes [x_{l},x_{p}] \wedge [x_{l},x_{q}] \Big),
\end{equation}
where 
$(x_{p} \wedge x_{q}).\tilde{c}$ denotes the action of $x_{p} \wedge x_{q} \in \Lambda^{2} V(n) \simeq \so(n)$ on $\tilde{c}$
(see \cite{FH1}, \S 20.1, (20.4)).
The expression \eqref{eq:xi} does not depend on the choice of $\tilde{c}$ since the differential of the Chevalley-Eilenberg complex is $\so(n)$-equivariant.

Let us compute $\bar{\xi}(\delta(\bar{c}))$.
In order to do so, it suffices to apply $\xi$ to the cycle $c'$ given in \eqref{eq:ciclossimgendelta5}.
Then $\xi(c')$ is four times the class of the cycle 
\begin{align*}
               &\sum_{l=1}^{n} \Big(
               \overset{\xi(*_{1})}{\overbrace{x_{l} \wedge x_{i} \otimes [x_{l},x_{j}] \wedge \bar{x}_{\bar{i}} [x_{p},x_{q}]
               + x_{l} \wedge x_{j} \otimes \bar{x}_{\bar{i}} [x_{p},x_{q}] \wedge [x_{l},x_{i}]+ \delta_{|\bar{i}|,0} x_{p} \wedge x_{q} \otimes [x_{l},x_{i}] \wedge [x_{l},x_{j}]}}
               \\
               &- \overset{\xi(*_{2})}{\overbrace{(\underset{\star_{1}}{\underbrace{x_{i} \wedge x_{j} \otimes \bar{x}_{\bar{i}} ([x_{l},x_{p}] \wedge [x_{l},x_{q}])}}
               + x_{l} \wedge x_{q} \otimes [x_{i},x_{j}] \wedge \bar{x}_{\bar{i}} [x_{l},x_{p}]
               + x_{l} \wedge x_{p} \otimes \bar{x}_{\bar{i}} [x_{l},x_{q}] \wedge [x_{i},x_{j}])}}
               \\
               &- \overset{\xi(*_{3}), \hskip 0.1cm \text{\begin{scriptsize}first part\end{scriptsize}}}
               {\overbrace{\sum_{h=1}^{r}\underset{\star_{2}}{\underbrace{x_{i} \wedge x_{j}
               \otimes x_{l} x_{i_{1}} \dots ([x_{l},x_{i_{h}}] \wedge x_{i_{h+1}} \dots x_{r} [x_{p},x_{q}])}}}}
               \\
               &\phantom{2 \sum_{l=1}^{n}} - \overset{\xi(*_{3}), \hskip 0.1cm \text{\begin{scriptsize}second part\end{scriptsize}}}
               {\overbrace{\sum_{h=1}^{r} x_{l} \wedge x_{i_{h}} \otimes x_{l} x_{i_{1}} \dots \hat{x}_{i_{h}} \dots x_{r} [x_{p},x_{q}] 
               \wedge [x_{i},x_{j}]}} \Big),
\end{align*}
where we have used that 
\[     \sum_{l=1}^{n} x_{p} \wedge x_{q} \otimes [x_{i},x_{j}] \wedge x_{l} x_{i_{1}} \dots x_{i_{r-1}} [x_{l},x_{i_{r}}]
        = \sum_{l=1}^{n} x_{p} \wedge x_{q} \otimes [x_{i},x_{j}] \wedge x_{i_{1}} \dots x_{i_{r-1}} [x_{l},[x_{l},x_{i_{r}}]] = 0.     \]

Since $\star_{2}$ is evidently a boundary and $\star_{1}$ is a boundary if $|\bar{i}| > 0$, thus $\xi(c')$ is equivalent to
\begin{align*}
          &2 \sum_{l=1}^{n} \Big(
               x_{l} \wedge x_{i} \otimes [x_{l},x_{j}] \wedge \bar{x}_{\bar{i}} [x_{p},x_{q}]
               - x_{l} \wedge x_{j} \otimes [x_{l},x_{i}] \wedge \bar{x}_{\bar{i}} [x_{p},x_{q}] 
               \\
               &\phantom{2 \sum_{l=1}^{n}} + \delta_{|\bar{i}|,0} x_{p} \wedge x_{q} \otimes [x_{l},x_{i}] \wedge [x_{l},x_{j}]
               - \delta_{|\bar{i}|,0} x_{i} \wedge x_{j} \otimes [x_{l},x_{p}] \wedge [x_{l},x_{q}]
               \\
               &\phantom{2 \sum_{l=1}^{n}}+ x_{l} \wedge x_{p} \otimes [x_{i},x_{j}] \wedge \bar{x}_{\bar{i}} [x_{l},x_{q}]
               - x_{l} \wedge x_{q} \otimes [x_{i},x_{j}] \wedge \bar{x}_{\bar{i}} [x_{l},x_{p}]
               \\
               &\phantom{2 \sum_{l=1}^{n}}+ \sum_{h=1}^{r} x_{l} \wedge x_{i_{h}}
               \otimes [x_{i},x_{j}] \wedge x_{l} x_{i_{1}} \dots \hat{x}_{i_{h}} \dots x_{r} [x_{p},x_{q}]  \Big)
\end{align*}
or also to
\begin{align*}
               &2 \sum_{l=1}^{n} \Big(
               x_{l} \wedge x_{i} \otimes (x_{l} \wedge x_{i}).[x_{i},x_{j}] \wedge \bar{x}_{\bar{i}} [x_{p},x_{q}]
               + x_{l} \wedge x_{j} \otimes (x_{l} \wedge x_{j}).[x_{i},x_{j}] \wedge \bar{x}_{\bar{i}} [x_{p},x_{q}]
               \\
               &\phantom{= 2 \sum_{l=1}^{n}}+ x_{l} \wedge x_{p} \otimes [x_{i},x_{j}] \wedge \bar{x}_{\bar{i}} (x_{l} \wedge x_{p}).[x_{p},x_{q}]
               + x_{l} \wedge x_{q} \otimes [x_{i},x_{j}] \wedge \bar{x}_{\bar{i}} (x_{l} \wedge x_{q}).[x_{p},x_{q}]
               \\
               &\phantom{= 2 \sum_{l=1}^{n}}+ \delta_{|\bar{i}|,0} x_{p} \wedge x_{q} \otimes [x_{l},x_{i}] \wedge [x_{l},x_{j}]
               - \delta_{|\bar{i}|,0} x_{i} \wedge x_{j} \otimes [x_{l},x_{p}] \wedge [x_{l},x_{q}]
               \\
               &\phantom{= 2 \sum_{l=1}^{n}}+ \sum_{h=1}^{r} x_{l} \wedge x_{i_{h}}
               \otimes [x_{i},x_{j}] \wedge (x_{l} \wedge x_{i_{h}})(\bar{x}_{\bar{i}}) [x_{p},x_{q}]  \Big)
\end{align*}
which may be further simplified to give 
\[     2 \sum_{1 \leq s < t \leq n} x_{s} \wedge x_{t} \otimes (x_{s} \wedge x_{t}).\tilde{c} 
       + \sum_{l=1}^{n} \delta_{|\bar{i}|,0} \Big( x_{p} \wedge x_{q} \otimes [x_{l},x_{i}] \wedge [x_{l},x_{j}] 
       - x_{i} \wedge x_{j} \otimes [x_{l},x_{p}] \wedge [x_{l},x_{q}] \Big).     \]
     
If $|\bar{i}| > 0$, \textit{i.e.} $\bar{\tilde{c}}$ has degree strictly greater than $4$, then  
\[     \bar{\xi}(\delta(\bar{c})) = 2 \sum_{1 \leq s < t \leq n} x_{s} \wedge x_{t} \otimes (x_{s} \wedge x_{t}).\bar{\tilde{c}}.     \]
In this case, if $\bar{c} \in \Ker(\delta)$, then, since $\{ x_{s} \wedge x_{t} \}_{1 \leq s < t \leq n}$ is a basis of $\Lambda^{2} V(n)$,
it must be that $(x_{s} \wedge x_{t}).\bar{\tilde{c}} = 0$, for all $1 \leq s < t \leq n$.
This in turn implies that $\bar{\tilde{c}}$ belongs to the trivial representation of $\so(n)$ in $H_{0}(V(n),W(n)^{\otimes 2})$. 
However, Corollary \ref{coro:compisownwn} tells us that this is not possible.

If $|\bar{i}| = 0$, $\bar{\tilde{c}}$ has degree $4$ and 
\begin{equation}
\label{eq:xi0}
     \bar{\xi}(\delta(\bar{c})) = 2 \sum_{1 \leq s < t \leq n} x_{s} \wedge x_{t} \otimes (x_{s} \wedge x_{t}).\bar{\tilde{c}}
       + \sum_{l=1}^{n} \Big( x_{p} \wedge x_{q} \otimes [x_{l},x_{i}] \wedge [x_{l},x_{j}] 
       - x_{i} \wedge x_{j} \otimes [x_{l},x_{p}] \wedge [x_{l},x_{q}] \Big).     
\end{equation}
If $n = 3$, from $H_{0,4}(V(n),\Lambda^{2}W(n)) = \Ker(\times^{\text{\begin{tiny}$M(n)$\end{tiny}}}_{0,1})$, we need not consider this case (see Remark \ref{rem:s2wwedge2w}). 

Let us thus assume that $n \geq 4$ and that $\bar{\tilde{c}}$ is any non trivial element of an isotypic component in 
$H_{0}(V(n),\Lambda^{2}W(n))$ different from (the ones appearing in) $\Lambda^{2} V(n)$. 
In this case, since both $\times^{\text{\begin{tiny}$M(n)$\end{tiny}}}_{0,1}$ and $\delta$ are $\so(n)$-equivariant, $\delta(\times^{\text{\begin{tiny}$M(n)$\end{tiny}}}_{0,1}(\bar{\tilde{c}}))$ vanishes if and only if 
$\delta \circ \times^{\text{\begin{tiny}$M(n)$\end{tiny}}}_{0,1}$ vanishes on the complete isotypic component to which 
$\bar{\tilde{c}}$ belongs. 
In this case, we fix 
\[     
\tilde{c} = \begin{cases}
            [e_{1},e_{2}] \wedge [e_{1},e_{4}] \in \Gamma_{2 L_{1}},     &\text{if $n = 4$,}
            \\
            [e_{1},e_{2}] \wedge [e_{1},e_{5}] \in \Gamma_{2 L_{1} + L_{2}},     &\text{if $n = 5$,}
            \\
            [e_{1},e_{2}] \wedge [e_{1},e_{3}] \in \Gamma_{2 L_{1} + L_{2} + L_{3}},     &\text{if $n \geq 7$,}
            \end{cases}     \]
where $\{ e_{1}, \dots, e_{n} \}$ is a basis of $V(n)$ for which the quadratic form of $V(n)$ is polarized (see \cite{FH1}, \S 18.1). 
We may choose this basis as follows. 
Let $m = [n/2]$ be the integral part of $n/2$. 
If $n$ is even, we define $e_{j} = (x_{j} + i x_{j+m})/\sqrt{2}$ and $e_{j+m} = (x_{j} - i x_{j+m})/\sqrt{2}$, for $1 \leq j \leq m$; 
whereas, if $n$ is odd, we also define $e_{n} = x_{n}$. 

We have intentionally omitted the case $n=6$ in the previous list since we should consider two different isotypic components: 
$\tilde{c}=[e_{1},e_{2}] \wedge [e_{1},e_{3}] \in \Gamma_{2 L_{1} + L_{2} + L_{3}}$ and 
$\tilde{c}=[e_{1},e_{2}] \wedge [e_{1},e_{6}] \in \Gamma_{2 L_{1} + L_{2} - L_{3}}$. 

We shall see that $\bar{\xi}(\delta(\times^{\text{\begin{tiny}$M(n)$\end{tiny}}}_{0,1}(\bar{\tilde{c}})))$ does not vanish in any case. 
We start recalling the following elementary fact: 
Consider a $k$-vector space $V$ of finite dimension $n$, $\phi \in (V^{*} \otimes V^{*})^{*}$ a bilinear map on $V^{*}$, 
and a basis $\{ v_{1}, \dots, v_{n} \} \subseteq V$, with dual basis $\{ v_{1}^{*}, \dots, v_{n}^{*} \} \subseteq V^{*}$. 
By the canonical identification $(V^{*} \otimes V^{*})^{*} \simeq V \otimes V$, we see that the expression 
\[     \sum_{i,j=1}^{n} \phi(v_{i}^{*},v_{j}^{*}) v_{i} \otimes v_{j} \in V^{\otimes 2}     \]
identifies with $\phi$, so it is independent of the choice of the basis. 
When $V$ is provided with a nondegenerate symmetric bilinear form $Q : V^{\otimes 2} \rightarrow k$, we may consider $\phi = Q^{-1}$, 
the inverse form of $Q$ (\textit{i.e.} the one obtained by the condition that the map $v \mapsto Q(v,\place)$ from $V$ to $V^{*}$ is an isometry). 

We shall apply the previous fact in order to rewrite equation \eqref{eq:xi0} as follows. 
First, 
\[     \sum_{l=1}^{n} x_{l} \otimes x_{l} = \sum_{l=1}^{m} (e_{l} \otimes e_{l+m} + e_{l+m} \otimes e_{l}) + \delta_{n-2m,1} e_{n} \otimes e_{n},     \]
where we have used $\phi$ equal to the inverse of the form on $V(n)$. 
Also, the nondegenerate symmetric form $K(x_{s} \wedge x_{t}, x_{s'} \wedge x_{t'}) = \delta_{s,s'} \delta_{t,t'} - \delta_{s,t'} \delta_{t,s'}$ is invariant, so it is a Killing form on $\Lambda^{2} V(n)$, coinciding with $-\frac{1}{8}\tr$ under the canonical identification $\Lambda^{2} V(n) \simeq \so(n)$ (see \cite{FH1}, \S 20.1, (20.4)). 
Hence,  
\[     \sum_{1 \leq s < t \leq n} (x_{s} \wedge x_{t}) \otimes (x_{s} \wedge x_{t}) = 
       \underset{\text{\begin{tiny}    $\begin{matrix}
                                       1 \leq s < t \leq n
                                       \\
                                       1 \leq s' < t' \leq n
                                       \end{matrix}$
                                       \end{tiny}}}
                                       {\sum} K^{-1}((e_{s} \wedge e_{t})^{*},(e_{s'} \wedge e_{t'})^{*}) 
                                       (e_{s} \wedge e_{t}) \otimes (e_{s'} \wedge e_{t'}).      \]
By the previous considerations, we may rewrite the expression \eqref{eq:xi0} for  $\bar{\xi}(\delta(\times^{\text{\begin{tiny}$M(n)$\end{tiny}}}_{0,1}(\bar{\tilde{c}})))$, 
where $\tilde{c} = [e_{1},e_{2}] \wedge [e_{1},e_{h}]$ and $h \in \{ 3, 4, 5, 6 \}$ is given according to the previous choices of cycles. 
There is one term of the form $(e_{1} \wedge e_{2}) \otimes a_{1,2}$ in the cycle representing the homology class 
$\bar{\xi}(\delta(\times^{\text{\begin{tiny}$M(n)$\end{tiny}}}_{0,1}(\bar{\tilde{c}})))$, where 
\begin{align*}
       a_{1,2} &= (e_{1+m} \wedge e_{2+m}) ([e_{1},e_{2}] \wedge [e_{1},e_{h}]) 
       - \sum_{l=1}^{m} ([e_{l},e_{1}] \wedge [e_{l+m},e_{h}] + [e_{l+m},e_{1}] \wedge [e_{l},e_{h}]) 
       \\
       &- \delta_{n-2m,1} [e_{n},e_{1}] \wedge [e_{n},e_{h}]
       \\
       &= [e_{1},e_{1+m}] \wedge [e_{1}, e_{h}] - [e_{1}, e_{2}] \wedge [e_{2+m}, e_{h}] - [e_{2+m},e_{2}] \wedge [e_{1},e_{h}]
       \\
       &- \sum_{l=1}^{m} ([e_{l},e_{1}] \wedge [e_{l+m},e_{h}] + [e_{l+m},e_{1}] \wedge [e_{l},e_{h}]) 
       - \delta_{n-2m,1} [e_{n},e_{1}] \wedge [e_{n},e_{h}],     
\end{align*}
so it does not vanish. 
Since there are no boundaries in this degree, $\bar{\xi}(\delta(\times^{\text{\begin{tiny}$M(n)$\end{tiny}}}_{0,1}(\bar{\tilde{c}})))$ is not zero, 
and \textit{a fortiori} $\delta(\times^{\text{\begin{tiny}$M(n)$\end{tiny}}}_{0,1}(\bar{\tilde{c}}))$ does not vanish. 
The proposition is thus proved.
\end{proof}

Let us now come back to the study of the spectral sequence of the previous section. 
\begin{proposition}
\label{prop:ker}
The kernel of the differential $d^{1}_{-1,3}$ is isomorphic to $V(n)[-4]$, with basis given in Lemma \ref{lema:morfb1}.
\end{proposition}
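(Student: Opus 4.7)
The plan is to assemble the four key ingredients already established: Proposition \ref{prop:proycan} (which computes $B'_1\circ I_2\circ d^1_{-1,3}=\pi_a$), Proposition \ref{prop:diagramaconmutativo} (the commutative square linking $\delta$ and $d^1_{-1,3}$ via $B'_1$ and $B_2$), Proposition \ref{prop:b1b2} (injectivity of $B_2|_{H_0(V(n),\Lambda^3W(n))}$ and the identification $\Ker(B'_1|_{H_1(V(n),S^2W(n))})\simeq \Lambda^5V(n)$), and Proposition \ref{prop:vnker} (which identifies $\Ker(\delta)\simeq V(n)[-4]\oplus \Lambda^5V(n)$). The idea is that the kernel of $d^1_{-1,3}$ can be identified with the image under $B'_1$ of $\Ker(\delta)$, and a simple count using the two previous results forces this image to be precisely $V(n)[-4]$.

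More concretely, first I would take $x\in \Ker(d^1_{-1,3})$ and apply Proposition \ref{prop:proycan} to conclude that $\pi_a(x)=0$, whence $x$ belongs to the symmetric part $H_{2,s}(\ym(n),W(n))$. So we may write $x = B'_1(y)$ for some $y\in H_1(V(n),S^2W(n))$. Next, the commutative diagram of Proposition \ref{prop:diagramaconmutativo} gives $B_2(\delta(y)) = d^1_{-1,3}(B'_1(y)) = 0$, and the injectivity of $B_2|_{H_0(V(n),\Lambda^3W(n))}$ (Proposition \ref{prop:b1b2}) forces $\delta(y)=0$, i.e.\ $y\in \Ker(\delta)$. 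Conversely, the same commutative diagram shows that every $B'_1(y)$ with $y\in\Ker(\delta)$ belongs to $\Ker(d^1_{-1,3})$. Hence
\[
     \Ker(d^1_{-1,3})=B'_1(\Ker(\delta)).
\]

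To finish, I would use Proposition \ref{prop:vnker} to decompose $\Ker(\delta)\simeq V(n)[-4]\oplus\Lambda^5 V(n)$, and Proposition \ref{prop:b1b2} to observe that $\Ker(B'_1|_{H_1(V(n),S^2W(n))})=\Lambda^5V(n)$ lies entirely inside $\Ker(\delta)$. Therefore
\[
     B'_1(\Ker(\delta)) \simeq \Ker(\delta)/\Lambda^5V(n) \simeq V(n)[-4],
\]
as $\so(n)$-modules. The explicit basis is then obtained by transporting the $V(n)$-type cycles of Proposition \ref{prop:ciclostor1} through $B'_1$: by the first item of Lemma \ref{lema:morfb1}, their images are, up to sign, the cycles $\sum_{j=1}^{n}[x_l,x_j]\otimes x_j$, for $1\le l\le n$, which is exactly the basis stated in that lemma.

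The forward step (containment $B'_1(\Ker(\delta))\subseteq \Ker(d^1_{-1,3})$ and the computation of its size) is routine once the preceding propositions are available. The only potentially delicate point is the initial reduction $x\in H_{2,s}$ via Proposition \ref{prop:proycan}, because the identification of $B'_1\circ I_2\circ d^1_{-1,3}$ with the canonical projection $\pi_a$ is what truly separates the symmetric and antisymmetric contributions; but this has already been done. So the proof reduces to a clean diagram-chase, with essentially no new computation required.
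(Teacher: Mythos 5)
Your proof is correct and takes essentially the same route as the paper: the paper's own proof is a one-line citation of Propositions \ref{prop:diagramaconmutativo}, \ref{prop:b1b2} and \ref{prop:vnker}, and your diagram chase --- including the preliminary reduction to the symmetric part $H_{2,s}$ via Proposition \ref{prop:proycan} --- is precisely the argument those citations are meant to encode.
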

\begin{proof}
By Propositions \ref{prop:diagramaconmutativo}, \ref{prop:b1b2} and \ref{prop:vnker}, we see that 
$\Ker(d^{1}_{-1,3}) \simeq V(n)[-4]$.
\end{proof}

\section{\texorpdfstring
{Computation of $HH^{1}(\YM(n))$}
{Computation of the group of outer derivations of the Yang-Mills algebra}
}
\label{sec:hh1}

In this section we shall finally compute the group of outer derivations $HH^{1}(\YM(n))$, for $n \geq 3$. 
We recall that, since the beginning of the previous section, we have assumed that $n \geq 3$, unless we say the contrary. 

We begin by describing some derivations of $\YM(n)$.
\begin{proposition}
\label{prop:algder}
There is a homogeneous $k$-linear monomorphism of degree $0$
\[     k \oplus V(n)[2] \oplus \Lambda^{2} (V(n)[1]) \hookrightarrow HH^{1}(\YM(n)).     \]
\end{proposition}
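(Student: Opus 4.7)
The plan is to explicitly construct three families of derivations of $\YM(n)$ and show that they assemble into a monomorphism into $HH^{1}(\YM(n))$. Recall that $HH^{1}(\YM(n)) = \mathrm{Der}(\YM(n))/\mathrm{InnDer}(\YM(n))$, and that if a derivation is homogeneous of degree $d$ then an inner derivation of the same degree must be realized by an element in $\YM(n)_{-d}$ (and in particular there are no inner derivations of negative degree, since $\YM(n)$ is $\NN_{0}$-graded with $\YM(n)_{0}=k$).

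First I would exhibit the candidates. For the summand $k$, take the Eulerian derivation $d_{\mathrm{eu}}$ of the graded algebra $\YM(n)$; it is homogeneous of degree $0$ and preserves $R(n)$ since the Yang-Mills relations are homogeneous of degree $3$, so $d_{\mathrm{eu}}$ descends to a derivation of $\YM(n)$. For the summand $V(n)[2]$, for each $i \in \{1,\dots,n\}$ define $D_{i} : T(V(n))\to T(V(n))$ as the unique derivation with $D_{i}(x_{j})=\delta_{i,j}$; it is homogeneous of degree $-1$, and a direct computation using that $[1,z]=0$ shows that $D_{i}\bigl(\sum_{j}[x_{j},[x_{j},x_{k}]]\bigr)=0$, so $D_{i}$ descends to $\YM(n)$. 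For the summand $\Lambda^{2}(V(n)[1])\simeq \so(n)$, use the action by derivations of $\so(n)$ on $\YM(n)$ coming from Proposition \ref{prop:accionlieyangmills}; this provides, for each $i<j$, a derivation $\Delta_{i,j}$ of $\YM(n)$ homogeneous of degree $0$ induced by an infinitesimal rotation $x_{i}\wedge x_{j}\in\so(n)$.

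Next I would verify that the induced map $\Psi: k\oplus V(n)[2]\oplus \Lambda^{2}(V(n)[1])\to HH^{1}(\YM(n))$ is a homogeneous $k$-linear morphism of degree $0$, which is clear by construction once one notes the convention $(M[d])_{n}=M_{n+d}$: $d_{\mathrm{eu}}$ sits in degree $0$, each $D_{i}$ in degree $-1$ and each $\Delta_{i,j}$ in degree $0$. To show injectivity, argue separately in each internal degree. In degree $-1$ the map restricts to the inclusion of $V(n)[2]$; any element $\sum_{i}c_{i}D_{i}$ in its kernel would descend from an inner derivation, which must be induced by an element of $\YM(n)_{1}=V(n)$, but $[x_{\ell},x_{j}]\in\tym(n)$ while $\sum_{i}c_{i}D_{i}(x_{j})=c_{j}\cdot 1\in k$, forcing all $c_{j}=0$. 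In degree $0$ the map restricts to $k\oplus \Lambda^{2}(V(n)[1])$; since these are non-isomorphic irreducible $\so(n)$-modules (using $n\geq 3$, so $\so(n)\simeq\Lambda^{2}V(n)$ is irreducible and nontrivial), it is enough to verify that neither the trivial nor the adjoint summand is killed. For the trivial summand, $d_{\mathrm{eu}}$ cannot be inner because the only element of $\YM(n)_{0}=k$ is central, so $[a,\place]=0$ for every $a\in\YM(n)_{0}$, whereas $d_{\mathrm{eu}}(x_{i})=x_{i}\neq 0$. The same central argument shows no nonzero $\Delta_{i,j}$ is inner, since $\Delta_{i,j}(x_{k})\neq 0$ whenever $k\in\{i,j\}$.

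The only conceptual obstacle here is the well-definedness of the $D_{i}$, which is just the brief calculation
\[
D_{i}\Big(\sum_{j=1}^{n}[x_{j},[x_{j},x_{k}]]\Big)
=\sum_{j=1}^{n}\bigl([\delta_{i,j},[x_{j},x_{k}]]+[x_{j},[\delta_{i,j},x_{k}]]+[x_{j},[x_{j},\delta_{i,k}]]\bigr)=0,
\]
and observing that the $\so(n)$-action of Proposition \ref{prop:accionlieyangmills} preserves $R(n)$. Everything else reduces either to inspection of degrees or to the fact that $\YM(n)_{0}=k$ is the centre in degree $0$, together with $\so(n)$-isotypical decomposition to separate the degree $0$ part. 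In the next section we will show that $\Psi$ is in fact surjective, yielding the full computation of $HH^{1}(\YM(n))$.
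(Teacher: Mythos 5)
Your proof is correct and follows essentially the same route as the paper: the same three families of derivations (the Eulerian derivation, the $d_{i}$ with $d_{i}(x_{j})=\delta_{i,j}$, and the $\so(n)$-action of Proposition \ref{prop:accionlieyangmills}), with injectivity into $HH^{1}(\YM(n))$ obtained from the observation that every nonzero inner derivation has positive degree. Two small slips are worth fixing: an inner derivation homogeneous of degree $d$ is $[a,\place]$ for $a\in\YM(n)_{d}$, not $\YM(n)_{-d}$ (your parenthetical conclusion is nevertheless the right one, and in degree $-1$ the cleanest point is simply that $\YM(n)_{-1}=0$); and for $n=4$ the module $\Lambda^{2}V(4)\simeq\Gamma_{L_{1}+L_{2}}\oplus\Gamma_{L_{1}-L_{2}}$ is not irreducible, although your isotypic argument survives because each of the two summands acts nontrivially on $V(4)\subseteq\YM(4)$ and neither is isomorphic to the trivial module.
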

\begin{proof}
We shall consider the following collection of plain derivations of $\YM(n)$ (in the non graded sense). 
They are homogeneous $k$-linear maps certain of degree, but satisfying the usual Jacobi identity (not the graded version).

In the first place, the homogeneous morphism of degree $0$ given by
\begin{align*}
   d_{\mathrm{eu}} : \YM(n) &\rightarrow \YM(n)
   \\
   z &\mapsto |z| z,
\end{align*}
where $z \in \YM(n)$ is homogeneous of usual degree $|z|$,
is a derivation of $\YM(n)$, which we call the Eulerian derivation.

Next, we define the derivations $d_{i}$, $i=1,\dots,n$ of degree $-1$ 
induced by the morphisms of the same name 
\begin{align*}
     d_{i} : V(n) &\rightarrow T(V(n))
     \\
     x_{j} &\mapsto \delta_{i,j}.
\end{align*}

Finally, since $\so(n) \simeq \Lambda^{2} V(n)$ acts on $\YM(n)$ by derivations of degree $0$,
we immediately obtain a homogeneous map of degree $0$ from $\Lambda^{2} (V(n)[1])$ to $\Der(\YM(n))$.

It is clear that the previous derivations induce a homogeneous $k$-linear monomorphism of degree $0$ 
\begin{equation}
\label{eq:premorfder}
     k \oplus V(n)[2] \oplus \Lambda^{2} (V(n)[1]) \hookrightarrow \Der(\YM(n)).
\end{equation}
To prove this, by degree reasons, it is only necessary to show that the set of derivations induced by the the standard basis of $\so(n)$ 
and the Eulerian derivation is linearly independent, which is direct. 

The map of the proposition is then the composition of the morphism \eqref{eq:premorfder} with the canonical projection 
\[     \Der(\YM(n)) \rightarrow \Der(\YM(n))/\mathrm{Innder}(\YM(n)) \simeq HH^{1}(\YM(n)).     \]
Since the inner derivations have degree greater than or equal to $1$, except for the zero derivation, 
it turns out that the previous composition is also injective.
The proposition is thus proved. 
\end{proof}

We devote the rest of this section to prove that the monomorphism of Proposition \ref{prop:algder} is in fact an isomorphism. 
This will be achieved by making use of the spectral sequence associated to a filtration of $S(\ym(n))$.

First, the isomorphism of equivariant $\YM(n)$-modules $\YM(n)^{\mathrm{ad}} \simeq S(\ym(n))$ implies that
\[     HH^{1} (\YM(n)) \simeq H^{1}(\ym(n),\YM(n)) \simeq H^{1}(\ym(n),S(\ym(n)))
                                = \bigoplus_{i \in \NN_{0}} H^{1}(\ym(n),S^{i}(\ym(n))).     \]
Let $I$ be the ideal of $S(\ym(n))$ generated by $\tym(n)$. 
We deduce that $I$ is also an equivariant $\YM(n)$-module, for $\tym(n)$ is an equivariant $\YM(n)$-module.
Therefore, we may consider the decreasing filtration $\{F^{\bullet} S(\ym(n))\}_{\bullet \in \ZZ}$
of equivariant $\YM(n)$-modules of $S(\ym(n))$ given by 
\[     F^{p} S(\ym(n)) = \begin{cases}
                            I^{p}, &\text{if $p \geq 1$,}
                            \\
                            S(\ym(n)), &\text{if $p \leq 0$.}
                         \end{cases}
                         \]
We see that $F^{\bullet} S(\ym(n))$ is exhaustive and Hausdorff.
Given $i \in \NN$, it induces a decreasing filtration 
$\{F^{\bullet} S^{i}(\ym(n))\}_{\bullet \in \ZZ}$ of $\ym(n)$-modules on $S^{i}(\ym(n))$, which also becomes exhaustive and Hausdorff.
For each $p \geq 0$, there is a natural isomorphism of equivariant $\YM(n)$-modules
\[     F^{p} S^{i}(\ym(n))/F^{p+1} S^{i}(\ym(n)) \simeq S^{i-p}V(n) \otimes S^{p}(\tym(n)),     \]
where the action of $\ym(n)$ on $S^{i-p}V(n)$ is trivial and the action of $\so(n)$ on each factor is the obvious one. 

This filtration provides a collection of spectral sequences ${}^{i}E_{\bullet}^{\bullet,\bullet}$ with 
\begin{equation}
\label{eq:sucesphh1}
   {}^{i}E_{1}^{p,q} = H^{p+q} (\ym(n),S^{i-p}V(n) \otimes S^{p}(\tym(n))),
\end{equation}
for all $i \in \NN_{0}$, where by definition $S^{q}(\place) = 0$, whenever $q < 0$.
Each of these spectral sequences is bounded and, hence, convergent.
The complete spectral sequence 
$E_{\bullet}^{\bullet,\bullet} = \oplus_{i \in \NN_{0}} {}^{i}E_{\bullet}^{\bullet,\bullet}$ is therefore convergent.
We remark that the collection of spectral sequences ${}^{i}E_{\bullet}^{\bullet,\bullet}$, $i \in \NN_{0}$, and its direct sum $E_{\bullet}^{\bullet,\bullet}$ are considered in the category of equivariant $\YM(n)$-modules. 

Since the isomorphism of equivariant $\YM(n)$-modules $S(\tym(n)) \simeq \U(\tym(n))^{\mathrm{ad}}$ preserves the internal 
degree, it induces an isomorphism $S^{+}(\tym(n)) \simeq \Ker(\epsilon_{\tym(n)})$, and, as a consequence,
\[     H_{\bullet}(\ym(n),\Ker(\epsilon_{\tym(n)})) \simeq H_{\bullet}(\ym(n),S^{+}(\tym(n)))
            = \bigoplus_{i \in \NN} H_{\bullet}(\ym(n),S^{i}(\tym(n))).     \]
By Theorem \ref{teo:homi}, $H_{3}(\ym(n),\Ker(\epsilon_{\tym(n)})) = 0$, which yields that 
$H_{3}(\ym(n),S^{i}(\tym(n))) = 0$ for all $i \in \NN$.
On the other hand, by the same theorem, $H_{2}(\ym(n),\Ker(\epsilon_{\tym(n)})) \simeq H_{2}(\ym(n),\tym(n)) \simeq V(n)[-4]$,
so it turns out that $H_{2}(\ym(n),S^{i}(\tym(n))) = 0$ for all $i \geq 2$.

Taking into account the Poincar\'e duality of the Yang-Mills algebra, we obtain the following result. 
\begin{proposition}
\label{prop:homo}
The homology groups $H^{0}(\ym(n),S^{i}(\tym(n)))$ and $H^{1}(\ym(n),S^{i}(\tym(n)))$ vanish for $i \geq 1$ and for $i \geq 2$, respectively. 
There is a homogeneous isomorphism of $\so(n)$-modules of degree $0$ of the form $H^{1}(\ym(n),\tym(n)) \simeq H^{1}(\ym(n),k) \simeq V(n)$.
\end{proposition}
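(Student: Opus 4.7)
The plan is to derive this proposition as a fairly direct consequence of Theorem \ref{teo:homi} combined with Poincar\'e duality (Proposition \ref{prop:(co)homologiaYangMills}), using the symmetrization isomorphism to decompose the augmentation ideal.

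First I would invoke the PBW symmetrization $S(\tym(n)) \simeq \U(\tym(n))^{\mathrm{ad}} = \TYM(n)^{\mathrm{ad}}$, which is an equivariant isomorphism of graded $\YM(n)$-modules (where $\YM(n)$ acts by the adjoint action). This restricts to an equivariant graded isomorphism $S^{+}(\tym(n)) \simeq \Ker(\epsilon_{\tym(n)})$ which moreover respects the natural $\NN$-grading given by the symmetric power $i$, yielding an equivariant decomposition
\[
   H_{\bullet}(\ym(n),\Ker(\epsilon_{\tym(n)})) \simeq \bigoplus_{i \geq 1} H_{\bullet}(\ym(n),S^{i}(\tym(n))).
\]

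Next I would feed Theorem \ref{teo:homi} into this decomposition. The vanishing $H_{3}(\ym(n),\Ker(\epsilon_{\tym(n)})) = 0$ forces $H_{3}(\ym(n),S^{i}(\tym(n))) = 0$ for every $i \geq 1$. Similarly, from $H_{2}(\ym(n),\Ker(\epsilon_{\tym(n)})) \simeq V(n)[-4]$ and the fact that the $i=1$ summand already contributes $H_{2}(\ym(n),\tym(n)) \simeq V(n)[-4]$, the remaining summands must vanish, i.e.\ $H_{2}(\ym(n),S^{i}(\tym(n))) = 0$ for all $i \geq 2$.

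Then I would apply Poincar\'e duality $H^{p}(\ym(n),Y) \simeq H_{3-p}(\ym(n),Y)[4]$ from Proposition \ref{prop:(co)homologiaYangMills}, which is an $\so(n)$-equivariant isomorphism homogeneous of degree $0$. This converts the above vanishing results into $H^{0}(\ym(n),S^{i}(\tym(n))) = 0$ for $i \geq 1$ and $H^{1}(\ym(n),S^{i}(\tym(n))) = 0$ for $i \geq 2$, and gives $H^{1}(\ym(n),\tym(n)) \simeq H_{2}(\ym(n),\tym(n))[4] \simeq V(n)[-4][4] \simeq V(n)$. Finally, the identification $H^{1}(\ym(n),k) \simeq V(n)$ as an $\so(n)$-module follows from $H_{2}(\ym(n),k) \simeq V(n)[-2]$ read off the Koszul complex \eqref{eq:complejohomologiayangmills} with $Y = k$, together with one more application of Poincar\'e duality. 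Since everything above is $\so(n)$-equivariant by construction, the desired isomorphism of $\so(n)$-modules follows. There is no real obstacle here; the only thing to be careful about is ensuring that the symmetrization isomorphism is compatible with both the internal grading and the $\so(n)$-action, which was already established at the end of Subsection \ref{subsec:homandcohom}.
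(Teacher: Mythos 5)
Your proposal is correct and follows essentially the same route as the paper: the symmetrization isomorphism $S^{+}(\tym(n)) \simeq \Ker(\epsilon_{\tym(n)})$ decomposing the homology into the summands $H_{\bullet}(\ym(n),S^{i}(\tym(n)))$, the vanishing and identification results of Theorem \ref{teo:homi}, and then Poincar\'e duality. The only addition is your explicit verification of $H^{1}(\ym(n),k)\simeq V(n)$ from the Koszul complex, which the paper leaves implicit.
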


From the previous proposition and recalling that 
\[     {}^{i}E_{\infty}^{p,q} \simeq F^{p}H^{1}(\ym(n),S^{i}(\ym(n)))/F^{p+1}H^{1}(\ym(n),S^{i}(\ym(n))),     \]
we conclude that $H^{1}(\ym(n),S^{i}(\ym(n)))$ is a direct sum (as a graded $\so(n)$-module) 
of a subquotient of ${}^{i}E_{2}^{1,0}$ and a subquotient of ${}^{i}E_{2}^{0,1}$.

\begin{center}
\begin{figure}[H]
\[
\xymatrix@R-10pt
{
&
&
&
&
&
&
E^{\bullet,\bullet}_{1}
&
\\
0
&
\bullet
\ar@{.}[rrrrdddd]
&
0
&
0
&
0
&
0
&
0
&
\\
0
&
\bullet
\ar[r]^{d^{0,2}_{1}}
&
\bullet
&
0
&
0
&
0
&
0
&
\\
0
&
\bullet
\ar[r]^{d_{1}^{0,1}}
&
\bullet
\ar[r]^{d_{1}^{1,1}}
&
\bullet
&
0
&
0
&
0
&
\\
0
&
\bullet
\ar[uuuu]^>{q}
\ar[rrrrr]^>{p}
\ar[r]^{d^{0,0}_{1}}
\ar@{.}[rr]
&
\bullet
\ar[r]^{d^{1,0}_{1}}
&
\bullet
\ar[r]^{d^{2,0}_{1}}
\ar@{.}[rd]
&
\bullet
&
0
&
0
&
\\
0
&
0
&
0
&
0
&
\bullet
\ar[r]^{d_{1}^{3,-1}}
&
\bullet
&
0
&
}
\]
\caption{First step $E_{1}^{\bullet,\bullet}$ of the spectral sequence.
The dotted lines indicate the limits wherein the spectral sequence is concentrated.}
\end{figure}
\end{center}

We shall begin by analyzing ${}^{i}E_{2}^{1,0}$.
In order to do so, it is necessary to compute the kernel of ${}^{i}d_{1}^{1,0}$ and the image of ${}^{i}d_{1}^{0,0}$, for $i \in \NN_{0}$. 

First, making use of the identifications
\[     {}^{i}E^{0,0}_{1} = H^{0} (\ym(n),S^{i}V(n)) \simeq S^{i}V(n)     \]
and
\[     {}^{i}E^{1,0}_{1} = H^{1}(\ym(n),S^{i-1}V(n) \otimes \tym(n)) \simeq S^{i-1}V(n) \otimes V(n)     \]
given in Proposition \ref{prop:homo}, it is clear that ${}^{i}d_{1}^{0,0}$ may be identified with the de Rham differential $d_{\mathrm{dR}}^{0}$ 
restricted to the $i$-th component of the symmetric algebra $S(V(n))$, which is given by 
\begin{equation}
\begin{split}
\label{eq:derha}
   d_{\mathrm{dR}}^{p} : S^{i}V(n) \otimes \Lambda^{p} V(n) &\rightarrow S^{i-1}V(n) \otimes \Lambda^{p+1} V(n)
   \\
   z \otimes x_{i_{1}} \wedge \dots \wedge x_{i_{p}} &\mapsto \sum_{j=1}^{n} \partial_{j} (z) \otimes x_{j} \wedge x_{i_{1}} \wedge \dots \wedge x_{i_{p}}.
\end{split}
\end{equation}


The following lemmas will be used in the forthcoming study of the spectral sequence we are dealing with. 
\begin{lemma}
\label{lema:d10}
The image of the differential 
\[     {}^{2}d_{1}^{1,0} : {}^{2}E^{1,0}_{1} = H^{1}(\ym(n),V(n) \otimes \tym(n)) \rightarrow {}^{2}E^{2,0}_{1} = H^{2}(\ym(n), S^{2}(\tym(n)))     \]
is canonically isomorphic to $\Lambda^{2} V(n)$.
In fact, the image of ${}^{2}d_{1}^{1,0}$ coincides with the image of the linear monomorphism 
\begin{equation}
\label{eq:iota}
   \bar{\iota} : \Lambda^{2} V(n) \rightarrow H^{2}(\ym(n), S^{2}(\tym(n)))
\end{equation}
given by the composition of the map $\iota : \Lambda^{2} V(n) \rightarrow Z^{2}(\YM(n), S^{2}(\tym(n)))$ defined by 
\begin{align*}
   \iota(x_{i} \wedge x_{j}) &= \sum_{p,l=1}^{n} \Big(4[x_{i},x_{l}] [[x_{j},x_{p}],x_{l}] \otimes x_{p}
   - 4 [[x_{i},x_{p}],x_{l}] [x_{j},x_{l}] \otimes x_{p}
   \\
   &\phantom{= \sum_{p,l=1}^{n}} - 2[x_{i},x_{l}] [[x_{j},x_{l}],x_{p}] \otimes x_{p}
   + 2[[x_{i},x_{l}],x_{p}] [x_{j},x_{l}] \otimes x_{p}\Big).
\end{align*}
and the canonical projection $Z^{2}(\YM(n), S^{2}(\tym(n))) \rightarrow H^{2}(\ym(n), S^{2}(\tym(n)))$. 
\end{lemma}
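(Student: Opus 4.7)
The plan is to realize ${}^{2}d_{1}^{1,0}$ as the connecting homomorphism of the short exact sequence of equivariant $\YM(n)$-modules
\[
0 \to S^{2}(\tym(n)) \to F^{1}S^{2}(\ym(n))/F^{3}S^{2}(\ym(n)) \to V(n) \otimes \tym(n) \to 0
\]
induced by the filtration $F^{\bullet}$, and to compute it explicitly on a basis of cocycles. As a first step I will identify representatives of $H^{1}(\ym(n), V(n) \otimes \tym(n))$. By Proposition \ref{prop:homo} and the fact that $\ym(n)$ acts trivially on the $V(n)$-factor of the quotient $F^{1}/F^{2}$, the group $H^{1}(\ym(n), V(n) \otimes \tym(n))$ is isomorphic to $V(n) \otimes V(n)$. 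From the internal-weight argument at the end of the proof of Theorem \ref{teo:homi} and Poincar\'e duality, a basis for $H^{1}(\ym(n), \tym(n)) \simeq V(n)$ is given by the cochain classes $\phi_{l} = \sum_{j=1}^{n}[x_{l},x_{j}] \otimes x_{j} \in C^{1}(\YM(n),\tym(n))$, for $l=1,\dots,n$. Accordingly, the cocycles $\psi_{i,l} = \sum_{j=1}^{n} x_{i}[x_{l},x_{j}] \otimes x_{j}$, for $1\leq i,l\leq n$, form a basis of $H^{1}(\ym(n),V(n)\otimes\tym(n))$.

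The second step is to compute ${}^{2}d_{1}^{1,0}(\psi_{i,l})$ by lifting $\psi_{i,l}$ trivially along the projection $F^{1}/F^{3} \twoheadrightarrow V(n) \otimes \tym(n)$ and applying the differential $d^{2}$ of the complex \eqref{eq:complejocohomologiayangmills} with coefficients in $F^{1}/F^{3}$. Using the PBW symmetrization, the adjoint action of $x_{p}$ on an element $x_{i}\cdot y\in V(n)\otimes\tym(n)$ inside $F^{1}/F^{3}$ produces, aside from the ``expected'' term $x_{i}[x_{p},y]$, a correction $[x_{p},x_{i}]\cdot y$ which lies in $S^{2}(\tym(n))$ and is precisely what contributes to the image in $E_{1}^{2,0}$. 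A systematic reduction of the resulting expression using the Jacobi identity and the Yang-Mills relations \eqref{eq:relym} (along the lines of Lemma \ref{lema:granconmutadorfinal}) should show that the symmetric-in-$(i,l)$ part of the outcome is a coboundary, while the antisymmetric-in-$(i,l)$ part yields exactly $\iota(x_{i}\wedge x_{l})$ as in the statement.

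The third step is to verify that $\iota(x_{i}\wedge x_{j})$ is indeed a cocycle of $C^{2}(\YM(n),S^{2}(\tym(n)))$ (a direct computation with the Yang-Mills relations and the Jacobi identity, of the same flavour as the verification that the $\phi_{l}$ are cocycles) and that the induced morphism $\bar{\iota}: \Lambda^{2}V(n) \to H^{2}(\ym(n),S^{2}(\tym(n)))$ is injective. Injectivity will follow from the observation that $\iota(x_{i}\wedge x_{j})$ has nontrivial component in the lowest possible internal weight, where no coboundaries of the appropriate degree exist, the same style of argument that pervades Sections \ref{sec:Wn} and \ref{sec:calculoh1tymn}. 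Once these are in hand, the equality $\mathrm{Im}({}^{2}d_{1}^{1,0})=\mathrm{Im}(\bar{\iota})$ follows by combining the computation in step two (which shows the inclusion $\supseteq$) with a dimension count showing that the quotient of $V(n)\otimes V(n)$ by the kernel of ${}^{2}d_{1}^{1,0}$ has dimension at most $\dim\Lambda^{2}V(n)$.

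The principal obstacle is the bookkeeping of step two: the computation of $d^{2}$ on the lift of $\psi_{i,l}$ in $F^{1}/F^{3}$ produces a rather long expression of iterated commutators, and reducing it via Yang-Mills and Jacobi to the compact form $\iota(x_{i}\wedge x_{l})$ is mechanically demanding, though conceptually routine. A useful structural short-cut is to split the answer from the outset into its symmetric and antisymmetric components in $(i,l)$, using that the symmetric part must vanish in $E_{2}^{2,0}$ (which can be certified a priori, since any class in $\mathrm{Im}({}^{2}d_{1}^{1,0})$ comes from $V(n)\otimes V(n)$ and only the antisymmetric piece is asserted to survive), so that only the antisymmetric computation has to be carried out in detail.
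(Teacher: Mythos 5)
Your overall route is the one the paper takes: identify explicit cocycle representatives of $H^{1}(\ym(n),V(n)\otimes\tym(n))\simeq V(n)^{\otimes 2}$ via Proposition \ref{prop:homo}, compute ${}^{2}d_{1}^{1,0}$ on them by the standard lift-and-differentiate zig-zag for the spectral sequence of the filtration, observe that the symmetric part of $V(n)^{\otimes 2}$ dies while the antisymmetric part is sent to $\iota(x_{i}\wedge x_{j})$, and then prove that $\bar{\iota}$ is injective. One small remark on your organisation: the ``a priori'' certification that the symmetric part must die because ``only the antisymmetric piece is asserted to survive'' is circular and cannot replace the computation; the paper in fact verifies that the cocycle representing ${}^{2}d_{1}^{1,0}(\bar{x}_{i,j}+\bar{x}_{j,i})$ vanishes identically, which is what your step two has to produce in any case.

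The genuine gap is in your justification of the injectivity of $\bar{\iota}$. You assert that ``no coboundaries of the appropriate degree exist,'' but this is not an argument the paper can afford to make: it identifies the space $B^{2}(\YM(n),S^{2}(\tym(n)))_{8}$ of coboundaries in the relevant internal degree as an epimorphic image of $S^{2}(\tym(n))_{4}\simeq S^{2}(\Lambda^{2}V(n))$, a nonzero space, and must then argue representation-theoretically (via Remark \ref{rem:s2wwedge2w}) that $\Lambda^{2}V(n)$ is not an isotypic constituent of $S^{2}(\Lambda^{2}V(n))$ for $n\neq 6$ (the case $n=6$ being handled separately), so that $\mathrm{Im}(\iota)$ meets the coboundaries trivially. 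As written, your claim is an unsupported assertion that contradicts the paper's own description of the coboundary space; if you want a degree argument you must actually carry out the bookkeeping for $\mathrm{Im}(d^{2})$, and if that fails you need the isotypic decomposition. Relatedly, the injectivity of $\iota$ at the cochain level is obtained in the paper from $\so(n)$-equivariance together with the irreducibility of $\Lambda^{2}V(n)$, and this requires a separate word for $n=4$, where $\Lambda^{2}V(4)\simeq\Gamma_{L_{1}+L_{2}}\oplus\Gamma_{L_{1}-L_{2}}$ is not irreducible and one must check that $\iota$ is nonzero on each summand; your proposal does not address this case.
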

\begin{proof}
By Proposition \ref{prop:homo}, $H^{1}(\ym(n),V(n) \otimes \tym(n)) \simeq V(n)^{\otimes 2}$. 
Furthermore, by the description of a basis of cocycles of the cohomology group $H^{1}(\ym(n),\tym(n))$ given in the paragraph preceding 
Theorem \ref{teo:homi}, we see that the homology class in $H^{1}(\ym(n),V(n) \otimes \tym(n))$ corresponding to 
$x_{i} \otimes x_{j} \in V(n)^{\otimes 2}$ is that of the cocycle 
\[     \sum_{l=1}^{n} x_{i} \otimes [x_{j},x_{l}] \otimes x_{l} \in V(n) \otimes C^{1}(\YM(n),\tym(n)),     \]
which will be denoted by $\bar{x}_{i,j}$.
Therefore, we have that ${}^{2}d_{1}^{1,0}(\bar{x}_{i,j})$ is the class of the cocycle
\begin{align*}
   &\sum_{p,l=1}^{n} \Big(2[x_{i},x_{p}] [[x_{j},x_{l}],x_{p}] \otimes x_{l}
   - 2 [[x_{i},x_{p}],x_{l}] [x_{j},x_{l}] \otimes x_{p}
   \\
   &\phantom{\sum_{p,l=1}^{n}}-2[x_{i},x_{l}] [[x_{j},x_{l}],x_{p}] \otimes x_{p}
   + [x_{i},x_{l}] [[x_{j},x_{l}],x_{p}] \otimes x_{p}
   + [[x_{i},x_{l}],x_{p}] [x_{j},x_{l}] \otimes x_{p}\Big)
   \\
   &=\sum_{p,l=1}^{n} \Big(2[x_{i},x_{p}] [[x_{j},x_{l}],x_{p}] \otimes x_{l}
   - 2 [[x_{i},x_{p}],x_{l}] [x_{j},x_{l}] \otimes x_{p}
   \\
   &\phantom{=\sum_{p,l=1}^{n}}-[x_{i},x_{l}] [[x_{j},x_{l}],x_{p}] \otimes x_{p}
   + [[x_{i},x_{l}],x_{p}] [x_{j},x_{l}] \otimes x_{p}\Big)
   \\
   &=\sum_{p,l=1}^{n} \Big(2[x_{i},x_{l}] [[x_{j},x_{p}],x_{l}] \otimes x_{p}
   - 2 [[x_{i},x_{p}],x_{l}] [x_{j},x_{l}] \otimes x_{p}
   \\
   &\phantom{=\sum_{p,l=1}^{n}}-[x_{i},x_{l}] [[x_{j},x_{l}],x_{p}] \otimes x_{p}
   + [[x_{i},x_{l}],x_{p}] [x_{j},x_{l}] \otimes x_{p}\Big).
\end{align*}
It is readily verified that the previous identity vanishes if we take $\bar{x}^{s}_{i,j} = \bar{x}_{i,j} + \bar{x}_{j,i}$, 
for all $i, j = 1, \dots, n$.
On the other hand, if $\bar{x}_{i,j}^{a} = \bar{x}_{i,j} - \bar{x}_{j,i}$, 
${}^{2}d_{1}^{1,0}(\bar{x}^{a}_{i,j})$ is given by the cocycle 
\begin{align*}
   &\sum_{p,l=1}^{n} \Big(4[x_{i},x_{l}] [[x_{j},x_{p}],x_{l}] \otimes x_{p}
   - 4 [[x_{i},x_{p}],x_{l}] [x_{j},x_{l}] \otimes x_{p}
   \\
   &\phantom{\sum_{p,l=1}^{n}}- 2[x_{i},x_{l}] [[x_{j},x_{l}],x_{p}] \otimes x_{p}
   + 2[[x_{i},x_{l}],x_{p}] [x_{j},x_{l}] \otimes x_{p}\Big).
\end{align*}
Hence, ${}^{2}d_{1} ^{1,0}(\bar{x}^{a}_{i,j})$ is the class of a cocycle of degree $8$.

Also, $\iota$ is injective,  and we can see this as follows. 
Let us first consider the case $n \neq 4$. 
Indeed, the fact that the map $\iota$ is a non trivial $\so(n)$-equivariant and that $\Lambda^{2}V(n)$ is irreducible 
implies that $\iota$ is monomorphic. 
For the case $n = 4$ we proceed analogously, but taking into account that 
$\Lambda^{2} V(n) \simeq \Gamma_{L_{1} + L_{2}} \oplus \Gamma_{L_{1} - L_{2}}$ 
and $\iota$ does not vanish in any direct summand. 
 
From the form of the complex $C^{\bullet}(\YM(n),S^{2}(\tym(n)))$, we see that the space 
$B^{2}(\YM(n), S^{2}(\tym(n)))_{8}$ of coboundaries of degree $8$ 
is an epimorphic image of $S^{2}(\tym(n))_{4} \simeq S^{2}(\Lambda^{2}V(n))$ under the $\so(n)$-equivariant map 
$d_{3}$. 
It is not hard to prove that the intersection between $B^{2}(\YM(n), S^{2}(\tym(n)))_{8}$ and the image of $\iota$ 
is trivial, so $\bar{\iota}$ is also injective. 
This can be deduced from arguments on isotypic components, which we now explain. 
If $n \neq 6$, Remark \ref{rem:s2wwedge2w} tells us that $\mathrm{Im}(\iota) \simeq \Lambda^{2}V(n)$ is not an 
isotypic component of $S^{2}(\Lambda^{2}V(n))$, which yields that the previous intersection is trivial. 
The case $n = 6$ is analogous. 
\end{proof}

\begin{lemma}
Let $p : V(n)^{\otimes 2} \rightarrow \Lambda^{2} V(n)$ be the canonical projection. 
The following diagram is commutative
\[
\xymatrix@C-10pt
{
{}^{i}E^{1,0}_{1}
\ar@{=}[d]
\ar[rr]^{{}^{i}d^{1,0}_{1}}
&
&
{}^{i}E^{2,0}_{1}
\ar@{=}[d]
\\
H^{1}(\ym(n),S^{i-1}V(n) \otimes \tym(n))
\ar[d]^{\simeq}
&
&
H^{2}(\ym(n),S^{i-2}V(n) \otimes S^{2}\tym(n))
\\
S^{i-1}V(n) \otimes V(n)
\ar[r]^{d_{\mathrm{dR}}^{0} \otimes \id_{V(n)}}
&
S^{i-2}V(n) \otimes V(n)^{\otimes 2}
\ar[r]^{\id_{S^{i-2}V(n)} \otimes p}
&
S^{i-2}V(n) \otimes \Lambda^{2} V(n)
\ar@{^{(}->}[u]^{\id_{S^{i-2}V(n)} \otimes \bar{\iota}}
}
\]

Also, note that $(\id_{S^{i-2}V(n)} \otimes p) \circ (d^{0}_{\mathrm{dR}} \otimes \id_{V(n)}) = d^{1}_{\mathrm{dR}}$. 
\end{lemma}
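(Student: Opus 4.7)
The second assertion, $(\id_{S^{i-2}V(n)} \otimes p) \circ (d^{0}_{\mathrm{dR}} \otimes \id_{V(n)}) = d^{1}_{\mathrm{dR}}$, is an immediate verification: applying the composition to $z \otimes x_{j}$ yields $\sum_{m=1}^{n} \partial_{m}(z) \otimes x_{m} \wedge x_{j}$, which equals $d^{1}_{\mathrm{dR}}(z \otimes x_{j})$ by the formula \eqref{eq:derha}.

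For the commutativity of the diagram, the strategy is to compute ${}^{i}d^{1,0}_{1}$ on an explicit cocycle representative, generalizing what is done in the proof of Lemma \ref{lema:d10} for the case $i=2$. The $\ym(n)$-module structure on $\mathrm{gr}^{1}S^{i}(\ym(n)) \simeq S^{i-1}V(n) \otimes \tym(n)$ is trivial on the first tensor factor and adjoint on the second, so Proposition \ref{prop:homo} yields $H^{1}(\ym(n), S^{i-1}V(n) \otimes \tym(n)) \simeq S^{i-1}V(n) \otimes V(n)$. Under this identification, the class of $z \otimes x_{j}$ is represented by the Koszul cocycle $\sum_{l=1}^{n} z \otimes [x_{j},x_{l}] \otimes x_{l} \in C^{1}(\YM(n), S^{i-1}V(n) \otimes \tym(n))$, which extends the cocycle $\bar{x}_{i,j}$ from the proof of Lemma \ref{lema:d10}. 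Since ${}^{i}d^{1,0}_{1}$ is the connecting homomorphism associated with the short exact sequence of $\ym(n)$-modules
\[     0 \to \mathrm{gr}^{2} S^{i}(\ym(n)) \to F^{1}S^{i}(\ym(n))/F^{3}S^{i}(\ym(n)) \to \mathrm{gr}^{1}S^{i}(\ym(n)) \to 0,     \]
its value on this cocycle is obtained by lifting to $C^{1}(\YM(n), F^{1}/F^{3})$, applying the Koszul differential $d^{2}$ of \eqref{eq:complejocohomologiayangmills}, and projecting onto $C^{2}(\YM(n), \mathrm{gr}^{2} S^{i}(\ym(n)))$.

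I would use the PBW vector-space splitting $S^{i}(\ym(n)) = \bigoplus_{p+q=i} S^{p} V(n) \otimes S^{q} \tym(n)$ to lift the cocycle to $\sum_{l} (z \otimes_{s} [x_{j},x_{l}]) \otimes x_{l}$. Transporting the adjoint action from $\YM(n)^{\mathrm{ad}}$ via the symmetrization isomorphism, the key computation is that modulo $F^{3}$ one has
\[     x_{m} \cdot (z \otimes_{s} [x_{j},x_{l}]) \equiv z \otimes_{s} [x_{m},[x_{j},x_{l}]] + \sum_{k=1}^{n} \partial_{k}(z) \otimes_{s} [x_{m},x_{k}] \otimes_{s} [x_{j},x_{l}],     \]
the first summand lying in $\mathrm{gr}^{1}$ and the second in $\mathrm{gr}^{2}$. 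Inserting this into $d^{2}$ and summing over $l$, the $\mathrm{gr}^{1}$-contributions reassemble into $z$ times the image under $d^{2}$ of the cocycle $\sum_{l} [x_{j},x_{l}] \otimes x_{l}$ representing $x_{j} \in H^{1}(\ym(n),\tym(n))$, which vanishes by the computation \eqref{eq:b1v}. The surviving $\mathrm{gr}^{2}$-contributions, after simplification via the Jacobi identity and the Yang-Mills relations as at the end of the proof of Lemma \ref{lema:d10}, reduce (up to a universal scalar absorbable in the normalization of $\iota$) to $\sum_{m=1}^{n} \partial_{m}(z) \otimes \iota(x_{m} \wedge x_{j})$, matching $(\id_{S^{i-2}V(n)} \otimes \bar{\iota}) \circ d^{1}_{\mathrm{dR}}(z \otimes x_{j})$ at the level of cohomology classes.

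The principal obstacle is the verification of the displayed formula for the $\ym(n)$-action modulo $F^{3}$: since the PBW vector-space splitting is not stable under the adjoint action on $\YM(n)^{\mathrm{ad}}$, passing $x_{m}$ through a symmetrized product produces commutator corrections of higher PBW weight, whose projection onto $\mathrm{gr}^{2}$ must be identified with the claimed expression. This rests on the PBW-style identity $[x_{m},s(z)] \equiv \sum_{k} \partial_{k}(z) \cdot [x_{m},x_{k}] \pmod{F^{2}}$ in $\YM(n)$, with $s$ the symmetrization map. Once this identity is secured, the subsequent matching of the $\mathrm{gr}^{2}$-contribution with $\sum_{m} \partial_{m}(z) \otimes \iota(x_{m} \wedge x_{j})$ is a direct but tedious calculation parallel to the one carried out for $i=2$ in Lemma \ref{lema:d10}.
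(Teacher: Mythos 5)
Your proposal is correct and follows essentially the same route as the paper: one evaluates ${}^{i}d_{1}^{1,0}$ on the standard cocycle representatives $\sum_{l} z \otimes [x_{j},x_{l}] \otimes x_{l}$ and checks by direct computation that the result is $\sum_{r}\partial_{r}(z)\otimes\iota(x_{r}\wedge x_{j})$, exactly as in the $i=2$ case of Lemma \ref{lema:d10} (the paper carries out the commutator manipulation in full rather than deferring it). One remark: the ``principal obstacle'' you identify is not actually there, because the coefficient module is $S(\ym(n))$ with the action of $\ym(n)$ by derivations extending $\mathrm{ad}$ (the symmetrization map being $\ym(n)$-equivariant by the cited result of Dixmier), so your displayed formula for $x_{m}\cdot(z\,[x_{j},x_{l}])$ holds exactly -- each derivation term either hits a $V(n)$-factor of $z$, raising the $\tym(n)$-weight by one, or hits $[x_{j},x_{l}]$, preserving it -- and no PBW correction terms need to be controlled; also the undetermined ``universal scalar'' is pinned to $1$ by specializing to $i=2$.
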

\begin{proof}
The second statement is direct. 
In order to prove the first statement of the lemma, it suffices to restrict to the case that $\bar{c} \in H^{1}(\ym(n),S^{i-1}V(n)\otimes \tym(n))$ is represented by
\[     \sum_{l=1}^{n} z \otimes [x_{j},x_{l}] \otimes x_{l},     \]
where $z = x_{j_{1}} \dots x_{j_{i-1}} \in S^{i-1}V(n)$. 
Notice that we have used the description of a basis of cocycles of the cohomology group $H^{1}(\ym(n),\tym(n))$ given in the paragraph preceding Theorem \ref{teo:homi}. 
Thus, ${}^{i}d_{1}^{1,0}(\bar{c})$ is the cohomology class of the cocycle
\begin{align*}
   &\sum_{l,g=1}^{n} \sum_{h=1}^{i-1}
   \Big(2 x_{j_{1}} \dots [x_{j_{h}},x_{g}] \dots x_{j_{i-1}} \otimes [[x_{j},x_{l}],x_{g}] \otimes x_{l}
   \\
   &\phantom{\sum_{l,g=1}^{n} \sum_{h=1}^{i-1}}
   - 2 x_{j_{1}} \dots [[x_{j_{h}},x_{g}],x_{l}] \dots x_{j_{i-1}} \otimes [x_{j},x_{l}] \otimes x_{g}
   \\
   &\phantom{\sum_{l,g=1}^{n} \sum_{h=1}^{i-1}}
   -2 x_{j_{1}} \dots [x_{j_{h}},x_{l}] \dots x_{j_{i-1}} \otimes [[x_{j},x_{l}],x_{g}] \otimes x_{g}
   \\
   &\phantom{\sum_{l,g=1}^{n} \sum_{h=1}^{i-1}}
   + x_{j_{1}} \dots [x_{j_{h}},x_{l}] \dots x_{j_{i-1}} \otimes [[x_{j},x_{l}],x_{g}] \otimes x_{g}
   \\
   &\phantom{\sum_{l,g=1}^{n} \sum_{h=1}^{i-1}}
   + x_{j_{1}} \dots [[x_{j_{h}},x_{l}],x_{g}] \dots x_{j_{i-1}} \otimes [x_{j},x_{l}] \otimes x_{g}\Big),
\end{align*}
which can also be rewritten as
\begin{align*}
   &\sum_{l,g=1}^{n} \sum_{r=1}^{n} \partial_{r} (z)
   \Big(2 [x_{r},x_{g}] \otimes [[x_{j},x_{l}],x_{g}] \otimes x_{l}
   - 2 [[x_{r},x_{g}],x_{l}] \otimes [x_{j},x_{l}] \otimes x_{g}
   \\
   &\phantom{\sum_{l,g=1}^{n} \sum_{r=1}^{n} \partial_{r} (z)} 
   -2 [x_{r},x_{l}] \otimes [[x_{j},x_{l}],x_{g}] \otimes x_{g}
   + [x_{r},x_{l}] \otimes [[x_{j},x_{l}],x_{g}] \otimes x_{g}
   \\
   &\phantom{\sum_{l,g=1}^{n} \sum_{r=1}^{n} \partial_{r} (z)} 
   + [[x_{r},x_{l}],x_{g}] \otimes [x_{j},x_{l}] \otimes x_{g}\Big) 
   = \sum_{r=1}^{n} \partial_{r} (z) \otimes \iota(x_{r} \wedge x_{j}).
\end{align*}
The lemma is then proved. 
\end{proof}

As a direct consequence of the previous lemmas and the fact that $H^{1}_{\mathrm{dR}}(S(V(n))) = 0$ 
(see \cite{Wei1}, Cor. 9.9.3), we obtain the following proposition. 
\begin{proposition}
The following diagram 
\[
\xymatrix
{
{}^{i}E^{0,0}_{1}
\ar[r]^{{}^{i}d^{0,0}_{1}}
\ar[d]^{\simeq}
&
{}^{i}E^{1,0}_{1}
\ar[r]^{{}^{i}d^{1,0}_{1}}
\ar[d]^{\simeq}
&
{}^{i}E^{2,0}_{1}
\\
S^{i}V(n)
\ar[r]^(.36){d_{\mathrm{dR}}^{0}}
&
S^{i-1}V(n) \otimes V(n)
\ar[r]^(.48){d_{\mathrm{dR}}^{1}}
&
S^{i-2}V(n) \otimes \Lambda^{2} V(n)
\ar@{^{(}->}[u]^{\id_{S^{i-2}V(n)} \otimes \bar{\iota}}
}
\]
is commutative.
Since $H^{1}_{\mathrm{dR}}(S(V(n))) = 0$, this implies that ${}^{i}E^{1,0}_{2} = 0$, for all $i \in \NN_{0}$.
\end{proposition}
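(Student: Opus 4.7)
The proposition is essentially a bookkeeping statement: once the two squares are shown to commute and $\bar{\iota}$ is known to be injective, vanishing of ${}^{i}E_{2}^{1,0}$ follows from the acyclicity of the de Rham complex of $S(V(n))$ in positive degrees. So the plan breaks into three small pieces, of which only the first requires a direct verification.

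First, I would dispose of the left square. The identification ${}^{i}E^{0,0}_{1} \simeq S^{i}V(n)$ is tautological, and the identification ${}^{i}E^{1,0}_{1} \simeq S^{i-1}V(n) \otimes V(n)$ comes from Proposition~\ref{prop:homo} together with the explicit basis of $1$-cocycles representing $H^{1}(\ym(n),\tym(n)) \simeq V(n)$ described just before Theorem~\ref{teo:homi}, namely the cocycles $\sum_{l=1}^{n} [x_{j},x_{l}] \otimes x_{l}$. Under these identifications, a monomial $z = x_{j_{1}} \dots x_{j_{i}} \in S^{i}V(n)$ is sent by ${}^{i}d^{0,0}_{1}$ to the class of $\sum_{l,h} x_{j_{1}} \dots [x_{j_{h}},x_{l}] \dots x_{j_{i}} \otimes x_{l}$, which by a Leibniz-type reordering modulo coboundaries in the Chevalley--Eilenberg complex is cohomologous to $\sum_{r=1}^{n} \partial_{r}(z) \otimes [x_{r},x_{l}] \otimes x_{l}$ (for some~$l$); under the vertical isomorphism this is exactly $d^{0}_{\mathrm{dR}}(z)$. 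This is the same style of computation as in the preceding lemma and is mentioned in passing in the paragraph containing equation~\eqref{eq:derha}.

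Second, the commutativity of the right square is \emph{exactly} the content of the preceding lemma, so no further argument is needed there. The injectivity of $\bar{\iota}$ was also established in Lemma~\ref{lema:d10}, so the vertical map on the right is a monomorphism.

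Third, and this is the punchline, I extract the vanishing of ${}^{i}E_{2}^{1,0}$. By the commutativity of the right square and the injectivity of $\bar{\iota}$, an element $\bar{c}$ of ${}^{i}E^{1,0}_{1}$ lies in $\Ker({}^{i}d^{1,0}_{1})$ if and only if its image in $S^{i-2}V(n) \otimes V(n)^{\otimes 2}$ is killed after projection to $S^{i-2}V(n) \otimes \Lambda^{2}V(n)$, that is, if and only if it lies in $\Ker(d^{1}_{\mathrm{dR}})$. By the commutativity of the left square, $\mathrm{Im}({}^{i}d^{0,0}_{1})$ corresponds to $\mathrm{Im}(d^{0}_{\mathrm{dR}})$. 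Hence
\[
{}^{i}E_{2}^{1,0} \simeq \Ker(d^{1}_{\mathrm{dR}})/\mathrm{Im}(d^{0}_{\mathrm{dR}}) \subseteq H^{1}_{\mathrm{dR}}(S(V(n))) = 0,
\]
the last equality being the Poincar\'e lemma for the polynomial algebra (see \cite{Wei1}, Cor.~9.9.3).

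The only non-cosmetic obstacle is the first step, verifying that the left square commutes in terms of cocycle representatives; but this is a short Leibniz-rule calculation entirely parallel to the one performed in Lemma~\ref{lema:d10}, so no new technique is required.
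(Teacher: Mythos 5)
Your proposal is correct and follows essentially the same route as the paper: the right square is exactly the preceding lemma, the identification of ${}^{i}d_{1}^{0,0}$ with $d_{\mathrm{dR}}^{0}$ is the same observation the paper makes (it treats it as clear where you spell out the short cocycle computation), and the conclusion via injectivity of $\id_{S^{i-2}V(n)} \otimes \bar{\iota}$ and the algebraic Poincar\'e lemma is identical. No gaps.
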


By the proposition we conclude that ${}^{i}E^{1,0}_{2} = 0$ for $i \in \NN_{0}$. 
Hence, $H^{1}(\ym(n),S^{i}(\ym(n)))$ is isomorphic
(as graded a $\so(n)$-module) to a subquotient of ${}^{i}E_{2}^{0,1}$.
Since ${}^{i}E_{2}^{0,1} = \Ker({}^{i}d_{1}^{0,1})$, it will be convenient to make this map explicit. 

\begin{lemma}
\label{lema:d01}
The image of the differential
\[     {}^{1}d_{1}^{0,1} : {}^{1}E^{0,1}_{1} = H^{1}(\ym(n),V(n)) \rightarrow {}^{1}E^{1,1}_{1} = H^{2}(\ym(n), \tym(n))     \]
is naturally isomorphic to $S^{2}_{\mathrm{irr}} V(n) \simeq S^{2} V(n)/k.\bar{q}$, where $\bar{q} = \sum_{i=1}^{n} x_{i} \otimes x_{i}$.
In fact, the image of ${}^{1}d_{1}^{0,1}$ coincides with the image of the linear monomorphism 
\begin{equation}
\label{eq:iota'}
   \bar{\iota}' : S^{2}_{\mathrm{irr}} V(n) \rightarrow H^{2}(\ym(n), \tym(n))
\end{equation}
given by the composition of the map $\iota' : S^{2}_{\mathrm{irr}} V(n) \rightarrow Z^{2}(\YM(n), \tym(n))$ defined as 
\[     \iota'(\overline{x_{i} \otimes_{s} x_{j}}) = \sum_{p,l=1}^{n} (-2[[x_{i},x_{p}],x_{j}] \otimes x_{p} - 2 [[x_{j},x_{p}],x_{i}] \otimes x_{p})
\]
and the canonical projection $Z^{2}(\YM(n), \tym(n)) \rightarrow H^{2}(\ym(n), \tym(n))$. 
\end{lemma}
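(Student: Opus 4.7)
The plan is to proceed in close parallel to Lemma \ref{lema:d10}. First, I would identify the source: since $V(n) = \ym(n)/\tym(n)$ carries the trivial $\ym(n)$-action, the Koszul complex \eqref{eq:complejocohomologiayangmills} yields $H^{1}(\ym(n),V(n)) \simeq V(n) \otimes V(n)$ with cocycle representatives $\{x_{j} \otimes x_{i}\}_{1 \leq i,j \leq n}$ in $C^{1}(\YM(n),V(n))$; denote the corresponding homology classes by $\bar{x}_{j,i}$. As $\so(n)$-modules this decomposes as $k.\bar{q} \oplus S^{2}_{\mathrm{irr}} V(n) \oplus \Lambda^{2} V(n)$, with $S^{2}_{\mathrm{irr}} V(n) \simeq \Gamma_{2 L_{1}}$ irreducible for $n \geq 3$.

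Next, I would compute ${}^{1} d^{0,1}_{1}$ as the connecting homomorphism arising from the short exact sequence of $\ym(n)$-modules $0 \to \tym(n) \to \ym(n) \to V(n) \to 0$: lift $x_{j} \otimes x_{i}$ to $C^{1}(\ym(n), \ym(n))$ equipped with the adjoint action and apply $d^{2}$. Using the Yang-Mills relation $\sum_{l} [x_{l},[x_{l},x_{j}]] = 0$ together with the Jacobi identity, this should produce
\[
{}^{1} d^{0,1}_{1}(\bar{x}_{j,i}) = \sum_{l=1}^{n} \bigl([[x_{l},x_{i}],x_{j}] + [[x_{l},x_{j}],x_{i}]\bigr) \otimes x_{l}.
\]
The right-hand side is manifestly symmetric in $i \leftrightarrow j$, so the restriction of ${}^{1} d^{0,1}_{1}$ to $\Lambda^{2} V(n)$ vanishes; on the symmetrized class $\bar{x}^{s}_{j,i} := \bar{x}_{j,i} + \bar{x}_{i,j}$, which corresponds up to a normalizing scalar to $\overline{x_{i} \otimes_{s} x_{j}}$, the image coincides with the prescribed cocycle $\iota'(\overline{x_{i} \otimes_{s} x_{j}})$. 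Moreover, the image of $\bar{q}$ equals $\sum_{i,l} [[x_{l},x_{i}],x_{i}] \otimes x_{l} = -\sum_{i,l} [x_{i},[x_{i},x_{l}]] \otimes x_{l}$, which already vanishes at the cochain level by Yang-Mills. Hence ${}^{1} d^{0,1}_{1}$ factors through $S^{2}V(n)/k.\bar{q} = S^{2}_{\mathrm{irr}} V(n)$ and agrees there with $\bar{\iota}'$, yielding $\mathrm{Im}({}^{1}d^{0,1}_{1}) = \mathrm{Im}(\bar{\iota}')$.

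Finally, to establish injectivity of $\bar{\iota}'$, I would use that it is $\so(n)$-equivariant and that $S^{2}_{\mathrm{irr}} V(n) \simeq \Gamma_{2 L_{1}}$ is irreducible for $n \geq 3$, so injectivity reduces to nonvanishing on a single element. The cocycles $\iota'(\overline{x_{i} \otimes_{s} x_{j}})$ lie in the homogeneous component of internal degree $4$ of $C^{2}(\YM(n),\tym(n))$, whose coboundary space is the image under $d^{2}$ of $\tym(n)_{2} \otimes V(n) = \Lambda^{2} V(n) \otimes V(n)$. An $\so(n)$-isotypic analysis analogous to the one employed in Lemma \ref{lema:d10} (using Corollary \ref{coro:compisownwn}) should rule out the presence of the $\Gamma_{2 L_{1}}$ isotypic component among these coboundaries, giving the desired injectivity. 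The hard part is precisely this last step: although the overall shape of the argument duplicates that of Lemma \ref{lema:d10}, the detailed enumeration of which $\so(n)$-types occur in the image of the relevant $d^{2}$ must be carried out carefully to exclude a nontrivial intersection between $\mathrm{Im}(\bar{\iota}')$ and the space of coboundaries.
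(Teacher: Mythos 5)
Your proposal is correct and follows essentially the same route as the paper: identify $H^{1}(\ym(n),V(n)) \simeq V(n)^{\otimes 2}$ with explicit Koszul cocycles, compute ${}^{1}d_{1}^{0,1}$ as the connecting map of $0 \to \tym(n) \to \ym(n) \to V(n) \to 0$ by lifting and applying $d^{2}$, check that the antisymmetric part and $\bar{q}$ die (Jacobi and Yang--Mills) while the symmetric part yields $\iota'$, and deduce injectivity of $\bar{\iota}'$ from $\so(n)$-equivariance, the irreducibility of $S^{2}_{\mathrm{irr}}V(n) \simeq \Gamma_{2L_{1}}$, and an isotypic comparison with the coboundary space. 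One remark on the step you single out as hard: it is easier than you anticipate, because $d^{2}$ raises the $\tym(n)$-degree by two and $\tym(n)_{1}=0$, so your candidate coboundary space $d^{2}(\tym(n)_{2}\otimes V(n))$ actually lands in $\tym(n)_{4}\otimes V(n)$ and meets the component $\tym(n)_{3}\otimes V(n)$ containing $\mathrm{Im}(\iota')$ trivially --- in that internal degree there are no coboundaries to exclude, so no enumeration of $\so(n)$-types is needed.
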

\begin{proof}
By Proposition \ref{prop:homo}, $H^{1}(\ym(n),V(n)) \simeq V(n)^{\otimes 2}$. 
Analogously to Lemma \ref{lema:d10}, we find that, if $\bar{x}_{i,j}$ is the cohomology class of the cocycle 
$x_{i} \otimes x_{j} \in V(n) \otimes C^{1}(\YM(n), k)$, then ${}^{1}d_{1}^{0,1}(\bar{x}_{i,j})$ is the cohomology class of the cocycle 
\begin{equation}
\label{eq:cociclos2}
      \sum_{p=1}^{n} (-2[[x_{i},x_{p}],x_{j}] \otimes x_{p} + [[x_{i},x_{j}],x_{p}] \otimes x_{p}).
\end{equation}
The Jacobi identity tells us that ${}^{1}d_{1}^{0,1}(\bar{x}_{i,j}^{a})$ vanishes if $\bar{x}_{i,j}^{a}$ is 
the cohomology class of the cocycle $x_{i} \otimes x_{j} - x_{j} \otimes x_{i} \in V(n) \otimes C^{1}(\YM(n), k)$, for all $i, j = 1, \dots, n$.
Moreover, by the Yang-Mills relations \eqref{eq:relym}, ${}^{1}d_{1}^{0,1}(\bar{q}) = 0$, with $\bar{q} = \sum_{i=1}^{n} \bar{x}_{i,i}$. 

On the other hand, if $\bar{x}_{i,j}^{s}$ is the cohomology class of $x_{i} \otimes x_{j} + x_{j} \otimes x_{i}$ (for $1 \leq i \leq j \leq n$), 
${}^{1}d_{1}^{0,1}(\bar{x}_{i,j}^{s})$ is given by the cocycle
\begin{equation}
\label{eq:cocicloef}
     \sum_{p=1}^{n} (-2[[x_{i},x_{p}],x_{j}] \otimes x_{p} - 2 [[x_{j},x_{p}],x_{i}] \otimes x_{p}).
\end{equation}
Therefore, ${}^{1}d_{1}^{0,1}(\bar{x}_{i,j}^{s})$ is the class of a cocycle of degree $6$.
However, there are also coboundaries in this degree, which are of the form 
\begin{equation}
\label{eq:cobordes}
     \sum_{1 \leq a < b \leq n} \sum_{p=1}^{n} c_{a,b} [[x_{a},x_{b}],x_{p}] \otimes x_{p}.
\end{equation}
Hence, we see that the cocycle \eqref{eq:cocicloef} is equivalent to
\begin{equation}
\label{eq:cociclofinal}
     -4 \sum_{p=1}^{n} [[x_{i},x_{p}],x_{j}] \otimes x_{p}.
\end{equation}

It is easily checked that $\iota'$ is injective, which follows from the fact that the map $\iota'$ is a non trivial $\so(n)$-equivariant and that $S^{2}_{\mathrm{irr}}V(n)$ is an irreducible $\so(n)$-module. 
 
Considering the complex $C^{\bullet}(\YM(n),\tym(n))$, we see that the subspace 
$B^{2}(\YM(n), \tym(n))_{6}$ spanned by the coboundaries of degree $6$ 
is an epimorphic image of $(\tym(n))_{4} \simeq \Lambda^{2}V(n)$ under the $\so(n)$-equivariant map 
$d_{3}$. 
It is not hard to prove that the intersection between $B^{2}(\YM(n), \tym(n))_{6}$ and the image of $\iota'$ 
is trivial, since $\mathrm{Im}(\iota') \simeq S^{2}_{\mathrm{irr}}V(n)$ is not an 
isotypic component of $\Lambda^{2}V(n)$. 
This implies that $\bar{\iota}'$ is monomorphic. 
\end{proof}

\begin{lemma}
\label{lema:p'}
If we denote by $p' : V(n)^{\otimes 2} \rightarrow S^{2}_{\mathrm{irr}} V(n)$ the canonical projection, 
the following diagram is commutative
\[
\xymatrix
{
{}^{i}E^{0,1}_{1}
\ar@{=}[d]
\ar[rr]^{{}^{i}d^{0,1}_{1}}
&
&
{}^{i}E^{1,1}_{1}
\ar@{=}[d]
\\
H^{1}(\ym(n),S^{i}V(n))
\ar[d]^{\simeq}
&
&
H^{2}(\ym(n),S^{i-1}V(n) \otimes \tym(n))
\\
S^{i}V(n) \otimes V(n)
\ar[r]^(.46){d_{\mathrm{dR}}^{0} \otimes \id_{V(n)}}
&
S^{i-1}V(n) \otimes V(n)^{\otimes 2}
\ar[r]^{\id_{S^{i-1}V(n)} \otimes p'}
&
S^{i-1}V(n) \otimes S^{2}_{\mathrm{irr}}V(n)
\ar@{^{(}->}[u]^{\id_{S^{i-1}V(n)} \otimes \bar{\iota}'}
}
\]
\end{lemma}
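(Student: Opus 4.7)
The plan is to mimic the proof of the preceding (unnumbered) commutativity lemma, substituting everywhere the bidegree $(1,0)$ by $(0,1)$. By $k$-linearity and the description of $H^{1}(\ym(n),S^{i}V(n))\simeq S^{i}V(n)\otimes V(n)$ provided by Proposition \ref{prop:homo} (recalling that the $\ym(n)$-action on $S^{i}V(n)$ is trivial), it suffices to verify commutativity on a class $\bar{c}\in H^{1}(\ym(n),S^{i}V(n))$ represented by a cocycle $c=z\otimes x_{j}$ with $z=x_{j_{1}}\cdots x_{j_{i}}\in S^{i}V(n)$; since $\ym(n)$ acts trivially on $S^{i}V(n)$, every such $c$ is automatically in the kernel of $d^{2}$.

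Because ${}^{i}d^{0,1}_{1}$ is the connecting morphism of the short exact sequence of equivariant $\YM(n)$-modules
\[
0\longrightarrow S^{i-1}V(n)\otimes\tym(n)\longrightarrow \frac{F^{0}S^{i}(\ym(n))}{F^{2}S^{i}(\ym(n))}\longrightarrow S^{i}V(n)\longrightarrow 0,
\]
the natural next step is to pick a lift $\tilde{z}$ of $z$ in $F^{0}S^{i}/F^{2}S^{i}$ (using the symmetrization isomorphism $S(\ym(n))\simeq\YM(n)^{\ad}$), form the cochain $\tilde{z}\otimes x_{j}\in (F^{0}S^{i}/F^{2}S^{i})\otimes V(n)$, and compute its coboundary $d^{2}(\tilde{z}\otimes x_{j})=\sum_{l}(x_{l}^{2}\tilde{z}\otimes x_{j}+x_{l}x_{j}\tilde{z}\otimes x_{l}-2x_{j}x_{l}\tilde{z}\otimes x_{l})$ from the complex \eqref{eq:complejocohomologiayangmills}. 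Since $c$ was a cocycle in $S^{i}V(n)\otimes V(n)$, the result lands in $(F^{1}S^{i}/F^{2}S^{i})\otimes V(n)\simeq S^{i-1}V(n)\otimes\tym(n)\otimes V(n)$ and represents the class ${}^{i}d^{0,1}_{1}(\bar{c})$ in $H^{2}(\ym(n),S^{i-1}V(n)\otimes\tym(n))$.

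The key step is then the simplification of this cocycle. Expanding the adjoint action of each $x_{l}$ on the symmetrization $\tilde{z}$ modulo $F^{2}$ by means of the Leibniz-type formula $x_{l}\cdot\tilde{z}\equiv\sum_{h}x_{j_{1}}\cdots[x_{l},x_{j_{h}}]\cdots x_{j_{i}}\pmod{F^{2}}$, and collecting the contributions coming from the three summands of $d^{2}$, the resulting cocycle takes the form $\sum_{r=1}^{n}\partial_{r}(z)\otimes\gamma_{r,j}$, where $\gamma_{r,j}\in C^{2}(\ym(n),\tym(n))$ is, modulo coboundaries of type \eqref{eq:cobordes}, precisely the cocycle $\iota'(\overline{x_{r}\otimes_{s}x_{j}})$ appearing in the proof of Lemma \ref{lema:d01}: the antisymmetric component in $(r,j)$ is killed by the Jacobi identity, and the $q$-proportional component is killed by the Yang-Mills relations, exactly as in that lemma. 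Identifying $\sum_{r}\partial_{r}(z)\otimes x_{r}\otimes x_{j}$ with $(d^{0}_{\mathrm{dR}}\otimes\id_{V(n)})(z\otimes x_{j})$ and the projection onto $\overline{x_{r}\otimes_{s}x_{j}}$ with $\id_{S^{i-1}V(n)}\otimes p'$ then yields the commutativity of the diagram.

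The main obstacle will be the careful bookkeeping for the noncommutative lift $\tilde{z}=\sigma(z)$: one has to verify that all cubic-and-higher iterated commutator corrections arising in $x_{l}x_{j}\tilde{z}$ and $x_{l}^{2}\tilde{z}$ actually lie in $F^{2}S^{i}$ and therefore vanish modulo the quotient, so that only the quadratic terms that reproduce the cocycles of Lemma \ref{lema:d01} survive. This is the same kind of delicate but essentially mechanical computation as in the companion $(1,0)$-lemma, with the extra burden of tracking the $S^{i-1}V(n)$-coefficient through the de Rham differential $d^{0}_{\mathrm{dR}}$.
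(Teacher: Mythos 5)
Your proposal is correct and follows essentially the same route as the paper: reduce to a monomial representative $z\otimes x_{j}$, compute ${}^{i}d_{1}^{0,1}$ by expanding the action modulo $F^{2}$ so that only single double-commutator terms survive, and recognize the resulting cocycle as $\sum_{r}\partial_{r}(z)\otimes\iota'(\overline{x_{r}\otimes_{s}x_{j}})$ via Lemma \ref{lema:d01}. The only difference is that you spell out the lift-and-project mechanics of the filtration differential explicitly, which the paper leaves implicit.
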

\begin{proof}
It is enough to prove the lemma when $\bar{c} \in H^{1}(\ym(n),S^{i}V(n))$ is represented by 
$z \otimes x_{j} \in S^{i}V(n) \otimes C^{1}(\YM(n),k)$,
where $z = x_{j_{1}} \dots x_{j_{i}} \in S^{i}V(n)$.
If this is the case, ${}^{i}d_{1}^{0,1}(\bar{c})$ is the cohomology class of the cocycle
\begin{align*}
   &\sum_{l=1}^{n} \sum_{h=1}^{i} (-2 x_{j_{1}} \dots [[x_{j_{h}},x_{l}],x_{j}] \dots x_{j_{i}} \otimes x_{l}
   + x_{j_{1}} \dots [[x_{j_{h}},x_{j}],x_{l}] \dots x_{j_{i}} \otimes x_{l})
   \\
   &= \sum_{l=1}^{n} \sum_{r=1}^{n} \partial_{r}(z) \otimes (-2[[x_{r},x_{l}],x_{j}] + [[x_{r},x_{j}],x_{l}]) \otimes x_{l}.
\end{align*}
\end{proof}

The previous lemma implies the following result.
\begin{proposition}
\label{prop:e2}
The space ${}^{i}E_{2}^{0,1}$ vanishes for $i \geq 3$.
Furthermore, 
\begin{itemize}
\item[(1)] ${}^{0}E_{2}^{0,1}$ is the vector space with basis given by the cohomology class of the cocycles $\{ x_{i} : i = 1, \dots, n \}$,
where $x_{i} \in V(n) = C^{1}(\YM(n),k)$,

\item[(2)] ${}^{1}E_{2}^{0,1}$ is the vector space with basis given by the cohomology class of the cocycles
\[     \Big\{ x_{i} \otimes x_{j} - x_{j} \otimes x_{i} : 1 \leq i < j \leq n \Big\} \cup \Big\{\sum_{i=1}^{n} x_{i} \otimes x_{i} \Big\} 
       \subseteq C^{1}(\YM(n),V(n)),     \]

\item[(3)] ${}^{2}E_{2}^{0,1}$ is the vector space with basis given by the cohomology class of the cocycles
\[     \Big\{ \sum_{i=1}^{n} x_{j}x_{i} \otimes x_{i} - \frac{1}{2} x_{i}^{2} \otimes x_{j} : i = 1, \dots, n \Big\} 
       \subseteq C^{1}(\YM(n),S^{2}V(n)).     \]
\end{itemize}
\end{proposition}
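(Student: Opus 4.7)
The plan is to identify ${}^{i}E_{2}^{0,1}$ explicitly with the $i$-th homogeneous piece of the polynomial conformal algebra of $\RR^{n}$. Since ${}^{i}E_{1}^{-1,1}=0$, we have ${}^{i}E_{2}^{0,1}=\Ker({}^{i}d_{1}^{0,1})$, and by Lemma~\ref{lema:p'} together with the injectivity of $\bar{\iota}'$ shown in Lemma~\ref{lema:d01}, this kernel coincides with the kernel of
\[
   \phi_{i}\colon S^{i}V(n) \otimes V(n) \longrightarrow S^{i-1}V(n) \otimes S^{2}_{\mathrm{irr}}V(n),
   \qquad z \otimes x_{j} \mapsto \sum_{r=1}^{n} \partial_{r}(z) \otimes p'(x_{r} \otimes x_{j}).
\]
Writing $\omega = \sum_{j} z_{j} \otimes x_{j}$ and noting that the $kq$-component of $x_{r}\otimes_{s} x_{j}$ in the decomposition $S^{2}V(n) = S^{2}_{\mathrm{irr}}V(n) \oplus k\,q$ is $\delta_{r,j}\,q/n$, the condition $\phi_{i}(\omega)=0$ is equivalent to the system
\[
  \partial_{r} z_{j} + \partial_{j} z_{r} = \tfrac{2}{n}\,\delta_{r,j}\, D, \qquad 1 \le r, j \le n,
\]
where $D := \sum_{s}\partial_{s} z_{s}$. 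This is the classical conformal Killing equation for the polynomial vector field $\omega = \sum_{j} z_{j}\,\partial_{x_{j}}$ on $\RR^{n}$.

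For $i = 0, 1, 2$ I verify the stated bases directly. The case $i = 0$ is immediate since the system is vacuous. For $i = 1$, writing $z_{j} = \sum_{i} a_{ij}\,x_{i}$ reduces the system to $a_{ij}+a_{ji} = \tfrac{2}{n}\delta_{i,j}\,\tr(a)$, whose solutions split as the antisymmetric matrices, giving $\{x_{i}\otimes x_{j} - x_{j}\otimes x_{i}\}_{i<j}$, together with the one-dimensional subspace of scalar matrices spanned by $\sum_{i} x_{i}\otimes x_{i}$. For $i = 2$, direct substitution confirms that each $\omega_{j} := \sum_{i} x_{j} x_{i} \otimes x_{i} - \tfrac{1}{2}\, q \otimes x_{j}$ lies in $\Ker(\phi_{2})$, and the coefficient of $x_{j}^{2}\otimes x_{j}$ in $\omega_{k}$ being $\tfrac{1}{2}\delta_{j,k}$ guarantees linear independence of the $n$ elements.

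The crucial step is to prove $\Ker(\phi_{i}) = 0$ for $i \ge 3$, which is the classical vanishing of polynomial conformal Killing fields of degree $\ge 3$ on $\RR^{n}$ when $n \ge 3$. I would derive it self-containedly: applying $\partial_{k}$ to the defining system and cyclically permuting indices yields the closed formula
\[
   \partial_{k}\partial_{r} z_{j} = \tfrac{1}{n}\bigl(\delta_{r,j}\partial_{k}D + \delta_{k,j}\partial_{r}D - \delta_{k,r}\partial_{j}D\bigr).
\]
A further derivative $\partial_{l}$, combined with the symmetry $\partial_{l}\partial_{k}\partial_{r}z_{j} = \partial_{k}\partial_{l}\partial_{r}z_{j}$, gives linear relations on the second derivatives of $D$; specializing $l = j$ with $j$ chosen distinct from both $k$ and $r$, which is possible precisely when $n \ge 3$, forces $\partial_{k}\partial_{r}D = 0$ for all $k, r$. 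Hence $D$ is at most linear, the second derivatives of each $z_{j}$ are constants, and $\deg z_{j} \le 2$; homogeneity of degree $i \ge 3$ then forces $\omega = 0$.

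To conclude, I observe that $V(n)$ acts trivially on $S^{i}V(n)$ (this is the $p=0$ associated-graded piece of the filtration of $S(\ym(n))^{\ad}$), so the Koszul differential $d^{1}\colon S^{i}V(n) \to S^{i}V(n)\otimes V(n)$ vanishes identically and every element of $S^{i}V(n)\otimes V(n)$ represents a distinct cohomology class; thus the cocycles listed in (1)--(3) are linearly independent in $H^{1}(\ym(n),S^{i}V(n))$. The main obstacle is the PDE argument for $i \ge 3$: the index manipulation must be arranged so that the hypothesis $n \ge 3$ is invoked at precisely the step where a third or fourth index distinct from $j, k, r$ is needed, since for $n = 2$ the analogous claim is false (the polynomial conformal Killing algebra is infinite dimensional in that case).
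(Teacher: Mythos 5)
Your reduction of ${}^{i}E_{2}^{0,1}=\Ker({}^{i}d_{1}^{0,1})$ to the kernel of $\phi_{i}$ via Lemma \ref{lema:p'} and the injectivity of $\bar{\iota}'$ is exactly the route the paper takes, and your reformulation of the resulting system as the conformal Killing equation $\partial_{r}z_{j}+\partial_{j}z_{r}=\tfrac{2}{n}\delta_{r,j}D$ is equivalent to conditions (i)--(ii) in the paper's proof (the diagonal entries being all equal is the same as each being $\tfrac{1}{n}D$). The cases $i=0,1$ are handled as in the paper (item (2) is really just Lemma \ref{lema:d01}). There is, however, one genuine gap: for $i=2$ you only check that the $n$ listed cocycles lie in $\Ker(\phi_{2})$ and are linearly independent; you never show they \emph{span} the kernel, which is what the claim that they form a basis of ${}^{2}E_{2}^{0,1}$ requires. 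The paper's proof spends most of its effort on precisely this point, solving the system for a general homogeneous quadratic $z_{j}=\sum a^{j}_{l,m}x_{l}x_{m}$. Your own closed formula $\partial_{k}\partial_{r}z_{j}=\tfrac{1}{n}(\delta_{r,j}\partial_{k}D+\delta_{k,j}\partial_{r}D-\delta_{k,r}\partial_{j}D)$ fills the gap at no cost -- for homogeneous $z_{j}$ of degree $2$ it shows the full collection of second derivatives, hence $\omega$ itself, is determined by the $n$ constants $\partial_{1}D,\dots,\partial_{n}D$, so $\dim\Ker(\phi_{2})\leq n$ -- but you must say this; as written the $i=2$ case is incomplete.

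For the vanishing when $i\geq 3$ your argument is a genuinely different, and cleaner, alternative to the paper's Lemma \ref{lema:tec}. The paper proves $\partial_{j_{2}}\partial_{j_{3}}p_{j_{1}}=0$ for distinct indices, deduces a normal form $p_{j}=a_{j}x_{j}^{i}+\sum_{h\neq j}\sum_{d}a^{j}_{h,d}x_{j}^{i-d}x_{h}^{d}$, and then eliminates the remaining coefficients case by case (with a separate treatment of $i=3$ versus $i>3$). Your Liouville-type argument -- differentiate the closed formula once more, use symmetry of mixed partials, and specialize $l=j$ with $j\notin\{k,r\}$ to force $\partial_{k}\partial_{r}D=0$ -- handles all $i\geq 3$ uniformly and makes transparent where $n\geq 3$ enters (one needs a third index, and for the conclusion $\partial_{k}^{2}D=0$ a cyclic permutation over three distinct indices), as well as why the statement fails for $n=2$. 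The final observation that $d^{1}$ (and in fact also $d^{2}$) vanishes on the trivial module $S^{i}V(n)$, so that cocycles inject into cohomology, is correct and matches the identification ${}^{i}E_{1}^{0,1}\simeq S^{i}V(n)\otimes V(n)$ used implicitly in the paper.
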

\begin{proof}
First, it is direct to prove that the collection of elements of $C^{1}(\YM(n),k)$ given in item (1) 
is indeed a basis of cocycles, since $H^{1}(\YM(n),k) \simeq V(n)$.
On the other hand, Lemma \ref{lema:d01} says that, $\Ker({}^{1}d_{1}^{0,1})$ is generated by the cocycles given in item (2). 

Let $i \geq 2$. 
We consider
\[     z = \sum_{j=1}^{n} z_{j} \otimes x_{j} \in S^{i} V(n) \otimes V(n)     \]
a representative of a cohomology class $\bar{z}$ in ${}^{i}E_{1}^{0,1}$.
We notice that 
\[     (\id_{S^{i-1}V(n)} \otimes p') \circ (d_{\mathrm{dR}}^{0} \otimes \id_{V(n)}) (\sum_{j=1}^{n} z_{j} \otimes x_{j})
       = \sum_{j,h=1}^{n} \partial_{h}(z_{j}) \otimes \overline{x_{h} \otimes_{s} x_{j}} \in S^{i-1} V(n) \otimes S^{2}_{\mathrm{irr}} V(n).
\]
Therefore, by Lemma \ref{lema:p'}, $\bar{z} \in \Ker({}^{i}d_{1}^{0,1})$ if and only if the following conditions are satisfied:
\begin{itemize}
\label{item:condicion1}
\item[(i)] $\partial_{h} z_{j} = - \partial_{j} z_{h}$, for all $h, j = 1, \dots, n$ such that $h \neq j$,
\label{item:condicion2}
\item[(ii)] $\partial_{h} z_{h} = \partial_{j} z_{j}$, for all $h, j = 1, \dots, n$.
\end{itemize}

We shall first analyze the case $i = 2$.
In order to do so, we shall assume that 
\[     z_{j} = \sum_{m,l=1}^{n} a_{l,m}^{j} x_{l} x_{m} \in S^{2} V(n),     \]
where $a^{j}_{l,m} = a^{j}_{m,l} \in k$, for all $l,m = 1, \dots, n$.
The previous conditions are respectively equivalent to 
\begin{itemize}
\item[(a)] $a^{j}_{l,m} = - a^{l}_{j,m}$, for all $j, l, m = 1, \dots, n$ such that $j \neq l$,

\item[(b)] $a^{j}_{j,m} = a^{l}_{l,m}$, for all $j, l, m = 1, \dots, n$.
\end{itemize}
The first condition implies that, if $j,l,m$ are all different, then
\[     - a^{m}_{l,j} = a^{j}_{l,m} = - a^{l}_{j,m} = a^{m}_{j,l},      \]
so, it must be $a^{j}_{l,m} = 0$.
Also, both conditions yield that, given $j \neq l$,
\[     - a^{l}_{j,j} = a^{j}_{l,j} = a^{l}_{l,l}.      \]
We shall denote $\alpha_{l} = a^{l}_{l,l}$. 

Applying these considerations we may simplify the expression of $z$ as follows 
\begin{align*}
     z  = \sum_{j,l,m=1}^{n} a_{l,m}^{j} x_{l} x_{m} \otimes x_{j}
       &= \sum_{j=1}^{n} \Big(2 \underset{\text{\begin{tiny}
                                       $\begin{matrix}
                                       1 \leq m \leq n
                                       \\
                                       m \neq j
                                       \end{matrix}$
                                       \end{tiny}}}
                                       {\sum} a_{j,m}^{j} x_{j} x_{m} \otimes x_{j}
                         + \underset{\text{\begin{tiny}
                                       $\begin{matrix}
                                       1 \leq m \leq n
                                       \\
                                       m \neq j
                                       \end{matrix}$
                                       \end{tiny}}}
                                       {\sum} a_{m,m}^{j} x_{m}^{2} \otimes x_{j}
                         + a^{j}_{j,j} x_{j}^{2} \otimes x_{j}\Big)
        \\
        &= \sum_{j=1}^{n} \Big(\underset{\text{\begin{tiny}$
                                     \begin{matrix}
                                       1 \leq m \leq n
                                       \\
                                       m \neq j
                                       \end{matrix}$
                                       \end{tiny}}}
                                       {\sum} 2 \alpha_{m} x_{j} x_{m} \otimes x_{j}
                         - \underset{\text{\begin{tiny}$
                                       \begin{matrix}
                                       1 \leq m \leq n
                                       \\
                                       m \neq j
                                       \end{matrix}$
                                       \end{tiny}}
                                       }{\sum} \alpha_{j} x_{m}^{2} \otimes x_{j}
                         + \alpha_{j} x_{j}^{2} \otimes x_{j}\Big)
        \\
        &= \sum_{m=1}^{n} 2 \alpha_{m} \Big(\sum_{j=1}^{n} x_{j} x_{m} \otimes x_{j} - \frac{1}{2} x_{j}^{2} \otimes x_{m}\Big),
\end{align*}
where we have omitted the terms with $a^{j}_{l,m}$ ($j,l,m$ all different) in the last member of the first line, since they vanish.
In consequence, we have proved that ${}^{2}E_{1}^{0,1}$ is spanned by the basis given in item (3). 

We shall now show that ${}^{i}E_{1}^{0,1} = 0$ for $i \geq 3$.
This is a direct consequence of the following auxiliary lemma.
\begin{lemma}
\label{lema:tec}
Let $n \geq 3$ and let $p_{1}, \dots, p_{n}$ be homogeneous polynomials of degree $i \geq 3$ in $k[x_{1},\dots ,x_{n}]$ 
which satisfy that 
\begin{itemize}
\item[(I)] $\partial_{h} p_{j} = - \partial_{j} p_{h}$, for all $h, j = 1, \dots, n$ such that $h \neq j$,

\item[(II)] $\partial_{h} p_{h} = \partial_{j} p_{j}$, for all $h, j = 1, \dots, n$.
\end{itemize}
Then, $p_{1} = \dots = p_{n} = 0$.
\end{lemma}
\begin{proof}
We choose different elements $j_{1}, j_{2}, j_{3} \in \{ 1, \dots, n \}$.
Applying condition (I), it turns out that
\[     \partial_{j_{2}} \partial_{j_{3}} p_{j_{1}}  = \partial_{j_{3}} \partial_{j_{2}} p_{j_{1}}
                                                    = - \partial_{j_{3}} \partial_{j_{1}} p_{j_{2}}
                                                    = - \partial_{j_{1}} \partial_{j_{3}} p_{j_{2}}
                                                    = \partial_{j_{1}} \partial_{j_{2}} p_{j_{3}}
                                                    = \partial_{j_{2}} \partial_{j_{1}} p_{j_{3}}
                                                    = - \partial_{j_{2}} \partial_{j_{3}} p_{j_{1}}.
\]
Therefore, $\partial_{j_{2}} \partial_{j_{3}} p_{j_{1}} = 0$, if $j_{1}, j_{2}, j _{3}$ are all different. 
This in turn implies that 
\begin{equation}
\label{eq:forma}
     p_{j} = a_{j} x_{j}^{i}
                  + \underset{\text{\begin{tiny}$\begin{matrix} h = 1, \dots, n \\ h \neq j \end{matrix}$\end{tiny}}}{\sum} \sum_{d=1}^{i}
                  a_{h,d}^{j} x_{j}^{i-d} x_{h}^{d}
\end{equation}
for $j \in \{ 1, \dots, n \}$.
Also, if $j_{1}, j_{2} \in \{ 1, \dots, n \}$ are two different elements, conditions (I) and (II) tell us that 
\[     \partial_{j_{1}}^{2} p_{j_{1}}  = \partial_{j_{1}} \partial_{j_{1}} p_{j_{1}}
                                       = \partial_{j_{1}} \partial_{j_{2}} p_{j_{2}}
                                       = \partial_{j_{2}} \partial_{j_{1}} p_{j_{2}}
                                       = - \partial_{j_{2}} \partial_{j_{2}} p_{j_{1}}.
\]
Hence, $\partial_{j_{2}}^{2}p_{j_{1}} = \partial_{j_{3}}^{2}p_{j_{1}}$, for all $j_{2},j_{3} \neq j_{1}$. 
Using this identity in equation \eqref{eq:forma}, we conclude that 
\[     p_{j} = a_{j} x_{j}^{i}
                  + \underset{\text{\begin{tiny}$\begin{matrix} h = 1, \dots, n \\ h \neq j \end{matrix}$\end{tiny}}}{\sum} \sum_{d=1}^{2}
                  a_{h,d}^{j} x_{j}^{i-d} x_{h}^{d}     \]
for all $j \in \{ 1, \dots, n \}$.
If $j \neq j'$, 
\[     \partial_{j'} p_{j} =  a_{j',1}^{j} x_{j}^{i-1} + 2 a_{j',2}^{j} x_{j}^{i-2} x_{j'}.     \]
In particular, condition (I) says that $a_{j',1}^{j} = 0$ for all $j, j' = 1, \dots, n$ such that $j \neq j'$, so 
\begin{equation}
\label{eq:forma2}
     p_{j} = a_{j} x_{j}^{i}
                  + \underset{\text{\begin{tiny}$\begin{matrix} h = 1, \dots, n \\ h \neq j \end{matrix}$\end{tiny}}}{\sum} 
                  a_{h,2}^{j} x_{j}^{i-2} x_{h}^{2}.
\end{equation}
 
We need to consider two cases: $i > 3$ and $i=3$. 
If $i > 3$, condition (I) also tells us that $a_{j',2}^{j} = 0$, for all $j, j' = 1, \dots, n$ such that $j \neq j'$. 
In this case, $p_{j} = a_{j} x_{j}^{i}$, for all $j \in \{ 1, \dots, n \}$, and condition (II) implies that $a_{j} = 0$, for all 
$j \in \{ 1, \dots, n \}$, so $p_{j}$ vanishes for all $j \in \{ 1, \dots, n \}$. 

If $i=3$, we recall that 
\[     \partial_{j} p_{j} = 3 a_{j} x_{j}^{2}
                  + \underset{\text{\begin{tiny}$\begin{matrix} h = 1, \dots, n \\ h \neq j \end{matrix}$\end{tiny}}}{\sum}
                  a_{h,2}^{j} x_{h}^{2}.     \]
Identity $\partial_{j}p_{j} = \partial_{j'}p_{j'}$ implies that $a_{j} = 0$, for all $j \in \{ 1, \dots, n \}$, and $a_{h,2}^{j} = 0$, for 
$h \neq j,j'$. 
Since $n \geq 3$, $p_{j}$ vanishes for all $j \in \{ 1, \dots, n \}$. 
The lemma is thus proved. 
\end{proof}

The proof of the proposition is then complete. 
\end{proof}


\begin{proposition}
\label{prop:e3}
The kernel of ${}^{2}d_{2}^{0,1}$ vanishes.
In consequence, ${}^{2}E^{0,1}_{3} = 0$.
\end{proposition}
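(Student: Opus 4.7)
The plan is the following. By Proposition \ref{prop:e2}, the space ${}^{2}E_{2}^{0,1}$ has dimension $n$, with basis given by the cohomology classes of the cocycles $z_{j} = \sum_{i=1}^{n} (x_{j}x_{i}\otimes x_{i} - \frac{1}{2}x_{i}^{2}\otimes x_{j})$, $j = 1, \dots, n$, and it is isomorphic as $\so(n)$-module to $V(n)$. Since the differential ${}^{2}d_{2}^{0,1}$ is $\so(n)$-equivariant and $V(n)$ is irreducible, it is enough to show that ${}^{2}d_{2}^{0,1}[z_{j}]$ is non-zero for a single $j$; I will take $j = 1$.

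To compute ${}^{2}d_{2}^{0,1}[z_{1}]$, I first lift $z_{1}$ to a cochain $\tilde{z}_{1} \in C^{1}(\YM(n), S^{2}(\ym(n)))$ using the PBW-symmetrization splitting
\[     S^{2}(\ym(n)) \simeq S^{2}V(n) \oplus (V(n)\otimes\tym(n)) \oplus S^{2}(\tym(n)),     \]
via the same formula as $z_{1}$, now interpreting the products as symmetric products. Applying the Koszul differential $d^{2}$ and invoking the Yang-Mills relations $\sum_{l}[x_{l},[x_{l},x_{i}]] = 0$ to cancel several terms coming from the adjoint action of $\ym(n)$ on symmetric products, an explicit computation of the projection onto the $S^{2}(\tym(n))$-summand will yield
\[     \bar{w}_{1} = 4\sum_{i,l=1}^{n} [x_{l},x_{1}]\cdot_{s}[x_{l},x_{i}]\otimes x_{i}
       - \sum_{i,l=1}^{n} [x_{l},x_{i}]\cdot_{s}[x_{l},x_{i}]\otimes x_{1},     \]
where $\cdot_{s}$ is the symmetric product. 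This element lives in the minimum internal degree $5$ of $S^{2}(\tym(n))\otimes V(n)$; at the corresponding cohomological degree, $C^{1}(\YM(n),V(n)\otimes\tym(n))$ vanishes for degree reasons (since $V(n)\otimes\tym(n)\otimes V(n)$ starts only in internal degree $4$), so the lift cannot be further adjusted and $\bar{w}_{1}$ directly represents ${}^{2}d_{2}^{0,1}[z_{1}]$ in $H^{2}(\YM(n),S^{2}(\tym(n)))$.

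It then remains to check that $[\bar{w}_{1}]$ is non-zero in ${}^{2}E_{2}^{2,0} = H^{2}(\YM(n),S^{2}(\tym(n)))/\mathrm{Im}(\bar{\iota})$. For $n\geq 3$ the element $\bar{w}_{1}$ is plainly non-zero as a cochain: the coefficient of $\otimes x_{2}$ equals $4\sum_{l=3}^{n}[x_{l},x_{1}]\cdot_{s}[x_{l},x_{2}]$, which is a sum of linearly independent elements of $S^{2}(\Lambda^{2}V(n))$. Since $C^{1}(\YM(n),S^{2}(\tym(n)))$ likewise vanishes at the relevant cohomological degree ($S^{2}(\tym(n))\otimes V(n)$ starts in degree $5$), there are no coboundaries in that degree, so $[\bar{w}_{1}]\neq 0$ in $H^{2}(\YM(n),S^{2}(\tym(n)))$. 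Finally, by the explicit formula for $\iota$ in Lemma \ref{lema:d10}, $\mathrm{Im}(\bar{\iota})$ sits in a strictly higher cohomological degree than $\bar{w}_{1}$, so the class $[\bar{w}_{1}]$ survives to ${}^{2}E_{2}^{2,0}$; this proves that ${}^{2}d_{2}^{0,1}$ is injective and hence ${}^{2}E_{3}^{0,1}=0$.

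The main obstacle in the plan is the concrete computation of $\bar{w}_{1}$: one must carefully track the three summands in the formula for $d^{2}$, work out how the adjoint action of $\ym(n)$ distributes over symmetric products of the form $x_{j}\cdot_{s}x_{i}$, and apply Yang-Mills relations and the Jacobi identity to identify precisely those contributions landing in $S^{2}(\tym(n))$ rather than in $V(n)\cdot_{s}\tym(n)$. Once this is done, the degree comparisons and the non-vanishing check are immediate.
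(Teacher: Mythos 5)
Your proposal follows essentially the same route as the paper: reduce to a single basis vector of ${}^{2}E_{2}^{0,1}\simeq V(n)$ using irreducibility and $\so(n)$-equivariance, compute its image explicitly as a cocycle in $C^{2}(\YM(n),S^{2}(\tym(n)))$, and conclude non-vanishing because that cocycle sits in the minimal internal degree of $S^{2}(\tym(n))\otimes V(n)$, where there are neither coboundaries nor contributions from $\mathrm{Im}(\bar{\iota})$. The cocycle $\bar{w}_{1}$ you assert agrees (after combining the first two sums by an index swap) with the paper's formula \eqref{eq:coe2} for $j=1$, so the deferred computation comes out as you predict and the argument is sound.
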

\begin{proof}
We first observe that ${}^{2}E_{2}^{0,1}$ is isomorphic to $V(n)$ as $\so(n)$-modules, so it is an irreducible $\so(n)$-module.
If we apply the differential ${}^{2}d_{2}^{0,1}$ to the cohomology class represented by a cocycle of the form 
\[     \sum_{l=1}^{n} \Big(x_{j}x_{l} \otimes x_{l} - \frac{1}{2} x_{l}^{2} \otimes x_{j}\Big),     \]
we obtain the cohomology class of the cocycle in $C^{2}(\YM(n),S^{2}(\tym(n)))$ given by 
\begin{equation}
\label{eq:coe2}
     \sum_{l,m=1}^{n} (2[x_{j},x_{m}] \otimes_{s} [x_{l},x_{m}] \otimes x_{l} - 2[x_{j},x_{l}] \otimes_{s} [x_{l},x_{m}] \otimes x_{m}
                         - [x_{l},x_{m}] \otimes_{s} [x_{l},x_{m}] \otimes x_{j}).     
\end{equation}
We point out that the cohomology classes of the previous cocycles are linearly independent, which implies that $\Ker({}^{2}d^{0,1}_{2}) = 0$. 
This can be deduced as follows. 
Taking into account ${}^{2}E_{2}^{0,1}$ is an irreducible $\so(n)$-module and ${}^{2}d_{2}^{0,1}$ is $\so(n)$-equivariant, 
the latter is an isomorphism if it does not vanish. 
Since there are no coboundaries of the same internal degree and the cocycles \eqref{eq:coe2} are nonzero, 
we conclude that $\Ker({}^{2}d^{0,1}_{2}) = 0$. 
\end{proof}

By Propositions \ref{prop:algder}, \ref{prop:e2} and \ref{prop:e3}, we derive the main result of this section. 
\begin{theorem}
\label{teo:hh1}
The morphism given in Proposition \ref{prop:algder} is bijective. 
Furthermore, there is an isomorphism of Lie algebras   
\[     HH^{1}(\YM(n)) \simeq V(n) \rtimes (\so(n) \times k),     \]
where $HH^{1}(\YM(n))$ is provided with the Gerstenhaber bracket, 
$k$ and $V(n)$ are considered as abelian subalgebras and $\so(n)$ acts on $V(n)$ by the standard action. 
\end{theorem}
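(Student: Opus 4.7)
The plan is to combine the spectral sequence computation carried out in this section with the injection provided by Proposition \ref{prop:algder}, and then verify the Lie algebra structure directly from the derivations listed there.

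First I would show that $HH^{1}(\YM(n))$ has the same graded dimension as $k \oplus V(n)[2] \oplus \Lambda^{2}(V(n)[1])$. Using the decomposition $HH^{1}(\YM(n)) \simeq \bigoplus_{i \in \NN_{0}} H^{1}(\ym(n),S^{i}(\ym(n)))$ and the already-established vanishing ${}^{i}E_{2}^{1,0} = 0$ for every $i \in \NN_{0}$, the convergence of the bounded spectral sequence yields a graded $\so(n)$-equivariant isomorphism $H^{1}(\ym(n),S^{i}(\ym(n))) \simeq {}^{i}E_{\infty}^{0,1}$. Since ${}^{i}E_{1}^{p,q} = 0$ whenever $p > i$ (because $S^{i-p}V(n) = 0$), for $i \in \{0,1\}$ every higher differential ${}^{i}d_{r}^{0,1}$ with $r \geq 2$ lands in a vanishing group, so ${}^{i}E_{\infty}^{0,1} = {}^{i}E_{2}^{0,1}$, and Proposition \ref{prop:e2} identifies these with $V(n)[2]$ and $k \oplus \Lambda^{2}(V(n)[1])$ respectively, through the cocycle bases exhibited there. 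For $i = 2$, Proposition \ref{prop:e3} gives that ${}^{2}d_{2}^{0,1}$ is injective, and no further differentials can occur by the same support argument, so ${}^{2}E_{\infty}^{0,1} = 0$; for $i \geq 3$ the group ${}^{i}E_{2}^{0,1}$ itself vanishes by Proposition \ref{prop:e2}. Summing all contributions gives the desired graded dimension, which combined with the injectivity of the morphism in Proposition \ref{prop:algder} forces the latter to be an isomorphism, proving the first assertion of the theorem.

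To identify the Lie algebra structure, I would use that the Eulerian derivation $d_{\mathrm{eu}}$, the translation derivations $d_{i}$ and the $\so(n)$-derivations are plain derivations of $\YM(n)$, so the Gerstenhaber bracket on $HH^{1}(\YM(n))$ restricted to their classes coincides with the commutator of derivations. Direct computation on the generators $x_{1}, \dots, x_{n}$ yields that $[d_{i},d_{j}] = 0$ for all $i,j$; that the bracket of two $\so(n)$-derivations is the usual $\so(n)$-bracket; that the bracket of an $\so(n)$-derivation with some $d_{k}$ realizes the standard action of $\so(n) \simeq \Lambda^{2}V(n)$ on $V(n)$; that $[d_{\mathrm{eu}}, L] = 0$ for every $L \in \so(n)$, since $\so(n)$-derivations are homogeneous of degree $0$; and that $[d_{\mathrm{eu}}, d_{i}] = -d_{i}$, since $d_{i}$ is homogeneous of degree $-1$. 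These relations exhibit $HH^{1}(\YM(n))$ as the claimed semidirect product, with $V(n)$ and $k$ abelian subalgebras and $\so(n)$ acting on $V(n)$ by the standard representation.

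The main obstacle is the dimension count itself: it depends critically on the non-trivial injectivity of ${}^{2}d_{2}^{0,1}$ from Proposition \ref{prop:e3}, which eliminates the spurious $S^{2}V(n)$-contribution that would otherwise enlarge $HH^{1}(\YM(n))$. Once that is in hand, the remaining arguments are routine bookkeeping of cocycles and of commutators of derivations.
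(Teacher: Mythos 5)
Your proposal is correct and follows essentially the same route as the paper: the bijectivity is obtained by combining the injection of Proposition \ref{prop:algder} with the collapse of the spectral sequence (vanishing of ${}^{i}E_{2}^{1,0}$, the identification of ${}^{i}E_{2}^{0,1}$ in Proposition \ref{prop:e2}, and the injectivity of ${}^{2}d_{2}^{0,1}$ from Proposition \ref{prop:e3}), and the Lie structure is read off from the commutators $[d_{\mathrm{eu}},d_{i,j}]=[d_{i},d_{j}]=0$, $[d_{i,j},d_{k}]=2(\delta_{j,k}d_{i}-\delta_{i,k}d_{j})$ and $[d_{\mathrm{eu}},d_{i}]=-d_{i}$, exactly as in the paper. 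Your explicit observation that the higher differentials out of ${}^{i}E_{r}^{0,1}$ for $i\in\{0,1\}$ land in groups with $p>i$ and hence vanish is a slightly more detailed justification of degeneration than the paper spells out, but it is the same argument.
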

\begin{proof}
We only need to prove the second statement. 
By Proposition \ref{prop:algder}, a basis of representatives of outer derivations is given by the derivations $d_{\mathrm{eu}}$, 
$d_{i}$ ($i = 1, \dots, n$), and the collection of derivations $d_{i,j}$ ($1 \leq i < j \leq n$) coming from the canonical basis of $\so(n)$ 
(when identified with $\Lambda^{2} V(n)$), which act on $\YM(n)$ as 
\[     d_{i,j} (x_{k}) = 2 (\delta_{j,k} x_{i} -  \delta_{i,k} x_{j}).     \]
From this it is easy to prove that $[d_{\mathrm{eu}},d_{i,j}] = [d_{i},d_{j}] = 0$, 
$[d_{i,j}, d_{k}] = 2 (\delta_{j,k} d_{i} -  \delta_{i,k} d_{j})$ and 
$[d_{\mathrm{eu}},d_{i}] = - d_{i}$. 
Hence, the Lie algebra $HH^{1}(\YM(n))$ with the Gerstenhaber bracket is isomorphic to $V(n) \rtimes (\so(n) \times k)$, 
where $k$ and $V(n)$ are considered as abelian subalgebras and $\so(n)$ acts on $V(n)$ by the standard action. 
\end{proof}
\section{\texorpdfstring
{Hochschild and cyclic homology of $\YM(n)$}
{Hochschild and cyclic homology of the Yang-Mills algebra}
}
\label{sec:hhhc}

\subsection{Generalities}

In this subsection, $A$ shall denote a connected graded $k$-algebra (\textit{i.e.} $A_{0}=k$).
We shall denote by $HC_{\bullet}(A)$ the $\bullet$-th cyclic homology group of $A$
and $\overline{HC}_{\bullet}(A) = HC_{\bullet}(A)/HC_{\bullet}(k)$ the reduced $\bullet$-th cyclic homology group.
Also, $\overline{HH}_{\bullet}(A) = HH_{\bullet}(A)/HH_{\bullet}(k)$
shall denote the reduced $\bullet$-th Hochschild homology group.
We recall that $\overline{HH}_{\bullet}(A) = HH_{\bullet}(A)$, for $\bullet \geq 1$, 
$\overline{HH}_{0}(A) = HH_{0}(A)/k$, $HH_{0}(A) = HC_{0}(A)$ and 
$\overline{HH}_{0}(A) = \overline{HC}_{0}(A)$.
In fact, all of them are not only abelian groups but also $k$-vector spaces. 

As it is usual, if $A$ is provided with an $\NN_{0}$-grading, then the cyclic homology has two gradings: the homological grading and the internal one. 
So, we shall denote by $HC_{i,j}(A)$ and $\overline{HC}_{i,j}(A)$ the components of internal degree $j$ of 
$HC_{i}(A)$ and $\overline{HC}_{i}(A)$, respectively.
As a consequence, these groups are graded vector spaces with respect to the internal grading. 

If $A$ is an $\NN_{0}$-graded algebra, the relation between the previous homologies is 
provided by the following collection of short exact sequence of graded vector spaces (see \cite{Wei1}, Thm. 9.9.1)
\begin{equation}
\label{eq:hhhc}
   0 \rightarrow \overline{HC}_{i-1} (A) \rightarrow \overline{HH}_{i} (A) \rightarrow \overline{HC}_{i} (A) \rightarrow 0, 
\end{equation}
for all $i \geq 0$, derived from Connes' long exact sequence.
This is a corollary of Goodwillie's Theorem proved by M. Vigu\'e-Poirrier (see \cite{Good1} and \cite{VP1}). 


We recall that the Euler-Poincar\'e characteristic for the Hilbert series is given by 
\begin{equation}
\label{eq:caraceuler}
     \chi_{\overline{HC}_{\bullet}(A)(t)} = \sum_{p \in \ZZ} (-1)^{p} \overline{HC}_{p}(A)(t).
\end{equation}

The following proposition is a particular case of Theorem 3.5, Eq. (16), in \cite{I1}.  
\begin{proposition}
\label{prop:seriehilbertalggrad}
If $A$ is an $\NN_{0}$-graded algebra, then 
\[     \chi_{\overline{HC}_{\bullet}(A)(t)} = \sum_{l \geq 1} \frac{\varphi(l)}{l} \log(A(t^{l})),     \]
where $\varphi$ denotes the Euler function. 
\end{proposition}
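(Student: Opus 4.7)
The plan is to deduce the formula by combining two ingredients: an explicit chain-level model for $\overline{HC}_{\bullet}(A)$ valid over a field of characteristic zero, and a Molien-type computation for the Hilbert series of coinvariants of finite cyclic group actions. The hypothesis that $A$ is connected graded ensures that $\bar{A}(t) = A(t) - 1$ has no constant term, so that all formal series involved---in particular $\log A(t) = \log(1 + \bar{A}(t))$---are well-defined elements of $\ZZ[[t]]$.

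First, I would invoke the Connes--Tsygan theorem stating that, in characteristic zero, the reduced cyclic homology of a unital algebra is the homology of the normalized $\lambda$-complex whose $n$-th term is the coinvariant space
\[     \overline{CC}^{\lambda}_{n}(A) = (\bar{A}^{\otimes(n+1)})_{C_{n+1}},     \]
where $C_{n+1} = \ZZ/(n+1)$ acts via the signed cyclic generator $t_{n}(a_{0} \otimes \dots \otimes a_{n}) = (-1)^{n} a_{n} \otimes a_{0} \otimes \dots \otimes a_{n-1}$. Since $A$ is locally finite dimensional, so is each $\overline{CC}^{\lambda}_{n}(A)$, and the Euler--Poincar\'e principle applied to the internal grading gives
\[     \chi_{\overline{HC}_{\bullet}(A)(t)} = \sum_{n \geq 0} (-1)^{n}\, \overline{CC}^{\lambda}_{n}(A)(t).     \]

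Second, Molien's formula expresses each Hilbert series $\overline{CC}^{\lambda}_{n}(A)(t)$ as the averaged trace $\frac{1}{n+1} \sum_{j=0}^{n} \tr(t_{n}^{j} \mid \bar{A}^{\otimes(n+1)})(t)$. Writing $m = n + 1$ and sorting the indices $j \in \{0, \dots, m-1\}$ by the value $d = \gcd(m, j)$, the classical character formula for cyclic permutations acting on tensor powers gives $\tr(\tau^{j} \mid \bar{A}^{\otimes m})(t) = \bar{A}(t^{m/d})^{d}$ for the unsigned rotation $\tau$, while the signed version contributes an extra factor $(-1)^{(m-1) j}$. Since exactly $\varphi(m/d)$ indices have a prescribed $d$, setting $l = m/d$ and $k = d$ produces
\[     \chi_{\overline{HC}_{\bullet}(A)(t)} = \sum_{m \geq 1} \frac{(-1)^{m-1}}{m} \sum_{\substack{l,k \geq 1 \\ lk = m}} \varphi(l)\, \varepsilon(l,k)\, \bar{A}(t^{l})^{k},     \]
where $\varepsilon(l,k)$ packages the sum of the signs $(-1)^{(m-1)j}$ over the $j$'s with $\gcd(m, j) = k$, divided by $\varphi(l)$.

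Third, a short case analysis on the parities of $l$ and $k$ shows that $(-1)^{m-1} \varepsilon(l,k) = (-1)^{k-1}$ uniformly in $l$; the numerical coincidence is that the $l$-dependence of the signed character average cancels the $l$-dependence of the outer Euler--Poincar\'e sign $(-1)^{m-1} = (-1)^{lk-1}$. Once this is verified, the double sum can be reorganized by swapping the order of $l$ and $k$, and the Taylor expansion $\log(1 + x) = \sum_{k \geq 1} (-1)^{k-1} x^{k}/k$ transforms the inner sum over $k$ into $\log A(t^{l})$, producing the desired identity. The main obstacle is precisely this sign bookkeeping in the third step, together with verifying that the rearrangements are legitimate in $\ZZ[[t]]$---which is guaranteed by the connectedness of $A$, so that only finitely many triples $(m,k,l)$ contribute to any given coefficient of $t^{N}$.
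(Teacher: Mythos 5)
Your proposal is correct, and it supplies an actual proof where the paper gives none: the paper's ``proof'' of this proposition is a one-line citation to Igusa (Theorem 3.5, Eq.\ (16) of \cite{I1}), so your argument is best viewed as a self-contained derivation of the special case needed here, along the lines of the cited source. The three ingredients are all sound: in characteristic zero the reduced cyclic homology of an augmented algebra is computed by the Connes complex $\bigl(\bar{A}^{\otimes(n+1)}\bigr)_{C_{n+1}}$ (Loday--Quillen/Tsygan); connectedness puts $\bar{A}^{\otimes(n+1)}$ in internal degrees $\geq n+1$, so each internal degree sees only finitely many terms and the Euler--Poincar\'e principle applies; and Molien's formula together with the cycle-type trace formula $\tr(\tau^{j} \mid \bar{A}^{\otimes m})(t) = \bar{A}(t^{m/d})^{d}$, $d = \gcd(m,j)$, reduces everything to the sign identity. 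I checked that identity: writing $m = lk$ with $l = m/d$ and $k = d$, the sum of $(-1)^{(m-1)j}$ over the $\varphi(l)$ indices $j$ with $\gcd(m,j)=d$ equals $\varphi(l)$ when $m$ is odd or $k$ is even, and $-\varphi(l)$ when $m$ is even and $k$ is odd (since then $l$ is even and every unit $j'$ mod $l$ is odd); in all three cases $(-1)^{m-1}$ times this sum is $(-1)^{k-1}\varphi(l)$, after which the sum over $k$ assembles into $\log\bigl(1+\bar{A}(t^{l})\bigr) = \log A(t^{l})$ and the rearrangement is legitimate because $\bar{A}(t^{l})^{k}$ has valuation at least $lk$. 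The only point worth making explicit is that the statement tacitly uses the standing hypotheses of the subsection ($A$ connected and locally finite dimensional), without which neither the Hilbert series nor the reduction to $\bar{A}$ makes sense; you noted this correctly.
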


Corollary 3 of \cite{CD1} and Proposition \ref{prop:seriehilbertalggrad} yield the following result. 
\begin{proposition}
\label{prop:xc}
If $\YM(n)$ denotes the Yang-Mills algebra with $n$ generators provided with the usual grading, then 
\[     \chi_{\overline{HC}_{\bullet}(\YM(n))(t)} = \sum_{l \geq 1} \frac{\varphi(l)}{l} \log(\YM(n)(t^{l}))
                                                 = - \sum_{l \geq 1} \frac{\varphi(l)}{l} \log(1 - n t^{l} + n t^{3l} - t^{4l}).     \]
\end{proposition}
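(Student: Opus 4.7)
The plan is to deduce the statement directly from the two ingredients cited right above it, treating it as essentially a bookkeeping consequence rather than a new result. First, I would observe that $\YM(n)$, equipped with its usual grading, is a connected $\NN_{0}$-graded $k$-algebra: indeed $\YM(n)_{0}=k$ because $T(V(n))_{0}=k$ and the relations \eqref{eq:relym} are homogeneous of positive degree. Consequently, the hypotheses of Proposition \ref{prop:seriehilbertalggrad} are met, and we immediately obtain
\[
\chi_{\overline{HC}_{\bullet}(\YM(n))(t)} = \sum_{l \geq 1} \frac{\varphi(l)}{l} \log(\YM(n)(t^{l})),
\]
which is the first equality in the statement. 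Note that the right-hand side is a well-defined element of $\ZZ[[t]]$ since $\YM(n)(0)=1$, so each $\log(\YM(n)(t^{l}))$ has no constant term.

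For the second equality, I would substitute the closed form for the Hilbert series of $\YM(n)$ given in Corollary 3 of \cite{CD1}, namely
\[
\YM(n)(t) = \frac{1}{(1-t^{2})(1-nt+t^{2})}.
\]
A short expansion yields $(1-t^{2})(1-nt+t^{2}) = 1-nt+nt^{3}-t^{4}$, so
\[
\log(\YM(n)(t^{l})) = -\log(1-nt^{l}+nt^{3l}-t^{4l}).
\]
Plugging this into the formula from Proposition \ref{prop:seriehilbertalggrad} produces the second equality in the statement.

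Since the proposition is a direct specialization of two previously stated results, there is no real obstacle; the only point to check carefully is that the logarithms and substitutions are being done inside the ring $\ZZ[[t]]$ where everything converges formally, which is guaranteed by connectedness of $\YM(n)$. No further homological input is required at this step.
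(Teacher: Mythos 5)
Your proposal is correct and matches the paper's (one-line) proof exactly: the result is obtained by applying Proposition \ref{prop:seriehilbertalggrad} to the connected graded algebra $\YM(n)$ and substituting the Hilbert series $\YM(n)(t) = \big((1-t^{2})(1-nt+t^{2})\big)^{-1} = (1-nt+nt^{3}-t^{4})^{-1}$ from Corollary 3 of \cite{CD1}. The additional remarks on connectedness and formal convergence in $\ZZ[[t]]$ are fine but not needed beyond what the paper already assumes.
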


\subsection{Hochschild and cyclic homology of the Yang-Mills algebra}

Corollary 3 of \cite{CD1}, Proposition \ref{prop:centroym}, Theorem \ref{teo:hh1} and Proposition \ref{prop:xc} tell us that 
\begin{align*}
   \overline{HH}_{3}(\YM(n))(t) &= t^{4},
   \\
   \overline{HH}_{2}(\YM(n))(t) &= \Big(\frac{n(n-1)}{2} + 1 \Big) t^{4} + n t^{3},
   \\
   \chi_{\overline{HC}_{\bullet}(\YM(n))(t)} &= - \sum_{l \geq 1} \frac{\varphi(l)}{l} \log(1 - n t^{l} + n t^{3l} - t^{4l}),
\end{align*}
where we have used the Poincar\'e duality of the Yang-Mills algebra.
We shall find the Hilbert series of the other homology $k$-vector spaces by putting together the following facts. 

First, taking into account the short exact sequence \eqref{eq:hhhc} and that $HH_{\bullet}(\YM(n)) = 0$ for $\bullet \geq 4$,
we conclude that $\overline{HC}_{\bullet}(\YM(n)) = 0$ for $\bullet \geq 3$.
Moreover,
\begin{equation}
\label{eq:hhhcrel}
\begin{split}
     \overline{HC}_{2}(\YM(n))(t) &= \overline{HH}_{3}(\YM(n))(t),
     \\
     \overline{HC}_{1}(\YM(n))(t) &= \overline{HH}_{2}(\YM(n))(t) - \overline{HC}_{2}(\YM(n))(t)
     \\
                                  &= \overline{HH}_{2}(\YM(n))(t) - \overline{HH}_{3}(\YM(n))(t),
     \\
     \overline{HC}_{0}(\YM(n))(t) &= \overline{HH}_{0}(\YM(n))(t).
\end{split}
\end{equation}

Second, as noted in Eq. (1.22) of \cite{CD2}, the Koszul property of $\YM(n)$ implies that 
\begin{equation}
\label{eq:koszulalgym}
     \sum_{i=0}^{3} (-1)^{i} \overline{HH}_{i}(\YM(n))(t) = 0.
\end{equation}
Finally, it may be directly checked from \eqref{eq:hhhc} that
\begin{equation}
\label{eq:hhhcrel2}
     \chi_{\overline{HC}_{\bullet}(\YM(n))(t)} = \sum_{i=0}^{3} (-1)^{i} \overline{HC}_{i}(\YM(n))(t) = \sum_{i=0}^{3} (-1)^{i} (3-i) \overline{HH}_{i}(\YM(n))(t).
\end{equation}

These two last identities constitute a linear system 
\begin{align*}
     3 \overline{HH}_{0}(\YM(n))(t) - 2 \overline{HH}_{1}(\YM(n))(t) &= \chi_{\overline{HC}_{\bullet}(\YM(n))(t)} - \overline{HH}_{2}(\YM(n))(t),
     \\
     \overline{HH}_{0}(\YM(n))(t) - \overline{HH}_{1}(\YM(n))(t) &= \overline{HH}_{3}(\YM(n))(t) - \overline{HH}_{2}(\YM(n))(t),
\end{align*}
with unique solution
\begin{align*}
     \overline{HH}_{0}(\YM(n))(t) &= \chi_{\overline{HC}_{\bullet}(\YM(n))(t)} - 2 \overline{HH}_{3}(\YM(n))(t) + \overline{HH}_{2}(\YM(n))(t),
     \\
     \overline{HH}_{1}(\YM(n))(t) &= \chi_{\overline{HC}_{\bullet}(\YM(n))(t)} - 3 \overline{HH}_{3}(\YM(n))(t) + 2 \overline{HH}_{2}(\YM(n))(t),
\end{align*}
Hence, we have proved the main Theorem \ref{teo:hhhcym>3}.

\section{\texorpdfstring{Appendix: Hochschild homology of $\YM(2)$}{Appendix: Hochschild homology of YM(2)}}
\label{sec:Heishom}

As a simple application of the Koszul complex \eqref{eq:complejohomologiayangmills},
we shall compute the Hochschild homology and cohomology of $\YM(2)$. 
This result is known in the literature (see \cite{Nu1}, Chap. III, Thm. 3.2), but we provide more explicit computations. 

Since $\ym(2) \simeq \h_{1}$ (see \cite{HS1}, Example 2.1), $\ym(2)$ has a basis $\{ x,y,z \}$ as $k$-vector space, 
such that $[x,y] = z$ and $z \in \Z(\ym(2))$. 
Notice that when $\ym(2)$ is provided with the usual grading, $x$ and $y$ have degree $1$, whereas $z$ has degree $2$.
We shall write $k[x,y,z]$ instead of $S(\ym(2))$.

It can be easily proved that the right action of $\ym(2)$ on $k[x,y,z]$ is as follows: $p.x = - z \partial p/\partial y$, 
$p.y = z \partial p/\partial x$ and $p.z = 0$, for $p \in k[x,y,z]$. 

Given $p = \sum_{(i,j,l) \in \NN_{0}^{3}} a_{i,j,l} x^{i} y^{j} z^{l} \in k[x,y,z]$, 
define $\int p dx = \sum_{(i,j,l) \in \NN_{0}^{3}} a_{i,j,l} (i+1)^{-1} x^{i+1} y^{j} z^{l}$. 
One can check that
\begin{equation}
\label{eq:conmutacion}
     \frac{\partial}{\partial x} \int p dx = p,  \hskip 0.5cm  \int \frac{\partial p}{\partial x} dx = p-p(0,y,z)
     \hskip 0.5cm \text{and} \hskip 0.5cm \frac{\partial}{\partial y} \int p dx = \int \frac{\partial p}{\partial y} dx.
\end{equation}
Analogous results hold when considering variables $y$ and $z$.

The Koszul complex of $\YM(2)$ is
\begin{equation}
\label{eq:complejohomologiayangmills2}
    0 \longrightarrow k[x,y,z][-4] \overset{d_{3}}{\longrightarrow} k[x,y,z] \otimes V(2)[-2] \overset{d_{2}}{\longrightarrow} k[x,y,z] \otimes V(2)
    \overset{d_{1}}{\longrightarrow} k[x,y,z] \longrightarrow 0,
\end{equation}
with differential
\begin{align*}
   d_{1} (p \otimes x + q \otimes y) &= z \Big(\frac{\partial q}{\partial x} - \frac{\partial p}{\partial y}\Big), \hskip 0.87cm
   d_{3} (r) = - z \frac{\partial r}{\partial y} \otimes x +  z \frac{\partial r}{\partial x} \otimes y,
   \\
   d_{2}(p \otimes x + q \otimes y) &= z^{2} \Big(\frac{\partial^{2}p}{\partial x^{2}} \otimes x + \frac{\partial^{2}p}{\partial x \partial y} \otimes y
                                             + \frac{\partial^{2}q}{\partial y^{2}} \otimes y 
                                             + \frac{\partial^{2}q}{\partial x \partial y} \otimes x\Big),
\end{align*}
where $p,q,r \in k[x,y,z]$.

We see that $H_{3}(\ym(2),\YM(2)^{\mathrm{ad}}) \simeq \Ker(d_{3})$ and that 
$r \in \Ker(d_{3})$ if and only if its partial derivatives with respect to $x$ and $y$ vanish, \textit{i.e.}
if $r \in k[z]$.
As a consequence we get an homogeneous isomorphism $HH_{3}(\YM(2)) \simeq k[z][-4]$ of degree $0$.

Moreover, by Poincar\'e duality, we immediately have that $HH^{0}(\YM(2)) \simeq \Z(\YM(2)) \simeq k[z]$.
Since the image of $d_{1}$ is the set of polynomials of the form $z p$, where $p \in k[x,y,z]$
we see that $HH_{0}(\YM(2)) \simeq k[x,y]$ of degree $0$,

Let us now compute $HH_{2}(\YM(2))$.
Let $\omega = p \otimes x + q \otimes y \in \Ker(d_{2})$.
This is equivalent to the following conditions:
\[
   \frac{\partial}{\partial x} \left( \frac{\partial p}{\partial x} + \frac{\partial q}{\partial y} \right) = 0,
   \hskip 5mm
   \frac{\partial}{\partial y} \left( \frac{\partial p}{\partial x} + \frac{\partial q}{\partial y} \right) = 0.
\]
If we write $p = \sum_{i \in \NN_{0}} p_{i} z^{i}$ and $q = \sum_{i \in \NN_{0}} q_{i} z^{i}$, for $p_{i}, q_{i} \in k[x,y]$, 
for all $i \in \NN_{0}$, the conditions are equivalent to: 
\[ \frac{\partial}{\partial x} \left( \frac{\partial p_{i}}{\partial x} + \frac{\partial q_{i}}{\partial y} \right) = 0,
   \hskip 5mm
   \frac{\partial}{\partial y} \left( \frac{\partial p_{i}}{\partial x} + \frac{\partial q_{i}}{\partial y} \right) = 0, 
   \quad \forall \hskip 0.6mm i \in \NN_{0}.
\]
Then, for all $i \in \NN_{0}$,
\begin{equation}
\label{eq:c}
     \frac{\partial p_{i}}{\partial x} + \frac{\partial q_{i}}{\partial y} = c_{i} \in k.
\end{equation}

We may choose $r = \sum_{i \in \NN_{0}} r_{i} z^{i} \in k[x,y,z]$, with $r_{i} \in k[x,y]$ such that 
\begin{equation}
\label{eq:eleccion}
     r_{i} = \int q_{i+1} dx - \int p_{i+1}(0,y) dy, \forall \hskip 0.5mm i \in \NN_{0}.
\end{equation}

Then,
\[     d_{3} (r) = - \sum_{i \in \NN_{0}} z^{i+1} \frac{\partial r_{i}}{\partial y} \otimes x +  z^{i+1} \frac{\partial r_{i}}{\partial x} \otimes y.     \]

As a consequence, the cycle $\omega$ is homologous to
\begin{align*}
     p \otimes x + q \otimes y - d_{3}(r) 
     &= p_{0} \otimes x + q_{0} \otimes y
       + \sum_{i \in \NN} \Big( z^{i} \Big(p_{i} + \frac{\partial r_{i-1}}{\partial y}\Big) \otimes x 
       + z^{i} \Big(q_{i} - \frac{\partial r_{i-1}}{\partial x}\Big) \otimes y \Big)
     \\
       &= p_{0} \otimes x + q_{0} \otimes y + \sum_{i \in \NN} z^{i}\Big( p_{i} + \frac{\partial r_{i-1}}{\partial y}\Big) \otimes x.     
\end{align*}
From \eqref{eq:c} and \eqref{eq:eleccion} we see that
\begin{align*}
     p_{i} + \frac{\partial r_{i-1}}{\partial y} 
     &= p_{i} + \frac{\partial}{\partial y} \Big(\int q_{i} dx - \int p_{i}(0,y) dy \Big)
     = p_{i} + \int \frac{\partial q_{i}}{\partial y} dx - p_{i}(0,y)
     \\
       &= p_{i} + \int \Big(c_{i} - \frac{\partial p_{i}}{\partial x}\Big) dx - p_{i}(0,y)
       = c_{i} x.    
\end{align*}
Hence, $\omega$ is homologous to the cycle $\omega '=p_{0} \otimes x + q_{0} \otimes y + \sum_{i \in \NN} z^{i} c_{i} x \otimes x$. 
If $c_{i} \neq 0$, $z^{i} c_{i} x \otimes x$ cannot be a boundary because all boundaries have $c_{i} = 0$.
Since $\omega '$ is a cycle, it we must have $\partial q_{0}/\partial y = c_{0} - \partial p_{0}/\partial x$,
and then
\[     q_{0} = c_{0} y - \int \frac{\partial p_{0}}{\partial x} dy + h,     \]
where $h \in k[x]$ is some polynomial.
Therefore the cycle $\omega$ is homologous to
\[     p_{0} \otimes x + c_{0} y \otimes y - \int \frac{\partial p_{0}}{\partial x} dy \otimes y + h \otimes y
        + \sum_{i \in \NN} z^{i} c_{i} x \otimes x.     \]
From this we can conclude that the set given by 
\[     \{ y \otimes y, 
          x^{i_{1}} \otimes y \hskip 0.6mm (i_{1} \in \NN_{0}), 
          x^{i_{2}} y^{i_{3}} \otimes x - \frac{i_{2}}{i_{3}+1} x^{i_{2}-1} y^{i_{3} +1} \otimes y \hskip 0.6mm (i_{2}, i_{3} \in \NN_{0}), 
          z^{i_{4}} x \otimes x \hskip 0.6mm (i_{4} \in \NN_{0}) \}     \]
is a basis of $HH_{2}(\YM(2))$.

In the same way we may compute the homology $HH_{1}(\YM(2))$.
If $\omega = p \otimes x + q \otimes y$ is a $1$-cycle, then
$\partial q_{i}/\partial x - \partial p_{i}/\partial y = 0$,
for all $i \in \NN_{0}$.
Hence there is a polynomial $r_{i} \in k[x,y]$ such that $p_{i} = \partial r_{i}/\partial x$ and
$q_{i} = \partial r_{i}/\partial y$, for all $i \in \NN_{0}$.

If we choose $p'_{i}$ and $q'_{i}$ such that $\partial p'_{i-2}/\partial x + \partial q'_{i-2}/\partial y = r_{i}$,
for all $i \geq 2$, then $\omega$ is homologous to
\[     p_{0} \otimes x + q_{0} \otimes y + z p_{1} \otimes x + z q_{1} \otimes y
       = \frac{\partial r_{0}}{\partial x} \otimes x + \frac{\partial r_{0}}{\partial y} \otimes y
       + z \frac{\partial r_{1}}{\partial x} \otimes x + z \frac{\partial r_{1}}{\partial y} \otimes y.     \]
Moreover, we immediately see that the collection of cycles with $r_{0} = x^{i_{1}}y^{i_{2}} \in k[x,y]$ and
$r_{1} = x^{i_{3}} y^{i_{4}} \in k[x,y]$, with $i_{1}, i_{2}, i_{3}, i_{4} \in \NN_{0}$ and $(i_{1}, i_{2}) \neq (0,0)$, $(i_{3},i_{4}) \neq (0,0)$,
gives a basis of $HH_{1}(\YM(2))$.

Using the previous computations is clear to compute the Hilbert series for the Hochschild homology of $\YM(2)$. 
For completeness, we state the Hilbert series for the Hochschild and cyclic homology of $\YM(2)$, where the Hilbert series for the cyclic homology was obtained from that of the Hochschild homology using relations \eqref{eq:hhhcrel} for $n=2$. 
\begin{theorem}
\label{teo:hhhcym2}
If $n = 2$, then the Hilbert series for the Hochschild homology are 
\begin{align*}
   HH_{\bullet}(\YM(2))(t) &= 0, &\text{if $\bullet \geq 4$,}
   \\
   HH_{3}(\YM(2))(t) &= \frac{t^{4}}{1-t^{2}},
   \\
   HH_{2}(\YM(2))(t) &= 2 t^{3} \frac{1+t-t^{2}}{(1-t^{2})(1-t)},
   \\
   HH_{1}(\YM(2))(t) &= t \frac{(2-t)(1+t^{2})}{(1-t)^{2}},
   \\
   HH_{0}(\YM(2))(t) &= \frac{1}{(1-t)^{2}}.
\end{align*}
The Hochschild cohomology is given by Poincar\'e duality: $HH^{\bullet}(\YM(2)) = HH_{3-\bullet}(\YM(2))[4]$, for $0 \leq \bullet \leq 3$, 
and $HH^{\bullet}(\YM(2)) = 0$, for $\bullet > 3$. 

Also, the Hilbert series for the cyclic homology are given by
\begin{align*}
   &HC_{4+2\bullet}(\YM(2))(t) = 1, &\text{if $\bullet \geq 0$,}
   \\
   &HC_{3+2\bullet}(\YM(2))(t) = 0, &\text{if $\bullet \geq 0$,}
   \\
   &HC_{2}(\YM(2))(t) = 1 + \frac{t^{4}}{1-t^{2}},
   \\
   &HC_{1}(\YM(2))(t) = \frac{(2-t)t^{3}}{(1-t)^{2}},
   \\
   &HC_{0}(\YM(2))(t) = \frac{1}{(1-t)^{2}}.
\end{align*}
\end{theorem}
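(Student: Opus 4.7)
The plan is to derive the Hilbert series of each $HH_{i}(\YM(2))$ directly from the explicit bases produced by the preceding analysis of the Koszul complex \eqref{eq:complejohomologiayangmills2}, then deduce the cyclic homology using the Koszul relation and the Connes sequence. The cohomological statement will follow at once from the Poincar\'e duality isomorphism of Proposition \ref{prop:(co)homologiaYangMills}, which gives $HH^{\bullet}(\YM(2)) \simeq HH_{3-\bullet}(\YM(2))[4]$.

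First I would read off $HH_{0}$ and $HH_{3}$ from the already-established identifications $HH_{3}(\YM(2)) \simeq k[z][-4]$ and $HH_{0}(\YM(2)) \simeq k[x,y]$, bearing in mind that $|z|=2$ and the shift $[-4]$ contributes a factor $t^{4}$; this gives $t^{4}/(1-t^{2})$ and $1/(1-t)^{2}$, respectively. Next I would sum the four families of the explicit basis of $HH_{2}(\YM(2))$: the element $y\otimes y$ contributes $t^{4}$, the family $\{x^{i_{1}}\otimes y\}_{i_{1}\geq 0}$ contributes $t^{3}/(1-t)$, the two-parameter family $\{x^{i_{2}}y^{i_{3}}\otimes x-\tfrac{i_{2}}{i_{3}+1}x^{i_{2}-1}y^{i_{3}+1}\otimes y\}_{i_{2},i_{3}\geq 0}$ contributes $t^{3}/(1-t)^{2}$, and $\{z^{i_{4}}x\otimes x\}_{i_{4}\geq 1}$ contributes $t^{6}/(1-t^{2})$; reducing over the common denominator $(1-t)^{2}(1+t)$ collapses the numerator to $2t^{3}(1+t-t^{2})$, yielding the desired formula. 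For $HH_{1}(\YM(2))$, each basis cycle is parametrized by a pair $(r_{0},r_{1})$ of monomials in $k[x,y]$ with $r_{0}$ or $r_{1}$ nonconstant; summing contributes $(1+t^{2})(1/(1-t)^{2}-1)=t(2-t)(1+t^{2})/(1-t)^{2}$.

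For the cyclic homology I would use that $\YM(2)$ is Koszul of global dimension $3$ (Proposition \ref{prop:koszulyangmills}), so the Euler relation \eqref{eq:koszulalgym} holds, which serves as an internal consistency check on the four Hilbert series computed above. Then I would apply the short exact sequences \eqref{eq:hhhc} coming from Goodwillie/Vigu\'e-Poirrier, first to compute $\overline{HC}_{0}=\overline{HH}_{0}$, then $\overline{HC}_{1}=\overline{HH}_{1}-\overline{HC}_{0}$, next $\overline{HC}_{2}=\overline{HH}_{2}-\overline{HC}_{1}$, and verify that this last subtraction telescopes to exactly $t^{4}/(1-t^{2})=\overline{HH}_{3}$, forcing $\overline{HC}_{3}=0$. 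Because $\overline{HH}_{i}$ vanishes for $i\geq 4$, the sequences \eqref{eq:hhhc} give $\overline{HC}_{i-1}\simeq\overline{HC}_{i}$ for $i\geq 4$, so all higher reduced cyclic homology vanishes; adding the known $HC_{\bullet}(k)$ yields the stated $1+t^{4}/(1-t^{2})$ in degree $2$ and the periodic tail of $1$'s in even degrees.

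The only nontrivial input beyond bookkeeping is the identification of the explicit bases of $HH_{1}$ and $HH_{2}$ used in step~two, which was already carried out in the paragraphs preceding the statement; the one subtlety worth flagging is that the fourth family in the given basis of $HH_{2}$ must be indexed by $i_{4}\geq 1$ rather than $i_{4}\geq 0$, since $x\otimes x=(x\otimes x-y\otimes y)+y\otimes y$ is redundant with respect to the second and third families, and I expect the main (purely bookkeeping) obstacle to be verifying the simplification of the numerator to $2t^{3}(1+t-t^{2})$ and the compatibility with the Koszul Euler relation, both of which are straightforward algebraic manipulations.
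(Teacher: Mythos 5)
Your proposal is correct and follows exactly the paper's own route: the Hilbert series are read off from the explicit bases of $HH_{0}$, $HH_{1}$, $HH_{2}$, $HH_{3}$ obtained from the Koszul complex \eqref{eq:complejohomologiayangmills2}, the cyclic homology is then deduced from the sequences \eqref{eq:hhhc}, and the cohomology from Poincar\'e duality. Your observation that the fourth family in the displayed basis of $HH_{2}(\YM(2))$ must be indexed by $i_{4} \geq 1$ rather than $i_{4} \geq 0$ is also right: it agrees with the derivation a few lines earlier (where the sum runs over $i \in \NN$), it is forced by linear independence since $x \otimes x = (x \otimes x - y \otimes y) + y \otimes y$, and it is exactly what the stated series $2t^{3}(1+t-t^{2})/((1-t^{2})(1-t))$ requires.
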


%


\noindent Estanislao Herscovich
          \\
          Departamento de Matem\'atica,
          \\
          Facultad de Ciencias Exactas y Naturales, 
          \\
          Universidad de Buenos Aires,
          \\
          Pabell\'on I, Ciudad Universitaria, 
          \\
          1428, Buenos Aires, Argentina          
          \\
          \href{mailto:eherscov@dm.uba.ar}{eherscov@dm.uba.ar} 

\smallskip
          
\noindent  Andrea Solotar
           \\
           Departamento de Matem\'atica,
           \\
           Facultad de Ciencias Exactas y Naturales, 
           \\
           Universidad de Buenos Aires,
           \\
           Pabell\'on I, Ciudad Universitaria, 
           \\
           1428, Buenos Aires, Argentina
           \\
           \href{mailto:asolotar@dm.uba.ar}{asolotar@dm.uba.ar}
\end{document}